\PassOptionsToPackage{quiet}{xeCJK}
\documentclass[12pt, twoside]{article}

\usepackage{lmodern}

\usepackage{quiver}

\usepackage{amsmath}
\usepackage{amssymb}
\usepackage{amsthm}
\usepackage{amsopn}
\usepackage{tikz}
\usepackage[indent=20pt]{parskip}
\usepackage{setspace}
\usepackage[margin=1in, a4paper]{geometry}
\usepackage{calc}
\usepackage{hyperref}
\usepackage{authblk}

\hypersetup{
	colorlinks = true,
	linkcolor = black,
	urlcolor = black,
	citecolor = black,
}
\usepackage{fancyhdr}

\newcommand{\mbz}{\mathbb{Z}}

\newcommand{\mbr}{\mathbb{R}}
\newcommand{\mbc}{\mathbb{C}}
\newcommand{\mbs}{\mathbb{S}}

\newcommand{\mbp}{\mathbb{P}}

\newcommand{\mbn}{\mathbb{N}}
\newcommand{\mba}{\mathbb{A}}
\newcommand{\mcf}{\mathcal{F}}
\newcommand{\mcg}{\mathcal{G}}

\newcommand{\mcx}{\mathcal{X}}
\newcommand{\mcy}{\mathcal{Y}}
\newcommand{\mfm}{\mathfrak{m}}

\newcommand{\mco}{\mathcal{O}}
\newcommand{\mcr}{\mathcal{R}}

\newcommand{\llangle}{\langle\!\langle}
\newcommand{\rrangle}{\rangle\!\rangle}
\renewcommand{\>}{\rangle}
\newcommand{\<}{\langle}
\renewcommand{\bar}{\overline}

\DeclareMathOperator{\GL}{GL}
\DeclareMathOperator{\SL}{SL}

\DeclareMathOperator{\Sq}{Sq}

\DeclareMathOperator{\ret}{r\acute{e}t}

\DeclareMathOperator{\supp}{supp}
\DeclareMathOperator{\Spec}{Spec}

\DeclareMathOperator{\Shv}{Shv}
\DeclareMathOperator{\Ab}{Ab}
\DeclareMathOperator{\Id}{Id}
\DeclareMathOperator{\HHom}{Hom}
\DeclareMathOperator{\intHom}{\underline{Hom}}
\DeclareMathOperator{\Gr}{Gr}

\DeclareMathOperator{\Sm}{Sm}
\DeclareMathOperator{\Spc}{Spc}
\DeclareMathOperator{\hSpc}{hSpc}
\DeclareMathOperator{\Gm}{\mathbb{G}_m}

\DeclareMathOperator{\GW}{GW}
\DeclareMathOperator{\W}{W}

\DeclareMathOperator{\I}{I}

\DeclareMathOperator{\cl}{cl}
\DeclareMathOperator{\HH}{H}
\DeclareMathOperator{\K}{K}
\DeclareMathOperator{\tildeK}{\widetilde{K}}
\DeclareMathOperator{\KO}{KO}
\DeclareMathOperator{\BO}{BO}
\DeclareMathOperator{\BSO}{BSO}
\DeclareMathOperator{\M}{M}
\DeclareMathOperator{\MW}{MW}
\DeclareMathOperator{\BM}{BM}
\DeclareMathOperator{\Q}{Q}
\DeclareMathOperator{\RS}{RS}
\DeclareMathOperator{\B}{B\!}

\DeclareMathOperator{\LL}{L}

\DeclareMathOperator{\BGL}{BGL}
\DeclareMathOperator{\BSL}{BSL}

\DeclareMathOperator{\colim}{colim}

\DeclareMathOperator{\Nis}{Nis}
\DeclareMathOperator{\Zar}{Zar}

\DeclareMathOperator{\rr}{r}

\DeclareMathOperator{\ev}{ev}
\DeclareMathOperator{\Vect}{Vect}

\DeclareMathOperator{\CH}{CH}
\DeclareMathOperator{\tildeCH}{\widetilde{CH}}
\DeclareMathOperator{\coker}{coker}

\DeclareMathOperator{\Top}{Top}
\DeclareMathOperator{\alg}{alg}
\DeclareMathOperator{\Sign}{Sign}
\DeclareMathOperator{\Pic}{Pic}
\DeclareMathOperator{\pr}{pr}

\theoremstyle{plain}\newtheorem{thm}{Theorem}[subsection]
\theoremstyle{definition}\newtheorem{defn}[thm]{Definition}
\theoremstyle{plain}
\theoremstyle{plain}\newtheorem{coro}[thm]{Corollary}
\theoremstyle{plain}\newtheorem{lemma}[thm]{Lemma}
\theoremstyle{plain}\newtheorem{prop}[thm]{Proposition}
\theoremstyle{plain}
\theoremstyle{plain}
\theoremstyle{plain}
\theoremstyle{remark}
\theoremstyle{plain}
\theoremstyle{plain}
\theoremstyle{plain}
\theoremstyle{remark}\newtheorem*{remark}{Remark}
\theoremstyle{remark}

\usepackage{amsmath, amssymb}
\usepackage{enumitem}

\usepackage[backend=biber, sorting=nyt]{biblatex}
\title{Algebraizability of Vector Bundles over Real Algebraic
Varieties}
\date{}
\author{Hanqi Wang}
\affil{Qiuzhen College, Tsinghua University}

\begin{document}

\maketitle{}

\pagestyle{fancy}

\fancyhead{}
\fancyhead[CE]{Hanqi Wang}
\fancyhead[CO]{Algebraizability of Vector Bundles over Real Algebraic Varieties}

\begin{abstract}
	Let $X$ be an affine smooth real algebraic variety (in the sense of Bochnak, Coste, and Roy) and let $V$ be a topological vector bundle over $X(\mbr)$. We investigate the problem of deciding whether $V$ is topologically isomorphic to an algebraic vector bundle using motivic homotopy theory. We prove that if $\dim X\leq 3$, then the algebraicity of Stiefel-Whitney classes is a necessary and sufficient condition for $V$ to be algebraizable. Next, we show that when $\dim X=4$ and $X(\mbr)$ is compact, even if the characteristic classes of $V$ are algebraic, there is still an obstruction to algebraizing $V$ related to the Pontryagin class $p_1$ and the Stiefel-Whitney class $w_4$. Then we give some applications of this result. Namely, we give an example where this obstruction is nontrivial, and we investigate the group $\K_0(X)$.
\end{abstract}

\tableofcontents

\section{Introduction}

Let $X$ be an algebraic variety over $\mbr$. Write $X(\mbr)$ for the topological space of $\mbr$-points of $X$ equipped with the Euclidean topology. Let $\Vect_n(X)$ denote the set of isomorphism classes of rank $n$ algebraic vector bundles over $X$, and let $\Vect_n^{\Top}(X(\mbr))$ denote the set of isomorphism classes of topological rank $n$ real vector bundles over $X(\mbr)$. Analytification produces a natural map 
$$\Vect_n(X)\to\Vect_n^{\Top}(X(\mbr)).$$

\begin{defn}
	A vector bundle $V$ over $X(\mbr)$ is said to be \emph{algebraizable} if it lies in the image of this map, i.e. if it is topologically isomorphic to (the $\mbr$-points of) some algebraic vector bundle.
\end{defn}

In this paper, we will study the problem of deciding when a topological vector bundle over $X(\mbr)$ is algebraizable.

\begin{remark}
	Before proceeding, we need to point out a subtlety arising from the fact that $\mbr$ is not algebraically closed. If $X=\Spec A$ is a smooth integral affine variety over $\mbr$, then there are many functions $f\in A$ such that $f$ is not a unit in $A$ but $f$ is nonzero on $X(\mbr)$ (for example, the function $1+x^2$ over $\mba^1_{\mbr}=\Spec\mbr[x]$). We will invert those functions in the structure sheaf of $X$ and we will allow them to be transition functions in vector bundles.
	The schemes obtained by such localizations will be referred to as \emph{real algebraic varieties}.
	In particular, the algebraic vector bundles that we consider in this paper are only defined over a neighbourhood of $X(\mbr)$. This will be made rigorous in \autoref{sec:2.3}.
\end{remark}

\begin{remark}
	For obvious reasons, we will assume that $X$ is irreducible and all vector bundles over $X(\mbr)$ have constant rank.
\end{remark}

This problem, and its analogues concerning complex vector bundles over real or complex varieties, have been studied for a long time. Intuitively, an algebraic vector bundle should have ``algebraic'' characteristic classes in a suitable sense. 
To be more precise, recall that for a variety $Y$ over $\mbc$, we have cycle class maps 
$$\cl:\CH^i(Y)\to\HH^{2i}(Y(\mbc);\mbz)$$
and for a variety $X$ over $\mbr$, we have maps
$$\cl:\CH^i(X)\to\HH^i(X(\mbr);\mbz/2).$$
We will say that a cohomology class is \emph{algebraic} if it lies in the image of the above maps and write $\HH^*_{\alg}(X;\mbz/2)\subseteq\HH^*(X(\mbr);\mbz/2)$ for the subgroup of algebraic classes. One can show that all cohomology classes of Grassmannians are algebraic and the algebraic characteristic classes are mapped to their topological counterparts. As a consequence, algebraic vector bundles have algebraic characteristic classes (Stiefel-Whitney classes $w_*(V)$ in the real case or Chern classes $c_*(V)$ in the complex case).

Now it is very natural to ask if the converse is true, i.e. if a vector bundle has algebraic characteristic classes, is it true that the bundle itself is algebraizable? There are many results on this problem in the case where the dimension is small and in some special cases.

For a smooth projective or smooth affine variety of dimension 3 or less over $\mbc$, the answer is yes \cite{Bnic1987OnCV} \cite{KumarMurthy82}. For an affine smooth real algebraic variety $X$ of dimension 3 or less with $X(\mbr)$ compact and for $X=\mbr\mbp^n$ when $n\leq 13$ and for $X=\mbs^n$, the answer is also yes \cite{Bochnak1989KtheoryOR} \cite{Fossum1969VectorBO} \cite{bd26133d-8ca7-32c5-8280-99dcf151d76b} \cite{algbundonprojspace}. These results are mainly obtained using topological and analytic methods or explicit constructions.
Also, we know that for 4-dimensional varieties, there exist non-algebraizable vector bundles with algebraic characteristic classes \cite{bochnak1989vector} \cite{asok2018obstructionsalgebraizingtopologicalvector}.

In this paper, we will investigate this problem for smooth affine real algebraic varieties using motivic homotopy theory following existing methods developed by Morel, Asok, Fasel, and their collaborators \cite{Mor12A1AT} \cite{Asok_2017affinerep} \cite{asok2018obstructionsalgebraizingtopologicalvector}. For a smooth affine variety $X$, it was shown that $\Vect_n(X)$ is in bijection with the set of $\mba^1$-homotopy classes $[X, \BGL_n]_{\mba^1}$ where $\BGL_n$ is the infinite Grassmannian. Hence, $\Vect_n(X)$, and the obstructions to algebraizing vector bundles, can be described using (relative) Postnikov towers.
In particular, we prove the following theorems.

\begin{thm}[Theorem\autoref{dim3}]
	Let $X$ be an affine smooth real algebraic variety of dimension 3 or less. Then a topological real vector bundle over $X$ is algebraizable if and only if its first and second Stiefel-Whitney classes are algebraic.
\end{thm}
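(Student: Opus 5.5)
Necessity is the easy direction. If $V$ is topologically isomorphic to the real points of an algebraic bundle $E$, then $w_i(V)=\cl(w_i^{\alg}(E))$, because the algebraic Stiefel--Whitney classes (pulled back from the Grassmannian, whose cohomology is entirely algebraic) map under the cycle class map to the topological ones. So $w_1$ and $w_2$ are algebraic.

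For sufficiency I would use the motivic description $\Vect_n(X)\cong[X,\BGL_n]_{\mba^1}$ together with real realization. Real realization sends $\BGL_n$ to $\BO(n)$, and I would compare the $\mba^1$-Postnikov tower of $\BGL_n$ with the Postnikov tower of $\BO(n)$. Since $\dim X\le 3$, only the stages through degree $3$ matter.

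\textbf{Step 1: reduction to the stable range.} For $n\ge 4$, every rank $n$ bundle on a space of dimension $\le 3$ splits off a trivial summand, both topologically and algebraically. The algebraic statement holds on smooth affines by $\mba^1$-connectivity of $\GL_{n}/\GL_{n-1}=\mba^n\setminus 0$, which is $(n-2)$-$\mba^1$-connected. So it suffices to treat stable classes, i.e.\ $\tildeK_0$ versus $\widetilde{\KO}$. For small rank $n\le 3$ I would run the same obstruction argument directly, or handle it via cancellation.

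\textbf{Step 2: the obstruction computation.} The relevant $\mba^1$-homotopy sheaves of $\BGL_\infty$ are:
\begin{itemize}[label={}]
\item $\pi_1=\Gm$, with real realization $\mbz/2$, detected by $w_1$;
\item $\pi_2=\K^Q_2$, detected on real points by $w_2$;
\item $\pi_3=\K^Q_3$.
\end{itemize}
Real realization maps the motivic Postnikov tower of $\BGL_\infty$ to the Postnikov tower of $\BO$, whose homotopy groups are $\mbz/2,\mbz/2,0,\mbz$. I would lift a topological classifying map $X(\mbr)\to\BO$ stage by stage.
\begin{itemize}[label={}]
\item The first stage is a line bundle with $w_1(V)$. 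This is algebraic exactly when $w_1(V)$ is algebraic, because $\Pic(X)\to\HH^1(X(\mbr);\mbz/2)$ has image $\HH^1_{\alg}$.
\item At the second stage, the choices of lift are parametrized by $\HH^2(X;\K_2^M)$, or $\K^Q_2$ cohomology. This group surjects onto $\HH^2_{\alg}(X;\mbz/2)$ via the map $\K^M_2\to\mbz/2$ (the sign / real cycle class), which is Colliot-Thélène--Scheiderer-type real cycle class surjectivity. So $w_2$ can be matched exactly when it is algebraic. One must also check that the $k$-invariant obstruction vanishes compatibly, which it does because it realizes to the topological $k$-invariant, and that one vanishes for $V$.
\item At the third stage the topological fibre group $\pi_3(\BO)=0$. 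So I need the obstruction to lifting to the $\K^Q_3$-stage to vanish, using $\HH^4=0$ for dimension reasons. I also need no further topological constraint, and there is none since $\pi_3\BO=0$. The map from the algebraic Postnikov stage to $X$ being $3$-dimensional then finishes the argument: $[X,\BGL]\to[X,P_3\BGL]$ is surjective because the higher obstructions lie in $\HH^{\ge 5}$, which vanishes in Nisnevich cohomological dimension $\le 3$.
\end{itemize}

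\textbf{Main obstacle.} The real difficulty is the comparison in Step 2. I must show that real realization intertwines the motivic $k$-invariants with the topological ones, and that the real cycle class maps on $\HH^i(X;\K^M_i)\to\HH^i(X(\mbr);\mbz/2)$ have image exactly the algebraic classes for $i=1,2$. For the latter I would use the real-étale/sign map $\K^M_i\to \I^i\to\mbz/2$ together with the Jacobson/Scheiderer identification of $\I^j$-cohomology with singular cohomology for large $j$. I would also need care with the localization to real algebraic varieties, so that $X(\mbr)$ controls everything.
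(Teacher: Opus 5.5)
\textbf{Rank 2 is a genuine gap.} Your argument for rank $\ge 3$ is essentially sound. For rank $3$ it is the paper's argument: the fibre of $(c_1,c_2)\colon\BGL_3\to K(\K^{\M}_1,1)\times K(\K^{\M}_2,2)$ is $2$-$\mba^1$-connected, so on a $3$-dimensional $X$ any pair of algebraic classes lifts, and $w_1,w_2$ classify rank $3$ bundles on a $3$-complex. The ``main obstacle'' you single out, matching motivic and topological $k$-invariants under real realization, is not needed. It suffices to know that $c_i$ realizes to $w_i$ and to use the topological classification.

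Rank $2$ you dismiss with ``run the same obstruction argument directly, or handle it via cancellation''. Neither works as stated. In rank $2$ one has $\pi_2^{\mba^1}(\BGL_2)=\K^{\MW}_2$ with a nontrivial $\Gm$-action, not $\K^{\Q}_2$. Topologically, a rank $2$ bundle is classified by $w_1$ together with its twisted Euler class $e_{w_1}(V)\in\HH^2(X(\mbr);\mbz(L))$, and $w_2$ is only the mod $2$ reduction of $e_{w_1}(V)$. An algebraic bundle with the right $w_1,w_2$ therefore matches the Euler class only modulo $2$; think of oriented plane bundles on a $2$-sphere with Euler numbers $n$ and $n+2$.

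Cancellation cannot repair this. Even if you could split a trivial summand off an algebraic rank $3$ bundle, which is in general obstructed by an Euler class in $\tildeCH^3(X,\det)$, you would only get an algebraic bundle stably isomorphic to $V$. On a $3$-dimensional base the stable class is determined by $w_1,w_2$, so it does not determine $e_{w_1}$.

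\textbf{What is missing.} You need the statement that every integral lift of an algebraic mod $2$ class is algebraic. Precisely: if $L$ is an algebraic line bundle with $w_1(L)=w_1(V)$ and $w_2(V)\in\HH^2_{\alg}(X;\mbz/2)$, then $e_{w_1}(V)$ lies in the image of $\gamma_2(L)\colon\HH^2(X,\I^2(L))\to\HH^2(X(\mbr);\mbz(L))$. This is Lerbet's result, Theorem~\ref{thm:3.1.1.3}.

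The paper proves it by comparing the long exact sequences of $0\to\I^3(L)\to\I^2(L)\to\K^{\M}_2/2\to 0$ and of $0\to\mbz(L)\xrightarrow{2}\mbz(L)\to\mbz/2\to 0$. The diagram chase needs $\gamma^2_3$ surjective and $\gamma_3$ injective. Both follow from Jacobson's isomorphism $\HH^*(X,\I^4(L))\cong\HH^*(X(\mbr);\mbz(L))$ together with the vanishing $\HH^3(X,\bar{\K})=0$ for $\bar{\K}=\ker(\I^3(L)\xrightarrow{2}\I^4(L))$ (Colliot-Th\'{e}l\`{e}ne--Scheiderer, Lerbet).

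With this in hand, you finish as follows. Lift $(c_1,e)$ to a map to $\BGL_2^{(2)}=K^{\Gm}(\K^{\MW}_2,2)$, using that $\tildeCH^2(X,L)\to\HH^2(X,\I^2(L))$ is onto. Then lift further to $\BGL_2$, since the remaining obstructions lie in $\HH^{\ge 4}(X,-)=0$.
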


\begin{thm}[Theorem\autoref{dim4}]
	Let $X$ be an affine smooth real algebraic variety of dimension 4 and assume that $X(\mbr)$ is compact. Let $M_1,\cdots,M_p$ be the orientable connected components of $X(\mbr)$, and let $N_1,\cdots,N_q$ be the non-orientable connected components. Then there exist group homomorphisms $k_i:\CH^2(X)\to\HH^4(M_i;\mbz)$ for $1\leq i\leq p$ and maps $k_j':\CH^2(X)\to\HH^4(N_j;\mbz/2)$ for $1\leq j\leq q$ such that the following holds:

	A topological real vector bundle $V$ over $X(\mbr)$ is algebraizable if and only if there exists $c_1\in\CH^1(X),c_2\in\CH^2(X)$ and a class $\alpha\in\HH^3_{\alg}(X;\mbz/2)$ such that
	\begin{enumerate}
		\item $\cl(c_1)=w_1(V),\cl(c_2)=w_2(V)$,
		\item for each $1\leq i\leq p$, $$p_1(V|_{M_i})=k_i(c_2+c_1^2),$$
		\item for each $1\leq j\leq q$, $$(w_4(V)+w_1(V)w_3(V))|_{N_j}=\alpha|_{N_j}\cup w_1(N_j)+k'_j(c_2+c_1^2).$$
	\end{enumerate}
\end{thm}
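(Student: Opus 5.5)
We plan to run the obstruction-theoretic comparison between the motivic Postnikov tower of $\BGL_n$ and the classical Postnikov tower of $\BO_n$, via the real realization functor. Affine representability gives $\Vect_n(X)\cong[X,\BGL_n]_{\mba^1}$. Since the real Betti realization of $\BGL_n$ is $\BO_n$, a topological bundle $V$ is algebraizable exactly when its classifying map $X(\mbr)\to\BO_n$ lifts along realization. We may stabilize: $\dim X=4$, so by the stable range (Suslin cancellation together with the corresponding topological statement), it suffices to treat rank $n\ge 5$, i.e.\ $\BGL_\infty$ versus $\BO$. We then compare stages. The motivic Postnikov tower of $\BGL$ has layers given by $\mba^1$-homotopy sheaves $\mathbb{G}_m$, $\K^{MW}_2$ (or $\K^Q_2$ in the stable range), then $\pi_2^{\mba^1}$, $\pi_3^{\mba^1}$, and so on. The topological tower of $\BO$ has layers $\mbz/2,\mbz/2,0,\mbz$. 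On a smooth affine real variety, the cohomology of these sheaves is computed by real-cycle-class type comparisons. Specifically, $\HH^i_{\Nis}(X,\I^j)\to\HH^i(X(\mbr);\mbz)$ is an isomorphism for $j$ large (Jacobson), and $\HH^i(X,\K^M_i/2)=\CH^i/2$ maps to $\HH^i(X(\mbr);\mbz/2)$ via $\cl$.

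The first two stages give the data $c_1,c_2$ and condition (1). A map to the second motivic stage is a pair $(c_1,c_2)$, up to the interaction of the $k$-invariant; the realization on this stage records $w_1,w_2$. Lifting to the next stage is automatic, since the topological layer is $0$ while the algebraic obstruction lands in $\HH^4(X,\pi_2^{\mba^1})$, which we expect to realize to something vanishing or absorbable. The real content is the $\pi_3$ layer. Topologically, the lift of $(w_1,w_2)$ to $\BO\langle$stage $4\rangle$ is determined up to $\HH^4(X(\mbr);\mbz^{w_1})$, and the class in this group is measured by $p_1$ (or by its twisted version). Algebraically, the choices for the lift form a torsor under $\HH^4_{\Nis}(X,\K^Q_4\text{-like sheaf})$. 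On a 4-dimensional affine real variety this group realizes to $\HH^4(X(\mbr);\mbz^{w_1})$ only through its image. Concretely, that image consists of the classes coming from $\I^4$-cohomology, together with a $\mbz/2$-contribution coming from $\CH$-type terms.

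The maps $k_i,k'_j$ are then defined as follows. Fix a canonical algebraic lift $\ell(c)$ depending on $c=c_2+c_1^2\in\CH^2(X)$; note that this combination is the algebraic $\K$-theoretic second Chern-type class appearing in the $k$-invariant. Let $k_i$, $k'_j$ be the realizations of the resulting $p_1$-type classes of $\ell(c)$ on each component. On an orientable compact component $M_i$ we have $\HH^4(M_i;\mbz)=\mbz$. On a non-orientable component $N_j$, the twisted group is $\mbz$ but the relevant untwisted invariant is mod 2, namely $\HH^4(N_j;\mbz/2)$; there the Wu-type formula identifies the topological obstruction with $w_4+w_1w_3$. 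The algebraic indeterminacy is then as follows. On $M_i$, the image of $\HH^4(X,\I^4)$ is, we expect, zero: for compact orientable components the real cycle class of a point is even, or the group vanishes after realization. On $N_j$, it is $\{\alpha|_{N_j}\cup w_1(N_j)\}$ with $\alpha$ algebraic, arising from the Bockstein/twisting $\HH^3(\,;\mbz/2)\xrightarrow{\cup w_1}\HH^4$. Condition (2) and condition (3) follow once both pieces are in place. Showing that $k_i$ is a homomorphism uses additivity of the $k$-invariant on H-spaces, since $\BGL$ is an H-space.

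The main obstacle is the precise computation of the $\pi_3$-layer on $X$ and of its realization. This requires identifying $\HH^4_{\Nis}(X,\pi_3^{\mba^1}\BGL)$ and its map to $\HH^4(X(\mbr);\mbz^{w_1})$, using the exact sequences $0\to\I^{5}\to\I^4\to\K^M_4/2\to0$ together with compactness, which makes $\HH^4(X(\mbr))$ equal to the top cohomology. We would attack this by first treating the $\I^j$-part via Jacobson's theorem, and then handling the mod-2 part via $\CH^4(X)/2\to\HH^4(X(\mbr);\mbz/2)$. In the latter, only the image of $\cup w_1$ of algebraic classes survives.
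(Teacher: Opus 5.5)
Your outline has the shape of the answer, but the step you defer as the ``main obstacle'' is the whole content of the theorem. You state its outcome rather than derive it, and where you guess at the mechanism the guesses are off.

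\textbf{The indeterminacy computation.} After reducing to oriented stable bundles, Dold--Whitney makes $(w_2,w_4,p_1)$ a complete invariant. The obstruction to realizing prescribed classes then lies in $\HH^4(X,\pi_3^{\mba^1}(F))$, where $F$ is the fiber of $(c_2,c_4,p_1)\colon\BSL_\infty\to K(\K^{\M}_2,2)\times K(\K^{\M}_4,4)\times K(\I^4,4)$. This group is the cokernel of a connecting map $\partial\colon\CH^3(X)=\HH^3(X,\K^{\Q}_3)\to\HH^4(X(\mbr);\mbz/2)\oplus\HH^4(X(\mbr);\mbz)$.

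So the indeterminacy comes from $\CH^3(X)$, which your sketch never mentions. The group $\HH^4(X,\K^{\Q}_4)=\CH^4(X)$ that you point to contributes nothing: it maps by multiplication by $6$ into the $2$-torsion group $\CH^4(X)$, and by zero into $\HH^4(X,\I^4)$ since $\HHom(\K^{\M}_4,\I^4)=0$. Two facts then have to be proved.

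(i) The $\mbz$-component of $\partial$ vanishes. On the $M_i$ this holds because $\CH^3(X)=\CH_1(X)$ is $2$-torsion when $X(\mbr)$ is compact, while $\HH^4(M_i;\mbz)=\mbz$. The $2$-torsion comes from smooth representatives of curve classes, a smooth surface containing the curve, and $2$-torsion of $\CH^1$ of a compact surface. On the $N_j$ it holds because $p_1=w_2^2$ there. Your stated reason, that ``the image of $\HH^4(X,\I^4)$ is zero'', is incorrect: $\HH^4(X,\I^4)\to\HH^4(X(\mbr);\mbz)$ is an isomorphism.

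(ii) The $\mbz/2$-component of $\partial$ is $(-)\cup w_1(X)$. This is where the real work is:
\begin{enumerate}
\item identify $\pi_3^{\mba^1}$ of the fiber of $(c_2,c_4)$ as an extension of $\K^{\Q}_3$ by $\K^{\M}_4/6$ (the cokernel of the Suslin--Hurewicz map);
\item push forward from a smooth curve $C\subseteq X$ with the twist $N=i^*\omega_X^\vee$;
\item observe that the possible $\Gm$-actions on the contracted sheaf $\Gm/6\oplus\mbz$ are parametrized by some $a\in\mbz/6$, and that the boundary of $1$ is $c_1(N)$ or $0$ according to the parity of $a$;
\item force $a$ to be odd using a test case such as $\mbr\mbp^4$, where every topological bundle is algebraizable.
\end{enumerate}
Without an argument of this kind, condition 3 is simply read off from the statement.

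\textbf{The reductions.} Suslin cancellation and Serre splitting only relate bundles of rank $\geq 5$ to stable classes. For ranks $2$, $3$, $4$ you need that algebraizability is equivalent to stable algebraizability. This is exactly where compactness of $X(\mbr)$ enters: by the Bochnak--Coste--Roy result, a topological bundle is algebraizable iff its class in $\KO^0(X(\mbr))$ comes from $\K_0(\mcr(X))$.

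The same result justifies replacing $V$ by the oriented bundle $V\oplus\det V$. That replacement, not a $k$-invariant or a Wu formula, is where $c_2+c_1^2=c_2(V\oplus\det V)$ and $w_4+w_1w_3=w_4(V\oplus\det V)$ come from.

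Finally, fixing a lift $\ell(c)$ to define $k_i,k'_j$ is harmless only once (i) and (ii) are known. You also still need to check that the redundant conditions hold automatically: $p_1$ on the $N_j$, and $w_4$ on the $M_i$, which is determined by $w_2$ and $p_1$ through the Pontryagin-square relation.
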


We will also give some applications.

\begin{thm}[Theorem\autoref{mainexam}]
	Let $Y$ be a smooth affine real algebraic variety of dimension 3 such that $Y(\mbr)$ is compact and orientable. Let $X=Y\times\mbs^1$, where $\mbs^1$ is the standard circle. Then a topological real vector bundle $V$ over $X(\mbr)$ is algebraizable if and only if $w_*(V)\in\HH^*_{\alg}(X;\mbz/2)$ and $p_1(V)=0$.
\end{thm}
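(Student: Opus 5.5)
We derive this statement as a corollary of Theorem~\autoref{dim4}, applied to $X=Y\times\mbs^1$. This $X$ is affine, smooth and of dimension $4$, and $X(\mbr)=Y(\mbr)\times S^1$ is compact. Each connected component of $X(\mbr)$ has the form $M\times S^1$ with $M$ a component of $Y(\mbr)$, so it is orientable. Hence in Theorem~\autoref{dim4} there are no non-orientable components ($q=0$), and condition~(3) disappears. Algebraizability is then equivalent to two requirements: $w_1(V),w_2(V)$ admit algebraic lifts $c_1,c_2$, and $p_1(V|_{M_i})=k_i(c_2+c_1^2)$ for each component.

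The first step is to show that the maps $k_i:\CH^2(X)\to\HH^4(M_i;\mbz)$ vanish in this situation. Each $k_i$ is a group homomorphism. It should arise as an obstruction class, essentially the integral ($\W$- or $\I$-cohomology) lift of $w_4$-type data paired against the real realization. The plan is as follows.

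\begin{enumerate}
\item Compute $\HH^4(M\times S^1;\mbz)\cong \HH^3(M;\mbz)\otimes\HH^1(S^1;\mbz)\cong\mbz$ by the K\"unneth formula, using $\HH^4(M)=0$. This group is torsion-free.
\item Show that $k_i$ factors through a $2$-torsion group, or through a reduction modulo $2$. Any homomorphism from a $2$-torsion group to $\mbz$ is zero.
\end{enumerate}

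Concretely, I expect $k_i$ to be defined via the real cycle class $\CH^2(X)\to\HH^2(X(\mbr);\mbz/2)$ followed by operations such as $\beta\Sq^1$ or cup products, all of which land in torsion. This would give $k_i=0$. Identifying exactly which construction the paper uses for $k_i$, and checking that it lands in torsion, is the main obstacle. If instead $k_i(c)$ is something like $p_1$ of an algebraic bundle $E$ with $w_2(E)=\cl(c)$, I would argue differently: the real points of algebraic bundles on $Y\times\mbs^1$ have $p_1$ coming from $\mbs^1$-invariant data, and $p_1$ restricted to $M\times\{pt\}$ lies in $\HH^4(M)=0$.

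With $k_i=0$, condition~(2) becomes $p_1(V)=0$ on every component. Condition~(1) is equivalent to $w_1(V),w_2(V)\in\HH^*_{\alg}(X;\mbz/2)$.

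For the ``only if'' direction, the necessity of $w_*(V)$ being algebraic holds for all algebraic bundles, as discussed in the introduction. For the ``if'' direction, the full hypothesis $w_*(V)$ algebraic includes in particular $w_1$ and $w_2$ algebraic, which is all that is needed. This proves the equivalence.
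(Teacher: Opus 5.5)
\textbf{There is a genuine gap: you never prove $k_i=0$.}

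Your reduction to Theorem~\ref{dim4} is correct. Every component of $X(\mbr)=Y(\mbr)\times S^1$ is orientable, so condition (3) disappears and everything hinges on $k_i=0$. That is exactly the step you leave open, and neither mechanism you propose works.

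\emph{First mechanism.} You suggest $k_i$ factors through $\bar{\gamma}$ followed by torsion-valued operations. This is false in general. On the standard sphere $\mbs^4$ every topological bundle is algebraizable \cite{Fossum1969VectorBO}, including bundles with $p_1\neq 0$. Yet $\HH^2(S^4;\mbz/2)=0$, so $k$ is not determined by mod $2$ data.

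\emph{Second mechanism.} This one is vacuous. Restriction to $M\times\{pt\}$ kills all of $\HH^4(M\times S^1;\mbz)\cong\HH^3(M;\mbz)\otimes\HH^1(S^1;\mbz)$, so it says nothing about $p_1$. Moreover, $M\times S^1$ does carry topological bundles with $p_1\neq 0$: pull back the quaternionic Hopf bundle along a degree-one collapse map $M\times S^1\to S^4$. Your phrase ``$\mbs^1$-invariant data'' is precisely what would need proof.

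\textbf{The missing idea: the domain of $k_i$ is already $2$-torsion.}

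\begin{enumerate}
\item The paper computes $\CH^2(Y\times\mbs^1)\cong\CH^1(Y)/p_*\CH^1(Y_{\mbc})\oplus\CH^2(Y)$ (Lemma~\ref{lem:4.2.1}). This uses the projective bundle formula on $Y\times\mbp^1_{\mbr}$, then removes the closed point at infinity of the conic (residue field $\mbc$) and the real points outside $\mbs^1$.
\item $\CH^1(Y)$ is $2$-torsion because it injects into $\HH^1(Y(\mbr);\mbz/2)$ (Theorem~\ref{thm:2.3.0.2}).
\item $\CH^2(Y)=\CH_1(Y)$ is $2$-torsion by Lemma~\ref{lem:3.2.1.2}.
\item Both of these facts use compactness of $Y(\mbr)$, which your argument never exploits.
\item Since $k_i$ is a homomorphism into the torsion-free group $\HH^4(M_i;\mbz)\cong\mbz$, it vanishes.
\end{enumerate}

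The last step is your own closing observation, but applied to $\CH^2(X)$ itself rather than to a hypothetical intermediate group. With this step supplied, the rest of your argument goes through as written.
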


\begin{thm}[Proposition\autoref{prop:4.2.3} and Theorem\autoref{thm:4.2.5}]
	Let $X$ be an affine smooth real algebraic variety of dimension 4 and assume that $X(\mbr)$ is compact. Consider the abelian group
	$$G=1\oplus\HH^1_{\alg}(X;\mbz/2)\oplus\CH^2(X)\subseteq\CH^*(X)$$
	with group operation given by multiplication, and let
	$H=w_1(X)\cup\HH^3_{\alg}(X;\mbz/2)$.
	Then there is a short exact sequence
	$$0\to H\to\tildeK_0(X)\to G\to 0.$$
	Moreover, if $X(\mbr)$ is orientable, then we may write
	$\CH^2(X)=\mbz^t\oplus(\mbz/2)^s$
	for some integers $s,t$. Set
	$$b_1=\dim_{\mbz/2}\HH^1_{\alg}(X;\mbz/2), \delta=\dim_{\mbz/2}\{v\in\HH^1_{\alg}(X;\mbz/2)| v^2=0\}.$$
	Then there is an isomorphism
	$$\tildeK_0(X)\cong\mbz^t\oplus (\mbz/2)^{s-b_1+2\delta}\oplus(\mbz/4)^{b_1-\delta}.$$
\end{thm}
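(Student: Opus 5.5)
The last statement concerns $\tildeK_0(X)$ for a compact $4$-dimensional $X(\mbr)$. My plan is to build the sequence from the rank-stable part of Theorem\autoref{dim4}, and then compute the extension in the orientable case.

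\textbf{Setting up the sequence.} Since $X$ is affine of dimension $4$, every class in $\tildeK_0(X)$ is $[E]-\operatorname{rk}E$ for a bundle $E$ of rank $\geq 5$, so we work in the stable range. Stably, $\BGL_n$ for $n$ large has a Postnikov tower whose first stages give the total Chern class restricted to degrees $\leq 2$. This yields a map
\[
c:\tildeK_0(X)\to G,\qquad [E]\mapsto 1+c_1(E)+c_2(E),
\]
computed in the truncated ring $\CH^{\leq 2}(X)$. The Whitney sum formula makes $c$ a homomorphism into the multiplicative group $G$.

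For the identification of $\CH^1(X)$ with $\HH^1_{\alg}(X;\mbz/2)$, I use that for real algebraic varieties (with the localized structure sheaf) $\cl:\CH^1(X)\to\HH^1$ is injective. Surjectivity of $c$ comes from realizing line bundles together with rank-$2$ bundles, or the Serre construction, giving arbitrary $c_2$.

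The kernel is governed by the next Postnikov layer. In the stable range this is an $\mathbf{K}^{M}_3$- or $\mathbf{K}^{MW}$-type sheaf, and its cohomology $\HH^4(X;\cdot)$ must be identified. On a real variety, using real-étale comparison (as in the proof of Theorem\autoref{dim4}), the contribution from $\HH^4$ of the $\eta$-torsion part reduces to $\HH^4(X(\mbr);\mbz/2)$. The image of the $k$-invariant is exactly the indeterminacy appearing in condition (3) of Theorem\autoref{dim4}. Matching this against that theorem, the kernel of $c$ should be $w_1(X)\cup\HH^3_{\alg}(X;\mbz/2)$: these are precisely the classes $\alpha\cup w_1$ that distinguish bundles with the same $c_1$ and $c_2$.

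\textbf{The orientable case.} Here $w_1(X)=0$, so $H=0$ and $\tildeK_0(X)\cong G$. It then suffices to compute the multiplicative group $G$. Write $u=c_1\in\HH^1_{\alg}$ and $v\in\CH^2$. The group law is
\[
(1+u+v)(1+u'+v')=1+(u+u')+(v+v'+uu').
\]
So $G$ is a central extension
\[
0\to\CH^2(X)\to G\to\HH^1_{\alg}(X;\mbz/2)\to 0,
\]
with cocycle $(u,u')\mapsto uu'\in\CH^2$. Since $\CH^1$ is $2$-torsion, $uu'=u'u$, so $G$ is abelian. In $G$ we have $(1+u)^2=1+u^2$, so an element of order $2$ in the quotient lifts to an element of order $4$ exactly when $u^2\neq 0$; here $u^2$ lies in the $2$-torsion of $\CH^2$, namely $u^2=\cl$-preimage of $w_1^2$.

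\textbf{Main obstacle.} Choose a basis of $\HH^1_{\alg}$ adapted to the quadratic map $u\mapsto u^2$. This map is additive mod the cross terms $2uu'=0$, so it is linear from $\HH^1_{\alg}$ to $\CH^2[2]$, with kernel of dimension $\delta$. The image has dimension $b_1-\delta$ and embeds into the $(\mbz/2)^s$ part, possibly also hitting $2$-torsion of the $\mbz^t$ part (which is none).

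Choosing a complement, $G$ is then $\mbz^t$, plus $(\mbz/4)^{b_1-\delta}$ from the non-isotropic basis elements each absorbing one $\mbz/2$, plus the remaining $(\mbz/2)^{s-(b_1-\delta)}$, plus $(\mbz/2)^{\delta}$ from the isotropic directions. This gives $(\mbz/2)^{s-b_1+2\delta}$, as claimed.

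The delicate step is the kernel identification. It requires the precise $k$-invariant and real-étale computation, and I would follow the proof of Theorem\autoref{dim4}, with stabilization, to show that the indeterminacy group is exactly $w_1\cup\HH^3_{\alg}$.
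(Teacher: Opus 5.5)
You have a genuine gap: you never prove that $\CH^2(X)=\mbz^t\oplus(\mbz/2)^s$ in the orientable case, and the statement asserts exactly this. Your count of the multiplicative group $G$ is otherwise right and essentially the paper's: split off $\mbz^t$, then compare the order $2^{b_1+s}$ with the $2$-rank $s+\delta$ of the finite part. But that count takes the decomposition as given. A priori $\CH^2(X)$ need not be finitely generated, nor need its torsion be killed by $2$, and without this the formula has no content.

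The missing ingredient is the injectivity of
\[
(\bar{\gamma},k):\CH^2(X)\to\HH^2(X(\mbr);\mbz/2)\times\HH^4(X(\mbr);\mbz).
\]
Take $c$ in the kernel.
\begin{enumerate}
\item Surjectivity of $\tildeK_0(X)\to G$ gives an algebraic class $E$ with $c_1(E)=0$ and $c_2(E)=c$.
\item Theorem\autoref{dim4} gives $p_1(E)=k(c)=0$, and $w_2(E)=\bar{\gamma}(c)=0$.
\item On an orientable fourfold, $p_1\equiv\mathfrak{P}(w_2)+j(w_4)\bmod 4$ with $j$ injective forces $w_4(E)=0$.
\item So $E$ is topologically stably trivial by Theorem\autoref{doldwhitney}, hence zero in $\tildeK_0(X)$ by Theorem\autoref{thm:2.3.0.2}, and $c=0$.
\end{enumerate}
Since $\HH^4(X(\mbr);\mbz)\cong\mbz^p$ is free, the decomposition follows. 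The same fact justifies a step you take silently: you identify ``$u^2=0$ in $\CH^2(X)$'', which is what the group law of $G$ sees, with ``$v^2=0$ in $\HH^2(X(\mbr);\mbz/2)$'', which is how $\delta$ is defined. This holds because $k$ kills torsion, so $\bar{\gamma}$ is injective on the torsion of $\CH^2(X)$, where $u^2$ lives.

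The kernel identification you explicitly leave open, and your sketch points at the wrong groups.
\begin{itemize}
\item Above $(c_1,c_2)$, the fiber of $\BGL_\infty\to K(\K^{\M}_1,1)\times K(\K^{\M}_2,2)$ has first homotopy sheaf $\K^{\Q}_3$, and $\HH^4(X,\K^{\Q}_3)=0$.
\item That vanishing is the paper's proof that $\tildeK_0(X)\to G$ is onto. It is cleaner than your appeal to the Serre construction, since it is not clear that every class in $\CH^2(X)$ is $c_2$ of a rank-$2$ bundle.
\item The groups measuring the difference between two lifts are $\HH^n(X,\pi_n)$, not the obstruction groups $\HH^{n+1}(X,\pi_n)$. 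In any case $\HH^4$ of a $\K^{\M}_3$-type sheaf vanishes.
\end{itemize}
You need not redo the $k$-invariant analysis; use Theorem\autoref{dim4} as a black box. A class in the kernel has trivial determinant ($c_1=0$), $w_2=0$ and $p_1=0$. By Theorem\autoref{thm:2.3.0.2} it is determined by its image in $\KO^0(X(\mbr))$. By Theorem\autoref{doldwhitney} that image is determined by $w_4$, which is additive on such classes. By condition 3 of Theorem\autoref{dim4} with $c_1=c_2=0$, $w_4$ ranges exactly over $w_1(X)\cup\HH^3_{\alg}(X;\mbz/2)$. This gives $H\cong\ker(\tildeK_0(X)\to G)$, which is the paper's argument.
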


This paper is organized as follows.

In \autoref{chap:2}, we recall several basic facts that will be used later. In \autoref{sec:2.1}, we recall some results on the unstable motivic homotopy category. In \autoref{sec:2.2}, we recall the constructions and properties of various real realization maps between cohomology groups. In \autoref{sec:2.3}, we recall basic knowledge on real algebra and analytic methods concerning vector bundles over real algebraic varieties. In particular, we will justify our rather unusual definition of algebraic vector bundles mentioned in the previous remark.

In \autoref{sec:3.1}, we study vector bundles over threefolds, treating the rank 2 case and the rank 3 or more case separately. In \autoref{sec:3.2}, we study vector bundles over compact fourfolds, and we look at some applications.

\section{Preliminaries}\label{chap:2}

\subsection{Motivic Homotopy Theory}\label{sec:2.1}

We first recall motivic homotopy theory, focusing on Morel's work on strongly $\mba^1$-invariant sheaves and relevant computational techniques.

\subsubsection{The Motivic Homotopy Category}

We first recall the construction of the motivic homotopy category from \cite{MV99}.
Fix a Noetherian base scheme $S$ (we are mainly interested in the case where $S=\Spec\mbr$). Let $\Sm_S$ be the category of smooth schemes over $S$.

\begin{defn}
	A map $f:X\to Y$ between schemes is called a \emph{Nisnevich cover} if $f$ is an \'{e}tale cover and for any $y\in Y$, there exists an $x\in X$ such that $f(x)=y$ and $\kappa(x)=\kappa(y)$. Here $\kappa(-)$ denotes the residue field of a point. The Grothendieck topology on $\Sm_S$ generated by Nisnevich covers is called the \emph{Nisnevich topology}.
\end{defn}

Let $\Shv_{\Nis}(\Sm_S,\Spc)$ denote the $\infty$-category of Nisnevich sheaves over $\Sm_S$ with values in $\Spc$, the $\infty$-category of spaces.

\begin{defn}
	A sheaf $\mcf\in\Shv_{\Nis}(\Sm_S,\Spc)$ is \emph{$\mba^1$-invariant} if for all $X\in\Sm_S$ the natural map $\mcf(X)\to\mcf(X\times\mba^1)$ is an equivalence.
\end{defn}

\begin{defn}
	The \emph{$\mba^1$-localization functor} is the localization functor
	$$\LL_{\mba^1}:\Shv_{\Nis}(\Sm_S,\Spc)\to\Shv_{\Nis}(\Sm_S,\Spc)$$
	with respect to the family of morphisms $\{X\times\mba^1\to X\}_{X\in\Sm_S}$.
\end{defn}

\begin{defn}
	The category of \emph{motivic spaces} over $S$, denoted $\Spc(S)$, is the image of the functor $\LL_{\mba^1}$, i.e. the category consisting of $\mba^1$-invariant sheaves.
\end{defn}

For a scheme $X\in\Sm_S$, we use $X$ to denote the motivic space given by the $\mba^1$-localization of its Yoneda sheaf, i.e. $\LL_{\mba^1}\HHom_S(-,X)\in\Spc(S)$.

Observe that $*=S$ is the final object in $\Spc(S)$. As usual, $\Spc_*(S)=\Spc(S)_{*/}$ is the category of \emph{pointed motivic spaces}, and $\hSpc(S),\hSpc_*(S)$ are the corresponding homotopy categories. We will also use the notation $[-,-]_{\mba^1}$ (resp. $[-,-]_{\mba^1,*}$) to denote the set of homotopy classes of maps between unpointed (resp. pointed) motivic spaces.

\subsubsection{$\mba^1$-invariant Sheaves}

From now on, we focus on the special case $S=\Spec k$.
All sheaf cohomology groups are taken under the Nisnevich topology unless otherwise stated. We will recall Morel's work on $\mba^1$-invariant sheaves \cite{Mor12A1AT}.

We say that a Noetherian scheme $X$ over $k$ is \emph{essentially smooth} if $X$ is a cofiltered limit of smooth schemes over $k$ with \'{e}tale affine transition map. For a Nisnevich sheaf $\mcf$ over $\Sm_k$, one can extend it to the category of essentially smooth schemes as follows: for $X=\lim_\alpha X_\alpha$, set $\mcf(X)=\colim_\alpha \mcf(X_\alpha)$. This is indeed well-defined by results from \cite{SGA4Part3}.

For an essentially smooth scheme $X$ over $k$, let $X^{(i)}$ denote the set of points in $X$ of codimension $i$.

\begin{defn}
	Let $\mcf$ be a sheaf of sets over $\Sm_k$. $\mcf$ is said to be \emph{unramified} if
	\begin{enumerate}
		\item for any $X\in\Sm_k$ and any dense open subset $U\subseteq X$, the restriction map $\mcf(X)\to\mcf(U)$ is injective, and
		\item for any irreducible $X\in\Sm_k$ with function field $F=k(X)$, the injective map $$\mcf(X)\to\bigcap_{x\in X^{(1)}}\mcf(\mco_{X,x})$$ is bijective.
	\end{enumerate}
\end{defn}

Morel showed that to describe an unramified sheaf, it suffices to describe its values over finitely generated fields over $k$, its values over discrete valuation rings in such fields, and certain residue (specialization) maps with certain compatibility.

\begin{defn}
	\begin{enumerate}
		\item Let $\mcg$ be a sheaf of groups. It is \emph{strongly $\mba^1$-invariant} if for any $X\in\Sm_k$, the map $$\HH^i(X,\mcg)\to\HH^i(X\times\mba^1,\mcg)$$ induced by the projection is an isomorphism for $i=0,1$.
		\item Let $M$ be a sheaf of abelian groups. It is \emph{strictly $\mba^1$-invariant} if for any $X\in\Sm_k$, the map $$\HH^i(X,M)\to\HH^i(X\times\mba^1,M)$$ induced by the projection is an isomorphism for all $i\in\mbn$.
	\end{enumerate}
\end{defn}

Morel gave a list of axioms that characterizes strong $\mba^1$-invariance of sheaves of groups.

\begin{defn}
	Let $\mcg$ be a sheaf of groups. Its \emph{contraction}, denoted $\mcg_{-1}$, is the sheaf of groups given by
	$$X\mapsto \ker(\mcg(X\times\Gm)\xrightarrow{\ev_1}\mcg(X)).$$
\end{defn}

\begin{remark}
	Intuitively, $\mcg_{-1}$ is the $\Gm$-loop space of $\mcg$.
\end{remark}

Here are some important properties of the above definitions.

\begin{thm}
	If $M$ is a strongly $\mba^1$-invariant sheaf of abelian groups, it is strictly $\mba^1$-invariant.
\end{thm}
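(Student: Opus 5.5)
The plan is to follow Morel's route: first identify strictly $\mba^1$-invariant sheaves with sheaves admitting a Rost–Schmid-type complex, and then use that complex to compute the cohomology of $X\times\mba^1$. Let $M$ be strongly $\mba^1$-invariant and abelian. I would work over a perfect field $k$, the setting of \cite{Mor12A1AT}; for us $k=\mbr$.

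\textbf{Step 1: unramifiedness and contractions.} Show that $M$ is unramified, following Morel's treatment of strongly $\mba^1$-invariant sheaves of groups. This rests on the $\HH^1$-invariance together with Gabber's presentation lemma. Concretely, given a local henselian essentially smooth $X$ and a closed subset of codimension $\geq 1$, one locally realizes the situation as a relative curve over $\mba^1$. Comparing with $\mba^1$-invariance then yields injectivity on dense opens and the purity condition. Along the way, show that the contraction $M_{-1}$ is again strongly $\mba^1$-invariant and abelian. Establish also, for each $x\in X^{(1)}$, the identification
\[
\HH^1_x(X,M)\cong M_{-1}(\kappa(x)),
\]
twisted by the normal line. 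This comes from Nisnevich excision plus homotopy purity applied to $\mba^1$ and $\mbp^1$-type neighbourhoods.

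\textbf{Step 2: the Gersten / Rost–Schmid complex.} For irreducible smooth $X$, build the complex
\[
C^*(X;M):\ M(k(X))\to\bigoplus_{x\in X^{(1)}} M_{-1}(\kappa(x))\to\bigoplus_{x\in X^{(2)}} M_{-2}(\kappa(x))\to\cdots,
\]
with twists by $\Lambda$ of the normal bundle. The differentials come from the coniveau spectral sequence together with the identifications of Step 1 in higher codimension, which are obtained inductively. The key claim is that the Nisnevich sheafified version is a flasque resolution of $M$; this is the Gersten conjecture for $M$. Exactness at a local ring again uses Gabber's lemma: reduce to showing that the boundary pieces vanish on henselian local schemes, where one can shrink to a relative curve situation in which $\HH^1$-invariance applies. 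Consequently
\[
\HH^i(X,M)=\HH^i(C^*(X;M)).
\]

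\textbf{Step 3: homotopy invariance of the complex.} Compare $C^*(X\times\mba^1;M)$ with $C^*(X;M)$ via the projection. Filter $X\times\mba^1$ by the points of $X$. For a field $F$, the complex $C^*(\mba^1_F;M)$ has cohomology $M(F)$ in degree $0$ and $0$ in degree $1$; this is exactly the $\HH^0$ and $\HH^1$ invariance hypotheses over fields. A spectral sequence argument, or induction on $\dim X$ using localization sequences for cohomology with supports, then gives that
\[
\HH^i(X,M)\to\HH^i(X\times\mba^1,M)
\]
is an isomorphism for all $i$.

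The main obstacle is Step 2, namely proving the Gersten exactness and the purity identifications using only strong (degree $\leq1$) invariance. This is where Gabber's presentation lemma and a careful analysis of $\mba^1$ and $\mbp^1$ over henselian bases are unavoidable. Since the paper is recalling this, in the write-up I would cite \cite{Mor12A1AT} for Steps 1–2 and only indicate Step 3.
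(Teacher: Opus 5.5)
Your three-step outline matches the structure the paper recalls from Morel in \S\ref{subsec:2.1.4}: build the complex, prove its $\mba^1$-invariance, prove it is an acyclic resolution. However, Step~2 as you describe it is circular, and the logical order of your Steps~2 and~3 has to be reversed.

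To write the $E_1$-row of the coniveau spectral sequence as a complex with terms $M_{-n}(\kappa(x))$, and to have it compute $\HH^*(X,M)$, you need two facts for $x\in X^{(n)}$ with $n\geq 2$. First, $\HH^n_x(X,M)\cong M_{-n}(\kappa(x))$ (twisted). Second, $\HH^i_x(X,M)=0$ for $i\neq n$. Homotopy purity gives these only if cohomology with supports can be transported along the $\mba^1$-equivalence $X/(X-Z)\simeq\mathrm{Th}(N_{Z/X})$. That holds only if $K(M,i)$ is $\mba^1$-local, i.e.\ if $\HH^j(-,M)$ is $\mba^1$-invariant for all $j\leq i$. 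For $i=1$ this is exactly strong invariance, so your codimension-one identification in Step~1 is fine. For $i\geq 2$ it is the theorem itself. Saying the identifications are ``obtained inductively'' does not break the circle, because identifying the degree-$n$ terms already requires degree-$n$ invariance.

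Your local exactness argument has the same defect. After Gabber's lemma moves a degree-$n$ class ($n\geq 2$) supported on $Z$ into $\mba^1_V$, with $V$ local of smaller dimension, you must know that such classes die in degree-$n$ cohomology of $\mba^1_V$. $\HH^0$- and $\HH^1$-invariance of $M$ alone do not give this.

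The route the paper cites never uses higher-codimension purity as an input.
\begin{enumerate}
\item Define $C^*_{\RS}(X,M)$ by hand. The differential $\partial^y_z$ uses only a codimension-one residue on the normalization of $\overline{\{y\}}$, i.e.\ codimension-one purity for the strongly $\mba^1$-invariant sheaf $M_{-n+1}$. When $\overline{\{y\}}$ is not normal at $z$, it also uses Morel's transfers on contractions. The identity $\partial_{\RS}^2=0$ is then a genuine theorem, not a formal consequence of a spectral sequence.
\item Prove that $C^*_{\RS}(X,M)\to C^*_{\RS}(X\times\mba^1,M)$ is a quasi-isomorphism of complexes. This is your Step~3. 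It is legitimate at this stage because, via the filtration by points of $X$, it reduces to the field case $\mba^1_F$. There it uses only $\HH^0$- and $\HH^1$-invariance of $M$ and its contractions.
\item Only then prove acyclicity on smooth local $X$, by induction on dimension. Gabber's lemma plus excision for the complex with supports transports the class to $\mba^1_V$. There $\HH^n(C^*_{\RS}(\mba^1_V,M))\cong\HH^n(C^*_{\RS}(V,M))=0$ for $n\geq 1$, by the quasi-isomorphism of the previous step and the inductive hypothesis.
\end{enumerate}
Since the sheafified complex consists of acyclic sheaves, it computes $\HH^*_{\Nis}(X,M)$. Strict invariance follows, and higher-codimension purity comes out as a corollary rather than an input.
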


\begin{remark}
	The above theorem is proved using Rost-Schmid complexes, which will be introduced in \autoref{subsec:2.1.4}.
\end{remark}

\begin{prop}
	If $M$ is a strictly $\mba^1$-invariant sheaf, then $M$ is unramified.
\end{prop}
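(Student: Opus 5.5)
We have to show that a strictly $\mba^1$-invariant sheaf $M$ is unramified. This is Morel's result, and the plan is to go through Gersten-type purity arguments using cohomology with supports. The key tool is the following purity statement. For $X\in\Sm_k$ and a closed subset $Z\subseteq X$ of codimension $\geq d$, we have $\HH^i_Z(X,M)=0$ for $i<d$.

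\textbf{Step 1: purity.} I will prove this by Nisnevich excision together with homotopy purity. First, after shrinking $X$ Nisnevich-locally around a generic point of $Z$ (we may take $Z$ smooth by stratifying), the pair $(X,Z)$ becomes Nisnevich-locally isomorphic to $(\mba^d_Z,Z)$, or to $(N_Z,Z)$ via the normal bundle. Then $\HH^*_Z(X,M)\cong \HH^*_Z(N_Z,M)$. Since $N_Z$ is Zariski-locally $Z\times\mba^d$, strict $\mba^1$-invariance lets us compute $\HH^*_Z(Z\times\mba^d,M)$ through the long exact sequence of the pair $(Z\times\mba^d, Z\times(\mba^d\setminus 0))$. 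Iterating $\mba^1$-invariance and the Mayer–Vietoris sequence for $\mba^d\setminus 0$ gives
$$\HH^i_Z\bigl(Z\times\mba^d,M\bigr)\cong \HH^{i-d}\bigl(Z,M_{-d}\bigr).$$
This vanishes for $i<d$. A stratification/dévissage argument on $Z$ then removes the smoothness assumption.

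\textbf{Step 2: injectivity.} Let $U\subseteq X$ be dense open and put $Z=X\setminus U$, so $\operatorname{codim} Z\geq 1$. The long exact sequence
$$\HH^0_Z(X,M)\to \HH^0(X,M)\to \HH^0(U,M)$$
combined with $\HH^0_Z(X,M)=0$ from Step 1 shows that restriction is injective.

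\textbf{Step 3: the intersection condition.} Take $s\in\bigcap_{x\in X^{(1)}}M(\mco_{X,x})\subseteq M(F)$. Since $M$ is a sheaf and $M(F)=\colim_U M(U)$, the section $s$ is defined on some dense open $U$. Because $s$ lies in $M(\mco_{X,x})$ for each of the finitely many codimension-one points $x$ of $X\setminus U$, it extends over an open set $U'$ whose complement $Z'$ has codimension $\geq 2$. The injectivity from Step 2 guarantees that these local extensions glue. Now consider
$$\HH^0(X,M)\to \HH^0(U',M)\to \HH^1_{Z'}(X,M).$$
The last group vanishes by Step 1 with $d=2$, so $s$ extends to all of $X$.

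\textbf{Main obstacle.} The hard part is Step 1, specifically the Nisnevich-local identification of $(X,Z)$ with a linear model. This requires Gabber's presentation lemma, which needs $k$ infinite; that holds here since $k=\mbr$. It also needs the dévissage for singular $Z$. If the Rost–Schmid machinery of \autoref{subsec:2.1.4} is available, I would instead read off purity directly from the Gersten/Rost–Schmid resolution of $M$: the cohomology with supports $\HH^i_Z(X,M)$ is computed by the terms of the complex supported at points of codimension $\geq d$, which vanish in degrees $<d$.
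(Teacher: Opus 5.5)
The paper gives no proof of this proposition; it is quoted from Morel. Your argument is the standard one, and it is sound. You show $\HH^0_Z(X,M)=0$ for $\operatorname{codim}Z\ge 1$ and $\HH^1_Z(X,M)=0$ for $\operatorname{codim}Z\ge 2$, and then make the two formal deductions. Steps 2 and 3 are correct as written. The gluing in Step 3 needs injectivity of $M(V)\to M(F)$, which you get from Step 2 by passing to the colimit. Step 1, however, needs three corrections, none of which changes the strategy.

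\textbf{The general formula is not justified, but you do not need it.} The isomorphism $\HH^i_Z(Z\times\mba^d,M)\cong\HH^{i-d}(Z,M_{-d})$ in all degrees does not follow from $\mba^1$-invariance and Mayer--Vietoris alone. For higher $i$ it needs two further inputs: the splitting $\HH^n(W\times\Gm,M)\cong\HH^n(W,M)\oplus\HH^n(W,M_{-1})$ for $n\ge 1$, and strict $\mba^1$-invariance of $M_{-1}$. In Morel's development these are proved with the Gersten-type resolution, that is, after unramifiedness. You only need $i<d\le 2$, and there everything is elementary:
\begin{itemize}
\item Since $\HH^i(Z,M)\to\HH^i(Z\times(\mba^d\setminus 0),M)$ is split injective (use a rational point), $\HH^i_Z(Z\times\mba^d,M)$ is the cokernel of $\HH^{i-1}(Z,M)\to\HH^{i-1}(Z\times(\mba^d\setminus 0),M)$.
\item So it remains to show $M(Z\times(\mba^2\setminus 0))=M(Z)$.
\item This follows from the sheaf axiom for the cover by $\Gm\times\mba^1$ and $\mba^1\times\Gm$, from $\mba^1$-invariance of $M$, and from the definitional splitting $M(W\times\Gm)=M(W)\oplus M_{-1}(W)$.
\end{itemize}

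\textbf{Gabber's presentation lemma is not what you need.} For smooth $Z$, the local linear model exists at every point of $Z$, not just at generic points, and over any field. It gives $\HH^*_Z(X,M)\cong\HH^*_Z(N_Z,M)$ by Morel--Voevodsky homotopy purity, because $K(M,i)$ is $\mba^1$-local when $M$ is strictly $\mba^1$-invariant. Passing from a trivializing cover of $N_Z$ to $N_Z$ itself is immediate in degrees $\le 1$. What your d\'evissage actually uses is:
\begin{itemize}
\item generic smoothness of $Z_{\mathrm{red}}$, which requires $k$ perfect (fine for $\mbr$);
\item the exact sequence $\HH^i_{Z'}(X,M)\to\HH^i_Z(X,M)\to\HH^i_{Z\setminus Z'}(X\setminus Z',M)$ for $Z'=\mathrm{Sing}(Z_{\mathrm{red}})$, combined with induction on $\dim Z$.
\end{itemize}

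\textbf{Drop the Rost--Schmid fallback.} That complex is built from $M(F)$, contractions and residue maps. Its being a resolution of $M$ is exactly the Gersten/purity statement, so invoking it to prove unramifiedness is circular.
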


\begin{thm}
	Let $\Ab(k)$ denote the category of sheaves of abelian groups over $\Sm_k$. Let $\Ab_{\mba^1}(k)$ denote the full subcategory of $\Ab(k)$ containing strictly $\mba^1$-invariant sheaves. Then $\Ab_{\mba^1}(k)$ is an abelian category and the inclusion $\Ab_{\mba^1}(k)\subseteq \Ab(k)$ is exact.
\end{thm}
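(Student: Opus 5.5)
The statement to prove is that $\Ab_{\mba^1}(k)$ is abelian and that the inclusion into $\Ab(k)$ is exact. It suffices to show that $\Ab_{\mba^1}(k)$ is closed in $\Ab(k)$ under kernels, cokernels and finite direct sums. Then kernels and cokernels computed in $\Ab(k)$ are also kernels and cokernels in the full subcategory. Hence $\Ab_{\mba^1}(k)$ is abelian: it is additive, and the canonical map from coimage to image is an isomorphism because this can be checked in $\Ab(k)$. Exactness of the inclusion then holds by construction. Direct sums are immediate, since Nisnevich cohomology commutes with finite sums. The plan therefore reduces to two closure statements.

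Let $f:M\to N$ be a map of strictly $\mba^1$-invariant sheaves, and put $K=\ker f$, $I=\operatorname{im} f$, $C=\coker f$. By the earlier theorem that strongly $\mba^1$-invariant sheaves of abelian groups are strictly $\mba^1$-invariant, it suffices to check invariance of $\HH^0$ and $\HH^1$ for $K$ and $C$.

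\textbf{Kernels.} We use the short exact sequences $0\to K\to M\to I\to 0$ and $0\to I\to N\to C\to 0$, together with the long exact sequences on $X$ and on $X\times\mba^1$ and the maps between them induced by the projection.

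$\HH^0$-invariance of $K$ is formal, since $K(X)=\ker(M(X)\to N(X))$ and $M,N$ are $\mba^1$-invariant. Because $M$ and $N$ are unramified (by the earlier proposition), $I$ is a subsheaf of an unramified sheaf. In particular $I(X)\to I(U)$ is injective for dense open $U$.

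The main obstacle is $\HH^1$ (and then $\HH^2$) invariance, because $I$ and $C$ are not yet known to be invariant. I would proceed as follows.

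\emph{Step 1.} Show that $\HH^0(-,I)$ is $\mba^1$-invariant. Injectivity of $I(X)\to I(X\times\mba^1)$ holds because restriction along the zero section splits it. For surjectivity, take a section $s\in I(X\times\mba^1)$. It lies in $N(X\times\mba^1)=N(X)$. It is locally the image of sections of $M$. Passing to the henselian local rings of $X$, where $\HH^1_{\Nis}(\mathcal O^h\times\mba^1,M)=\HH^1(\mathcal O^h,M)=0$ by strict invariance, we can lift $s$ globally on $\mathcal O^h\times\mba^1$. We then restrict the lift along the zero section.

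\emph{Step 2.} Run the same argument one degree up, using the Gersten/Rost–Schmid resolution of $M$ and $N$ (to be introduced in \autoref{subsec:2.1.4}). This identifies $\HH^i(X,-)$ for $X$ henselian local as vanishing in positive degrees. It also gives a five-lemma comparison of the long exact sequences for $X$ versus $X\times\mba^1$, yielding $\HH^1$-invariance of $I$.

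Combining Steps 1 and 2 with the five lemma on $0\to K\to M\to I\to 0$ gives $\HH^0$- and $\HH^1$-invariance of $K$. The sequence $0\to I\to N\to C\to0$ then gives the same for $C$. The only extra input needed for $C$ is injectivity on $\HH^2(-,I)$, which follows from the zero-section splitting.

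If this direct diagram chase breaks down at $\HH^1(I)$, I would instead use Morel's characterization of strongly invariant sheaves through unramified data on fields and DVRs with residue maps. Kernels and cokernels of such data are computed fieldwise, and the axioms (in particular the $\mba^1$-purity exact sequence $0\to M(F)\to M(F(t))\to\bigoplus_x M_{-1}(\kappa(x))\to 0$) are preserved by the snake lemma, since contraction is exact. I expect this fieldwise route to be the cleaner one, with the exactness of contraction, proved via the $\Gm$-splitting, as the key lemma.
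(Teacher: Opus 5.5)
The paper gives no proof of this statement; it is recalled from Morel's book, where (as far as I recall) it is deduced from the $\mathbb{A}^1$-derived category. Your framework is a legitimate, more elementary alternative: closure under kernels and cokernels, the reduction via the earlier strong-implies-strict theorem, and chasing long exact sequences. As written, however, the argument does not close.

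\textbf{Step 1.} Your justification uses the wrong group. The obstruction to lifting $s$ from $I$ to $M$ over $\mathcal{O}^h\times\mathbb{A}^1$ lies in $H^1(\mathcal{O}^h\times\mathbb{A}^1,K)$, not in $H^1(\mathcal{O}^h\times\mathbb{A}^1,M)$, and at that point nothing is known about $K$. Step 1 is nevertheless true for a formal reason. On $N$, $p^*$ is an isomorphism with inverse $z^*$, so $s=p^*z^*s$ with $z^*s\in I(X)$. Hence every subsheaf of an $\mathbb{A}^1$-invariant sheaf is $H^0$-invariant.

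\textbf{Step 2.} This is the real gap, and it is circular. Take the ladder $H^1(M)\to H^1(I)\to H^2(K)\to H^2(M)$. Getting surjectivity on $H^1(-,I)$ from the four lemma needs surjectivity on $H^2(-,K)$, which is exactly what you do not have. Gersten resolutions of $M$ and $N$ give no information about $I$ or $K$. Vanishing of cohomology on henselian local schemes holds for every sheaf and does not help on $X\times\mathbb{A}^1$.

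\textbf{The cokernel.} Your last sentence about $C$ has the direction reversed. Injectivity on $H^2(-,I)$ is automatic from the zero-section splitting, as it is for every sheaf in every degree. Surjectivity on $H^1(-,C)$ instead needs surjectivity on $H^2(-,I)$.

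\textbf{The fix.} The missing observation is that $H^1$-invariance of $K$ needs nothing beyond $H^0$-invariance of $I$. Consider the ladder $I(X)\to H^1(X,K)\to H^1(X,M)\to H^1(X,I)$ over its analogue on $X\times\mathbb{A}^1$. The first vertical map is surjective by the remark above. The third is bijective, and the fourth is injective (split by $z^*$). So the four lemma gives surjectivity on $H^1(-,K)$, and injectivity is again split. Hence $K$ is strongly $\mathbb{A}^1$-invariant, and by the earlier theorem strictly $\mathbb{A}^1$-invariant.

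Now apply the five lemma in every degree to $H^i(M)\to H^i(I)\to H^{i+1}(K)\to H^{i+1}(M)$. This makes $I$ strictly invariant. The same argument on $H^i(N)\to H^i(C)\to H^{i+1}(I)\to H^{i+1}(N)$ then handles $C$. So the only deep input is strong-implies-strict, applied once, to $K$.

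\textbf{The fallback.} The route via unramified data then becomes unnecessary. As sketched, it also skips the real difficulty: the sheaf-theoretic cokernel is not a priori unramified, and its values on non-henselian discrete valuation rings are not computed fieldwise.
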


\begin{prop}
	If $\mcg$ is strongly $\mba^1$-invariant, then $\mcg_{-1}$ is also strongly $\mba^1$-invariant.
\end{prop}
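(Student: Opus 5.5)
We must show: if $\mcg$ is strongly $\mba^1$-invariant, then $\HH^i(X,\mcg_{-1})\to\HH^i(X\times\mba^1,\mcg_{-1})$ is a bijection for $i=0,1$ and every $X\in\Sm_k$. The plan is to express the cohomology of $\mcg_{-1}$ on $X$ through that of $\mcg$ on $X\times\Gm$, where strong invariance of $\mcg$ applies directly. The idea is that $\mcg_{-1}$ is, functorially in $X$, a retract of the sheaf $X\mapsto\mcg(X\times\Gm)$.

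\textbf{The split sequence.} Write $\pi:X\times\Gm\to X$ for the projection and $s:X\to X\times\Gm$ for the unit section $x\mapsto(x,1)$. Set $\mcg^{\Gm}:=\pi_*\pi^*\mcg$, i.e.\ $U\mapsto\mcg(U\times\Gm)$. Since $\pi\circ s=\Id$, the maps $\pi^*$ and $\ev_1=s^*$ give a split short exact sequence of Nisnevich sheaves of groups
$$1\to\mcg_{-1}\to\mcg^{\Gm}\xrightarrow{\ev_1}\mcg\to1.$$
The section $\mcg\to\mcg^{\Gm}$ is $\pi^*$. Thus $\mcg^{\Gm}\cong\mcg_{-1}\rtimes\mcg$, and all of this is natural in $X$.

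\textbf{Reduction to $X\times\Gm$.} Next I identify $\HH^i(X,\mcg^{\Gm})$ with $\HH^i(X\times\Gm,\mcg)$ for $i=0,1$. For $i=0$ this is the definition. For $i=1$, I would show that a $\mcg$-torsor on $X\times\Gm$ that is trivial over each $U\times\Gm$ for a Nisnevich cover $\{U\to X\}$ gives a Čech class for $\mcg^{\Gm}$, and conversely. The map $\HH^1(X,\pi_*\pi^*\mcg)\to\HH^1(X\times\Gm,\mcg)$ is always injective. For surjectivity one needs the torsor to become trivial on $\mathrm{Spec}(\mco^h_{X,x})\times\Gm$. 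I expect this \textbf{torsor triviality on $\mathcal{O}\times\Gm$ over henselian local schemes to be the main obstacle}. My first approach is Morel's theorem that for strongly $\mba^1$-invariant $\mcg$, $\HH^1$ of a smooth henselian local scheme times $\Gm$ is computed by unramifiedness. Failing that, I would work throughout with sheaf cohomology of $\pi^*$ and accept this as Morel's input.

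\textbf{Conclusion.} Using the splitting, for $i=0$ we have $\mcg_{-1}(X)=\ker(\mcg(X\times\Gm)\to\mcg(X))$. The comparison maps for $X$ and $X\times\mba^1$ are bijections on both $\mcg(X\times\Gm)$ and $\mcg(X)$ by strong invariance of $\mcg$, so they are bijections on the kernels. For $i=1$, the split exact sequence gives that $\HH^1(X,\mcg^{\Gm})\to\HH^1(X,\mcg)$ is a retraction of pointed sets. Nonabelian cohomology of a semidirect product makes $\HH^1(X,\mcg_{-1})$ the fibre over the base point, after twisting: retractions kill the twisting ambiguity only at the trivial torsor. So for injectivity away from the base point I would twist by $\mcg$-torsors, i.e.\ run the argument for the inner forms ${}_P\mcg$. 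These are again strongly $\mba^1$-invariant, locally isomorphic to $\mcg$. Naturality in $X\to X\times\mba^1$ together with the bijections for $\mcg$ on $X\times\Gm$ and on $X$ (strong invariance applied to both $X$ and $X\times\Gm$) then yields a bijection on $\HH^1(-,\mcg_{-1})$ by a five-lemma argument for pointed sets with group actions.
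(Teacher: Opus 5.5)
The paper gives no proof of this statement; it is recalled from Morel's book. So I am judging your argument on its own terms.

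Your split sequence $1\to\mcg_{-1}\to\mcg^{\Gm}\to\mcg\to1$ and your treatment of $\HH^0$ are correct. The $\HH^1$ case, however, is not established as written. The step you call the main obstacle, namely $\HH^1(X,\mcg^{\Gm})\cong\HH^1(X\times\Gm,\mcg)$ (triviality of torsors on $\Spec(\mco^h_{X,x})\times\Gm$), is left unproved, with only a vague appeal to Morel.

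That identification is not needed. You already note that $\iota_Y\colon\HH^1(Y,\mcg^{\Gm})\hookrightarrow\HH^1(Y\times\Gm,\mcg)$ is injective, and it is natural in $Y$. A natural subfunctor of an $\mba^1$-invariant functor is itself $\mba^1$-invariant:
\begin{itemize}
\item Injectivity of $\pr^*$ holds because of the zero section $s_0$.
\item For surjectivity, take $Q\in\HH^1(X\times\mba^1,\mcg^{\Gm})$. Strong invariance of $\mcg$ on the smooth scheme $X\times\Gm$ gives $\iota(Q)=\pr^*s_0^*\iota(Q)=\iota(\pr^*s_0^*Q)$, hence $Q=\pr^*s_0^*Q$.
\end{itemize}
So $\HH^1(-,\mcg^{\Gm})$ is $\mba^1$-invariant without any statement about henselian local schemes.

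The passage from $\mcg^{\Gm}$ to $\mcg_{-1}$ is also misdescribed.
\begin{itemize}
\item Since $\mcg^{\Gm}(X)\to\mcg(X)$ is onto, $j\colon\HH^1(X,\mcg_{-1})\to\HH^1(X,\mcg^{\Gm})$ has trivial kernel. It is still not injective in general.
\item The fibres of $j$ are the orbits of $\mcg(X)$ acting on $\HH^1(X,\mcg_{-1})$. Here $c\in\mcg(X)$ acts through the automorphism of $\mcg_{-1}$ given by conjugation by $\pi^*c$.
\item Hence the fibre over the base point of $\HH^1(X,\mcg^{\Gm})\to\HH^1(X,\mcg)$ is $\HH^1(X,\mcg_{-1})/\mcg(X)$, not $\HH^1(X,\mcg_{-1})$. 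Twisting by $\mcg$-torsors, or using the inner forms ${}_P\mcg$, plays no role.
\item Injectivity of $\pr^*$ on $\HH^1(-,\mcg_{-1})$ is automatic for any sheaf, by the zero section. What needs proof is surjectivity.
\end{itemize}
Surjectivity follows directly. For $Q\in\HH^1(X\times\mba^1,\mcg_{-1})$, the previous paragraph and naturality of $j$ give $j(Q)=j(\pr^*s_0^*Q)$. So $Q=c\cdot\pr^*s_0^*Q$ for some $c\in\mcg(X\times\mba^1)=\pr^*\mcg(X)$. Writing $c=\pr^*c_0$, naturality of the action gives $Q=\pr^*(c_0\cdot s_0^*Q)$.

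With these two repairs your strategy becomes a complete proof. It uses only strong invariance of $\mcg$ on $X\times\Gm$ and on $X$, plus formal nonabelian cohomology.
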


\begin{prop}
	The functor $\mcg\mapsto\mcg_{-1}$ is exact.
\end{prop}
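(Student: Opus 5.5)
To prove that $\mcg\mapsto\mcg_{-1}$ is exact, I would reduce to the defining formula and check exactness stalkwise, where the needed surjectivity can be produced by hand. For a short exact sequence $1\to\mcg'\to\mcg\to\mcg''\to 1$ of strongly $\mba^1$-invariant sheaves of groups (exact as Nisnevich sheaves), the goal is exactness of $1\to\mcg'_{-1}\to\mcg_{-1}\to\mcg''_{-1}\to 1$. The evaluation at $1$ is split by the projection $X\times\Gm\to X$, so for any sheaf $\mcf$ there is a natural decomposition $\mcf(X\times\Gm)=\mcf(X)\ltimes\mcf_{-1}(X)$. Hence the contraction is a natural retract of the functor $\mcf\mapsto(X\mapsto\mcf(X\times\Gm))$, i.e. of $\pi_*\pi^*$ with $\pi:\Gm\times X\to X$. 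Left exactness is therefore immediate, since pushforward and the kernel of a split map are both left exact. Everything reduces to surjectivity of $\mcg_{-1}\to\mcg''_{-1}$ as a map of Nisnevich sheaves.

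For surjectivity I would check stalks at a henselian local essentially smooth scheme $X=\Spec\mco$. By the long exact sequence in nonabelian cohomology on $X\times\Gm$, the cokernel of $\mcg(X\times\Gm)\to\mcg''(X\times\Gm)$ injects into $\HH^1(X\times\Gm,\mcg')$. So the key step is to show that $\HH^1(X\times\Gm,\mcg')$ is trivial, or at least trivial on the "contracted part." This is the part I expect to be the main obstacle. To get it, I would use strong $\mba^1$-invariance of $\mcg'$ together with Morel's description of $\HH^1$ of an unramified strongly invariant sheaf via torsors trivialized generically. The plan is:
\begin{enumerate}
	\item Compute $\HH^1(X\times\mba^1,\mcg')=\HH^1(X,\mcg')=*$, using strong invariance and the fact that $X$ is henselian local.
	\item Apply the Mayer--Vietoris / localization sequence for the cover of $\mbp^1_X$ by two copies of $\mba^1_X$ meeting in $X\times\Gm$. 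This shows that a class in $\HH^1(X\times\Gm,\mcg')$ comes from a torsor on $\mbp^1_X$.
	\item Conclude that its restriction along the splitting vanishes, via the $\mba^1$-contractibility arguments Morel uses for the $\mbp^1$-gluing (the class is detected on $\mbp^1_X$ by $\mcg'_{-1}(X)$-valued data that lifts).
\end{enumerate}

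Alternatively, and more cleanly, I would use Morel's identification of $\mcg_{-1}$ with $\Omega_{\Gm}$ of the $\mba^1$-local classifying space $\B\mcg$. Since $\mcg$ is strongly $\mba^1$-invariant, $\B\mcg$ is $\mba^1$-local. A short exact sequence then gives a fiber sequence $\B\mcg'\to\B\mcg\to\B\mcg''$ of $\mba^1$-local spaces. Applying the exact functor $\Omega_{\Gm}$ (it preserves fiber sequences) gives a fiber sequence, and taking $\pi_0,\pi_1$ yields a long exact sequence
$$\pi_1\to\mcg'_{-1}\to\mcg_{-1}\to\mcg''_{-1}\to\pi_0(\Omega_{\Gm}\B\mcg').$$
The last term is $\pi_0$ of $\Omega_{\Gm}\B\mcg'$, namely the sheafification of $\HH^1(X\times\Gm,\mcg')$ modulo $\HH^1(X,\mcg')$. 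Its vanishing amounts to the $\mbp^1$-argument above: Morel's computation $\pi_0(\Omega_{\Gm}\B\mcg)=*$ for strongly invariant $\mcg$, obtained from the Gersten/Rost--Schmid description of $\HH^1$ on $\Gm$ over a henselian local base. With that in hand, the sequence shows $\mcg\mapsto\mcg_{-1}$ is exact.
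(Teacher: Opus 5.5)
The paper gives no proof of this statement; it is recalled from Morel's book, so your argument has to stand on its own. Several parts are fine: left exactness, the reduction of surjectivity to stalks at an essentially smooth henselian local $X$, the injection of $\mcg''(X\times\Gm)/\mcg(X\times\Gm)$ into $\HH^1(X\times\Gm,\mcg')$, and the reformulation via $\Omega_{\Gm}\B\mcg$ with $\pi_1(\Omega_{\Gm}\B\mcg)=\mcg_{-1}$.

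The gap is the one non-formal step: the triviality of $\HH^1(X\times\Gm,\mcg')$ for such $X$. The route you sketch for it fails. Mayer--Vietoris for $\mbp^1_X=\mba^1_X\cup\mba^1_X$ does not extend a torsor from the overlap $X\times\Gm$ to $\mbp^1_X$. Clutching turns \emph{sections} over $X\times\Gm$ into torsors on $\mbp^1_X$. In the abelian case the sequence only gives an injection $\HH^1(X\times\Gm,M)\hookrightarrow\HH^2(\mbp^1_X,M)$, whose target is no easier to control. Moreover, a class that extends to $\mbp^1_X$ also extends to $\mba^1_X$, where $\HH^1(\mba^1_X,\mcg')=\HH^1(X,\mcg')=*$, so it is already trivial. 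Your step 2 is therefore equivalent to the conclusion, and step 3 adds nothing. The loop-space version just renames the missing fact as $\pi_0(\Omega_{\Gm}\B\mcg')=*$ and cites it. The Rost--Schmid complexes you invoke for that fact exist only for abelian coefficients.

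What is missing is a localization (purity) argument across the divisor $X\times 0\subset X\times\mba^1$, not a covering argument. Take abelian $M$, which is the case the paper actually uses (e.g.\ when contracting $0\to\K^{\M}_4/6\to\pi_3^{\mba^1}(F_1)\to\K^{\Q}_3\to0$ three times). The points of $X\times\mba^1$ lying on $X\times 0$ span a subcomplex of $C^*_{\RS}(X\times\mba^1,M)$ identified with $C^{*-1}_{\RS}(X,M_{-1})$, with quotient $C^*_{\RS}(X\times\Gm,M)$. The unit section makes $\HH^n(X\times\mba^1,M)\to\HH^n(X\times\Gm,M)$ injective, so the long exact sequence breaks into split short exact sequences. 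This gives $\HH^n(X\times\Gm,M)\cong\HH^n(X,M)\oplus\HH^n(X,M_{-1})$, and for $n=1$ and $X$ henselian local both summands vanish.

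Alternatively, the cokernel of $\mcg_{-1}\to\mcg''_{-1}$ lies in $\Ab_{\mba^1}(k)$ and is therefore unramified, so it suffices to check fields $F$. There $\HH^1(\mathbb{G}_{m,F},M')=0$ follows from the two-term Rost--Schmid complexes of $\mba^1_F$ and $\mathbb{G}_{m,F}$ together with $\HH^1(\mba^1_F,M')=0$. For genuinely non-abelian $\mcg'$ you need the corresponding non-abelian input from Morel's theory, which your proposal does not supply.
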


\subsubsection{Milnor-Witt $K$-theory and Related Sheaves}

Here we define various strongly $\mba^1$-invariant sheaves, mainly following \cite{Mor12A1AT}.

\begin{defn}
    Let $F$ be a field. The \emph{Milnor-Witt K-theory} of $F$, denoted $\K^{\MW}_*(F)$, is the graded associative ring generated by symbols $[u]\in F^{\times}$ in degree 1 and an element $\eta$ in degree $-1$ subject to the following relations.
    \begin{enumerate}
        \item (Steinberg relation) For $u\in F-{0,1}$, we have $[u].[1-u]=0$.
        \item For $a,b\in F^\times$, we have $[ab]=[a]+[b]+\eta.[a].[b]$.
        \item $\eta$ is in the center of $\K^{\MW}_*$.
        \item Set $h:=2+\eta.[-1]$. Then $h.\eta=0$.
    \end{enumerate}
\end{defn}

Recall that for a field $F$ one can define the \emph{Grothendieck-Witt ring}, denoted $\GW(F)$, as the group completion of the monoid of isomorphism classes of nondegenerate quadratic forms over $F$. Its quotient by the hyperbolic form $h:=1+\<-1\>$ is called the \emph{Witt ring}, denoted $\W(F)$. Taking the rank of a form induces a morphism $\W(F)\to\mbz/2$, whose kernel is the \emph{fundamental ideal}, denoted $\I(F)$. Let $\I^n(F)$ denote the $n$-th power of $\I(F)$, and we set $\I^n(F)=\W(F)$ for $n\leq 0$. Let $\I^*(F)=\oplus_{n\in\mbz}\I^n(F)$. This is a $\mbz$-graded ring. 

\begin{prop}
	The map $\GW(F)\to\K^{\MW}_0(F)$ given by $\<a\>\mapsto 1+\eta.[a]$ is an isomorphism.
\end{prop}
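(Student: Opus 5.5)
We prove that $\phi:\GW(F)\to\K^{\MW}_0(F)$, $\<a\>\mapsto 1+\eta.[a]$, is an isomorphism. The plan is to check that $\phi$ is well defined, then build an inverse $\psi:\K^{\MW}_0(F)\to\GW(F)$ and verify that both composites are the identity.

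\textbf{Step 1: $\phi$ is well defined.} We use the presentation of $\GW(F)$ as the abelian group generated by symbols $\<a\>$, $a\in F^\times$, subject to three relations (for characteristic $\neq 2$):
\begin{enumerate}
\item $\<ab^2\>=\<a\>$;
\item $\<a\>+\<b\>=\<a+b\>+\<ab(a+b)\>$ whenever $a+b\neq 0$;
\item $\<a\>+\<-a\>=1+\<-1\>$.
\end{enumerate}
Write $\<a\>_{\MW}:=1+\eta.[a]$. Relation (2) of the definition gives $\<ab\>_{\MW}=\<a\>_{\MW}\<b\>_{\MW}$. Relation (4), $h\eta=0$, gives $\eta[b^2]=\eta[b](2+\eta[b])$, and the standard identity $[b]h=[b]\<-1\>_{\MW}+[b]$ reduces this to $0$. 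Together these yield (1). For (2) and (3) we use the standard consequences of the Steinberg relation: $[a][-a]=0$, $\<a\>_{\MW}[a]=[a]$, and $\eta[a][-a]=0$. These give (3) directly. For (2) we divide by $a+b$, reducing to the case $a+b=1$, and then apply $[a][1-a]=0$. This step is the main computational obstacle. It requires Morel's lemmas $[ab]=[a]+\<a\>[b]$ and $[a][-1]=[a][a]$, which must be derived from the listed axioms.

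\textbf{Step 2: construct the inverse.} We show that $\K^{\MW}_0(F)$ is generated as an abelian group by the elements $\<a\>_{\MW}$. A degree-$0$ monomial has the form $\eta^n[a_1]\cdots[a_n]$. Since $\eta$ is central, this equals $\prod_i \eta[a_i]=\prod_i(\<a_i\>_{\MW}-1)$. Using multiplicativity, this expands to a $\mbz$-linear combination of $\<\prod_{i\in S} a_i\>_{\MW}$ over subsets $S$. So $\phi$ is surjective and $\phi$ is a ring map.

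For injectivity we build $\psi$ using the graded ring structure: take $\GW(F)$ in degree $\leq 0$, $\I^n(F)$ in degree $n>0$, and pass to a suitable fiber product $\I^*\times_{I^*/I^{*+1}}\K^M_*$ as in Morel. This ring is generated by $\eta\mapsto\<\!\<-1\>\!\>$-type elements and $[u]\mapsto(\<\!\<u\>\!\>,\{u\})$. We check the four defining relations of $\K^{\MW}_*$ there: Steinberg holds in both factors, and $h\eta=0$ holds because $h$ is hyperbolic, so it vanishes in $\W$. This produces a ring map $\psi$. In degree $0$ it sends $1+\eta[a]$ to $1+(\<a\>-1)=\<a\>$, so $\psi_0\circ\phi=\mathrm{id}$.

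\textbf{Conclusion.} Since $\phi$ is surjective and $\psi_0\circ\phi=\mathrm{id}$, $\phi$ is an isomorphism. A simpler alternative for the inverse is to define $\psi_0$ on the degree-$0$ words and check invariance directly. That approach, however, needs a presentation of $\K^{\MW}_0$, which is essentially what the fiber-product construction provides.
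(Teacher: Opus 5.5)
The paper gives no proof of this proposition; it is quoted as background from Morel (Lemma 3.10 of his book). Your three-part outline matches Morel's standard argument and is sound. Well-definedness comes from the Witt presentation of $\GW(F)$. Surjectivity holds because every degree-$0$ monomial is $\prod_i\eta[a_i]=\prod_i(\<a_i\>-1)$. Injectivity comes from a left inverse into $J_*=\I^*\times_{\I^*/\I^{*+1}}\K^{\M}_*$. This only uses Milnor's elementary map $\K^{\M}_*\to\I^*/\I^{*+1}$, not the Milnor conjecture. Several details as written are wrong, however, and need repair.

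\textbf{Step 1.} The identity $\<a\>[a]=[a]$ is false. The correct identity is $\<a\>[a]=[a]+\eta[a][a]=\<-1\>[a]$. For example, over $\mbr$ the difference $\<-1\>[-1]-[-1]=\eta[-1][-1]$ maps to $(\<-1\>-1)^2\in\I(\mbr)$, which has signature $4$.

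You do not need that identity for relation (3), which follows from the defining relations alone. Since $[-a]=[-1]+[a]+\eta[-1][a]$, we get $\eta[a]+\eta[-a]=\eta[-1]+h\eta[a]=\eta[-1]$, hence $\<a\>+\<-a\>=h=1+\<-1\>$.

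For relation (1), the identity $[b]h=[b]\<-1\>+[b]$ is a tautology and does not finish the argument. What you need is $\<b\>[b]=\<-1\>[b]$, i.e.\ $[b][b]=[b][-1]$. Then $\eta[b^2]=\eta(1+\<b\>)[b]=\eta h[b]=0$. That identity follows from $[b][-b]=0$ together with $h[-1]=[1]=0$. In turn, $[1]=0$ follows from $[1]=-\eta[1][-1]$ and $h\eta=0$; this is also what gives $\phi(1)=1$ and justifies your division by $a+b$.

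\textbf{Step 2: the target ring is misdescribed.} In negative degrees $J_n=\W(F)$, not $\GW(F)$. With $\GW(F)$ in degree $-1$ the relation $h\eta=0$ fails, since $h\eta\mapsto 1+\<-1\>\neq 0$ there. The generators must go to $\eta\mapsto(1,0)\in J_{-1}=\W(F)$ and $[u]\mapsto(\<u\>-1,\{u\})$, which is what your own degree-$0$ computation uses. By contrast, $\llangle -1\rrangle=\<1,1\>$ is wrong for $\eta$, and sending $[u]$ to the Pfister form $\llangle u\rrangle=1-\<u\>$ would force $\eta\mapsto -1$.

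Besides Steinberg and $h\eta=0$, you must also check three things:
\begin{enumerate}
\item Relation (2) in $J_*$. In $\I^*$ it is $\<ab\>-1=(\<a\>-1)+(\<b\>-1)+(\<a\>-1)(\<b\>-1)$; in $\K^{\M}_*$ it is additivity, since $\eta\mapsto 0$.
\item $\eta=(1,0)$ is central. This holds because its $\K^{\M}$-component is $0$, even though $J_*$ is not commutative.
\item $J_0=\W(F)\times_{\mbz/2}\mbz$ is identified with $\GW(F)$ via $q\mapsto(\bar q,\mathrm{rk}\,q)$. This map is injective because $\ker(\GW\to\W)=\mbz h$ and $h$ has rank $2$. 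Only with this identification does $\psi_0\circ\phi=\mathrm{id}$ mean the identity on $\GW(F)$.
\end{enumerate}

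With these corrections the proof goes through.
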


From now on, for $a\in F^\times$, we write $\<a\>:=1+\eta.[a]\in\K^{\MW}_0(F)$.

\begin{prop}
	The maps $\eta^n:\K_0^{\MW}(F)\to\K_{-n}^{\MW}(F)$ induce isomorphisms $$\W(F)=\K_0^{\MW}(F)/h\cong \K_{-n}^{\MW}(F).$$
\end{prop}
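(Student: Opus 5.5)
The plan is to use only the defining relations of $\K^{\MW}_*(F)$ together with the identification $\GW(F)\cong\K^{\MW}_0(F)$ of the previous proposition. The argument has two parts: show that $h$ kills $\K^{\MW}_{-n}$ for $n\geq 1$, and then show that multiplication by $\eta^n$ is surjective with kernel exactly the ideal $(h)$.

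The first part is immediate. Since $\eta$ is central and $h\eta=0$, for any $x\in\K^{\MW}_0(F)$ we get $\eta^n(hx)=(\eta^n h)x=0$ when $n\geq1$. Hence $\eta^n:\K^{\MW}_0(F)\to\K^{\MW}_{-n}(F)$ factors through $\K^{\MW}_0(F)/h$. Under $\<a\>\mapsto 1+\eta[a]$ the element $h=2+\eta[-1]$ corresponds to $1+\<-1\>$, so this quotient is $\GW(F)/(h)=\W(F)$.

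For surjectivity I would show that $\K^{\MW}_{-n}(F)$ is spanned by monomials $\eta^{m}[u_1]\cdots[u_k]$ with $m-k=n$. Using the centrality of $\eta$, each such monomial can be written as $\eta^n\cdot\bigl(\eta^{k}[u_1]\cdots[u_k]\bigr)$. Each factor $\eta[u_i]$ equals $\<u_i\>-1$, which lies in $\K^{\MW}_0$. Therefore every element of degree $-n$ lies in $\eta^n\K^{\MW}_0(F)$.

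Injectivity is the main obstacle, since it requires knowing that the kernel of $\eta^n$ is no bigger than $(h)$. I would construct the inverse directly, following Morel. Define a graded ring $J^*(F)$ by $J^n=\I^n(F)$ for $n\geq0$ and $J^n=\W(F)$ for $n<0$. In this ring, $\eta$ acts as the inclusion $\I^{n+1}\subseteq\I^n$ (the identity in negative degrees), and $[u]\mapsto\<u\>-1\in\I(F)$.

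Next I would check that the relations hold in $J^*$:
\begin{enumerate}
\item The Steinberg relation reduces to $(\<u\>-1)(\<1-u\>-1)=0$ in $\W(F)$, which is the standard identity $\<u,1-u\>\cong\<1,u(1-u)\>$.
\item The relation $[ab]=[a]+[b]+\eta[a][b]$ becomes $\<ab\>-1=(\<a\>-1)+(\<b\>-1)+(\<a\>-1)(\<b\>-1)$, which holds by expanding the right-hand side.
\item The relation $h\eta=0$ holds because $h=1+\<-1\>=0$ in $\W(F)$.
\end{enumerate}
Together these give a ring map $\K^{\MW}_*(F)\to J^*(F)$. In degree $-n$ it sends $\eta^n x$ to the class of $x$ in $\W(F)$. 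So the composite $\W(F)\to\K^{\MW}_{-n}(F)\to\W(F)$ is the identity, and the factored map is injective. Combined with surjectivity, this gives the isomorphism.
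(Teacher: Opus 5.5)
Your proposal is correct. The paper does not prove this statement; it is recalled from Morel's book without proof. Your argument is a complete version of the standard proof:
\begin{itemize}
\item \emph{Surjectivity.} Centrality of $\eta$ and the identity $\eta[u]=\langle u\rangle-1$ let you pull $\eta^n$ out of every monomial of degree $-n$.
\item \emph{Injectivity.} The ring map $\K^{\MW}_*(F)\to J^*(F)$ sends $\eta\mapsto 1\in\W(F)=J^{-1}(F)$. On $\K^{\MW}_0(F)\cong\GW(F)$ it restricts to $\langle a\rangle\mapsto\langle a\rangle$, i.e.\ the projection onto $\W(F)$. So composing with $\eta^n$ recovers the identification $\K^{\MW}_0(F)/h\cong\W(F)$, which forces $\eta^n$ to be injective on $\K^{\MW}_0(F)/h$.
\end{itemize}

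Two small remarks. First, you never checked relation 3 (centrality of $\eta$), but it holds automatically because $J^*(F)$ is commutative. Second, you only use the \emph{existence} of the ring map $\K^{\MW}_*(F)\to\I^*(F)$, not the stronger statement (the paper's next theorem) that it induces $\K^{\MW}_*/h\cong\I^*$ in all degrees. Your argument therefore needs nothing beyond $\GW(F)\cong\K^{\MW}_0(F)$ and the defining relations.
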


\begin{thm}
	Consider the map $\K_*^{\MW}(F)\to\I^*(F)$ defined by $[a]\mapsto \<a\>-1$ and $\eta$ acts on $\I^*(F)$ by inclusions $\I^*(F)\subseteq\I^{*-1}(F)$. This map induces an isomorphism $\K^{\MW}_*/h\cong\I^*(F)$.
\end{thm}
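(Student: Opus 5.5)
The statement to be proved is Morel's theorem: the map $\K_*^{\MW}(F)\to\I^*(F)$, given by $[a]\mapsto\<a\>-1\in\I(F)$ and with $\eta$ acting as the inclusion $\I^{n}(F)\subseteq\I^{n-1}(F)$, induces an isomorphism $\K^{\MW}_*(F)/h\cong\I^*(F)$. The plan has three steps: check that the map is well defined and kills $h$, prove surjectivity degree by degree, and prove injectivity by comparing two Cartesian squares. The comparison uses Milnor's conjecture, i.e. the Orlov--Vishik--Voevodsky isomorphism $\K^M_n(F)/2\cong\I^n(F)/\I^{n+1}(F)$ sending $\{a_1,\dots,a_n\}\mapsto\llangle a_1,\dots,a_n\rrangle$.

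\textbf{Step 1: well-definedness.} Write $\langle\!\langle a\rangle\!\rangle=\<a\>-1$ for the class in $\I(F)$ (the paper's sign convention, up to an irrelevant sign). We check the four defining relations in the graded ring $\I^*(F)$, where $\eta\in\I^{-1}=\W$ is the element $1$ placed in degree $-1$.
\begin{enumerate}
\item $\eta$ is central because $\I^*$ is commutative.
\item The relation $[ab]=[a]+[b]+\eta[a][b]$ becomes $\<ab\>-1=(\<a\>-1)+(\<b\>-1)+(\<a\>-1)(\<b\>-1)$. This holds because $\<ab\>=\<a\>\<b\>$.
\item The Steinberg relation becomes $(\<a\>-1)(\<1-a\>-1)=0$ in $\W(F)$. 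This is the standard identity that the Pfister form $\llangle a,1-a\rrangle$ is hyperbolic.
\item The element $h=2+\eta[-1]$ maps to $2+(\<-1\>-1)=1+\<-1\>$, which is $0$ in $\W$. Hence $h\eta\mapsto0$, and the whole ideal $(h)$ maps to zero.
\end{enumerate}
So the map factors through $\K^{\MW}_*(F)/h$.

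\textbf{Step 2: surjectivity.} In degrees $n\le0$, $\K^{\MW}_{n}(F)/h=\K^{\MW}_0(F)/h=\W(F)$ by the earlier propositions (the $\eta^n$ isomorphisms and $\GW(F)\cong\K^{\MW}_0(F)$), so the map is the identity of $\W(F)$. In degrees $n\ge1$, $\I^n(F)$ is additively generated by the Pfister forms $\llangle a_1,\dots,a_n\rrangle$, and each of these is the image of $[a_1]\cdots[a_n]$.

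\textbf{Step 3: injectivity for $n\ge1$.} We use the Cartesian square of Morel,
$$\K^{\MW}_n(F)\cong \I^n(F)\times_{\I^n(F)/\I^{n+1}(F)}\K^M_n(F),$$
whose maps are the forgetful map to $\K^M_n$ (killing $\eta$) and the map to $\I^n$ above. Quotienting by $h$ kills the $\K^M$ factor modulo $2$, since $h$ acts as multiplication by $2$ on $\K^M$. This should identify $\K^{\MW}_n/h$ with $\I^n$, provided the kernel of $\K^{\MW}_n\to\I^n$ equals $h\K^{\MW}_n$. An element of that kernel has the form $(0,x)$ with $x\in 2\K^M_n$. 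Writing $x=2y$, one shows $(0,x)=h\cdot\tilde y$ for a lift $\tilde y$ of $y$; this works because $h\tilde y$ maps to $0$ in $\I^n$ and to $2y$ in $\K^M_n$.

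\textbf{Main obstacle.} The real difficulty is establishing the fiber-product description of $\K^{\MW}_n$ in the first place. This needs Milnor's conjecture together with an explicit presentation argument showing that the map $\K^{\MW}_n\to \I^n\times_{\I^n/\I^{n+1}}\K^M_n$ is injective. Here I would follow Morel's argument: filter $\K^{\MW}_n$ by $\eta$-powers and use that $\eta\K^{\MW}_{n+1}$ maps isomorphically onto $\I^{n+1}$, proved by induction using the $\W(F)$-module structure. I would cite this rather than reprove it.
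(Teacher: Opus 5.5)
The paper does not prove this statement; it quotes it from Morel as background. Your Steps 1 and 2 are correct, and the deduction in Step 3 from the fiber-product square plus Milnor's conjecture is valid. The trouble is that the square you cite is not an easier input: modulo Milnor's conjecture it is equivalent to the statement. Write $\phi_n\colon\K^{\MW}_n(F)\to\I^n(F)$ and $J^n=\K^{\MW}_n(F)/h$. If $\ker\phi_{n+1}=h\K^{\MW}_{n+1}$, then $\K^{\MW}_n\to\I^n\times_{\I^n/\I^{n+1}}\K^{\M}_n$ is injective. Indeed, a kernel element is $\eta z$ with $\phi_{n+1}(z)=0$, so $z\in h\K^{\MW}_{n+1}$ and $\eta z=0$ because $\eta h=0$. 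Surjectivity onto the fiber product is formal. So Step 3 only re-derives the theorem from a reformulation of itself, and the entire content of the statement sits in the part you propose to cite.

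The route you sketch for that part does not close as described. One has $\eta J^{k+1}=\I(F)\cdot J^k$, so the $\eta$-filtration on $J^n$ is its $\I(F)$-adic filtration as a $\W(F)$-module. Milnor's conjecture implies that $\bar{\phi}$ is an isomorphism on associated graded pieces, $\K^{\M}_{n+m}/2\cong\I^{n+m}/\I^{n+m+1}$. Chasing this gives only $\ker\bar{\phi}_n\subseteq\bigcap_{m\ge0}\eta^mJ^{n+m}$, with equality by the Arason--Pfister Hauptsatz.

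The same thing shows up in your induction. Once $\bar{\phi}_n$ is known to be injective, injectivity of $\bar{\phi}_{n+1}$ is equivalent to injectivity of $\eta\colon J^{n+1}\to J^n$, and the inductive hypothesis does not give this. Your intermediate claim that $\eta\K^{\MW}_{n+1}\to\I^{n+1}$ is injective is again the theorem in disguise.

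What is actually missing is separatedness of the $\eta$-adic filtration on $\K^{\MW}_*(F)/h$, an Arason--Pfister-type statement on the source side. It is not formal. The $\W(F)$-module structure does not by itself yield it, since $J^n$ is not finitely generated and $\W(F)$ is in general not Noetherian, so no Krull-intersection argument applies. This is the non-formal core of Morel's theorem, and your proposal neither proves it nor reduces it to anything independent of the statement.
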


\begin{defn}
	The \emph{Milnor $K$-theory} of $F$, denoted $\K^{\M}_*(F)$, is defined as the quotient $\K^{\M}_*(F):=K_*^{\MW}(F)/\eta$.
\end{defn}

\begin{remark}
	From the definitions one easily sees that $\K^{\M}_*(F)$ is the quotient of the tensor algebra of $F^\times$ over $\mbz$ by the Steinberg relation.
\end{remark}

\begin{thm}
	Let $M_*$ denote one of $\K^{\MW}_*$, $\K^{\M}_*$, and $\I^*$. Then $M_*$ admits natural residue maps which define a $\mbz$-graded unramified sheaf. We will use the same notation as $M_*$ for the corresponding unramified sheaf. Furthermore, this sheaf is strongly $\mba^1$-invariant. We also have $(M_*)_{-1}=M_{*-1}$.
\end{thm}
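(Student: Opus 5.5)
The plan is to follow Morel's construction. First define the residue maps at the level of fields, then deduce the sheaf-theoretic properties from his general axiomatics.

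\textbf{Residue maps.} For a discrete valuation $v$ on a field $F$ with uniformizer $\pi$ and residue field $\kappa(v)$, I would construct the residue $\partial_v^\pi:\K^{\MW}_*(F)\to\K^{\MW}_{*-1}(\kappa(v))$ via Milnor's trick. Introduce the auxiliary graded ring $\K^{\MW}_*(\kappa(v))[\xi]$, where $\xi$ has degree $1$ and satisfies $\xi^2=\xi[-1]$ and the other commutation relations. Send $[u\pi^n]$, for $u$ a unit, to the appropriate expression in $[\bar u]$, $\xi$ and $\eta$. Checking the four defining relations is the technical heart; the Steinberg relation needs a case analysis on $v(u)$ and $v(1-u)$.

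\textbf{Descent to the quotients.} The residue is $\eta$-linear and sends $h$ to a multiple of $h$. It therefore descends to $\K^{\M}_*=\K^{\MW}_*/\eta$ (recovering the classical tame symbol) and to $\I^*=\K^{\MW}_*/h$, using the identification $\K^{\MW}_*/h\cong\I^*$ stated above.

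\textbf{Unramified sheaf.} Next I would verify Morel's axioms for an unramified $\mcf_k$-datum:
\begin{enumerate}
\item only finitely many residues are nonzero for a given element;
\item the specialization maps exist;
\item the codimension-two compatibility holds.
\end{enumerate}
Axiom (1) is immediate from the symbol description. The compatibility (3) reduces, via Gersten-type arguments, to the case of a $2$-dimensional regular local ring. This gives unramified sheaves on $\Sm_k$ by taking $\mcf(X)=\bigcap_{x\in X^{(1)}}\ker\partial_x$ inside $M_*(k(X))$.

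\textbf{Strong $\mba^1$-invariance.} The key input is the Milnor-type split exact sequence
$$0\to M_n(F)\to M_n(F(t))\xrightarrow{\oplus\partial_P} \bigoplus_{P}M_{n-1}(F[t]/P)\to 0,$$
with $P$ ranging over monic irreducibles. It is proved by filtering by degree of $P$ and twisting the residues appropriately in the MW case. This sequence gives $\mba^1$-invariance of sections on fields. Morel's theorem then upgrades an unramified sheaf with this property (together with the axioms) to a strongly $\mba^1$-invariant one, and hence, by the theorem stated above, to a strictly $\mba^1$-invariant one.

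\textbf{Contraction.} The contraction formula follows from the same sequence applied to $\Gm=\mba^1-0$. Concretely, $M(X\times\Gm)\cong M(X)\oplus M_{-1}(X)\cdot[t]$, so $\ker(\ev_1)\cong M_{*-1}(X)$. For $\I^*$ and $\K^{\M}_*$, this is compatible with the quotient maps because contraction is exact, as stated earlier.

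\textbf{Main obstacle.} I expect the hardest step to be the well-definedness of the MW residue, i.e.\ the relation checks in the auxiliary ring, together with the codimension-two compatibility axiom. For the latter, I would first reduce to the case of a local ring of a smooth surface and then compare the two orders of taking residues directly on generators.
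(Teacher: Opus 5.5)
The paper gives no proof of this statement; it is quoted from Morel's book. Your outline follows Morel's architecture: the residue via the auxiliary ring $\K^{\MW}_*(\kappa(v))[\xi]$, descent along $\eta$ and $h$, unramified data, Milnor's split sequence, and the contraction via $\Gm$. \textbf{However, the passage to strong $\mba^1$-invariance does not go through as written.} Unramifiedness plus $M_n(F)=M_n(\mba^1_F)$ for fields only controls sections. Using the zero section it gives $M_n(X)\cong M_n(X\times\mba^1)$, which is the $\HH^0$ half of the definition. Strong invariance is a statement about $\HH^1$. Morel's criterion does not take as input only the unramified-datum axioms you list plus $\mba^1$-invariance over fields. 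It also requires compatibility axioms on the residue maps themselves:
\begin{itemize}
\item a codimension-two condition over essentially smooth local schemes of dimension $2$, essentially that the Gersten-type sequence $M_n(F)\to\bigoplus_{y\in X^{(1)}}\HH^1_y(X,M_n)\to\HH^2_z(X,M_n)$ is a complex;
\item a compatibility of residues on $\mba^1_{\mco_v}$ for a discrete valuation ring $\mco_v$, relating the residues at closed points of $\mba^1_F$ to the residue along the special fibre $\mba^1_{\kappa(v)}$.
\end{itemize}
Verifying these for $\K^{\MW}_*$ is the substantive part of the theorem after the construction of the residue. Your proposal neither states nor checks them.

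Your plan for the codimension-two axiom has a related problem. The specialization $s_{y_0}$ is only defined on $\bigcap_{y\in X^{(1)}}\ker\partial_y\subseteq\K^{\MW}_n(F)$. For a two-dimensional regular local ring $A$ you do not know a priori that this group is generated by products of symbols of units of $A$, on which the axiom would be trivial. That is a Gersten-type statement lying downstream of what you are trying to prove. A general element is a sum of symbols whose entries have arbitrarily many prime factors in $A$, so ``comparing the two orders of residues on generators'' is not a finite check. You need a further reduction that controls arbitrary unramified elements. One option is a Gabber-type presentation plus \'etale excision, reducing to a localization of $\mba^1$ over a discrete valuation ring, where Milnor's sequence describes all of $\K^{\MW}_*(F(t))$.

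Two smaller points. For the descent to $\I^*$, what you need is $h$-linearity, $\partial_v^\pi(hx)=h\,\partial_v^\pi(x)$; note $\partial_v^\pi(h)=0$ anyway. This follows from $\partial_v^\pi(\langle u\rangle x)=\langle\bar u\rangle\,\partial_v^\pi(x)$ for units $u$. Also, extending $M_n(X\times\Gm)=M_n(X)\oplus[t]\,M_{n-1}(X)$ from $X=\Spec F$ to general smooth $X$ uses the behaviour of residues at the Gauss valuations along $D\times\Gm$, which you should state.
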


\begin{remark}
	Let $\K_i^{\Q}$ be the Nisnevich sheaf associated to the presheaf $U\mapsto \K_i(U)$ where $K_i(U)$ is the $i$-th $K$-group of $U$. We will call these sheaves \emph{Quillen $K$-theory sheaves}. Using the algebraic Bott periodicity (\cite{voe02nordfjordeidlect}, Part 3, Section 3.2) and applying Theorem\autoref{thm:2.1.5.2} to $\mcx=\BGL_{\infty}$, one can show that the above theorem remains true for $M_*=\K^{\Q}_*$.
\end{remark}

The following theorem follows from Orlov, Vishik, and Voevodsky's proof of Milnor's conjecture \cite{OVV96}.

\begin{thm}
	We have an isomorphism of sheaves $\K^{\M}_n/2\cong\I^n/\I^{n+1}$ given by $$[a_1,\cdots,a_n]\mapsto\llangle a_1,\cdots,a_n\rrangle$$ when $n\geq 1$. Here $\llangle a_1,\cdots,a_n\rrangle=\<1,-a_1\>\otimes\cdots\otimes\<1,-a_n\>$ is the Pfister form.
\end{thm}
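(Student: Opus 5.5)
This is the sheaf-level form of the Milnor conjecture, proved by Orlov--Vishik--Voevodsky. The plan is to reduce it to the field statement via unramifiedness, and then quote the field statement \cite{OVV96}.

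\textbf{Step 1: the map on fields.} For any field $F$ finitely generated over $k$ (characteristic $\neq 2$), I first check that the assignment $[a_1,\cdots,a_n]\mapsto\llangle a_1,\cdots,a_n\rrangle$ gives a well-defined homomorphism $\K^{\M}_n(F)/2\to\I^n(F)/\I^{n+1}(F)$.
\begin{enumerate}
	\item Multilinearity modulo $\I^{n+1}$ follows from $\llangle ab\rrangle\equiv\llangle a\rrangle+\llangle b\rrangle \pmod{\I^2}$.
	\item The Steinberg relation holds because $\llangle a,1-a\rrangle$ is hyperbolic.
	\item The factor $2$ is killed because $2\llangle a\rrangle=\llangle a,-1\rrangle\in\I^2$.
\end{enumerate}
Alternatively, the map is simply the reduction modulo $\eta$ and $h$ of the isomorphism $\K^{\MW}_*/h\cong\I^*$ recalled above. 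Concretely, $\K^{\MW}_n/(h,\eta)$ equals both $\K^{\M}_n/2$ and $\I^n/\eta\I^{n+1}=\I^n/\I^{n+1}$; the first identification uses $h\equiv 2$ modulo $\eta$. This second description gives the well-definedness and compatibility for free. Surjectivity is immediate, since $\I^n(F)$ is generated by $n$-fold Pfister forms.

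\textbf{Step 2: injectivity on fields.} This is the deep input: the Milnor conjecture on quadratic forms, proved in \cite{OVV96} using Voevodsky's resolution of the Milnor conjecture on Galois cohomology. I would cite it rather than reprove it. It is the main obstacle, and the only genuinely nontrivial point.

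\textbf{Step 3: passing to sheaves.} Both sides are strictly $\mba^1$-invariant sheaves, being cokernels in the abelian category $\Ab_{\mba^1}(k)$: the first is the cokernel of multiplication by $2$ on $\K^{\M}_n$, the second the cokernel of $\I^{n+1}\subseteq\I^n$. They are therefore unramified, so by Morel's description a morphism between them is an isomorphism once it is an isomorphism on finitely generated fields. Indeed, by unramifiedness, for irreducible $X$ the sections over $X$ sit inside the sections over the function field $F$ as the intersection of the sections over $\mco_{X,x}$ for $x\in X^{(1)}$.

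It remains to check that the field maps define a morphism of sheaves. This means they must commute with the residue maps $\partial_v$ at discrete valuations on both sides, for example via the compatibility of $\partial_v$ on $\K^{\MW}_*$ with those on $\K^{\M}_*$ and $\I^*$. This compatibility is automatic if the sheaf map is constructed as the quotient map $\K^{\MW}_n\to\K^{\MW}_n/(h,\eta)$ identified with both sides, which is why I would use that construction. Steps 1 and 3 are then formal, and everything rests on Step 2.
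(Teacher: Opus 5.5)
Your proposal is correct and follows the paper, which gives no argument beyond attributing the statement to Orlov--Vishik--Voevodsky; your passage from fields to sheaves is the routine step the paper leaves implicit. That step is cleanest stated as follows: the kernel and cokernel of the comparison map are strictly $\mba^1$-invariant, hence unramified, and vanish on fields, hence vanish (your parenthetical about sections embedding into the function field only gives injectivity on sections).

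One caution about your ``alternative'' in Step 1. The isomorphism $\K^{\MW}_*/h\cong\I^*$ is Morel's theorem, whose proof relies on the Milnor conjecture, and reducing it modulo $\eta$ already yields the field-level statement outright, so on that route your Step 2 is redundant rather than the sole deep input. To build the sheaf map without presupposing the result, use the morphism $\K^{\MW}_n\to\I^n\to\I^n/\I^{n+1}$, which kills $h\K^{\MW}_n$ and $\eta\K^{\MW}_{n+1}$ and therefore factors through $\K^{\M}_n/2$.
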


We also need to define the twist of a sheaf by a line bundle. Let $X$ be an essentially smooth scheme over $k$ and let $L$ be a line bundle over $X$. Recall that sheaves of the form $A_{-1}$ admits natural $\Gm$-actions. Then we define the sheaf $A_{-1}(L)$ over the small Nisnevich site of $X$ as the sheaf associated to the presheaf
$$(u:U\to X)\mapsto A_{-1}(U)\otimes_{\mbz[\mco(U)^\times]} \mbz[(u^*L)^\times(U)]$$
where $L^\times$ is the $\Gm$-torsor given by the complement of the zero section of $L$.

\begin{prop}
	Let $X$ be an essentially smooth scheme over $k$ and let $L$ be a line bundle over $X$. Then there are exact sequences
	$$0\to\I^{n+1}(L)\to\K_n^{\MW}(L)\to\K^{\M}_n\to0$$
	and
	$$0\to\I^{n+1}(L)\to\I^n(L)\to\K^{\M}_n/2\to0$$
	of sheaves over the small Nisnevich site of $X$.
\end{prop}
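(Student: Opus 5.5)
The approach is to first prove the untwisted sequences (with $L$ trivial) as sequences of Nisnevich sheaves on $\Sm_k$. Then we descend to the small Nisnevich site of $X$ and twist by $L$, using that the twisting construction is exact.

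\textbf{Untwisted case.} All sheaves involved are unramified: $\K^{\MW}_n$, $\K^{\M}_n$, $\I^{n}$ by the theorem above, and $\K^{\M}_n/2$ as a cokernel in the abelian category $\Ab_{\mba^1}(k)$. A sequence of strictly $\mba^1$-invariant (hence unramified) sheaves is exact if it is exact on stalks, which are essentially smooth henselian local schemes. Since the sheaf structure is compatible, it suffices to work with the field-level description. Over a field $F$, the standard fibre square
\[
\K^{\MW}_n(F)\cong \I^n(F)\times_{\I^n(F)/\I^{n+1}(F)}\K^{\M}_n(F)
\]
gives the kernel of $\K^{\MW}_n\to\K^{\M}_n$, namely $\eta\K^{\MW}_{n+1}$. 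This kernel maps isomorphically onto $\I^{n+1}$. Here the fibre square uses the identification $\K^{\MW}_*/h\cong\I^*$, the identification $\K^{\M}_*=\K^{\MW}_*/\eta$, and the Milnor conjecture isomorphism $\K^{\M}_n/2\cong\I^n/\I^{n+1}$ stated above. The second sequence $0\to\I^{n+1}\to\I^n\to\I^n/\I^{n+1}\cong\K^{\M}_n/2\to0$ is then immediate at the field level. To pass from fields to sheaves, we check that the maps commute with the residue maps, which is built into how these unramified sheaves are constructed. Exactness on local rings then follows from unramifiedness: sections over a local ring are the elements of the function field with vanishing residues at codimension one points, and a diagram chase using exactness at $F$ and at residue fields gives the result. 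The point needing care is surjectivity onto $\K^{\M}_n$ over a local ring. I would deduce it from the exactness of $\Ab_{\mba^1}(k)\subseteq\Ab(k)$: the cokernel sheaf is strictly $\mba^1$-invariant and vanishes on fields, hence vanishes.

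\textbf{Twisting.} The sheaves $\I^{n+1}$ and $\K^{\MW}_n$ are contractions ($\I^{n+1}=(\I^{n+2})_{-1}$, and similarly for the others), so the twists make sense. The maps in the sequences are $\Gm$-equivariant, where $\Gm$ acts through $\<u\>$. This action is trivial on $\K^{\M}_n$ and on $\K^{\M}_n/2$, since $\<u\>-1=\eta[u]$ dies modulo $\eta$, and on $\I^n/\I^{n+1}$ the element $\<u\>$ acts as $1$. Hence their twists are canonically the untwisted sheaves. Locally on $X$, $L$ is trivial, and twisting by a trivialized $L$ recovers the original sheaf. Exactness of a sequence of sheaves on the small site is local, so the twisted sequences are exact. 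Formally, twisting is $A\otimes_{\mbz[\mco^\times]}\mbz[L^\times]$, and $\mbz[L^\times]$ is locally free of rank one over $\mbz[\mco^\times]$, hence flat, so tensoring preserves exactness after sheafification.

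The main obstacle is the field-level fibre-product identification: that $\ker(\K^{\MW}_n\to\K^{\M}_n)\to\I^{n+1}$ is an isomorphism. This rests on Morel's theorem, which relies on the Milnor conjecture. I would quote it as a consequence of the stated results rather than reprove it.
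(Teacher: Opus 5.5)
The paper gives no proof of this proposition. It is recalled among the preliminaries, following Morel, so there is no in-paper argument to compare with. Your proposal is a correct reconstruction of the standard proof, in three steps:
\begin{enumerate}
\item Prove exactness at the level of fields.
\item Promote it to sheaves. Kernels, cokernels and homology of maps in $\Ab_{\mba^1}(k)$ stay strictly $\mba^1$-invariant, hence unramified, hence zero once they vanish on all finitely generated fields; evaluation at $\Spec F$ is a Nisnevich stalk functor.
\item Twist. This is exact because $\mbz[L^\times]$ is locally free of rank one over $\mbz[\mco^\times]$, and the $\Gm$-action is trivial on $\K^{\M}_n$ and on $\K^{\M}_n/2$.
\end{enumerate}

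Two tightenings are worth making.

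\emph{The map $\I^{n+1}\to\K^{\MW}_n$.} Define it as the map induced by $\eta\colon\K^{\MW}_{n+1}\to\K^{\MW}_n$ on $\K^{\MW}_{n+1}/h\cong\I^{n+1}$; this is well defined since $\eta h=0$.
\begin{itemize}
\item It is then visibly a morphism of sheaves.
\item It is $\K^{\MW}_0$-linear because $\eta$ is central, which is exactly the $\Gm$-equivariance you need for the twist.
\item Its injectivity on fields is formal from $\K^{\MW}_*/h\cong\I^*$ with $\eta$ acting by inclusion. If $\eta y=0$, the image of $y$ in $\I^{n+1}\subseteq\I^n$ vanishes, so $y\in h\K^{\MW}_{n+1}$.
\end{itemize}
So the first sequence needs only that theorem, which is where the Milnor conjecture is hidden. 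You do not need the full fibre square, and you do not need the isomorphism $\K^{\M}_n/2\cong\I^n/\I^{n+1}$ there. That isomorphism is used only to identify the cokernel in the second sequence. For $n\le 0$ that identification is elementary: $\W/\I\cong\mbz/2$, and both sides vanish for $n<0$.

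\emph{The passage to sheaves.} The $\Ab_{\mba^1}(k)$ argument you reserve for surjectivity also gives injectivity and exactness in the middle. Apply it to the kernel and to the middle homology sheaf, and you can drop the residue-by-residue diagram chase.
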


\subsubsection{The Rost-Schmid Complex}\label{subsec:2.1.4}

The Rost-Schmid complex provides an explicit chain complex that computes the cohomology of a strongly $\mba^1$-invariant sheaf of abelian groups (\cite{Mor12A1AT}, Chapter 5).

Let $X$ be an essentially smooth scheme over $k$ and let $M$ be a strongly $\mba^1$-invariant sheaf of abelian groups. Recall that $X^{(i)}$ denote the set of points of $X$ with codimension $i$. We first define the abelian groups occurring in the complex.

\begin{defn}
	For any integer $n\geq 1$, set
	$$C^n_{\RS}(X,M):=\bigoplus_{x\in X^{(n)}} M_{-n}(\kappa(x))\otimes_{\mbz[\kappa(x)^\times]}\mbz[\omega_{x/X}^\times]$$
	where $\omega_{x/X}$ denotes the top wedge of the $\kappa(x)$-vector space $\mfm_x/\mfm_x^2$, i.e. the fiber of the canonical bundle of $X$ at $x$, and the action of $\kappa(x)^\times$ on $M_{-n}(\kappa(x))$ is the natural one for $n\geq 1$. When $n=0$, set $C_{\RS}^0(X,M)=M(k(X))$.
\end{defn}

\begin{remark}
	Note that $\omega_{x/X}$ is isomorphic to $\kappa(x)$.
	The twist by $\omega_{x/X}$ is to get rid of choices of uniformizers in some discrete valuation rings when we define the differentials.
\end{remark}

Here we briefly sketch how the differentials are defined. Take $y\in X^{(n-1)},z\in X^{(n)}$ for some $n\geq 1$. Let $Y=\bar{\{y\}}$. Assume $Y$ is normal for the moment. To define the differentials, it suffices to define a map
$$\partial^y_z:M_{-n+1}(\kappa(y))\to M_{-n}(\kappa(z))\otimes_{\mbz[\kappa(z)^\times]}\mbz[\omega^\times_{z/Y}]$$
when $z\in Y$ (otherwise set $\partial_z^y=0$). We set $\partial_z^y$ to be the composition
$$M_{-n+1}(\kappa(y))\xrightarrow{\partial}\HH^1_z(Y,M_{-n+1})\cong M_{-n}(\kappa(z))\otimes_{\mbz[\kappa(z)^\times]}\mbz[\omega^\times_{z/Y}]$$ where $\partial$ is the boundary map in the long exact sequence of cohomology with supports, and the isomorphism essentially follows from homotopy purity (\cite{MV99}, Theorem 2.23).

\begin{remark}
	When $Y$ is not normal, we need to pass to the normalization and use certain transfer maps to deal with the possible field extensions appearing in the normalization of $Y$.
\end{remark}

\begin{defn}
	Let
	$$\partial_{RS}:C^*_{\RS}(X,M)\to C^{*+1}_{\RS}(X,M)$$
	denote the map obtained by taking the direct sum of the differentials of the form $\partial_z^y$. The resulting chain of abelian groups is called the \emph{Rost-Schmid complex}.
\end{defn}

\begin{thm}
	$\partial_{\RS}^2=0$. In other words, the Rost-Schmid complex is indeed a chain complex.
\end{thm}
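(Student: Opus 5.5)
To show $\partial_{\RS}^2=0$, I will reduce it to a statement about cohomology with supports, where the vanishing becomes a formal consequence of the long exact sequences attached to a filtration by codimension. Fix $x\in X^{(n-1)}$ and $z\in X^{(n+1)}$. The $(x,z)$-component of $\partial_{\RS}^2$ is $\sum_{y}\partial^y_z\circ\partial^x_y$, the sum running over codimension $n$ points $y$ with $z\in\bar{\{y\}}\subseteq\bar{\{x\}}$. Since all the groups and maps localize, I will pass to the essentially smooth local scheme $\Spec\mco_{X,z}$, where only finitely many such $y$ occur. Thus it suffices to prove: for a local essentially smooth scheme and its closed point $z$, the composite from the generic-point term down to $z$ through all intermediate $y$ vanishes.

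\textbf{Step 1: coniveau spectral sequence.} Filter $X$ by closed subsets $Z^p$ of codimension $\ge p$ and consider the exact couple formed by the groups $\HH^*_{Z^p}(X,M)$. The $E_1$-page is $\bigoplus_{x\in X^{(p)}}\HH^{p+q}_x(X,M)$. Because the $d_1$-differentials are connecting maps of an exact couple, $d_1\circ d_1=0$ holds automatically: it factors through two consecutive maps of one long exact sequence.

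\textbf{Step 2: identify $E_1$ with $C^*_{\RS}$.} By homotopy purity, i.e. the identification quoted before the statement following \cite{MV99}, Theorem 2.23, together with strict $\mba^1$-invariance, we have
$$\HH^p_x(X,M)\cong M_{-p}(\kappa(x))\otimes_{\mbz[\kappa(x)^\times]}\mbz[\omega_{x/X}^\times].$$
The local cohomology vanishes in other degrees, by Gersten-type acyclicity for strictly $\mba^1$-invariant sheaves. So the row $q=0$ of the $E_1$-page is termwise $C^*_{\RS}(X,M)$.

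\textbf{Step 3: compare the differentials.} This is the main obstacle. I must check that $d_1$ agrees with the explicitly defined $\partial_{\RS}$, including the sign and twist conventions. The key difficulty is the non-normal case, where $\partial_{\RS}$ is defined through normalization and transfers. For normal $\bar{\{y\}}$, the definition of $\partial^y_z$ is literally the connecting map in the cohomology with supports of $Y=\bar{\{y\}}$. Compatibility with $d_1$ on $X$ then follows from naturality of purity isomorphisms under the closed immersion $Y\hookrightarrow X$ (deformation to the normal bundle, giving the $\omega$ twists). For non-normal $Y$, I would first prove that the connecting map on $Y$ factors through the normalization $\tilde Y\to Y$ via the transfers. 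To do so, I would reduce to curves over $\kappa(x)$, i.e. local dimension-one situations. There I would use Morel's construction of the transfers as the unique maps making the residue sequence for $\mba^1_{\kappa}$ / $\mbp^1$ compatible.

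\textbf{Conclusion.} Once Steps 1–3 are in place, $\partial_{\RS}=d_1$, and so $\partial_{\RS}^2=d_1^2=0$.

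An alternative route, which I would try if the comparison in Step 3 proves too delicate, is to check $\partial^2=0$ directly in the two-dimensional local case. One would blow up to reduce to a regular local ring of dimension 2 with normal crossing divisors, and then use the reciprocity law for residues on $\mbp^1$. That reciprocity law is the essential input in either approach.
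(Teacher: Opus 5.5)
\textbf{There is a genuine gap: Step 2 is circular.} The paper does not prove this statement; it quotes it from \cite{Mor12A1AT}, Chapter 5. The surrounding text fixes the logical order:
\begin{itemize}
\item the Rost--Schmid complex is defined for \emph{strongly} $\mba^1$-invariant $M$;
\item $\partial_{\RS}^2=0$, together with $\mba^1$-invariance and acyclicity of the complex, is what is used to prove that strongly $\mba^1$-invariant sheaves are strictly $\mba^1$-invariant (see the remark after that theorem and the corollary closing the subsection).
\end{itemize}
Your Step 2 assumes strict $\mba^1$-invariance, so as a proof of this statement it assumes the result the complex is built to establish.

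The assumption is not cosmetic. Identifying $\HH^p_x(X,M)$ with $M_{-p}(\kappa(x))\otimes\mbz[\omega^\times_{x/X}]$ via homotopy purity requires $K(M,p)$ to be $\mba^1$-local, i.e.\ $\HH^i(-,M)$ must be $\mba^1$-invariant for all $i\leq p$. Strong invariance only makes $K(M,1)=\B M$ $\mba^1$-local. That is exactly why the Rost--Schmid differential is built from codimension-one purity $\HH^1_z(\overline{\{y\}},M_{-n+1})$ on closures of points, with coefficients in contractions, and never from $\HH^p_x(X,M)$ for $p\geq 2$. Without strict invariance you do not know that the coniveau $E_1$-terms in degrees $\geq 2$ are the Rost--Schmid groups. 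So the comparison in Step 3 cannot even be formulated, and $d_1^2=0$ tells you nothing about $\partial_{\RS}$.

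\textbf{Step 3 is only sketched, and it carries all the content.} Suppose you had strict invariance from an independent source, as for sheaves known a priori to be strictly invariant. There the coniveau route is legitimate; it is how one treats homotopy modules. You would still need two things:
\begin{itemize}
\item compatibility of nested purity isomorphisms with the boundary maps;
\item for non-normal $\overline{\{y\}}$, that the connecting map in cohomology with supports factors through the normalization via Morel's canonical transfers. This is the ``geometric versus canonical transfers'' comparison, a result of the same depth as the statement itself.
\end{itemize}

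\textbf{What the intended generality requires.} You need an argument that stays within codimension-one residues and transfers. The $(x,z)$-component of $\partial_{\RS}^2$ depends essentially only on the two-dimensional local scheme $\overline{\{x\}}$ localized at $z$, with coefficients in $M_{-(n-1)}$. The vanishing there must come from the compatibility of residues with transfers, i.e.\ reciprocity on $\mbp^1$, which is built into the definition of the canonical transfers. Your fallback route points in this direction, since it does not need strict invariance. But it is a single sentence, and it would additionally require compatibility of the Rost--Schmid differentials with pushforward along the blow-up.
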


\begin{thm}
	The Rost-Schmid complex is $\mba^1$-invariant: for any essentially smooth scheme $X$ over $k$ and any strongly $\mba^1$-invariant sheaf of abelian groups $M$, we have a quasi-isomorphism $C^*_{\RS}(X,M)\cong C^*_{\RS}(X\times\mba^1,M)$.
\end{thm}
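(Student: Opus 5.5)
The plan is to reduce to the case $X=U\times\mba^1$ with $U$ essentially smooth and $M$ replaced by $M$ together with its contractions. The key tool is the explicit structure of the Rost-Schmid complex of $\mba^1_F$ over a field $F$. Concretely, the projection $p:X\times\mba^1\to X$ induces a pullback $p^*:C^*_{\RS}(X,M)\to C^*_{\RS}(X\times\mba^1,M)$. It sends a point $x\in X^{(n)}$ to the generic point of $p^{-1}(\bar{\{x\}})$, which has codimension $n$, and uses the canonical identification $\omega_{x/X}\cong\omega_{p^{-1}x/X\times\mba^1}$. The first step is to check that $p^*$ commutes with the differentials. The residue maps $\partial^y_z$ are defined via cohomology with supports, and flat pullback along $p$ is compatible with the boundary map $\partial$ and with the purity isomorphism. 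So this reduces to naturality of the long exact sequence of supports, and of the Morel–Voevodsky purity isomorphism under smooth base change.

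Next I will construct a homotopy inverse, or at least show that $p^*$ is a quasi-isomorphism, by filtering $C^*_{\RS}(X\times\mba^1,M)$ by the codimension of the image point in $X$. A point $w\in(X\times\mba^1)^{(n)}$ maps to some $x\in X$ of codimension either $n$ (when $w$ is the generic point of the fiber over $x$) or $n-1$ (when $w$ is a closed point of the fiber $\mba^1_{\kappa(x)}$). This gives a short exact sequence of complexes, or more precisely a two-step filtration on each fiber. The associated graded pieces are
$$\bigoplus_{x\in X^{(m)}} C^*_{\RS}\bigl(\mba^1_{\kappa(x)},M_{-m}\bigr)\otimes\mbz[\omega^\times_{x/X}]$$
suitably shifted. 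The spectral sequence (or a direct diagram chase) then reduces the claim to the field case: for every finitely generated field $F/k$ and every strongly $\mba^1$-invariant $M$, the complex
$$M(F(t))\xrightarrow{\partial}\bigoplus_{z\in(\mba^1_F)^{(1)}}M_{-1}(\kappa(z))\otimes\mbz[\omega^\times_z]$$
has kernel $M(F)$ and is surjective.

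The field case is the main obstacle. I would obtain it from strong $\mba^1$-invariance via the Nisnevich cohomology of $\mba^1_F$. By the Gersten-type resolution for unramified sheaves (Morel, Chapter 5), this complex computes $\HH^0$ and $\HH^1$ of $\mba^1_F$ with coefficients in $M$. Strong $\mba^1$-invariance then gives $\HH^0(\mba^1_F,M)=M(F)$ and $\HH^1(\mba^1_F,M)=\HH^1(\Spec F,M)=0$. If one prefers not to invoke the comparison with sheaf cohomology (which partly relies on strict $\mba^1$-invariance), I would instead argue directly, following Morel's analysis of $\mba^1$ over a field. For surjectivity, each residue $\partial_z$ with $z$ a closed point admits a section, namely multiplication by the monic irreducible polynomial $\pi_z$ followed by a transfer. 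For exactness at $M(F(t))$, an element unramified at all finite points extends over $\mba^1_F$, and is therefore constant by $\mba^1$-invariance together with the unramifiedness of $M$. Finally, the non-normal closures appearing in the differentials require transfers. Their compatibility with the filtration is the delicate bookkeeping point, and I would handle it by passing to normalizations fiberwise. For a general essentially smooth $X$, one takes colimits, which is harmless since all constructions commute with filtered colimits.
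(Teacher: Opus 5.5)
The paper gives no proof of this statement and simply quotes it from Morel's book (Chapter~5). Your argument is essentially the Rost-type fibration argument used there, and it is correct: the pullback $p^*$, then the filtration of $C^*_{\RS}(X\times\mba^1,M)$ by the codimension of the image point in $X$, whose graded pieces are the complexes $C^*_{\RS}(\mba^1_{\kappa(x)},M_{-m})$ twisted by $\omega_{x/X}$, and finally the field case $0\to M(F)\to M(F(t))\to\bigoplus_z M_{-1}(\kappa(z))\otimes\mbz[\omega_z^\times]\to 0$. The one point to tighten is the field case: derive it directly from strong $\mba^1$-invariance (the coniveau sequence on the curve $\mba^1_F$, $\HH^1_{\Nis}(\mba^1_F,M)=\HH^1(F,M)=0$, and purity for $\HH^1_z$) rather than from the Gersten resolution of Morel's Chapter~5, which comes after and depends on this theorem; and drop your ``direct'' surjectivity sketch, since sections of the individual maps $\partial_z$ do not give surjectivity onto the direct sum, and the transfers on $M$ you would need are, in Morel's development, built from this very exact sequence.
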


\begin{thm}
	Let $M$ be a strongly $\mba^1$-invariant sheaf of abelian groups. For an essentially smooth scheme $X$ over $k$, the complex $C_{\RS}^*(-,M)$ viewed as a complex of sheaves over $X$ is an acyclic resolution of $M$ over $X$ in the Zariski and the Nisnevich topology.
	
	In particular, for any essentially smooth scheme $X$ over $k$ and any strongly $\mba^1$-invariant sheaf of abelian groups $M$, there are isomorphisms
	$$\HH^*(C^*_{\RS}(X,M))\cong\HH^*_{\Nis}(X,M)\cong\HH_{\Zar}^*(X,M).$$
\end{thm}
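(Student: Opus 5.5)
The plan is to follow Morel's strategy (\cite{Mor12A1AT}, Chapter 5): show that the Rost--Schmid complex, viewed as a complex of Nisnevich sheaves, resolves $M$, and then show that each term is acyclic. The cohomology isomorphisms follow immediately. Write $\mathcal{C}^n(M)$ for the sheaf $U\mapsto C^n_{\RS}(U,M)$ on the small site of $X$. Each term is a direct sum over points $x\in X^{(n)}$ of pushforwards $i_{x*}$ of the constant sheaf with value $M_{-n}(\kappa(x))$, twisted by $\omega_{x/X}$.

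\emph{Acyclicity.} Each such summand is a skyscraper-type sheaf supported on $\overline{\{x\}}$ with constant value. In the Zariski topology it is flasque, so its higher cohomology vanishes. In the Nisnevich topology it is a pushforward from the point $\Spec\kappa(x)$. Nisnevich cohomology of a field is trivial, since every Nisnevich cover of a field splits, so the Leray spectral sequence gives vanishing of the higher cohomology. Direct sums commute with cohomology on Noetherian spaces of finite Krull dimension, which takes care of the whole term. Together with exactness of the augmented complex, this gives $\HH^*(C^*_{\RS}(X,M))\cong\HH^*_{\Nis}(X,M)\cong\HH^*_{\Zar}(X,M)$.

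\emph{Exactness of $0\to M\to\mathcal{C}^0\to\mathcal{C}^1\to\cdots$.} It suffices to check exactness on stalks. In the Nisnevich case these are the complexes $C^*_{\RS}(\mathcal{O}^h_{X,x},M)$ for henselian local essentially smooth $\mathcal{O}^h_{X,x}$; in the Zariski case we use $\mathcal{O}_{X,x}$. Exactness in degrees $0$ and $1$ is exactly unramifiedness of $M$ (the Proposition above): $M$ embeds into $M(k(X))$, and its image is the intersection of the kernels of the residue maps. For higher degrees I would use Gabber's geometric presentation lemma, which works over infinite $k$; $\mbr$ is infinite. Given a local essentially smooth $U$ and a closed $Z\subset U$ of positive codimension, after shrinking there is an étale map $U\to\mba^1_V$ that identifies $Z$ with a closed subscheme finite over $V$, with $V$ essentially smooth of smaller dimension.

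Using this presentation, the coniveau filtration of $C^*_{\RS}(U,M)$ is compared with that of $\mba^1_V$, and then with $C^*_{\RS}(V,M)$ and $C^*_{\RS}(V,M_{-1})$ via the $\mba^1$-invariance theorem stated above. Concretely, one shows that any cycle supported in codimension $\geq1$ on $U$ becomes a boundary after restricting to a smaller neighbourhood. This proceeds by induction on $\dim U$: the fibration $\mba^1_V\to V$ splits the Rost--Schmid complex of $\mba^1_V$ into pieces controlled by $C^*_{\RS}(V,M)$ and $C^*_{\RS}(V,M_{-1})$. Both are exact on local $V$ by the inductive hypothesis, using that $M_{-1}$ is again strongly $\mba^1$-invariant and that contraction is exact (Propositions above).

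\emph{Main obstacle.} The hardest step is the local exactness in high degrees. It needs compatibility of the Rost--Schmid differentials with the étale excision isomorphisms supplied by Gabber's lemma, with twists by $\omega$ handled correctly. It also needs the transfers used in non-normal cases, so that $\partial^2=0$ and the differentials agree with the connecting maps of cohomology with supports. I would first establish the identification $\HH^n_z(U,M)\cong M_{-n}(\kappa(z))\otimes\mbz[\omega^\times_{z/U}]$ for $z$ of codimension $n$, by purity plus induction. With that identification, the coniveau spectral sequence of $M$ has $E_1$-page equal to the Rost--Schmid complex, and it is concentrated in one row because local cohomology vanishes outside degree equal to codimension. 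This gives the resolution statement directly.
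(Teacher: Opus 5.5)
The paper gives no argument of its own here; it simply quotes Morel (Chapter 5). Your first two parts are essentially Morel's route and are sound, granted $\partial_{\RS}^2=0$ and the $\mba^1$-invariance of the complex as stated earlier in the paper. Those parts are termwise acyclicity, then local exactness by induction on dimension via Gabber's presentation lemma and the $\mba^1$-invariance of the Rost--Schmid complex.

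The inductive step is cleanest stated without any local cohomology:
\begin{enumerate}
\item A degree $n\geq 1$ cocycle $\alpha$ near $x$ is supported on a closed $Z$ of codimension $n$.
\item Take $\varphi:U\to\mba^1_V$ \'{e}tale with $\varphi|_Z$ a closed immersion and $\varphi^{-1}(\varphi(Z))=Z$. The subcomplexes supported on $Z$ and on $\varphi(Z)$ then agree (same points, residue fields and $\omega$'s), so $\alpha$ is a cocycle on $\mba^1_V$.
\item Let $v$ be the image of $x$. Over $\Spec\mco_{V,v}$, $\mba^1$-invariance plus the inductive hypothesis give $\alpha=\partial\beta$.
\item Hence $\alpha=\varphi^*\alpha=\partial(\varphi^*\beta)$ near $x$.
\end{enumerate}
Only exactness of $C^*_{\RS}(\Spec\mco_{V,v},M)$ is used. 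No separate $M_{-1}$-complex on $V$ is needed; those terms are internal to the proof of $\mba^1$-invariance.

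The step that fails is the plan in your last paragraph. There you propose to first prove
$$\HH^n_z(U,M)\cong M_{-n}(\kappa(z))\otimes_{\mbz[\kappa(z)^\times]}\mbz[\omega^\times_{z/U}],$$
with vanishing in other degrees, ``by purity'', and then read off the resolution from the coniveau spectral sequence.
\begin{itemize}
\item Homotopy purity computes $\HH^n_Z(X,M)$ only when $K(M,n)$ is $\mba^1$-local, i.e.\ when $\HH^i(-,M)$ is $\mba^1$-invariant for all $i\leq n$.
\item For a merely strongly $\mba^1$-invariant $M$ this is known only for $i\leq 1$. That is exactly why the paper uses purity only for $\HH^1_z$ in codimension one when defining the differentials.
\item Concentration of local cohomology in degree equal to the codimension is the Gersten property for $M$. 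Together with the $\mba^1$-invariance of the complex, it already yields strict $\mba^1$-invariance.
\item The paper deduces strict $\mba^1$-invariance from this very theorem: see the Corollary that follows it, and the Remark saying that strong implies strict is proved via Rost--Schmid complexes.
\end{itemize}
So that route is circular. The local-cohomology computation is a consequence of the resolution, not an input to it. The Gabber argument never needs to match Rost--Schmid differentials with connecting maps in higher codimension, so you should drop that paragraph rather than treat it as the main obstacle.

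A minor point: Nisnevich-locally the summands are pushforwards of $L\mapsto M_{-n}(L)\otimes\omega$ on the small site of $\kappa(x)$, not constant sheaves. They are still acyclic by your argument.
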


\begin{coro}
	A sheaf of abelian groups is strictly $\mba^1$-invariant if and only if it is strongly $\mba^1$-invariant.
\end{coro}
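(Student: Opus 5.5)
The corollary states that a sheaf of abelian groups is strictly $\mba^1$-invariant if and only if it is strongly $\mba^1$-invariant. I will prove the two implications separately, using only the results on Rost-Schmid complexes recalled above.

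The direction ``strictly $\Rightarrow$ strongly'' is immediate from the definitions. Strict $\mba^1$-invariance asks that $\HH^i(X,M)\to\HH^i(X\times\mba^1,M)$ be an isomorphism for all $i\in\mbn$. Strong $\mba^1$-invariance asks only for $i=0,1$, so it is the special case.

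For ``strongly $\Rightarrow$ strictly'', let $M$ be strongly $\mba^1$-invariant and $X\in\Sm_k$. By the comparison theorem above, the Rost-Schmid complex computes Nisnevich cohomology for both $X$ and $X\times\mba^1$:
$$\HH^i_{\Nis}(X,M)\cong\HH^i(C^*_{\RS}(X,M)),\qquad \HH^i_{\Nis}(X\times\mba^1,M)\cong\HH^i(C^*_{\RS}(X\times\mba^1,M)).$$
The $\mba^1$-invariance theorem for Rost-Schmid complexes gives a quasi-isomorphism $C^*_{\RS}(X,M)\simeq C^*_{\RS}(X\times\mba^1,M)$. Combining these, $\HH^i(X,M)\cong\HH^i(X\times\mba^1,M)$ for every $i$.

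The one step needing care, and the main obstacle, is checking that this abstract isomorphism is the map induced by the projection $p:X\times\mba^1\to X$. I would argue as follows.

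1. The quasi-isomorphism in the $\mba^1$-invariance theorem is induced by flat pullback along $p$, which sends a point $x\in X^{(n)}$ to the generic point of $p^{-1}(\bar{\{x\}})$, of codimension $n$ in $X\times\mba^1$.
2. The identification of Rost-Schmid cohomology with sheaf cohomology is natural for smooth (flat) morphisms. This holds because $C^*_{\RS}(-,M)$ is a resolution of $M$ by sheaves that are functorial under such pullbacks.
3. Hence the square relating $p^*$ on Nisnevich cohomology and $p^*$ on Rost-Schmid cohomology commutes, so the map induced by $p$ is itself an isomorphism.

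Alternatively, the zero section $s:X\to X\times\mba^1$ satisfies $s^*p^*=\mathrm{id}$. So $p^*$ is injective on $\HH^i$, and by the abstract isomorphism above it is then bijective whenever the groups are finite. In general, however, I would rely on the naturality argument in steps 1–3. Either way, $M$ is strictly $\mba^1$-invariant, which completes the equivalence.
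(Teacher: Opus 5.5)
Your proof is correct and follows the paper's route. The paper gives this corollary no separate proof: the direction from strictly to strongly $\mba^1$-invariant holds by definition, and the converse comes from combining the $\mba^1$-invariance of the Rost--Schmid complex with its identification with Nisnevich cohomology. Your check that the quasi-isomorphism is flat pullback along the projection, and that the comparison is natural for smooth morphisms, is exactly the point the paper leaves implicit; the finite-group argument via the zero section is unnecessary and can be dropped.
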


\begin{remark}
	For a line bundle $L$ over $X$ and a strongly $\mba^1$-invariant sheaf $M$ of abelian groups of the form $M=A_{-1}$, we can define the \emph{twisted Rost-Schmid complex} $C^*_{\RS}(X,M(L))$ by replacing $\omega_X$ with $\omega_X\otimes L$ and twisting the differentials by nonzero local sections accordingly.
\end{remark}

\subsubsection{Homotopy Sheaves and Postnikov Towers}

Now we review the basic properties of homotopy sheaves and Postnikov towers (\cite{Mor12A1AT}, Chapter 6 and Appendix B). Fix a motivic space $\mcx\in\Spc(k)$.

\begin{defn}
	\begin{enumerate}
		\item Define $\pi_0^{\mba^1}(\mcx)$ to be the sheaf associated to the presheaf $U\mapsto[U,\mcx]_{\mba^1}$.
		\item If $\mcx$ is pointed, define $\pi_n^{\mba^1}(\mcx)$ to be the sheaf associated to the presheaf $U\mapsto [\Sigma^n(U_+),\mcx]_{\mba^1,*}$.
	\end{enumerate}
\end{defn}

\begin{thm}\label{thm:2.1.5.2}
	For any pointed motivic space $\mcx\in\Spc_*(k)$, $\pi_1^{\mba^1}(\mcx)$ is strongly $\mba^1$-invariant. As a consequence, for $n\geq 2$, $\pi_n^{\mba^1}(\mcx)$ is strictly $\mba^1$-invariant.
\end{thm}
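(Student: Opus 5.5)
This is Morel's theorem on $\mathbb{A}^1$-homotopy sheaves, and the plan is to follow the strategy of \cite{Mor12A1AT}, Chapter 6. The statement reduces to showing that $\pi_1^{\mathbb{A}^1}$ of any pointed motivic space is strongly $\mathbb{A}^1$-invariant. Once this is known, the second assertion is formal. For $n\geq 2$ we have $\pi_n^{\mathbb{A}^1}(\mathcal{X})\cong\pi_1^{\mathbb{A}^1}(\Omega^{n-1}\mathcal{X})$, where the $\mathbb{A}^1$-localization of an $\mathbb{A}^1$-invariant sheaf is already $\mathbb{A}^1$-invariant, so $\Omega^{n-1}\mathcal{X}$ is again a motivic space. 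This sheaf is abelian by the usual Eckmann--Hilton argument, since it is $\pi_1$ of a loop space. A strongly $\mathbb{A}^1$-invariant sheaf of abelian groups is strictly $\mathbb{A}^1$-invariant by the theorem recalled above, proved via the Rost--Schmid complex.

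For $\pi_1$, I would proceed as follows.

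\emph{Step 1: reduce to connected spaces.} Replace $\mathcal{X}$ by the connected component of the base point, which does not change $\pi_1$. The space $\Omega\mathcal{X}$ is then an $\mathbb{A}^1$-invariant sheaf of group-like $H$-spaces. So $G:=\pi_1^{\mathbb{A}^1}(\mathcal{X})=\pi_0(\Omega\mathcal{X})$ is the Nisnevich sheafification of $U\mapsto[U,\Omega\mathcal{X}]_{\mathbb{A}^1}$.

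\emph{Step 2: unramifiedness.} Show that $G$ is unramified and has the specialization and residue structure over fields and discrete valuation rings. The key input is Gabber's presentation lemma, in Morel's form: for $X$ smooth, $Z\subset X$ closed of positive codimension and $x\in Z$, there is a Nisnevich neighbourhood of $x$ in which $Z$ becomes a closed subscheme of an affine line over a smooth base. Using this with $\mathbb{A}^1$-invariance and the Nisnevich descent squares of $\mathcal{X}$, one proves the connectivity statement $\pi_0(\mathcal{X})(\mathcal{O}_{X,x})\to\pi_0(\mathcal{X})(k(X))$ and the purity statements in codimension $1$ and $2$. These say that $G$ is determined by its values on fields, together with residues at codimension-one points subject to a codimension-two compatibility.

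\emph{Step 3: strong invariance.} Prove that $\mathrm{H}^1(-,G)$ is $\mathbb{A}^1$-invariant by classifying $G$-torsors through the nonabelian Cousin/Rost--Schmid-type description in degrees $\leq 1$. That description is available by Step 2. It reduces the question to the case of $\mathbb{A}^1_F$ over a field $F$, and there one uses that $\mathbb{A}^1$-invariance of $\Omega\mathcal{X}$ forces $G$-torsors on $\mathbb{A}^1_F$ to be constant. Concretely, this is the statement that $G(\mathbb{A}^1_F)=G(F)$ together with the vanishing of the relevant residues, which is the ``$\mathbb{A}^1$-invariance of the Gersten complex in low degree''.

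\emph{Main obstacle.} I expect Step 3, and the codimension-two compatibility feeding into it, to be the main difficulty. The sheaf $G$ need not be abelian, so the Rost--Schmid machinery cannot be applied directly. One must work with group actions and twisted residues instead, following Morel's theory of strongly $\mathbb{A}^1$-invariant sheaves of groups. I would first try to mimic the abelian proof, replacing cochain complexes by crossed complexes in degrees $0,1,2$, and check the required $\mathbb{A}^2$-case by hand.
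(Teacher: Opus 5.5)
\emph{Relation to the paper.} The paper does not prove this statement; it quotes Morel (Chapter 6). The words ``as a consequence'' refer exactly to your deduction: $\pi_n^{\mba^1}(\mcx)=\pi_1^{\mba^1}(\Omega^{n-1}\mcx)$, which is abelian by Eckmann--Hilton, and strongly implies strictly $\mba^1$-invariant for abelian sheaves. That part is fine, with one correction. The reason $\Omega^{n-1}\mcx$ is again a motivic space is that $\mba^1$-invariant sheaves are closed under limits; your remark about $\mba^1$-localization is beside the point. For $\pi_1$ itself, deferring to Morel is also what the paper does. However, the outline you give for $\pi_1$ contains a genuine error at its decisive step.

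\emph{The gap.} In Step 3 you claim that $\mba^1$-invariance of $\Omega\mcx$ forces $G$-torsors on $\mba^1_F$ to be trivial. It does not: invariance of $\Omega\mcx$ controls $G=\pi_0(\Omega\mcx)$ in degree $0$, not its torsors. Take $G=\mbz[\Gm]$, the Nisnevich sheaf $U\mapsto\mbz[\mco(U)^\times]$ on connected smooth $U$. It is $\mba^1$-invariant, so $\mcx=\B G$ has $\mba^1$-invariant loop space. Yet $\HH^1(\mba^1_{\mbr},G)$ is the cokernel of $\mbz[\mbr(T)^\times]\to\bigoplus_x\mbz[(K^h_x)^\times\setminus(\mco^h_x)^\times]$, where $\mco^h_x$ is the henselian local ring at $x$ and $K^h_x$ its fraction field. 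The basis element $[T\sqrt{1+T}]$ in the summand at $x=0$ is not in the image, so $\HH^1(\mba^1_{\mbr},G)\neq 0$.

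The input that actually works is the $\mba^1$-invariance of $\mcx$ itself. Since $\mba^1_F$ has Nisnevich cohomological dimension $1$, a torsor $\mba^1_F\to\B G$ lifts through the Postnikov tower of the base-point component of $\mcx$; the obstructions lie in $\HH^{i+1}(\mba^1_F,\pi_i)$ for $i\geq 2$, which vanish. Any map $\mba^1_F\to\mcx$ then factors up to homotopy through $\Spec F$, over which torsors are trivial.

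Even granting this, the reduction from arbitrary $X\times\mba^1$ to $\mba^1_F$ is the hard content of Morel's theory of strongly $\mba^1$-invariant sheaves of groups. It requires a nonabelian description of $\HH^1(X,G)$ and the codimension-two compatibility. Your plan to mimic the abelian proof with crossed complexes is a placeholder for this, not an argument.

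A smaller point in Step 2: the Gabber-lemma argument only shows that a class restricting to the base point on a dense open is the base point. That gives injectivity for the group $\pi_0(\Omega\mcx)$, but not for $\pi_0(\mcx)$ as you wrote it. For $\pi_0(\mcx)$ it can fail: $\Gm$ with a doubled point is an $\mba^1$-invariant sheaf of sets that is not unramified.
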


We need the following computational result.
Recall that if we set 
$$\BGL_n:=\colim(\cdots\to \Gr_{n,m}\to\Gr_{n,m+1}\to\cdots)$$
and
$$\BGL_\infty:=\colim(\cdots\to \BGL_n\to\BGL_{n+1}\to\cdots)$$
then $K=\mbz\times\BGL_{\infty}$ represents algebraic $K$-theory (\cite{voe02nordfjordeidlect}, Part 3, Section 3.2). So $\pi_i^{\mba^1}(\BGL_{\infty})=\K^{\Q}_i$ for $i\geq 1$. By considering the fiber sequences
$$\GL_n\to\GL_{n+1}\to \GL_{n+1}/\GL_n= \mba^{n+1}-\{0\}$$
and arguing as in the classical topological case, we obtain the following proposition.

\begin{prop}
	We have natural maps $\pi_i^{\mba^1}(\GL_n)\to\K^{\Q}_i$ which is an isomorphism for $i\leq n-2$ and an epimorphism for $i=n-1$.
\end{prop}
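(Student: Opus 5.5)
We prove this the way one proves the analogous topological statement $\pi_i(\GL_n)\to\pi_i(\GL_\infty)=\pi_{i}(\Omega \BGL_\infty)$ is an isomorphism in a range: via connectivity of the quotients $\GL_{m+1}/\GL_m$ and long exact sequences of homotopy sheaves.

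First we identify $\pi_i^{\mba^1}(\BGL_\infty)=\K^{\Q}_i$. This needs the shift $\pi_i^{\mba^1}(\GL_\infty)\cong\pi_{i+1}^{\mba^1}(\BGL_\infty)$, coming from the $\mba^1$-fiber sequence $\GL_\infty\to E\GL_\infty\to\BGL_\infty$, together with the representability statement for $K=\mbz\times\BGL_\infty$. The natural map $\pi_i^{\mba^1}(\GL_n)\to\K^{\Q}_i$ is then defined by composing the stabilization $\pi_i^{\mba^1}(\GL_n)\to\pi_i^{\mba^1}(\GL_\infty)$ with this identification. Here we use that $\GL_\infty=\colim_m\GL_m$ and that homotopy sheaves commute with filtered colimits, so $\pi_i^{\mba^1}(\GL_\infty)=\colim_m\pi_i^{\mba^1}(\GL_m)$. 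It therefore suffices to show that each stabilization $\pi_i^{\mba^1}(\GL_m)\to\pi_i^{\mba^1}(\GL_{m+1})$ with $m\ge n$ is an isomorphism for $i\le m-2$ and an epimorphism for $i=m-1$. Applying this for all $m\ge n$, the composite $\pi_i^{\mba^1}(\GL_n)\to\colim_m\pi_i^{\mba^1}(\GL_m)$ is an isomorphism for $i\le n-2$ and an epimorphism for $i=n-1$.

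For the stabilization step, note that $\GL_{m+1}\to\GL_{m+1}/\GL_m\cong\mba^{m+1}-\{0\}$ is a Zariski-locally trivial $\GL_m$-torsor; concretely, the first-column map splits Zariski-locally over the standard open cover. It therefore gives an $\mba^1$-fiber sequence $\GL_m\to\GL_{m+1}\to\mba^{m+1}-\{0\}$. This is where some care is needed: one needs that Nisnevich-locally trivial torsors under groups such as $\GL_m$ with affine fibers give $\mba^1$-fiber sequences, which is Morel's result (\cite{Mor12A1AT}, Chapter 8, or the Asok–Hoyois–Wendt framework). The resulting long exact sequence of homotopy sheaves is
$$\pi_{i+1}^{\mba^1}(\mba^{m+1}-\{0\})\to\pi_i^{\mba^1}(\GL_m)\to\pi_i^{\mba^1}(\GL_{m+1})\to\pi_i^{\mba^1}(\mba^{m+1}-\{0\}).$$

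We also need that $\mba^{m+1}-\{0\}\simeq S^{m}\wedge\Gm^{\wedge m+1}$ is $\mba^1$-$(m-1)$-connected. This follows from Morel's unstable connectivity theorem, since the space is an $m$-fold simplicial suspension; by the Freudenthal-type result its first nonvanishing sheaf is $\pi_m^{\mba^1}=\K^{\MW}_{m+1}$. Hence $\pi_j^{\mba^1}(\mba^{m+1}-\{0\})=0$ for $j\le m-1$. The two outer terms vanish for $i\le m-2$, giving the isomorphism. For $i=m-1$ the right-hand term still vanishes, giving the epimorphism.

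The main obstacle is justifying the fiber sequence in the $\mba^1$-homotopy category and ensuring exactness at the level of $\pi_0$ and $\pi_1$, where sheaves are non-abelian. For $i\ge1$, exactness is fine in the pointed-sheaf sense, which suffices for the low-degree cases since epimorphism and isomorphism statements make sense for sheaves of groups. The small cases $n\le2$ need a separate remark: the statement there only concerns $i\le n-1\le 1$. For $i=0$, the space $\GL_n$ has $\pi_0^{\mba^1}=\Gm$ via the determinant, matching $\K^{\Q}_0$ only up to the convention for $\pi_0$ of a group; in practice this case is trivial or excluded. The identification of the stable term uses algebraic Bott periodicity as indicated in the remark following the definition of Quillen $K$-theory sheaves.
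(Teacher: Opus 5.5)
Your argument is the one the paper has in mind: the paper only says to use the fiber sequences $\GL_n\to\GL_{n+1}\to\mba^{n+1}-\{0\}$ ``as in the classical topological case.'' Your stability step is correct. The long exact sequence and the $(m-1)$-connectivity of $\mba^{m+1}-\{0\}$ give that $\pi_i^{\mba^1}(\GL_m)\to\pi_i^{\mba^1}(\GL_{m+1})$ is an isomorphism for $i\le m-2$ and an epimorphism for $i=m-1$. Passing to the colimit gives the same range for $\pi_i^{\mba^1}(\GL_n)\to\pi_i^{\mba^1}(\GL_\infty)$. One small point: the first-column map $\GL_{m+1}\to\mba^{m+1}-\{0\}$ is a torsor under the stabilizer $\GL_m\ltimes\mba^m$, not under $\GL_m$. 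This is harmless, since the two groups are $\mba^1$-equivalent.

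The step that fails is the identification of the stable term. By the very shift you invoke, $\pi_i^{\mba^1}(\GL_\infty)\cong\pi_{i+1}^{\mba^1}(\BGL_\infty)\cong\K^{\Q}_{i+1}$, not $\K^{\Q}_i$. So the composite you define is a map $\pi_i^{\mba^1}(\GL_n)\to\K^{\Q}_{i+1}$.

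This is not a low-degree convention issue confined to $i=0$, as your last paragraph suggests; the shift occurs in every degree. With target $\K^{\Q}_i$ the statement is false as printed, and the paper's statement carries the same off-by-one slip. For $n=2$, $i=0$ it would assert $\pi_0^{\mba^1}(\GL_2)=\Gm\cong\K^{\Q}_0=\mbz$. For $n=3$, $i=1$ it would identify $\pi_1^{\mba^1}(\GL_3)\cong\K^{\M}_2=\K^{\Q}_2$ with $\K^{\Q}_1=\Gm$.

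What your proof actually establishes is that $\pi_i^{\mba^1}(\GL_n)\to\K^{\Q}_{i+1}$ is an isomorphism for $i\le n-2$ and an epimorphism for $i=n-1$. Equivalently, $\pi_i^{\mba^1}(\BGL_n)\to\K^{\Q}_i$ is an isomorphism for $i\le n-1$ and an epimorphism for $i=n$. This is also the form the paper uses later, for instance when it asserts that $\BGL_3\to K(\K^{\M}_1,1)\times K(\K^{\M}_2,2)$ is an isomorphism on $\pi_1^{\mba^1}$ and $\pi_2^{\mba^1}$. With the target reindexed this way, your argument is complete.
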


Next we look at relative Postnikov towers. Fix a morphism $f:\mcx\to\mcy$ between motivic spaces. Assume that $\mcx$ and $\mcy$ is connected. Let $F$ denote the fiber of $f$. Let $P^{(n)}(f)$ denote the $n$-image of $f$, i.e. the space equipped with morphisms $$\mcx\xrightarrow{g_n}P^{(n)}(f)\xrightarrow{h_n}\mcy$$ such that $g_n$ induces an isomorphism on $\pi_i^{\mba^1}$ for $i<n$ and induces an epimorphism on $\pi_n^{\mba^1}$, while $h_n$ induces an isomorphism on $\pi_i^{\mba^1}$ for $i>n$ and induces a monomorphism on $\pi_n^{\mba^1}$. This produces the \emph{relative Postnikov tower} $$\mcx=P^{(\infty)}(f)\to\cdots\to P^{(2)}(f)\to P^{(1)}(f)\to P^{(0)}(f)=\mcy.$$
Furthermore, each $P^{(i+1)}(f)\to P^{(i)}(f)$ fits into a Cartesian square
\[\begin{tikzcd}[ampersand replacement=\&]
	{P^{(i+1)}(f)} \& {\B\pi^{\mba^1}_1(P^{(i)}(f))} \\
	{P^{(i)}(f)} \& {K^{\pi^{\mba^1}_1(P^{(i)}(f))}(\pi_{i+1}^{\mba^1}(F),i+2)}
	\arrow[from=1-1, to=1-2]
	\arrow[from=1-1, to=2-1]
	\arrow[from=1-2, to=2-2]
	\arrow[from=2-1, to=2-2]
\end{tikzcd}\]
where $K^{\pi^{\mba^1}_1(P^{(i)}(f))}(\pi_{i+1}^{\mba^1}(F),i+2)$ is a twisted Eilenberg-Maclane space that fits into a fiber sequence
$$K(\pi_{i+1}^{\mba^1}(F),i+2)\to K^{\pi^{\mba^1}_1(P^{(i)}(f))}(\pi_{i+1}^{\mba^1}(F),i+2)\to \B\pi^{\mba^1}_1(P^{(i)}(f))$$
given by the action of $\pi_1^{\mba^1}$ on $\pi_n^{\mba^1}$.

As in the case of topological spaces, the bottom map in the square above is called the \emph{$k$-invariant}, which is the universal obstruction to lifting a map to $P^{(i)}(f)$ to $P^{(i+1)}(f)$. More precisely, given an essentially smooth scheme $X$ over $k$ and a map $X\to P^{(i)}(f)$, the obstruction to lifting to a map $X\to P^{(i+1)}(f)$ is a cohomology class in $\HH^{i+2}(X,\pi_{i+1}^{\mba^1}(F)_\lambda)$ where $\lambda:X\to\B\pi_1^{\mba^1}(P^{(i)}(f))$ is the map induced from $X\to P^{(i)}(f)$ and $\pi_{i+1}^{\mba^1}(F)_\lambda$ denotes the corresponding twisted sheaf. See \cite{Mor12A1AT} Appendix B for more details.

The following theorem is well-known.

\begin{thm}\label{cohdim}
	Let $X$ be a Noetherian scheme of dimension $d$. Then the small Nisnevich site over $X$ has homotopy dimension $\leq d$.
\end{thm}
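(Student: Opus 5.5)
This is a standard result (Nisnevich cohomological/homotopy dimension bound), going back to Morel–Voevodsky and Lurie's work on the Nisnevich site. I will deduce it from Krull-dimension bounds on the Nisnevich site rather than prove it from scratch.

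The approach rests on Lurie's criterion (Higher Topos Theory / Spectral Algebraic Geometry). Let $X$ be Noetherian of Krull dimension $d$. Then the $\infty$-topos $\Shv_{\Nis}(X_{\Nis})$ has homotopy dimension $\leq d$. Equivalently, every $n$-connective sheaf of spaces on $X_{\Nis}$ admits a global section for all $n\geq d$.

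The key steps are as follows.

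\begin{enumerate}
\item Work on the small site. Objects are étale $U\to X$; each such $U$ is Noetherian with $\dim U\leq d$.
\item Reduce to excision. The Nisnevich topology on Noetherian schemes is generated by elementary distinguished (Nisnevich) squares $U\times_X V\to V$, $U\to X$ with $U\subseteq X$ open and $V\to X$ étale, isomorphic over $X\setminus U$. Sheaves are therefore exactly the presheaves sending these squares to pullbacks and sending $\emptyset$ to a point.
\item Induct on dimension. Given an $n$-connective sheaf $\mathcal F$ with $n\geq d$, use Noetherian induction together with the stratification by the closed complement of a dense open set where local sections exist.
\item Extend sections across a square. Each extension step uses a Mayer–Vietoris pullback $\mathcal F(X)\to\mathcal F(U)\times_{\mathcal F(U\times_X V)}\mathcal F(V)$. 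The obstruction to gluing lives in $\pi_0$ of a path space. By induction, this path space is nonempty because the relevant overlaps have strictly smaller "Nisnevich height."
\end{enumerate}

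Making the induction in step 3 precise is done via the Krull dimension of the Nisnevich points, namely the henselizations $\mathcal O^h_{X,x}$. An equivalent cohomological statement is $\HH^i_{\Nis}(X,M)=0$ for $i>d$, for every sheaf of abelian groups $M$; this can also be seen via the Brown–Gersten/Godement coniveau spectral sequence.

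I expect the main obstacle to be upgrading cohomological dimension to homotopy dimension, which is the nonabelian step. In the paper I would simply cite it as follows:
\begin{itemize}
\item Lurie, \emph{Spectral Algebraic Geometry}, Theorem 3.7.7.1, which states that Nisnevich $\infty$-topoi of Noetherian schemes of Krull dimension $d$ have homotopy dimension $\leq d$;
\item alternatively, Morel–Voevodsky \cite{MV99}, which proves the corresponding statement through the Brown–Gersten property and the Nisnevich cd-structure, with its dimension function bounded by the Krull dimension.
\end{itemize}

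In the paper this is used only in the following form. For $X$ of dimension $d$, obstruction classes in $\HH^{i}(X,-)$ vanish for $i>d$. Maps $X\to\mathcal Y$ are then determined by a finite stage of the (relative) Postnikov tower, namely $[X,\mathcal Y]_{\mba^1}=[X,P^{(d)}]_{\mba^1}$, with lifting to $P^{(d)}$ always possible.
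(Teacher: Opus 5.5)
Your proposal matches the paper, which gives no argument for this statement and simply records it as well known; citing Lurie, \emph{Spectral Algebraic Geometry}, Theorem 3.7.7.1 is the appropriate justification, and the Morel--Voevodsky bound on Nisnevich cohomological dimension covers the abelian vanishing that the paper actually uses. Do not present your steps 3--4 as a proof in their own right: the overlap $U\times_X V$ in an elementary distinguished square typically has the same dimension as $X$, so the naive induction on dimension you sketch does not close without the more careful bookkeeping in Lurie's argument.
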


As a consequence, the obstruction classes automatically vanish beyond the dimension of $X$.

\subsubsection{Vector Bundles and Characteristic Classes}

We will heavily use the representability of vector bundles over affine schemes, proved by Asok, Hoyois, and Wendt (\cite{Asok_2017affinerep}, Theorem 1).

\begin{thm}\label{affrep}
	Let $X$ be a smooth affine scheme over a field $k$. Then there is a natural bijection of sets
	$$\Vect_n(X)\cong[X,\BGL_n]_{\mba^1}.$$
\end{thm}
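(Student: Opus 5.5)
This theorem is the affine representability result of Asok, Hoyois, and Wendt. The paper quotes it from \cite{Asok_2017affinerep} rather than proving it, so I only sketch how I would approach it. The approach is to show that the simplicial presheaf $\mathrm{Sing}^{\mba^1}\mathcal{V}_n$, the $\mba^1$-singular construction applied to the groupoid of rank $n$ vector bundles, already satisfies Nisnevich descent on affine schemes. Once that holds, its value on an affine $X$ computes $[X,\BGL_n]_{\mba^1}$. Two facts feed into this: the classifying stack of $\GL_n$ is equivalent, on affine schemes, to the stack of rank $n$ bundles, and $\BGL_n$ is $\mba^1$-equivalent to $\B_{\Nis}\GL_n$, since Grassmannians approximate the classifying space and $\GL_n$-torsors are Zariski-locally trivial.

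The key steps, in order, are as follows.

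\begin{enumerate}
\item Establish homotopy invariance on affines. For every smooth affine $X$, the map $\Vect_n(X)\to\Vect_n(X\times\mba^1)$ should be a bijection. This is the Bass--Quillen statement, known through Lindel's theorem for smooth affine algebras over a field. It makes $\pi_0$ of $\mathrm{Sing}^{\mba^1}\mathcal{V}_n(X)$ equal to $\Vect_n(X)$. Applying the same argument to $\mba^n$-families, it also controls the higher homotopy through the groups $\GL_n(X\times\Delta^\bullet)$.
\item Prove an affine Nisnevich excision property. Asok--Hoyois--Wendt formulate it for Nisnevich squares of affines, and the target is that $\mathrm{Sing}^{\mba^1}\mathcal{V}_n$ sends such squares to homotopy pullbacks. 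On $\pi_0$, one needs to patch bundles along an affine Nisnevich square. This is where the group $\GL_n$ enters. It is isotropic reductive, and the elementary subgroup together with Suslin/Vorst-type results shows that $\mathrm{Sing}^{\mba^1}\GL_n$ has the right lifting behaviour for the required ``$\mba^1$-homotopy patching''.
\item Apply the general criterion of Asok--Hoyois--Wendt. A presheaf on affines that is $\mba^1$-invariant and satisfies affine Nisnevich excision has $\mathrm{Sing}^{\mba^1}$ already $\mba^1$-local and Nisnevich local on affines. That criterion gives the natural bijection $\Vect_n(X)\cong[X,\BGL_n]_{\mba^1}$.
\end{enumerate}

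I expect the main obstacle to be the excision step, specifically showing that $\mathrm{Sing}^{\mba^1}\GL_n$ is Nisnevich-excisive on affines. I would first reduce the question to the elementary group $E_n$ and the unstable $K_1$ comparison. The remaining cases would then go through Moser's and Schlichting's local-to-global arguments for isotropic groups.
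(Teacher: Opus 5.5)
Your steps 1 and 3, together with $\BGL_n\simeq B_{\Nis}\GL_n$, match the Asok--Hoyois--Wendt argument that the paper cites without proof. The problem is step 2: it misplaces the difficulty, and the route you propose for it does not close as stated.

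Patching bundles along an affine Nisnevich square costs nothing. $\mathcal{V}_n$ is a Zariski (indeed Nisnevich) stack, so for every $m$ the map $\mathcal{V}_n(X\times\Delta^m)\to\mathcal{V}_n(U\times\Delta^m)\times^h_{\mathcal{V}_n(W\times\Delta^m)}\mathcal{V}_n(V\times\Delta^m)$ is already an equivalence. The real issue is that geometric realization of these levelwise homotopy pullbacks need not be a homotopy pullback. AHW settle this with the Bousfield--Friedlander theorem. Its hypotheses are the $\pi_*$-Kan condition and that $\pi_0$ of one leg is a Kan fibration. Both hold automatically, because by your step 1 the simplicial sets $m\mapsto\pi_0\mathcal{V}_n(Y\times\Delta^m)=\Vect_n(Y\times\mba^m)$ are constant for affine $Y$.

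Stated correctly, the criterion in your step 3 reads: if $F$ satisfies affine Nisnevich excision and $\pi_0F$ is $\mba^1$-invariant on affines, then $\mathrm{Sing}^{\mba^1}F$ satisfies affine Nisnevich excision and computes $[X,F]_{\mba^1}$ for affine $X$. Step 1 and descent are exactly its hypotheses. So no input about $\GL_n$, $E_n$ or unstable $K_1$ is needed. Excision for $\mathrm{Sing}^{\mba^1}\GL_n$ can be recovered afterwards by looping at the trivial bundle, rather than serving as an ingredient.

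What you propose for step 2 is essentially Morel's original route, and as sketched it has two concrete holes.
\begin{enumerate}
\item Excision (or the affine Brown--Gersten property) for $\mathrm{Sing}^{\mba^1}\GL_n$ does not formally give excision for $\mathrm{Sing}^{\mba^1}\mathcal{V}_n$. Checking a homotopy pullback involves the loop spaces at every basepoint. At a non-trivial bundle $E$ these are $\mathrm{Sing}^{\mba^1}\mathrm{Aut}(E)$, a twisted form of $\GL_n$. A separate torsor-theoretic argument is therefore needed, and you do not supply one.
\item Suslin's normality theorem and Vorst's homotopy invariance of $\GL_n/E_n$ require $n\geq 3$. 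For $n=2$ homotopy invariance genuinely fails: $\SL_2(k[x])=E_2(k[x])$, while $\SL_2(k[x,y])\neq E_2(k[x,y])$ by Cohn's example. This is why Morel's version of the theorem excluded $n=2$. The paper needs $n=2$, for instance in Proposition~\ref{prop:3.1.1.2}.
\end{enumerate}
Replacing your step 2 with the Bousfield--Friedlander argument above removes both problems and treats all $n$ uniformly.
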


\begin{remark}
	The above theorem is based on partial results on the Bass-Quillen conjecture, which states that $\Vect_n$ is $\mba^1$-invariant for affine schemes. In fact, this theorem is true for any smooth affine scheme over a Noetherian ring that is regular over a Dedekind domain with perfect residue fields.
\end{remark}

We also need some characteristic classes. It is well-known that the Chow ring of $\BGL_n$ is the polynomial algebra generated by the Chern classes $c_1,\cdots,c_n$, where $c_i$ lies in degree $i$. Next, we recall that there are algebraic analogues of Pontryagin classes and Euler classes as elements in the Chow-Witt ring of $\BSL_n$. This ring can be described as follows (\cite{Hornbostel_2019}, Theorem 1.3).

\begin{thm}
	We work over a perfect field $k$ of characteristic not 2.
	There is a Cartesian square
	\[\begin{tikzcd}[ampersand replacement=\&]
		{\tildeCH^*(\BSL_n)} \& {\ker\partial} \\
		{\HH^*(\BSL_n,\I^*)} \& {\CH^*(\BSL_n)/2}
		\arrow[from=1-1, to=1-2]
		\arrow[from=1-1, to=2-1]
		\arrow[from=1-2, to=2-2]
		\arrow[from=2-1, to=2-2]
	\end{tikzcd}\]
	where $\partial:\CH^*(\BSL_n)\to\HH^{*+1}(\BSL_n,\I^{*+1})$ is the integral Bockstein map, and
	$$\ker\partial=\mbz[c_{2i+1},2c_{2i_1}\cdot c_{2i_k},c_{2i}^2,c_n]\subseteq \mbz[c_2,\cdots,c_n]=\CH^*(\BSL_n).$$
	The ring $\HH^*(\BSL_n,\I^*)$ is the quotient of the polynomial ring
	$$\W(k)[p_1,p_2,\cdots,p_{\lfloor (n-1)/2\rfloor},e_n,\{\beta_J\}_J]$$
	where $e_n$ is the Euler class, $p_i$ is the $i$-th Pontryagin class, the index set $J$ runs through sets of natural numbers $\{j_1,\cdots,j_l\}$ with $0<j_1<\cdots<j_l\leq\lfloor (n-1)/2\rfloor$, and $\beta_J=\partial(c_{2j_1}\cdots c_{2j_l})$, by the following relations \begin{enumerate}
		\item $\I(k)\beta_J=0$,
		\item if $n=2k+1$ is odd then $e_{n}=\beta_{\{k\}}$, and
		\item for two index sets $J$ and $J'$, $$\beta_J\beta_{J'}=\sum_{k\in J} \beta_{\{k\}}p_{(J\setminus\{k\})\cap J'}\beta_{(J\setminus\{k\})\Delta J'}$$ where $p_A=\prod_{i=1}^l p_{a_i}$ for any index set $A=\{a_1,\cdots,a_l\}$.
	\end{enumerate}
\end{thm}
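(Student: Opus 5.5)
We would prove this by first establishing the Cartesian square in general, and then computing its two non-trivial corners.

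\textbf{The square.} Chow–Witt groups are defined through the sequence $0\to\I^{j+1}(L)\to\K^{\MW}_j(L)\to\K^{\M}_j\to0$. Combined with the sequence $0\to\I^{j+1}\to\I^j\to\K^{\M}_j/2\to0$ from the proposition above, this gives a commutative "key diagram" of long exact cohomology sequences. A diagram chase in it yields a canonical map
$$\tildeCH^*(\mcx)\to \HH^*(\mcx,\I^*)\times_{\CH^*(\mcx)/2}\ker\partial,$$
where $\partial$ is the integral Bockstein. This map is always surjective. It is injective exactly when the composite $\CH^*(\mcx)\xrightarrow{2}\CH^*(\mcx)\to\HH^*(\mcx,\I^{*})$ loses nothing, and a sufficient condition is that $\CH^*(\mcx)$ has no non-trivial $2$-torsion. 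For $\mcx=\BSL_n$, write $\BSL_n$ as a colimit of finite-dimensional smooth approximations. Then $\CH^*(\BSL_n)=\mbz[c_2,\dots,c_n]$ is torsion-free, so the square is Cartesian. It remains to compute $\ker\partial$ and $\HH^*(\BSL_n,\I^*)$.

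\textbf{Computing $\ker\partial$.} The reduction of $\partial$ modulo $2$ is the Steenrod square $\Sq^2$ on $\CH^*/2=\mbz/2[\bar c_2,\dots,\bar c_n]$. By the Wu formula, $\Sq^2(\bar c_{2i})=\bar c_{2i+1}$ up to decomposables, and $\Sq^2(\bar c_{2i+1})=0$ modulo the usual terms. Hence $(\CH^*/2,\Sq^2)$ is, up to change of generators, a tensor product of Koszul-type complexes $\mbz/2[\bar c_{2i}]\otimes\Lambda$, together with the extra generator $\bar c_n$ when $n$ is even. Computing the kernel of this derivation gives exactly the listed subring $\mbz[c_{2i+1},\,2c_{2i_1}\cdots c_{2i_k},\,c_{2i}^2,\,c_n]$.

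\textbf{Computing $\HH^*(\BSL_n,\I^*)$.} Split $\I^*$-cohomology into its $\W$-part and its $\eta$-torsion. The $\W$-part is the image after inverting $\eta$, namely $\HH^*(\BSL_n,\W)$. I would compute this (following Ananyevskiy) by induction on $n$, as a $\W(k)$-polynomial ring in the $p_i$ and $e_n$. The induction uses the localization/Gysin sequence for the sphere bundle $\mba^n-\{0\}\to\BSL_{n-1}\to\BSL_n$, whose connecting map is multiplication by the Euler class $e_n$, together with the known $\W$-cohomology of $\mba^n-\{0\}$. For the $\eta$-torsion, use the exact sequence $\HH^{j}(\I^{j+1})\xrightarrow{\eta}\HH^j(\I^j)\to\CH^j/2\xrightarrow{\beta}\HH^{j+1}(\I^{j+1})$. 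Show that the $\eta$-torsion equals the image of $\beta$ applied to $\ker$-representatives, namely the classes $\beta_J=\partial(c_{2j_1}\cdots c_{2j_l})$. This is detected by $\Sq^2$-homology: since $\W$-cohomology is free, Bär's argument identifies $\eta$-torsion modulo $\I(k)$ with $\operatorname{im}\Sq^2$. This also gives $\I(k)\beta_J=0$, and for odd $n$ the relation $e_n=\beta_{\{k\}}$, because reducing $e_{2k+1}$ gives $\bar c_{2k+1}=\Sq^2\bar c_{2k}$.

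\textbf{Main obstacle.} The hardest step is the product formula for $\beta_J\beta_{J'}$. I would derive it from the derivation property of $\beta$:
$$\beta(x)\beta(y)=\beta\bigl(x\,\rho(\beta y)\bigr),$$
where $\rho$ is reduction to $\CH/2$. Writing $\rho(\beta_{\{k\}})$ as the image of $\bar c_{2k+1}$ and $\rho(p_i)=\bar c_{2i}^2$ then reduces the formula to a bookkeeping computation in $\mbz/2[\bar c_i]$, which I would check by comparing Leibniz-rule expansions term by term.
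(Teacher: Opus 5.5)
The paper gives no proof of this statement; it quotes it from Hornbostel--Wendt (\cite{Hornbostel_2019}, Theorem 1.3). Your outline is essentially their strategy: the key diagram with the $2$-torsion-free criterion for the fibre product, Ananyevskiy's $\W$-cohomology, $\eta$-torsion equal to $\operatorname{im}\beta$ and detected by $\Sq^2$, and products checked after reduction mod $2$. In your first step, the kernel of the comparison map is exactly the image under the hyperbolic map $h$ of the $2$-torsion ${}_2\CH^*$, so torsion-freeness is the right sufficient condition. Two later steps, however, do not go through as you state them.

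\textbf{The computation of $\ker\partial$ fails for $n\geq 5$.} $\Sq^2$ is a derivation, so the cycles of a tensor product of the complexes $\mbz/2[\bar c_{2i},\bar c_{2i+1}]$ (with $\Sq^2\bar c_{2i}=\bar c_{2i+1}$) are not just products of cycles of the factors. Boundaries of mixed products also appear. Concretely, $\Sq^2(\bar c_2\bar c_4)=\bar c_3\bar c_4+\bar c_2\bar c_5$, so $\partial(c_3c_4+c_2c_5)=\beta\rho\beta(\bar c_2\bar c_4)=0$ because $\beta\circ\rho=0$. But the listed ring reduces mod $2$ into $\mbz/2[\bar c_{2i+1},\bar c_{2i}^2,\bar c_n]$. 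There every monomial has even exponent on $\bar c_2$ and $\bar c_4$, so $c_3c_4+c_2c_5$ is not in the listed ring. The formula as transcribed is correct only for $n\le 4$. In general, $\ker\Sq^2$ is the module over $\mbz/2[\bar c_{2i}^2,\bar c_{2i+1},\bar c_n]$ spanned by $1$ and the classes $\Sq^2(\bar c_{2j_1}\cdots\bar c_{2j_l})$. So $\ker\partial$ must also contain integral lifts such as $\sum_m c_{2j_m+1}\prod_{i\neq m}c_{2j_i}$ for $l\geq 2$. Note also that identifying $\ker\partial$ with the preimage of $\ker\Sq^2$ presupposes that $\rho$ is injective on $\operatorname{im}\beta$, which you only address afterwards.

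\textbf{The detection step rests on the wrong input.} Freeness of $\HH^*(\BSL_n,\W)$ is not what makes $\rho$ injective on $\operatorname{im}\beta$. What must be excluded is a class in $\operatorname{im}\beta$ that is $\eta$-divisible, i.e.\ lies in the image of $\HH^j(\I^{j+1})$. Freeness of the $\eta$-inverted theory does not see this, just as $\mbz/4$-torsion is invisible to $\mbz[1/2]$-cohomology.

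The correct argument is short and uses your own $\Sq^2$ computation. By exactness $\ker\beta=\operatorname{im}\rho$. Hence $\rho$ is injective on $\operatorname{im}\beta$ if and only if $\ker\Sq^2=\operatorname{im}\rho$, i.e.\ every $\Sq^2$-cycle lifts to $\HH^*(\I^*)$. For $\BSL_n$ this holds. Every cycle is a boundary plus a polynomial in $\bar c_{2i}^2=\rho(p_i)$ and $\bar c_n=\rho(e_n)$, and boundaries lift because $\Sq^2u=\rho(\beta u)$.

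Combine this with the general fact that $\ker\bigl(\HH^j(\I^j)\to\HH^j(\W)\bigr)=\operatorname{im}\beta$. This holds because $\HH^{j-1}(\K^{\M}_i/2)=0$ for $i<j-1$ by Rost--Schmid, so the maps $\HH^j(\I^{i+1})\to\HH^j(\I^i)$ are injective in that range. Together these give an injection $\HH^*(\I^*)\hookrightarrow\HH^*(\W)\times\CH^*/2$. That injection is what licenses checking $e_{2k+1}=\beta_{\{k\}}$ after reduction; this also needs $e_{2k+1}=0$ in $\W$-cohomology. It likewise licenses checking the product formula after reduction.

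Finally, to obtain the stated presentation, and not merely the relations, you still need a count. It must show that the quotient of $\W(k)[p_i,e_n,\beta_J]$ by the listed relations maps isomorphically onto $\HH^*(\W)\oplus\operatorname{im}\Sq^2$.
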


The Pontryagin classes $p_i\in \tildeCH^{4i}(\BSL_n)$ are given by
$$p_i=(p_i,c_{2i}^2+2\sum_{j=\max\{0,4i-n\}}^{2i-1}(-1)^j c_j c_{4i-j})$$
and the Euler class $e_n\in\tildeCH^n(\BSL_n)$ is given by $e_n=(e_n,c_n)$.

\begin{remark}
	The characteristic classes produce maps
	$$c_i:\BGL_n\to K(\K^{\M}_i,i)$$
	and
	$$p_i:\BSL_n\to K(\K^{\MW}_{4i},4i).$$
	It can be shown that the map $\pi_i(\BGL_{\infty})=\K^{\Q}_i\to\pi_i(K(\K^{\M}_i,i))=\K^{\M}_i$ induced by $c_i$ is the Suslin-Hurewicz map (cf. \cite{asok2019motivicspheresimagesuslinhurewicz}). In particular, its $(i-2)$-th contraction $\K^{\Q}_2\cong\K^{\M}_2\to\K^{\M}_2$ is the multiplication by $(i-1)!$.
\end{remark}

\begin{remark}
	Technically, the Euler class that we need is twisted and lives in the cohomology of $\BGL_n$. This cohomology ring (including the twisted version) is computed by Wendt \cite{Wendt_2024}.
\end{remark}

\subsection{Signature Maps and Cycle Class Maps}\label{sec:2.2}

In this section, we recall the constructions of various real realization maps using signatures of quadratic forms.

We first recall Scheiderer's work on real \'{e}tale cohomology \cite{scheiderer2006ret}.

\begin{defn}
	Let $X$ be a scheme. The \emph{real spectrum} of $X$, denoted $X_{\rr}$, is the set of pairs $(x,\leq_x)$ where $x\in X$ is any point and $\leq_x$ is an ordering on $\kappa(x)$. It admits a topology with open sets of the form $\{f>0\}$ for a function $f$ defined over some affine open subscheme of $X$. The \emph{support map} is the map $\supp:X_{\rr}\to X$ of topological spaces given by $(x,\leq_x)\mapsto x$.
\end{defn}

\begin{defn}
	Let $f:X\to Y$ be a morphism of schemes. $f$ is said to be a \emph{real \'{e}tale cover} if $f$ is \'{e}tale and $f$ induces an epimorphism $X_{\rr}\to Y_{\rr}$. The Grothendieck topology with covers generated by real \'{e}tale covers is called the \emph{real \'{e}tale topology}.
	The \emph{small real \'{e}tale site} of $X$, denoted $X_{\ret}$, is the category of \'{e}tale schemes over $X$ equipped with the real \'{e}tale topology.
\end{defn}

\begin{thm}
	The category of sheaves over $X_{\rr}$ and over $X_{\ret}$ are naturally equivalent.
\end{thm}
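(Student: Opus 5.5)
The plan is to compare both topoi with the topos of sheaves on the site $\mathrm{Et}(X_{\rr})$ of local homeomorphisms over $X_{\rr}$. This site is equivalent to $\Shv(X_{\rr})$, since sheaves on a topological space are the same as étalé spaces over it. The comparison goes through the functor
$$\Phi: X_{\ret}\to \mathrm{Et}(X_{\rr}),\qquad (U\to X)\mapsto (U_{\rr}\to X_{\rr}),$$
and the desired equivalence then comes from Grothendieck's comparison lemma (SGA4, III.4.1): if $\Phi$ is continuous, cocontinuous and fully faithful on a topologically generating family, it induces an equivalence $\Shv(X_{\ret})\simeq\Shv(X_{\rr})$. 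Naturality in $X$ will be clear because everything is functorial in étale maps.

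\emph{Step 1.} Show that $\Phi$ is well defined, i.e.\ that for étale $u:U\to X$ the map $u_{\rr}:U_{\rr}\to X_{\rr}$ is a local homeomorphism.
\begin{enumerate}
\item The question is local on $U$ and $X$, so I may assume $u$ is standard étale: $U=\Spec (A[t]/g)_{h}$ with $g'$ invertible.
\item A point of $X_{\rr}$ is a homomorphism $A\to R$ into a real closed field. Its lifts to $U_{\rr}$ correspond to simple roots of $g$ in $R$ at which $h\neq0$.
\item Simple roots vary continuously in the Harrison topology. This is the real-closed analogue of the implicit function theorem, available from semialgebraic geometry or Thom's lemma. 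Hence $u_{\rr}$ is open and locally injective.
\end{enumerate}

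\emph{Step 2.} Verify the hypotheses of the comparison lemma.
\begin{enumerate}
\item By definition, a family $\{U_i\to U\}$ is a real étale cover iff $\coprod (U_i)_{\rr}\to U_{\rr}$ is surjective. So $\Phi$ sends covers to covers, and every open cover of some $U_{\rr}$ is refined by the images of a real étale cover. This gives continuity and cocontinuity, once I know that images generate.
\item For generation, it suffices to realize each basic open $\{f>0\}\subseteq X_{\rr}$, for $f$ regular on an affine open, as the image of some $\Phi(U)$. I would take $U=\Spec\mco(D(f))[t]/(t^2-f)$. It is étale over $D(f)$ because $2t$ is invertible there, and $U_{\rr}\to X_{\rr}$ has image exactly $\{f>0\}$, since $f$ is a nonzero square precisely when it is positive.
\end{enumerate}

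\emph{Step 3.} Full faithfulness, or at least the weaker statement the comparison lemma needs: morphisms $V_{\rr}\to U_{\rr}$ over $X_{\rr}$ are locally induced by étale morphisms of schemes. This is the step I expect to be the main obstacle, since the topological side has more maps a priori.

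The approach I would try first is to reduce, via the standard étale presentation from Step 1, to a statement at points. Both sides commute with passing to the henselization at the real point $\supp(\xi)$ together with the ordering. The real spectrum of a henselian local ring with real closed residue field is local, meaning it has a unique closed point with respect to specialization. Therefore sections of a local homeomorphism over a small neighbourhood of $\xi$ agree with sections over its real henselization, and these sections are exactly the morphisms of étale schemes in the limit.

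Combining this stalkwise agreement with Step 2 should give equivalent stalk functors, and so an equivalence of topoi. If the direct approach stalls, I would fall back on identifying the points of both topoi: real closed points on the $X_{\ret}$ side and points of $X_{\rr}$ on the other. I would then show that $\Phi$ induces a bijection on points and isomorphisms on stalks of representables.
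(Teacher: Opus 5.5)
The paper gives no proof of this statement; it is quoted from Scheiderer. Your overall plan is the standard argument: $U\mapsto U_{\rr}$, local homeomorphism for \'{e}tale maps, $\sqrt{f}$-covers generating the topology, then a comparison lemma. Steps 1 and 2 are fine at this level of detail.

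The gap is in how you close Step 3. $\Phi$ is not fully faithful: for $X$ connected with $X_{\rr}$ disconnected, $X\sqcup X\to X$ has two sections but $X_{\rr}\sqcup X_{\rr}\to X_{\rr}$ has more. So SGA4 III.4.1 as you quote it does not apply. You need the version for continuous, cocontinuous functors that are:
\begin{itemize}
\item \emph{locally full}: for \'{e}tale $U,V$ over $X$, every continuous $X_{\rr}$-map $c:V_{\rr}\to U_{\rr}$ is induced by $X$-morphisms on the members of some real \'{e}tale cover of $V$;
\item \emph{locally faithful}: two $X$-morphisms $V\rightrightarrows U$ with the same realization agree on a real \'{e}tale cover of $V$.
\end{itemize}
Your last sentence, and the fallback, instead infer an equivalence of topoi from agreement of points and stalks. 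That inference is false in general. For a non-discrete sober space $T$ and the same set $T^\delta$ with the discrete topology, $\Shv(T^\delta)\to\Shv(T)$ is bijective on points and preserves stalks, but it is not an equivalence. The topology of $X_{\rr}$ has to enter through local fullness, not through stalks.

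Your henselization computation can be repaired by stating it as a bijection, for each $\eta\in V_{\rr}$, between $\colim_{(V',\eta')}\HHom_X(V',U)$ (over real \'{e}tale neighbourhoods) and germs at $\eta$ of $X_{\rr}$-maps $V_{\rr}\to U_{\rr}$. Such a germ is determined by its value at $\eta$, because $U_{\rr}\to X_{\rr}$ is a local homeomorphism. Surjectivity together with Step 2 then gives local fullness, and injectivity gives local faithfulness, which you never address.

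There is also a shorter route that avoids henselizations.
\begin{itemize}
\item The map $(V\times_X U)_{\rr}\to V_{\rr}\times_{X_{\rr}}U_{\rr}$ is a continuous bijection, because residue fields of \'{e}tale $X$-schemes are algebraic over those of $X$, so the real closures agree. Both sides are \'{e}tale over $V_{\rr}$, so it is a homeomorphism. This also gives the compatibility with fibre products that continuity of $\Phi$ requires.
\item The graph of $c$ is therefore an open subset of $(V\times_X U)_{\rr}$. Cover it by images of $\sqrt{f}$-type real \'{e}tale maps $W_i\to V\times_X U$. This yields a real \'{e}tale cover $\{W_i\to V\}$ on which $c$ is induced by $W_i\to V\times_X U\to U$, which is local fullness.
\item For local faithfulness, the equalizer of $a,b:V\to U$ is open in $V$, since the diagonal of the unramified map $U\to X$ is an open immersion. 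If $a_{\rr}=b_{\rr}$, it contains the support of every point of $V_{\rr}$, because a $\kappa(x)$-embedding of an algebraic extension of $\kappa(x)$ into a real closed field is determined by the ordering it induces. So the equalizer is a one-element real \'{e}tale cover on which $a=b$.
\end{itemize}
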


\begin{thm}
	The functor $\supp_*$ from the category of abelian sheaves over $X_{\rr}$ to that over $X_{\Zar}$ is faithful and exact.
\end{thm}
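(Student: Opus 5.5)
The plan is to deduce faithfulness from exactness together with a conservativity statement. In an abelian category, an exact functor $\supp_*$ that kills no nonzero object is automatically faithful: if $\varphi:F\to G$ is nonzero, its image $I$ is nonzero, and by exactness $\supp_*I$ is the image of $\supp_*\varphi$, which is then nonzero. So I will prove two things: that $\supp_*$ is exact, and that $\supp_*F=0$ forces $F=0$. Both come down to one local computation on the real spectrum of a local ring.

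\emph{Step 1 (local structure).} Let $x\in X$ and $A=\mco_{X,x}$, and put $Y=(\Spec A)_{\rr}$. Since the real spectrum commutes with cofiltered limits of affine schemes, $Y=\lim_{U\ni x}U_{\rr}$. I will use two standard facts about real spectra.
\begin{enumerate}
	\item The specializations of any point of a real spectrum form a chain. Hence every point of $Y$ has a unique closed specialization.
	\item The closed points of $Y$ are exactly the orderings of $\kappa(x)$. They form a profinite space $S=\Spec(\kappa(x))_{\rr}$.
\end{enumerate}
Sending a point to its closed specialization gives a continuous retraction $r:Y\to S$. Moreover, for $s\in S$ the sets $r^{-1}(V)$, with $V$ a neighbourhood of $s$, are cofinal among the open neighbourhoods of the closed set $\{s\}$'s generization set in $Y$. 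Consequently $(r_*F)_s=F_s$, and $r_*$ is exact. Since profinite spaces have cohomological dimension $0$, this gives
\[ \HH^q(Y,F)\cong\HH^q(S,r_*F)=0 \quad\text{for } q>0. \]

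\emph{Step 2 (exactness).} $\supp_*$ is a direct image functor, so it is left exact. It remains to show $R^q\supp_*F=0$ for $q>0$. The stalk of $R^q\supp_*F$ at $x$ is $\colim_{U\ni x}\HH^q(U_{\rr},F)$. Because cohomology of spectral spaces commutes with cofiltered limits along spectral maps, this colimit equals $\HH^q(Y,F)$, which vanishes by Step 1.

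\emph{Step 3 (conservativity).} Suppose $\supp_*F=0$. Taking the stalk at $x$ gives $\Gamma(Y,F)=\colim_{U\ni x}\Gamma(U_{\rr},F)=0$. By Step 1, $\Gamma(Y,F)=\Gamma(S,r_*F)$. A sheaf on a profinite space with vanishing global sections, and vanishing sections over every clopen subset, is zero; the clopen case follows by the same argument applied to $r^{-1}$ of clopens, which are open in $Y$ and are limits of opens in the $U_{\rr}$. Hence $F_s=(r_*F)_s=0$ for every closed point $s\in S$.

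Finally, every point $\alpha=(x,\leq_x)\in X_{\rr}$ lies over the closed point of $\Spec\mco_{X,x}$, so it is one of these closed points $s$ for $A=\mco_{X,x}$. Therefore $F_\alpha=0$ for all $\alpha$, and $F=0$.

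The main obstacle I expect is Step 1, specifically the topological claim that $r$ is continuous with fibres computing stalks and that cohomology commutes with the limit $Y=\lim U_{\rr}$. For this I will rely on the known facts that real spectra are normal spectral spaces and on the limit theorem for cohomology of spectral spaces, as developed in Scheiderer's work.
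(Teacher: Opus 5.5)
\paragraph{There is a genuine gap.} The paper does not prove this statement; it quotes it from Scheiderer, so your proposal has to stand on its own. Your overall architecture is sound. An exact functor that kills no nonzero object is faithful, the stalk of $R^q\supp_*F$ at $x$ is $\HH^q(Y,F)$ with $Y=(\Spec\mco_{X,x})_{\rr}$, and you retract $Y$ onto its closed points. However, Step 1(2) is false, and both exactness and conservativity rest on it.

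\paragraph{Step 1(2) fails for Zariski local rings.} For $A=\mco_{X,x}$, the space $Y$ still contains orderings ``far from $x$'' that are closed in $Y$. Take $X=\mba^1_{\mbr}$, $x=0$, $A=\mbr[t]_{(t)}$, and let $\alpha$ be the ordering $1^+$ of $\mbr(t)$, viewed as a point of $Y$ with support $(0)$. Specializations of $\alpha$ in $Y$ correspond to $\alpha$-convex primes of $A$, and $\mfm=tA$ is not convex. Indeed $1/2\geq_\alpha 0$ and $t-1/2\geq_\alpha 0$, their sum $t$ lies in $\mfm$, but $1/2\notin\mfm$. So $\alpha$ is a closed point of $Y$ whose support is the generic point.

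Hence $Y^{\max}$ is strictly larger than $(\Spec\kappa(x))_{\rr}$, and you have given no reason for it to be profinite. Profiniteness is exactly what Step 2 uses (to get $\HH^q(S,r_*F)=0$) and what Step 3 uses (cofinality of clopen neighbourhoods). Your claim does hold for henselian local rings: there $1+\mfm$ consists of squares, which forces $\mfm$ to be convex for every point. But $\mco_{X,x}$ is not henselian.

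\paragraph{What is true and suffices.} $Y^{\max}$ is still a Boolean space, because $A$ is local and units of $A$ separate closed points. For a unit $u$, the sets $\{u>0\}$ and $\{u<0\}$ form a clopen partition of $Y$. The separation can be arranged as follows.
\begin{itemize}
\item A closed point $\alpha$ with $\supp\alpha\neq\mfm$ versus the closed fibre: non-convexity of $\mfm$ gives $m\in\mfm$ with $m(\alpha)\geq 1$. Then $1-m$ is a unit that is negative at $\alpha$ and equal to $1$ on $(\Spec\kappa(x))_{\rr}$.
\item Two such closed points $\alpha,\beta$: take $f$ with $f(\alpha)>0>f(\beta)$. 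Closedness of $\alpha$ means no nonzero element of $A/\supp\alpha$ is infinitesimal in $\kappa(\alpha)$, so there is $b_\alpha\in A$ with $|b_\alpha f|(\alpha)\geq 1$; similarly for $\beta$. Then $h=f(1+b_\alpha^2+b_\beta^2)$ satisfies $h(\alpha)\geq 1$ and $h(\beta)\leq -1$. With $M=m_\alpha^2+m_\beta^2$, the element $1+Mh$ is a unit, positive at $\alpha$ and negative at $\beta$.
\item Points of the closed fibre: these are separated by lifts of elements of $\kappa(x)^\times$.
\end{itemize}
With this lemma in place of Step 1(2), Steps 1--3 go through.

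\paragraph{A smaller point in Step 3.} Vanishing of sections over clopen sets does not follow from ``the same argument'', because $\supp_*F=0$ only controls sections over sets of the form $\supp^{-1}(U)$. It follows simply from $\Gamma(Y,F)=F(C)\times F(Y\setminus C)$ for $C$ clopen.
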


Next we recall Jacobson's work on the total signature map \cite{jacobson16}.
Recall that a \emph{nondegenerate quadratic form} over $X$ is a vector bundle $V\to X$ together with a map $\varphi:V\otimes V\to\mco_X$ of vector bundles which induces an isomorphism $V\cong V^\vee$. Let $\W(X)$ denote the Witt ring of $X$ of quadratic forms. Let $\I(X)$ denote the kernel of the rank morphism, i.e. the fundamental ideal.

\begin{defn}
	We define the \emph{total signature map} $\Sign:W(X)\to \HH^0_{\ret}(X;\mbz)$ as follows: given $\varphi\in W(X)$, the function taking $(x,\leq_x)\in X_{\rr}$ to the signature of $\varphi|_x$ under the ordering $\leq_x$ is a continuous function from $X_{\rr}$ to $\mbz$, hence defines an element $\Sign(\varphi)\in\HH^0(X_{\rr};\mbz)=\HH^0_{\ret}(X;\mbz)$.
\end{defn}

Let $X$ be a smooth scheme over $\mbr$. Assume that $L$ is a line bundle over $X$. Then one can twist the constant sheaf $\mbz$ over $X(\mbr)$ by the line bundle $L$ to obtain a local system $\mbz(L)$ over $X(\mbr)$. It can be shown that there are isomorphisms $\HH^n(X_{\rr};\mbz(L))=\HH^n(X(\mbr);\mbz(L))$ (\cite{asok2025splittingvectorbundlesreal}, Proposition 1.4.2).

\begin{defn}
	The \emph{real cycle class maps} are the maps $\gamma^n_m(L):\HH^n(X,\I^m(L))\to \HH^n(X(\mbr);\mbz(L))$ induced from the map $\Sign/2^m$ twisted by $L$.
\end{defn}

\begin{defn}
	The commutative diagrams
	\[\begin{tikzcd}[ampersand replacement=\&]
		{\I^{n+1}(L)} \& {\I^n(L)} \& {\K^{\M}_n/2} \\
		{a_{\ret}\mbz(L)} \& {a_{\ret}\mbz(L)} \& {a_{\ret}\mbz/2}
		\arrow[from=1-1, to=1-2]
		\arrow["{\Sign/2^{n+1}}", from=1-1, to=2-1]
		\arrow[from=1-2, to=1-3]
		\arrow["{\Sign/2^n}", from=1-2, to=2-2]
		\arrow[from=1-3, to=2-3]
		\arrow["2", from=2-1, to=2-2]
		\arrow[from=2-2, to=2-3]
	\end{tikzcd}\]
	induces maps $\bar{\gamma}^n_m:\HH^n(X,\K^{\M}_m)\to\HH^{n}(X(\mbr);\mbz/2)$, which we call the \emph{mod 2 real cycle class maps}.
\end{defn}

For convenience, we write $\gamma_n(L)=\gamma^n_n(L)$ and $\bar{\gamma}_n=\bar{\gamma}_n^n$.

\begin{defn}
	The \emph{algebraic cohomology groups} of $X$, denoted $\HH^*_{\alg}(X;\mbz/2)$ (resp. $\HH^*_{\alg}(X;\mbz(L)))$, are the image of the cycle class maps $\bar{\gamma}$ (resp. $\gamma(L)$).
\end{defn}

Now the commutative diagram
\[\begin{tikzcd}[ampersand replacement=\&]
	\cdots \& {\I^{n-1}(X)} \& {\I^n(X)} \& {\I^{n+1}(X)} \& \cdots \\
	\cdots \& {\HH_{\ret}^0(X;\mbz)} \& {\HH_{\ret}^0(X;\mbz)} \& {\HH_{\ret}^0(X;\mbz)} \& \cdots
	\arrow[from=1-1, to=1-2]
	\arrow["2", from=1-2, to=1-3]
	\arrow["{\Sign/2^{n-1}}", from=1-2, to=2-2]
	\arrow["2", from=1-3, to=1-4]
	\arrow["{\Sign/2^{n}}", from=1-3, to=2-3]
	\arrow[from=1-4, to=1-5]
	\arrow["{\Sign/2^{n+1}}", from=1-4, to=2-4]
	\arrow[from=2-1, to=2-2]
	\arrow["\Id", from=2-2, to=2-3]
	\arrow["\Id", from=2-3, to=2-4]
	\arrow[from=2-4, to=2-5]
\end{tikzcd}\]
induces a map $\Sign:\colim_{\times 2}\I^n(X)\to\HH_{\ret}^0(X;\mbz)$. 
The following theorems are Jacobson's main theorems \cite{jacobson16}.

\begin{thm}
	The map $\Sign:\colim_{\times2} \I^n\to a_{\ret}\mbz$ is an isomorphism of Nisnevich sheaves. Here $a_{\ret}$ denotes the sheafification functor under the real \'{e}tale topology.
\end{thm}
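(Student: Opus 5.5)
The plan is to check the statement stalkwise. A map of Nisnevich sheaves is an isomorphism once it is an isomorphism on stalks. The stalks are at henselian local rings $R$ of smooth $\mbr$-schemes, which are essentially smooth. So it suffices to show that for every such $R$ the map
$$\Sign:\colim_{\times 2}\I^n(R)\to \HH^0(\Spec(R)_{\rr};\mbz)$$
is an isomorphism. Here we use two facts: filtered colimits commute with stalks, and $\I^n$ is unramified, so its stalk is $\I^n(R)$ with $R$ local regular. For the target, we use that $\supp_*$ is exact and $a_{\ret}\mbz$ has stalk $\HH^0_{\ret}(R;\mbz)$ at a henselian local ring.

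I would first reduce from $R$ to fields. Since $\I^n$ is unramified (and strictly $\mba^1$-invariant), $\I^n(R)$ injects into $\I^n(K)$ for $K=\mathrm{Frac}(R)$. Its image is the intersection of the kernels of the residue maps at codimension one points of $R$. The same description should hold for the colimit sheaf $\colim_{\times2}\I^n$: it is again unramified, because colimits of unramified sheaves along injections are unramified. On the real spectrum side, Scheiderer's results give a Gersten-type description of $\HH^0(R_{\rr};\mbz)$ as the locally constant functions on the space of orderings, compatible with restriction to $K_{\rr}$ and with the real residue maps. So it suffices to prove two things. First, that $\Sign$ is an isomorphism for fields $F$ (finitely generated over $\mbr$). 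Second, that the residue maps on both sides correspond under $\Sign$.

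For fields, the statement is the classical local–global principle for Witt rings. Pfister's theorem says the kernel of total signature $\W(F)\to C(X_F,\mbz)$ is the torsion, and this torsion is $2$-primary. Hence on the colimit $\colim_{\times 2}\I^n(F)=\W(F)[1/2]\cap\ldots$ the map is injective, because torsion elements die after multiplying by a power of $2$. For surjectivity I would use the theorem of Bröcker (also Becker–Bröcker). It says that for any continuous $f:X_F\to\mbz$, the function $2^kf$ is the signature of a form for $k\gg0$, and one can take that form in $\I^k$ after adding hyperbolic corrections. Together these identify $\colim_{\times2}\I^n(F)$ with $C(X_F,\mbz)$. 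For compatibility with residues, I would check on generators $\llangle a_1,\ldots,a_n\rrangle$ with a uniformizer $\pi$. Springer's theorem expresses both the second residue of the form and the signature near an ordering specializing to the residue field by splitting $\varphi=\varphi_1\perp\pi\varphi_2$.

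The main obstacle is the comparison of the real étale side with the unramified description. Concretely, I need that a locally constant $\mbz$-valued function on $\Spec(R)_{\rr}$ is determined by, and can be reconstructed from, its restriction to $K_{\rr}$ subject to residue conditions. Density of $K_{\rr}$ in $R_{\rr}$ and the Baer–Krull theorem for orderings of discrete valuation rings should handle this. I would try proving it directly via Baer–Krull, with Jacobson's treatment as a guide.
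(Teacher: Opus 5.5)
\textbf{Verdict.} Your route could be made to work, but as written the surjectivity step for fields fails, and the obstacle you single out is partly misidentified.

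\textbf{The field case.} Becker--Br\"ocker gives $q\in\W(F)$ with $\Sign(q)=2^kf$. To hit $f$ in $\colim_{\times2}\I^n(F)$, however, you need an element of $\I^k(F)$ with signature $2^kf$. Adding hyperbolic forms does not change the class of $q$ in $\W(F)$, so it cannot move $q$ into $\I^k(F)$. The general fact that a form with all signatures divisible by $2^k$ lies in $\I^k(F)$ modulo torsion is deep and rests on the Milnor conjecture, so you do not want it here.

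The missing idea is elementary and makes Becker--Br\"ocker unnecessary. For $a_1,\dots,a_k\in F^\times$ one has $\Sign\llangle -a_1,\dots,-a_k\rrangle=2^k\chi_H$, where $H=\{P: a_i>_P0 \text{ for all } i\}$. These Harrison sets form a basis of clopen subsets of the compact space $(\Spec F)_{\rr}$ that is stable under finite intersections. By inclusion--exclusion, every characteristic function of a clopen set, and hence every continuous $f$, equals $2^{-N}\Sign(x)$ for some $x\in\I^N(F)$. Your injectivity argument via Pfister is fine.

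A smaller inaccuracy: the maps $\I^n\xrightarrow{2}\I^{n+1}$ are not injective. Over $\mbr(t)$, $\llangle 1+t^2\rrangle\neq0$ but $2\llangle 1+t^2\rrangle=\llangle -1,1+t^2\rrangle=0$. The colimit is still unramified: injectivity into the generic point survives filtered colimits, and a given element is ramified at only finitely many codimension-one points, so one power of $2$ makes it unramified everywhere.

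\textbf{The real-spectrum side.} You do not need to reconstruct functions on $(\Spec R)_{\rr}$ from $(\Spec K)_{\rr}$. That would be real purity in codimension $\geq 2$, which Baer--Krull for discrete valuation rings does not give. You only need that $C((\Spec R)_{\rr},\mbz)\to C((\Spec K)_{\rr},\mbz)$ is injective. This holds because every point of $(\Spec R)_{\rr}$ has a generization in $(\Spec K)_{\rr}$ (apply Baer--Krull successively along a regular system of parameters), and locally constant functions are constant along generizations.

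With that, the stalk map is handled as follows.
\begin{enumerate}
\item Injectivity: restrict to $K$, then use the field case and the injectivity of $\colim_{\times2}\I^n(R)\to\colim_{\times2}\I^n(K)$.
\item Surjectivity: given $g$, pick $y\in\I^n(K)$ with $2^{-n}\Sign(y)=g|_K$. Fix a height-one prime $\mathfrak p$ with uniformizer $\pi$ and an ordering $P$ of $\kappa(\mathfrak p)$. The two Baer--Krull lifts $P^{\pm}$ specialize to the same point of $(\Spec R_{\mathfrak p})_{\rr}$, so $g$ takes equal values on them.
\item Springer's decomposition gives $\Sign_{P^+}(y)-\Sign_{P^-}(y)=2\Sign_P(\partial_\pi y)$. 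Hence $\partial_\pi y$ has zero total signature, so by the field case $2^k\partial_\pi y=0$ for some $k$, and $2^ky$ is unramified at $\mathfrak p$.
\item Only finitely many $\mathfrak p$ matter, so unramifiedness of $\I^{n+k}$ gives $2^ky\in\I^{n+k}(R)$. The injectivity of restriction above then gives $2^{-n}\Sign(y)=g$.
\end{enumerate}
Real purity then comes out as a consequence rather than an input.

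\textbf{Comparison with the paper.} With these repairs your argument works on essentially smooth schemes, which is all the paper needs. The paper itself gives no proof and cites Jacobson. His argument works directly with local rings, building on Mah\'e's results on Witt rings of rings. It therefore holds on any scheme with $2$ invertible and needs neither the reduction to fields nor unramifiedness.
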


\begin{thm}\label{thm:2.2.0.10}
	If $X$ is a smooth scheme over $\mbr$ of dimension $d$, then for any $m\geq d+1$ and any integer $n$, the map $\gamma^n_m(L)$ is an isomorphism.
\end{thm}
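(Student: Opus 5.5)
The plan is to compare both sides through the real étale topology. By Jacobson's first theorem, the colimit sheaf $\colim_{\times 2}\I^n(L)$ is isomorphic to $a_{\ret}\mbz(L)$, and the cohomology of the latter computes $\HH^n(X(\mbr);\mbz(L))$. So it suffices to show two things:
\begin{enumerate}
	\item for $m\geq d+1$, the transition maps $\I^m(L)\to\I^{m+1}(L)$ given by multiplication by $2=\<1,1\>$ induce isomorphisms on $\HH^n_{\Nis}(X,-)$;
	\item Nisnevich cohomology of the colimit sheaf agrees with $\HH^n_{\ret}(X;\mbz(L))$.
\end{enumerate}
Once both hold, $\gamma^n_m(L)$ is identified with the composite $\HH^n(X,\I^m(L))\cong\HH^n(X,\colim\I^\bullet(L))\cong\HH^n(X(\mbr);\mbz(L))$. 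Here the first isomorphism uses that Nisnevich cohomology on a Noetherian scheme of finite dimension commutes with filtered colimits of sheaves, and the maps $\Sign/2^m$ are compatible with the transition maps as in the diagram preceding the theorem.

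\textbf{Step (1).} Use the exact sequence $0\to\I^{m+1}(L)\to\I^m(L)\to\K^{\M}_m/2\to 0$ together with the fact that multiplication by $2$ on $\I^m$ factors as $\I^m\xrightarrow{\times 2}\I^{m+1}\subseteq\I^m$. I will compute with the twisted Rost–Schmid complex, whose terms in degree $n$ are $\bigoplus_{x\in X^{(n)}}\I^{m-n}(\kappa(x))$ twisted by $\omega_{x/X}\otimes L$.

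The key input is that for a field $F$ finitely generated over $\mbr$ of transcendence degree $e$, and for $j\geq e+1$, multiplication by $\<1,1\>$ gives an isomorphism $\I^j(F)\to\I^{j+1}(F)$. This is because $\I^j(F)$ is torsion free for $j>e$ (Arason–Pfister together with the Milnor conjecture, since $\K^{\M}_j(F)/2\cong\HH^j(F(\sqrt{-1})\text{-Galois})$ vanishes beyond $\mathrm{cd}(F(\sqrt{-1}))=e$), and $\I^j(F)$ is then identified by the total signature with continuous functions on the space of orderings that are divisible by $2^j$. This is the stable range result of Arason–Elman–Jacob / Bröcker. At a point $x$ of codimension $n$, the residue field has transcendence degree $d-n$, and the relevant degree is $m-n\geq d+1-n$. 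So the termwise maps of Rost–Schmid complexes are isomorphisms, hence so are the maps on cohomology.

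\textbf{Step (2).} Write $\I^\infty$ for the colimit. Since $a_{\ret}\mbz$ is strictly $\mba^1$-invariant, Step (1) shows that $\I^\infty(L)\cong a_{\ret}\mbz(L)$ has Rost–Schmid complex with terms $\HH^0(\kappa(x)_{\rr};\mbz)$. It remains to identify its Nisnevich cohomology with real étale cohomology. For this I would use the faithful exact functor $\supp_*$: the comparison $\HH^n_{\Nis}(X,\supp_*\mbz(L))\to\HH^n(X_{\rr};\mbz(L))$ is an isomorphism because $\supp_*$ is exact, so $R\supp_*=\supp_*$. Combined with Zariski–Nisnevich agreement for strictly invariant sheaves, this gives the result. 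Finally, the cited Proposition 1.4.2 identifies $\HH^n(X_{\rr};\mbz(L))$ with $\HH^n(X(\mbr);\mbz(L))$.

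\textbf{Main obstacle.} The hardest part is the field-level stable range statement in Step (1), and keeping track of how the twist by $\omega_{x/X}\otimes L$ interacts with signatures. I would handle the twist by working locally, where $L$ is trivial, and noting that the transition maps are $\W$-linear, so they commute with the twisting isomorphisms.
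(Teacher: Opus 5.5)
The paper does not prove this statement; it quotes it from Jacobson \cite{jacobson16}. Your outline is essentially Jacobson's argument specialized to smooth varieties over $\mbr$, and it works.

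The structure is as follows. You show that $\langle 1,1\rangle:\I^m(L)\to\I^{m+1}(L)$ is already an isomorphism of sheaves for $m\geq d+1$. You check this termwise on Rost--Schmid complexes, using that a point of codimension $n$ has residue field of transcendence degree at most $d-n$ while the relevant degree is $m-n\geq d-n+1$. Hence $\I^m(L)\cong\colim_{\times 2}\I^\bullet(L)\cong a_{\ret}\mbz(L)$. You then pass to the real spectrum using Nisnevich--Zariski agreement and Scheiderer's exactness of $\supp_*$. Since $\supp_*$ lands in Zariski sheaves, the Leray step should be stated for Zariski cohomology, after the Nisnevich-to-Zariski comparison.

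One justification needs correcting: the field-level input in Step (1). As written, you say $\K^{\M}_j(F)/2$ vanishes beyond $e$. But $\K^{\M}_j(F)/2\cong\HH^j(F,\mbz/2)$ never vanishes for a formally real $F$, because $\{-1\}^j\neq 0$ there. What vanishes for $j>e$ is
\[
\HH^j(F(\sqrt{-1}),\mbz/2)\cong\K^{\M}_j(F(\sqrt{-1}))/2 .
\]
Together with the Arason--Pfister Hauptsatz, this gives $\I^j(F(\sqrt{-1}))=0$ for $j>e$. The exact sequence
\[
\I^j(F(\sqrt{-1}))\xrightarrow{\mathrm{tr}}\I^j(F)\xrightarrow{\langle 1,1\rangle}\I^{j+1}(F)\to\I^{j+1}(F(\sqrt{-1}))
\]
then gives the isomorphism $\I^j(F)\cong\I^{j+1}(F)$ for $j\geq e+1$ directly. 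This route needs no appeal to signatures or stability indices.
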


We also need the following result by Colliot-Th\'{e}l\`{e}ne and Scheiderer \cite{CTS96} and Lerbet \cite{Lerbet_2026}.

\begin{thm}[\cite{Lerbet_2026}, Proposition 3.3]\label{thm:2.2.0.11}
	Let $X$ be a smooth scheme over $\mbr$ of dimension $d$ and let $L$ be a line bundle over $X$. Then the map $\I^d(L)\xrightarrow{2}\I^{d+1}(L)$ is an epimorphism. Moreover, 
	let $\bar{\K}$ denote the kernel of the map $\I^d(L)\xrightarrow{2}\I^{d+1}(L)$. Assume that $X$ is not proper or $X(\mbr)\neq\varnothing$. Then $\HH^d(X,\bar{\K})=0$.
\end{thm}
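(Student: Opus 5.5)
The statement has two parts, and I would prove them separately: the surjectivity is a field-theoretic statement, and the vanishing of $\HH^d(X,\bar{\K})$ is a computation in the Rost--Schmid complex.

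\textbf{Surjectivity.} I would reduce it to fields.

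1. The map $2:\I^d(L)\to\I^{d+1}(L)$ is a morphism of strictly $\mba^1$-invariant sheaves, at least locally where $L$ is trivialised. Since the category of such sheaves is abelian, its cokernel $C$ is again strictly $\mba^1$-invariant, hence unramified.

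2. In particular $C$ injects into its values at generic points. So it suffices to show $C(F)=0$ for every field $F$ that is the function field of an irreducible scheme étale over $X$.

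3. Every such $F$ has transcendence degree $\le d$ over $\mbr$, so its virtual cohomological dimension is $\le d$.

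4. For fields with $\mathrm{vcd}\le d$, the theorem of Arason--Elman--Lam (deduced from the Milnor conjecture) gives $\I^{d+1}(F)=2\I^d(F)$, and $\I^{d+1}(F)$ is torsion-free. Hence $C(F)=0$, which gives surjectivity.

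\textbf{Vanishing of $\HH^d(X,\bar{\K})$.} The sheaf $\bar{\K}$ is strictly $\mba^1$-invariant, as a kernel in the same abelian category. By the Rost--Schmid resolution, $\HH^d(X,\bar{\K})$ is the cokernel of
$$\bigoplus_{y\in X^{(d-1)}}\bar{\K}_{-(d-1)}(\kappa(y))\otimes\mbz[\omega_y^\times]\;\longrightarrow\;\bigoplus_{x\in X^{(d)}}\bar{\K}_{-d}(\kappa(x))\otimes\mbz[\omega_x^\times].$$

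1. Contraction is exact, so $\bar{\K}_{-d}=\ker(2:\W\to\I)$ and $\bar{\K}_{1-d}=\ker(2:\I\to\I^2)$.

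2. At a closed point $x$ the residue field is $\mbr$ or $\mbc$. If $\kappa(x)=\mbr$, then $\W(\mbr)=\mbz$ and the kernel of $2$ vanishes. If $\kappa(x)=\mbc$, then $\W(\mbc)=\mbz/2$ and the whole group survives.

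3. So the target is $\bigoplus_{x\ \mathrm{complex}}\mbz/2$, and I must show that the class of each complex closed point $x$ is a boundary.

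\textbf{Killing a complex point.} Choose an integral curve $Y\subseteq X$ through $x$, with generic point $y\in X^{(d-1)}$, which I may take to be smooth near $x$ after normalising (transfers handle the normalisation). On $\kappa(y)$ I look for a torsion element of $\I$: a form of signature zero at every ordering whose double is zero. The candidate is the Pfister form $\llangle f\rrangle$ with $f\in\kappa(y)$ totally positive, for example a sum of squares.

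1. Its residue at a closed point $z$ of $Y$ is the class of $1\in\W(\kappa(z))$ when $v_z(f)$ is odd, and $0$ when it is even.

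2. So I need a totally positive $f$ whose divisor is $x$ plus complex points of even multiplicity, plus points where residues are irrelevant. The real points are irrelevant because the relevant $\W$-kernel there is zero; points at infinity are irrelevant because they are not on $X$.

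3. This is the main obstacle. On a projective completion $\bar{Y}$ the parity constraint from the class group mod $2$ prevents finding such $f$ in general.

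4. The hypothesis is what breaks this constraint. If $X$ is not proper, I would choose $Y$ to be non-proper and push the unwanted odd-multiplicity zeros to the missing points at infinity. If $X(\mbr)\neq\varnothing$, I would choose $Y$ through a real point $r$ as well as through $x$, and absorb the parity into $r$, whose contribution vanishes.

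5. To realise this, I would first try a Riemann--Roch argument on $\bar{Y}$ to produce $g$ with divisor $x+\bar{x}-2D$ plus points off $X$ or at $r$, and then take $f=g\bar{g}$, a norm from $\mbc$ and hence totally positive.
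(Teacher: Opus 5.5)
The paper does not prove this statement; it quotes it from Lerbet, so your proposal has to stand on its own.

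\textbf{What works.} The surjectivity part is fine: the cokernel is unramified, and $\I^{d+1}(F)=2\I^d(F)$ whenever $\mathrm{vcd}_2(F)\le d$. The reduction of $\HH^d(X,\bar{\K})$ is also fine: it suffices to show that each complex closed point $x$ is a Rost--Schmid boundary in $\bigoplus_x\W(\mbc)=\bigoplus_x\mbz/2$. The twist by $\omega\otimes L$ is harmless there, since at a complex point the residue of any unit multiple of $\llangle f\rrangle$ is just $v_x(f)$ mod $2$.

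\textbf{The gap.} The decisive step is left open (``I would first try''), and the mechanism you sketch does not work as written.

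\emph{The divisor $x+\bar x-2D$ is wrong.} If $\mathrm{div}(g)$ has odd coefficients at both conjugate geometric points $p,\bar p$ over $x$, then $v_x(g\bar g)=v_p(g)+v_{\bar p}(g)$ is even. So $\llangle g\bar g\rrangle$ has zero residue at $x$.

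\emph{Parity cannot be absorbed into $r$ at the level of $f$.} A function changes sign across a real point where its order is odd. Hence a totally positive $f$ has even order at every real point of the normalization, and the absorption must happen at the level of $g$.

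\emph{Riemann--Roch is the wrong tool.} It gives no control of divisors modulo $2$.

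\textbf{The missing input} is the $2$-divisibility of the Jacobian of the complexified curve.
\begin{enumerate}
\item Let $\tilde C$ be the smooth projective model of $Y$.
\item Take \emph{one} point $p\in\tilde C_{\mbc}$ over $x$.
\item Take a point $w\in\tilde C_{\mbc}$ lying over a real point of $Y$ (if $Y$ was chosen through a real point), or over a point of $\tilde C$ not lying over $Y$ (if $Y$ is non-proper).
\item Since $\Pic^0(\tilde C_{\mbc})$ is divisible, $p+w=2w+(p-w)\in 2\Pic(\tilde C_{\mbc})$. So there is $g$ with $\mathrm{div}(g)=p+w-2D$.
\end{enumerate}
Now set $f=g\bar g$ and check the required properties.
\begin{enumerate}
\item $f$ is a sum of two squares, so $2\llangle f\rrangle=\llangle -1,f\rrangle=0$ and $\llangle f\rrangle$ lies in $\bar{\K}_{1-d}(\kappa(y))$.
\item $v_x(f)=v_p(g)+v_{\bar p}(g)$ is odd.
\item $v_z(f)=v_q(g)+v_{\bar q}(g)$ is even at every other complex point $z$.
\item Contributions over real points of $Y$ land in $\bar{\K}_{-d}(\mbr)=0$.
\item Points of $\tilde C$ not over $Y$ are not in $X$.
\end{enumerate}
Hence the boundary of $\llangle f\rrangle$ is exactly the class of $x$. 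If $\mbc\subseteq\kappa(y)$, run the same argument directly on $\tilde C$ as a complex curve, where $\I^2(\kappa(y))=0$.

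This is precisely where the hypothesis enters. Without such a $w$ the argument breaks: for the anisotropic conic one gets $\HH^1(X,\bar{\K})\cong\Pic(X)/2\cong\mbz/2$.

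\textbf{Two smaller remarks.}
\begin{enumerate}
\item Choosing a curve through two prescribed points needs $X$ irreducible. The statement is false for the anisotropic conic $\sqcup\,\mba^1$; the paper assumes irreducibility throughout.
\item For the schemes $\Spec S^{-1}A$ to which the paper actually applies the result, every codimension-$d$ point is real. A complex maximal ideal $\mathfrak m=(g_1,\dots,g_m)$ contains $\sum g_i^2\in S$. So $C^d_{\RS}(X,\bar{\K})=0$, and your reduction alone already gives the vanishing there.
\end{enumerate}
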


\begin{remark}
	Consider the map $\CH^n(X)\to\HH^n(X(\mbr);\mbz/2)$ given by $$\CH^n(X)\to\HH_{d-n}^{\BM}(X(\mbr);\mbz/2)\cong\HH^n(X(\mbr);\mbz/2)$$
	where the first map takes a cycle to its fundamental class in Borel-Moore homology and the isomorphism is Poincar\'{e} duality. It turns out that this map coincides with $\bar{\gamma}_n$ (\cite{Lerbet_2026}, Section 2.5).
\end{remark}

Finally, we recall the following theorem, which states that algebraic characteristic classes are mapped to their topological counterparts (\cite{asok2025splittingvectorbundlesreal}, Proposition 1.4.4, and \cite{Hornbostel_2021}, Theorem 6.3).

\begin{thm}\label{thm:2.2.0.12}
	\begin{enumerate}
		\item Under the mod 2 cycle class map $$\bar{\gamma}:\CH^*(\BGL_n)\to\HH^*(\BO_n;\mbz/2)$$ the $i$-th Chern class $c_i$ is mapped to the $i$-th Stiefel-Whitney class $w_i$.
		\item Under the composition $$\tildeCH^*(\BSL_n)\to\HH^*(\BSL_n,\I^*)\xrightarrow{\gamma}\HH^*(\BSO_n;\mbz)$$ the (algebraic) $i$-th Pontryagin class $p_i$ is mapped to the (topological) $i$-th Pontryagin class $p_i$, and the (algebraic) Euler class $e_n$ is mapped to the topological Euler class $e_n$.
	\end{enumerate}
\end{thm}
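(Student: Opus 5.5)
The plan is to reduce both statements to universal computations on classifying spaces, where the algebraic and topological characteristic classes are pinned down by a few normalizations. I would use three properties of the realization maps: naturality, multiplicativity, and compatibility with reduction. I would work with the finite approximations $\Gr_{n,m}$, whose real points $\Gr_{n,m}(\mbr)$ are the real Grassmannians, and pass to the colimit at the end.

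For (1), I would first check that $\bar{\gamma}:\CH^*(-)\to\HH^*((-)(\mbr);\mbz/2)$ is a natural ring homomorphism. Via the remark after Theorem\autoref{thm:2.2.0.11}, $\bar\gamma$ is the Borel--Moore fundamental-class map followed by Poincar\'e duality. Cycle classes of transversal intersections go to cup products, and exterior products go to cross products, so this should be routine. Next I would use the splitting principle. The map $\BGL_1^{\times n}\to\BGL_n$ realizes to $\BO_1^{\times n}\to\BO_n$. On mod $2$ cohomology this map is injective, with image the symmetric polynomials in the $w_1$'s. Both the total Chern class and the total Stiefel--Whitney class satisfy the Whitney product formula, so the claim reduces to $n=1$, where it says $\bar\gamma(c_1(\mco(1)))=w_1$ on $\mbp^m$. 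Here $c_1(\mco(1))$ is the class of a hyperplane $H$, and $H(\mbr)=\mbr\mbp^{m-1}\subseteq\mbr\mbp^m$. Its Poincaré dual is the generator of $\HH^1(\mbr\mbp^m;\mbz/2)$, which is $w_1$ of the tautological line bundle.

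For (2), I would first treat the Euler class through Thom classes. The class $e_n$ is the pullback along the zero section of the Thom class in $\HH^n_{0}(V,\I^n(\det V^\vee))$. So it suffices to show that $\gamma$, extended to cohomology with supports, sends the algebraic Thom class to the topological Thom class. By naturality and locality this is a check on one fibre $\mba^n\supseteq\{0\}$. There $\HH^n_0(\mba^n,\I^n)\cong\W(\mbr)$, generated by the class of $\<1\>$ twisted by the orientation. Its signature divided by $2^n$ tracks the Pfister form $\llangle -1,\dots,-1\rrangle$, whose signature is $2^n$, so it maps to a generator of $\HH^n_0(\mbr^n;\mbz)$. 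Fixing signs then amounts to matching orientation conventions.

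For the Pontryagin classes I would proceed in three steps.
\begin{enumerate}
\item Work rationally. $\HH^*(\BSO_n;\mbz)/\text{torsion}$ injects into $\HH^*(\BSO_2^{\times k};\mbz)$ with $k=\lfloor n/2\rfloor$. Under the splitting, the algebraic $p_i$ becomes the $i$-th elementary symmetric function in the squares $e_2^{(j)2}$ of the rank-$2$ Euler classes (this is the Hornbostel--Wendt description). The topological $p_i$ has the same description, so the Euler class case together with multiplicativity of $\gamma$ gives agreement modulo torsion.
\item Handle torsion. All torsion in $\HH^*(\BSO_n;\mbz)$ has order $2$ and is detected by mod $2$ reduction (Brown, Feshbach). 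It therefore suffices to show that the images agree after reducing mod $2$.
\item Compare mod $2$ reductions. The reduction of the topological $p_i$ is $w_{2i}^2$. For the algebraic class, the defining diagram of $\bar\gamma$ shows that reducing $\gamma(p_i)$ mod $2$ equals $\bar\gamma$ applied to the Chow component $c_{2i}^2+2(\dots)\equiv c_{2i}^2$ of $p_i$. By (1), this is $w_{2i}^2$.
\end{enumerate}

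The main obstacle is establishing the structural compatibilities of $\gamma$. These are multiplicativity with twisted coefficients, the extension to cohomology with supports compatible with purity isomorphisms, and the identity ``$\gamma$ mod $2$ $=\bar\gamma\circ$ (reduction $\I^n\to\K^{\M}_n/2$)'' at the level of cohomology rather than just sheaves. For the last of these I would start from the commutative diagram defining $\bar\gamma$ and apply the long exact sequences in cohomology. The Euler class fibre computation, including signs, is the other delicate point.
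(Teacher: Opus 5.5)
The paper does not prove this statement. It is quoted from \cite{asok2025splittingvectorbundlesreal} (Proposition 1.4.4) and \cite{Hornbostel_2021} (Theorem 6.3). Those references supply the compatibilities of the real cycle class map that you flag as the main obstacle (multiplicativity with twists, behaviour with supports and Thom isomorphisms, reduction mod $2$), and they also fix the sign conventions.

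Your reconstruction is a sound alternative route. The splitting principle plus Borel--Haefliger multiplicativity gives (1), and a fibrewise Thom-class check gives $e_n$. For $p_i$, restriction to $\BSO_2^{\times k}$ kills any non-torsion discrepancy. The fibre-product description of $\tildeCH^*(\BSL_n)$, together with (1) and the defining diagram of $\bar\gamma$, shows the discrepancy vanishes mod $2$. Brown's theorem that torsion in $\HH^*(\BSO_n;\mbz)$ has exponent $2$ then finishes. What your route buys is transparency: it shows that the torsion part of the comparison is forced by part (1).

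Two simplifications and one caution.

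\emph{Local Thom computation.} This needs no Pfister-form heuristic. By the defining diagram, $\gamma^n_n=\gamma^n_{n+1}\circ(\cdot\langle 1,1\rangle)$. Multiplication by $\langle 1,1\rangle$ is bijective on $\HH^n_{\{0\}}(\mba^n,-)$, because $\W(\mbr)\to\I(\mbr)$, $x\mapsto\langle 1,1\rangle x$, is bijective. Finally, $\gamma^n_{n+1}$ is an isomorphism with supports by Theorem~\ref{thm:2.2.0.10} applied to $\mba^n$ and $\mba^n\setminus\{0\}$, plus the five lemma.

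\emph{Whitney formula on $\BSL_2^{\times k}$.} If the formula you invoke holds only modulo $\beta$-classes, this is harmless. $\I(\mbr)$-torsion classes map to $2$-torsion classes, hence to zero, in the torsion-free group $\HH^*(\BSO_2^{\times k};\mbz)$.

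\emph{Caution on signs.} The mod $2$ step is blind to signs. Whether you obtain $p_i$ or $(-1)^ip_i$, and $e_n$ or $-e_n$, is decided entirely by two conventions: the rank-$2$ normalization of $p_1$ against $e_2^2$ (a factor $\langle -1\rangle$ would realize to $-1$), and the orientation of the algebraic Thom class. So ``the topological $p_i$ has the same description'' is a convention check you must actually carry out. It is the one place where the citation does real work for you.
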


\begin{remark}
	For any smooth scheme $X$ over $\mbr$ and any line bundle $L$ over $X$, the image of the above composition
	$$\tildeCH^*(X,L)\to\HH^*(X,\I^*(L))\xrightarrow{\gamma(L)}\HH^*(X(\mbr);\mbz(L))$$
	coincides with the image of $\gamma(L)$. Indeed, we have short exact sequences of sheaves
	$$0\to 2\K^{\M}_n\xrightarrow{h}\K^{\MW}_n(L)\to\I^n(L)\to 0$$
	and by the Rost-Schmid complex, $\HH^{n+1}(X,2\K^{\M}_n)=0$ since $(\K^{\M}_n)_{-n-1}=0$. So the map $\tildeCH^*(X,L)\to\HH^*(X,\I^*(L))$ is surjective. We will use this fact implicitly in the sequel.
\end{remark}

\subsection{Real Algebraic Geometry}\label{sec:2.3}

In this section, we explain our unusual conventions about real algebraic geometry and list some existing results about algebraic vector bundles following the textbook by Bochnak, Coste, and Roy \cite{BCR}.

\begin{defn}\label{defn:2.3.0.1}
	Let $Y=\Spec A$ be a scheme of finite type over $\mbr$. Let $S\subseteq A$ be the multiplicative subset consisting of elements which are nonzero over $Y(\mbr)$.
	The \emph{affine real algebraic variety} defined by $Y$ is the scheme $X=\Spec S^{-1}A$. We will write $\mcr(X)=\Gamma(X,\mco_X)$ for the ring of \emph{regular functions} over $X$.
\end{defn}

\begin{remark}
	This definition is slightly different from \cite{BCR} Definition 3.2.9, where they only considered closed points.
\end{remark}

In the setting of Definition\autoref{defn:2.3.0.1}, if $Y$ is smooth, then $X$ is essentially smooth. We will say that an affine real algebraic variety is \emph{smooth} if it comes from a smooth affine scheme of finite type over $\mbr$. Also note that $X_{\rr}=Y_{\rr}$, so the results in the previous sections is valid for $X$.

\begin{remark}
	Write $X=\lim_\alpha U_\alpha$ where $U_\alpha$ are the open subsets of $Y$ containing $X$. From now on, for a motivic space $\mcy$, a map $X\to \mcy$ will refer to an element in the colimit $\colim_\alpha\HHom_{\Spc(k)}(U_\alpha,\mcy)$. Similarly, for a strictly $\mba^1$-invariant sheaf $M$, we set
	$$\HH^i(X,M)=\colim_\alpha\HH^i(U_\alpha,M).$$
	One can check that this coincides with the cohomology of the Rost-Schmid complex $C^*_{\RS}(X,M)$.
\end{remark}

Now we list some results on vector bundles over a smooth affine real algebraic variety $X$ (cf. \cite{BCR}, Chapter 12).
As one would expect, given a projective module of finite rank $M$ over $\mcr(X)$, there exists uniquely (up to isomorphism) a vector bundle $V_{\alg}\to X$ such that $\Gamma(V_{\alg})=M$. Taking $\mbr$-points, we get a topological real vector bundle $V\to X(\mbr)$. Recall that a real vector bundle over $X(\mbr)$ is \emph{algebraizable} if it is topologically isomorphic to the $\mbr$-points of some algebraic vector bundle.

For a topological space $S$, let $\KO^0(S)$ denote the group completion of the monoid of isomorphism classes of real topological vector bundles over $S$. Then we have a natural group homomorphism $\K_0(\mcr(X))\to\KO^0(X(\mbr))$ induced by taking $\mbr$-points.

\begin{thm}[\cite{BCR}, Theorem 12.3.3 and Proposition 12.3.5]\label{thm:2.3.0.2}
	Assume that $X(\mbr)$ is compact.
	\begin{enumerate}
		\item Two algebraic vector bundles over $X$ are algebraically isomorphic if and only if they are topologically isomorphic. In particular, the natural maps $\Pic(\mcr(X))=\CH^1(X)\to\Pic(X(\mbr))=\HH^1(X(\mbr);\mbz/2)$ and $\K_0(\mcr(X))\to\KO^0(X(\mbr))$ are injective.
		\item A topological vector bundle $V\to X(\mbr)$ is algebraizable if and only if its class in $\KO^0(X(\mbr))$ lies in the image of $\K_0(\mcr(X))\to\KO^0(X(\mbr))$.
	\end{enumerate}
\end{thm}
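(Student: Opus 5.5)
The approach is to reduce everything to one approximation principle: when $X(\mbr)$ is compact, continuous sections of an algebraic vector bundle $E\to X$ can be uniformly approximated on $X(\mbr)$ by regular sections, i.e. elements of $\Gamma(X,E)$. Two standard facts then turn "close to an isomorphism" or "close to a surjection" into genuinely algebraic statements. For $E$ trivial this is the Stone--Weierstrass theorem: $\mcr(X)$ contains the coordinate functions, so it separates points of $X(\mbr)$ and is dense in $C(X(\mbr),\mbr)$. For general $E$, I would write $E$ as a direct summand of a trivial bundle $\mco^N$ with a regular idempotent $e$. I then approximate the coordinates of a continuous section $s$ and apply $e$, which only changes the error by the bounded factor $\|e\|$.

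The second ingredient is a nonvanishing criterion for regular functions. If $f\in\mcr(X)$ is nonzero at every point of $X(\mbr)$, then $f$ is a unit in $\mcr(X)$. This is exactly why $X$ was defined in Definition\autoref{defn:2.3.0.1} by inverting such functions, after writing $f$ as a fraction. I also want a slightly stronger form: every maximal ideal of $\mcr(X)$ is the ideal of a real point. I would check this via the real Nullstellensatz in \cite{BCR}, since an ideal with no real zeros contains a sum of squares $1+\sum g_i^2$-type element that is nonvanishing on $X(\mbr)$.

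For part (1), let $E,F$ be algebraic bundles over $X$ and $\varphi:E(\mbr)\to F(\mbr)$ a topological isomorphism. Since $\HHom(E,F)$ is itself an algebraic bundle, I approximate $\varphi$ by a regular section $\psi$. Invertibility is an open condition and $X(\mbr)$ is compact, so $\psi$ is an isomorphism at every real point. Locally $\det\psi$, with respect to trivializations, is a regular function nonvanishing on $X(\mbr)$, so by the criterion above it is a unit, and hence $\psi$ is an algebraic isomorphism. Injectivity of $\CH^1(X)=\Pic(\mcr(X))\to\HH^1(X(\mbr);\mbz/2)$ is the rank-one case. For $\K_0$, if $[E]-[F]$ maps to $0$ in $\KO^0$, then $E(\mbr)\oplus\mbr^n\cong F(\mbr)\oplus\mbr^n$ for some $n$. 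By the above, $E\oplus\mco^n\cong F\oplus\mco^n$ algebraically, so $[E]=[F]$ in $\K_0(\mcr(X))$.

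For part (2), the "only if" direction is trivial. Conversely, suppose $[V]$ lies in the image. Every class of $\K_0(\mcr(X))$ has the form $[E]-[\mco^k]$, so there is a topological isomorphism $V\oplus\mbr^m\cong E(\mbr)\oplus\mbr^{k'}$. Projecting onto $\mbr^m$ gives a continuous surjection $\sigma:E(\mbr)\oplus\mbr^{k'}\to\mbr^m$ with kernel isomorphic to $V$. I approximate $\sigma$ by a regular morphism $\tau:E\oplus\mco^{k'}\to\mco^m$. Surjectivity is open, so $\tau$ is surjective at each real point. Then $\coker\tau$ is a finitely generated module whose support contains no maximal ideal, hence it vanishes by the maximal ideal statement above. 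So $\ker\tau$ is a projective $\mcr(X)$-module, i.e. an algebraic bundle $W$. Finally, nearby surjections have isomorphic kernels: with a continuous splitting $s$ of $\sigma$, the composite of the inclusion $\ker\sigma\hookrightarrow E\oplus\mbr^{k'}$ with the projection $1-s\tau$ onto $\ker\tau$ is an isomorphism when $\tau$ is close to $\sigma$. Therefore $V\cong W(\mbr)$.

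The main obstacle I expect is the algebraic-from-real-points step, namely that maximal ideals of $\mcr(X)$ correspond to real points (equivalently, functions nonvanishing on $X(\mbr)$ are units). This is the only place where the unusual definition of real algebraic variety really matters, and it requires the real Nullstellensatz rather than just Definition\autoref{defn:2.3.0.1}. Using kernels of surjections is what lets me avoid approximating projections by exact regular idempotents, which would otherwise be the delicate point.
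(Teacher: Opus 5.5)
Your argument is the standard proof behind the results of \cite{BCR} that the paper cites. The paper gives no proof of its own, only the remark that the argument uses partition of unity and Stone--Weierstrass approximation. The ingredients are the same: approximate continuous sections of algebraic $\HHom$-bundles by regular ones, use that isomorphisms and surjections form open conditions over the compact space $X(\mbr)$, and pass from real points to all of $X$ because every maximal ideal of $\mcr(X)$ is a real point. Your embedding of $E$ as a regular direct summand of a trivial bundle replaces the partition-of-unity step. The maximal-ideal fact does not need the real Nullstellensatz. If a proper ideal of $S^{-1}A$ pulled back to $(g_1,\dots,g_k)\subseteq A$ with no common real zero, then $\sum g_i^2$ would lie both in that ideal and in $S$. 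Two steps are wrong as written, though both are easily repaired.

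In part (1) you apply the unit criterion to $\det\psi$ computed in local trivializations. That criterion only holds for elements of $\mcr(X)$ itself. A Zariski open $D(g)\subseteq X$ is not a real algebraic variety in the paper's sense, and a regular function on it can be nonzero at all its real points without being a unit. For example, take $X$ coming from $\mba^2$: the function $x^2+y^2$ is nonzero on the real points of $D(x)$ but vanishes at the point $(x^2+y^2)\in D(x)$. Argue globally instead. The map $\psi\otimes\kappa(\mfm)$ is an isomorphism at every maximal ideal $\mfm$, since all of these are real points, so $\psi$ is an isomorphism by Nakayama. Equivalently, $\det\psi$ is a global section of $\det E^\vee\otimes\det F$ whose zero locus is a closed subset of $X$ with no real points, hence empty.

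In part (2), $1-s\tau$ is not a projection onto $\ker\tau$ unless $\tau s=1$; in fact $\tau(1-s\tau)=(1-\tau s)\tau$. Replace $s$ by $s_\tau=s(\tau s)^{-1}$, which exists once $\|\tau-\sigma\|\cdot\|s\|<1$ on $X(\mbr)$. Then $1-s_\tau\tau$ is the projection onto $\ker\tau$ along $\mathrm{im}\,s$. Since $\ker\sigma$ and $\ker\tau$ are both complements of $\mathrm{im}\,s$, its restriction to $\ker\sigma$ is a continuous bundle isomorphism $\ker\sigma\cong\ker\tau$. With these two repairs your proof is complete.
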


This shows that if $X(\mbr)$ is compact, then it suffices to study algebraizability of stable vector bundles.

\begin{remark}
	Compactness of $X(\mbr)$ is essential in this theorem. Indeed, the proof uses partition of unity and Stone-Weierstrass approximation.
\end{remark}

Finally, we need to show that under our convention on maps out of $X$, the space $\BGL_n$ still represents the functor $\Vect_n$.

\begin{lemma}
	Write $X=\lim_\alpha U_\alpha$ where $U_\alpha$ are the open subsets of $Y$ containing $X$. Then the natural map
	$$\colim_\alpha[U_\alpha,\BGL_n]_{\mba^1}\to\Vect_n(X)$$
	is bijective.
\end{lemma}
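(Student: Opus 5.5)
The map is the composite
$$\colim_\alpha[U_\alpha,\BGL_n]_{\mba^1}\xrightarrow{\ \sim\ }\colim_\alpha\Vect_n(U_\alpha)\longrightarrow\Vect_n(X).$$
The first arrow is a bijection by applying Theorem\autoref{affrep} to each $U_\alpha$. For this I will restrict to the cofinal subsystem of \emph{affine} opens $U_\alpha=\Spec A[1/f]$ with $f\in S$; these are smooth affine schemes over $\mbr$ because $Y$ is smooth. Cofinality holds because any open $U\supseteq X$ contains such a basic open. Indeed, the complement $Y\setminus U=V(I)$ misses every point of $X$, so in particular it misses the generic points of $X$ and all points lying over $Y(\mbr)$. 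This means $I$ is not contained in any prime of $A$ disjoint from $S$. A prime avoidance/real Nullstellensatz argument then gives an element $f\in I\cap S$, and $D(f)\subseteq U$ with $D(f)\supseteq X$. Concretely: if $g_1,\dots,g_r$ generate $I$ and have no common real zero, then $f=\sum g_i^2\in I$ is nonvanishing on $Y(\mbr)$. Naturality of Theorem\autoref{affrep} in the affine scheme makes the bijections compatible with the transition maps, so the colimits agree.

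It therefore remains to show that $\colim_\alpha\Vect_n(U_\alpha)\to\Vect_n(X)$ is a bijection, where $X=\Spec S^{-1}A$ and $S^{-1}A=\colim_{f\in S}A[1/f]$ is a filtered colimit of rings. This is the standard fact that finitely presented modules and their isomorphisms descend along filtered colimits of rings (EGA IV, 8.5).

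For \emph{surjectivity}, take a rank $n$ projective $S^{-1}A$-module $P$. Present it as the image of an idempotent matrix $e\in M_N(S^{-1}A)$. The entries of $e$ involve finitely many denominators, so $e$ lies in $M_N(A[1/f])$ for some $f\in S$. The relation $e^2=e$ holds in $S^{-1}A$, so it holds after inverting one further $g\in S$. Then $e$ defines a projective module over $A[1/fg]$ restricting to $P$. Constant rank $n$ can be arranged by inverting a further element of $S$, or follows automatically once the components through $X$ are fixed, since $X$ is irreducible.

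For \emph{injectivity}, suppose two modules over $A[1/f]$ become isomorphic over $S^{-1}A$. The isomorphism and its inverse are given by finitely many matrix entries, and the identities they satisfy hold after inverting finitely many elements of $S$. Hence the isomorphism is already defined over some $A[1/h]$.

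The only step needing genuine care is the cofinality argument, i.e.\ that every open subset of $Y$ containing $X$ contains a basic open $D(f)$ with $f\in S$, so that the colimit over all opens matches the colimit over the localizations $A[1/f]$. I expect this to be the main obstacle. The sum-of-squares trick above is what I would use: generators of $I$ with no common real zero yield an element of $I$ invertible on $Y(\mbr)$. Alternatively, one can define the system directly as $\{D(f)\}_{f\in S}$, which is evidently the intended meaning of $X=\lim_\alpha U_\alpha$, and then the question disappears.
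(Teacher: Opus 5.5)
Your proposal is correct and follows the same route as the paper: you reduce to $\colim_\alpha\Vect_n(U_\alpha)\to\Vect_n(X)$, get surjectivity by clearing denominators in an idempotent matrix, and get injectivity by spreading out the isomorphism over some $A[1/h]$. The paper phrases that last step geometrically, as extending a section of the open subscheme $\GL(V,V')\subseteq\HHom(V,V')$ after shrinking $U$. Your extra steps fill in details the paper leaves implicit: restricting to the cofinal affine system $D(f)$, $f\in S$, via the sum-of-squares argument so that Theorem~\ref{affrep} actually applies, and checking $e^2=e$ and constant rank after a further localization.
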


\begin{proof}
	Let $Y=\Spec A$ be a smooth affine scheme over $\mbr$ such that $X=\Spec S^{-1} A$ as in Definition\autoref{defn:2.3.0.1}.

	We first show surjectivity. It suffices to show that any vector bundle $V$ of rank $n$ over $X$ can be extended to a vector bundle over some $U_\alpha$. Let $M=\Gamma(V)$ be the associated projective module over $\mcr(X)=S^{-1}A$. Let $N$ be an integer such that $M$ is a direct summand of $\mco_X^N$. Let $P\in \M_{N}(S^{-1}A)$ be the matrix representing the projection map $\mco_X^N\to M$. Then by taking common denominators, we may find an element $f\in A$ such that $P$ is defined in $A_f$. Let $N$ be the $A_f$-module given by the image of $P$. Then the vector bundle associated to $N$ over $\Spec A_f$ is an extension of $V$.

	Next we show injectivity. Let $U\subseteq Y$ be an open subset containing $X$ and let $V,V'$ be two vector bundles over $U$ which are isomorphic when restricted to $X$. Consider the vector bundle $W=\HHom(V,V')$. Let $\GL(V,V')$ denote the open subset of the total space of $W$ consisting of isomorphisms. Then the section $\sigma$ of $W|_X$ given by the isomorphism $V|_X\cong V'|_X$ lies in $\GL(V,V')$. By shrinking $U$ if necessary, we may extend $\sigma$ to a section of $W$ lying in $\GL(V,V')$. This section witnesses the isomorphism between $V$ and $V'$.
\end{proof}

\section{Vector Bundles over Threefolds}\label{sec:3.1}

In this section we assume that $X$ is a smooth affine real algebraic variety of dimension 3 or less.

\subsection{Vector Bundles of Rank 2}

We start with the topological classification of rank 2 real vector bundles.

\begin{prop}
	Let $S$ be a CW-complex. Then rank 2 real vector bundles over $S$ are classified by the first Stiefel-Whitney class $w_1\in\HH^1(S;\mbz/2)$ and the twisted Euler class $e_{w_1}\in\HH^2(S;\mbz_{w_1})$. Here $\mbz_{w_1}$ is the local system obtained by twisting the constant sheaf $\mbz$ by the line bundle given by $w_1$.
\end{prop}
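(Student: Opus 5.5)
Rank 2 real vector bundles over $S$ correspond to homotopy classes of maps $S\to\BO_2$. I plan to understand $\BO_2$ through its orientation double cover. The homomorphism $\det:O_2\to\mbz/2$ has kernel $SO_2\cong U(1)$. This gives a fibration
$$\BSO_2\simeq\mbc\mbp^\infty=K(\mbz,2)\to\BO_2\to\B\mbz/2=K(\mbz/2,1),$$
and the second map classifies $w_1$. So $\BO_2$ has only two nonzero homotopy groups, $\pi_1=\mbz/2$ and $\pi_2=\mbz$. The action of $\pi_1$ on $\pi_2$ is through the sign: a reflection in $O_2$ conjugates $SO_2$ by $z\mapsto\bar z$, and this acts by $-1$ on $\pi_1(SO_2)=\pi_2(\BSO_2)$. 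Hence $\BO_2$ is a $2$-stage twisted Postnikov system, namely the twisted Eilenberg--MacLane space $K^{\mbz/2}(\mbz,2)$ over $K(\mbz/2,1)$, with the sign action.

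The next step is to check that the $k$-invariant vanishes, i.e.\ that the fibration has a section. The $k$-invariant lies in $\HH^3(K(\mbz/2,1);\mbz_{\mathrm{sign}})$. A section exists because $O_1\times O_1\subseteq O_2$, composed with the projection onto one factor, restricts to an isomorphism on $\pi_1$ (it hits $\det$). Concretely, the map $\B O_1\to\BO_2$ given by $L\mapsto L\oplus\varepsilon$ is a section of $w_1$. So $\BO_2$ is the split twisted Eilenberg--MacLane space $K(\mbz_{\mathrm{sign}},2)\times_{\B\mbz/2}$-type fibration with a section.

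Standard obstruction theory for such split twisted EM fibrations then says the following. For a CW complex $S$, homotopy classes of maps $S\to\BO_2$ lying over a fixed map $w:S\to K(\mbz/2,1)$ are in bijection with $\HH^2(S;\mbz_w)$, via the difference class with the base section $w\oplus\varepsilon$. Fiberwise homotopy over $w$ can be upgraded to free homotopy, because changing $w$ within its homotopy class just transports along the twisted coefficients. Hence the pair $(w_1,\text{class in }\HH^2(S;\mbz_{w_1}))$ classifies the bundle. There is one subtlety I expect to be the main obstacle: the action of the automorphisms of the base data. In general, free homotopy classes over $w$ are $\HH^2(S;\mbz_w)$ modulo the action of $\pi_1$ of the mapping space $\mathrm{Map}(S,K(\mbz/2,1))_w$, i.e.\ of $\HH^0(S;\mbz/2)$. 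This acts by the sign on components where the local system is trivial. I would handle this by fixing the identification of the local system $\mbz_{w_1}$ as the orientation sheaf of the determinant line bundle. With that choice, the class becomes a canonical invariant of $V$ rather than of the map.

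It remains to identify this invariant with the twisted Euler class $e_{w_1}(V)\in\HH^2(S;\mbz_{w_1})$. For this I will use naturality. The universal difference class on $\BO_2$ relative to the section $L\oplus\varepsilon$ is a class in $\HH^2(\BO_2;\mbz_{w_1})$. It restricts on the fiber $\BSO_2$ to the generator, which is the Euler class $c_1$. It also vanishes on the section, and the twisted Euler class of $L\oplus\varepsilon$ is indeed zero because that bundle has a nowhere vanishing section. Since $\HH^2(\BO_2;\mbz_{w_1})$ is detected by these two restrictions (from the Serre spectral sequence of the split fibration), the universal difference class equals $e_{w_1}$. Therefore $(w_1,e_{w_1})$ is a complete invariant.
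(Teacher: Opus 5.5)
Your proposal is correct and follows the same route as the paper. The paper only records that $\BO_2$ has $\pi_1=\mbz/2$, $\pi_2=\mbz$ with the sign action and then appeals to standard obstruction theory; you fill in what it leaves implicit, namely the vanishing of the $k$-invariant via the section $L\mapsto L\oplus\varepsilon$ and the identification of the difference class with $e_{w_1}$ through $\HH^2(\BO_2;\mbz_{w_1})\cong\mbz$.

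One small correction: the $\pi_1$ of the mapping space acts by $-1$ on $\HH^2$ of every component, twisted or not. For example, over $\mbr\mbp^2$ the twisted Euler class of $T\mbr\mbp^2$ in $\HH^2(\mbr\mbp^2;\mbz_{w_1})\cong\mbz$ is only defined as $\pm 1$. Your fix, comparing pairs (orientation local system, Euler class) up to isomorphism, already handles this uniformly.
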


\begin{proof}
	The homotopy groups of $\BO_2$ are the following: $\pi_1(\BO_2)=\mbz/2$, $\pi_2(\BO_2)=\mbz$, and $\pi_i(\BO_2)=0$ for $i\geq 3$. Moreover, the action of $\pi_1(\BO_2)$ on $\pi_2(\BO_2)$ is the only nontrivial action. The conclusion then follows from standard obstruction theory.
\end{proof}

Let $V$ be a rank 2 real vector bundle over $X(\mbr)$, and let $w_1(V),e_{w_1}(V)$ denote the characteristic classes of $V$. Then $V$ is algebraic if and only if we can find an algebraic vector bundle $V_{\alg}\to X$ such that $w_1(V_{\alg})=w_1(V)$ and $e_{w_1}(V_{\alg})=e_{w_1}(V)$.

\begin{prop}\label{prop:3.1.1.2}
	$V$ is algebraizable if and only if 
	\begin{enumerate}
		\item $w_1(V)\in\HH^1_{\alg}(X;\mbz/2)$, and
		\item if 1. is satisfied, there exists an algebraic line bundle $L$ over $X$ such that $w_1(L)=w_1(V)$ and $e_{w_1}(V)\in\HH^2_{\alg}(X;\mbz(L))$.
	\end{enumerate}
\end{prop}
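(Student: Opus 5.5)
The plan is to prove the two directions separately. For sufficiency I will build the algebraic bundle through the relative Postnikov tower of the determinant map $\det:\BGL_2\to\B\Gm$. The key input is that, in dimension at most $3$, the only obstruction living above the twisted Euler class lands in $\HH^4$, and $\HH^4$ vanishes on $X$.

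\emph{Necessity.} Suppose $V\cong V_{\alg}(\mbr)$ for some algebraic rank $2$ bundle $V_{\alg}$. Then
$$w_1(V)=w_1(V_{\alg}(\mbr))=\bar{\gamma}_1(c_1(V_{\alg}))$$
by Theorem\autoref{thm:2.2.0.12}(1), so $w_1(V)\in\HH^1_{\alg}(X;\mbz/2)$. Take $L=\det V_{\alg}$, or its dual, depending on the twisting convention for the Euler class; since $L\cong L^\vee$ topologically over $\mbr$, the choice does not matter for $w_1$. Then $w_1(L)=w_1(V)$. The algebraic Euler class $e(V_{\alg})\in\tildeCH^2(X,L)$ maps under $\tildeCH^2(X,L)\to\HH^2(X,\I^2(L))\xrightarrow{\gamma(L)}\HH^2(X(\mbr);\mbz(L))$ to the topological twisted Euler class $e_{w_1}(V)$. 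This is the twisted version of Theorem\autoref{thm:2.2.0.12}(2), which I will deduce by pulling back along the universal case over $\BGL_2$, using Wendt's computation of the twisted cohomology and the twisted real cycle class map. Hence $e_{w_1}(V)\in\HH^2_{\alg}(X;\mbz(L))$.

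\emph{Sufficiency.} Fix $L$ with $w_1(L)=w_1(V)$ and a class $\epsilon\in\HH^2(X,\I^2(L))$ with $\gamma(L)(\epsilon)=e_{w_1}(V)$. By the remark following Theorem\autoref{thm:2.2.0.12}, $\epsilon$ lifts to some $\tilde\epsilon\in\tildeCH^2(X,L)=\HH^2(X,\K^{\MW}_2(L))$. The steps are as follows.
\begin{enumerate}
	\item Consider $\det:\BGL_2\to\B\Gm$, whose fiber is $\BSL_2$. Since $\BSL_2$ is $\mba^1$-simply connected, $P^{(1)}(\det)=\B\Gm$. Also $\pi_2^{\mba^1}(\BSL_2)=\K^{\MW}_2$, with $\Gm$ acting by the twist.
	\item The line bundle $L$ gives a map $X\to\B\Gm$. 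Lifts of this map to $P^{(2)}(\det)$ form a torsor under $\HH^2(X,\K^{\MW}_2(L))$. The set of lifts is nonempty because $\mco\oplus L$ provides one.
	\item Choose the lift corresponding to $\tilde\epsilon$, adjusting by the Euler class of $\mco\oplus L$, which vanishes since that bundle has a nowhere-zero section.
	\item All further obstructions lie in $\HH^{i+2}(X,\pi_{i+1}^{\mba^1}(\BSL_2)_L)$ with $i\ge2$, i.e.\ in degree $\geq 4$. These vanish by Theorem\autoref{cohdim}, since $\dim X\le 3$; for $X$ essentially smooth, pass to the colimit over the $U_\alpha$.
	\item We thus obtain a map $X\to\BGL_2$, which by Theorem\autoref{affrep} and the lemma at the end of \autoref{sec:2.3} is an algebraic bundle $V_{\alg}$ with $\det V_{\alg}\cong L$.
\end{enumerate}

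Finally, I will check that $V_{\alg}(\mbr)\cong V$ using the topological classification by $(w_1,e_{w_1})$. First, $w_1(V_{\alg}(\mbr))=w_1(L)=w_1(V)$. Second, I need the Euler class of $V_{\alg}$ to equal $\tilde\epsilon$, so that its real realization is $\gamma(L)(\epsilon)=e_{w_1}(V)$.

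The main obstacle is this last identification. I must show that the $\HH^2(X,\K^{\MW}_2(L))$-parametrization of lifts to $P^{(2)}(\det)$ is compatible with the Euler class. Concretely, the composite $\BGL_2\to P^{(2)}(\det)$ should agree with the twisted Euler class $e:\BGL_2\to K^{\Gm}(\K^{\MW}_2,2)$ over $\B\Gm$. I would first try to verify this on the fiber $\BSL_2$. There, both maps induce isomorphisms on $\pi_2^{\mba^1}$, since the Euler class generates $\HH^2(\BSL_2,\K^{\MW}_2)\cong\K^{\MW}_0$. The identification would then follow up to a unit of $\GW$, which can be normalized by a choice of generator. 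I would extend this to the twisted setting using the $\Gm$-equivariance of the Euler class.
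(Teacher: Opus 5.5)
Your proposal is correct and is essentially the paper's argument. The paper works with the Postnikov tower of $\BGL_2$, whose first stage is $\B\Gm$ and whose second stage is the twisted Eilenberg--MacLane object $K^{\Gm}(\K^{\MW}_2,2)$; this is the same as your relative tower for $\det$. It then kills the higher obstructions by the dimension bound and realizes chosen preimages of $w_1(V)$ and $e_{w_1}(V)$ under the cycle class maps. The only difference is that the paper takes for granted that the second Postnikov coordinate is the twisted Euler class, and calls necessity obvious. Your sketch of why this holds up to a unit of $\GW$, via the fiber $\BSL_2$ and the base point $\mco\oplus L$, fills in that implicit step rather than taking a different route.
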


\begin{proof}
	The ``only if'' direction is obvious: take $L=\det V$. We focus on the ``if'' direction. We will construct a vector bundle by constructing a map $X\to\BGL_2$ and applying Theorem\autoref{affrep}.

	The Postnikov tower of $\BGL_2$ looks like
	$$\BGL_2\to\cdots\to\BGL_2^{(2)}\to\BGL_2^{(1)}\to *$$
	with $\BGL_2^{(1)}=\B\Gm$. In view of Theorem\autoref{cohdim}, the obstructions to lifting a map $X\to\BGL_2^{(2)}$ to a map $X\to\BGL_2$ vanish since they lie in $\HH^{i+1}(X,\pi_i^{\mba^1}(\BGL_2)),i\geq 3$ which are zero. So there is a surjection
	$$[X,\BGL_2]_{\mba^1}\twoheadrightarrow [X,\BGL_2^{(2)}]_{\mba^1}.$$
	We know how to describe the space $\BGL_2^{(2)}$. $\pi_2^{\mba^1}(\BGL_2)=\K^{\MW}_2$ and the action of $\pi_1^{\mba^1}(\BGL_2)=\Gm$ on $\K_2^{\MW}$ is the restriction of the multiplication action of $\K^{\MW}_0$ on $\K^{\MW}_2$ (\cite{AsokFasel2014threefold}, Proposition 6.3). Therefore $\BGL_2^{(2)}$ is the twisted Eilenberg-Maclane object $K^{\Gm}(\K^{\MW}_2,2)$. So the maps $X\to K^{\Gm}(\K^{\MW}_2,2)$ are classified by $c_1:X\to\B\Gm$ and $e_{c_1}\in\HH^2(X,\K^{\MW}_2(L))$ where $L$ is the line bundle over $X$ corresponding to $c_1$. By Theorem\autoref{thm:2.2.0.12}, we conclude that if the conditions in the above statement are satisfied, the algebraic vector bundle $V_{\alg}$ can be constructed by lifting a map $X\to\BGL_2^{(2)}$ given by (a choice of) preimages of $w_1(V)$ and $e_{w_1}(V)$ under the cycle class maps.
\end{proof}

Next we discuss how to eliminate the twist by $L$. The following theorem is due to Lerbet (\cite{Lerbet_2026}, Proposition 4.4).

\begin{thm}\label{thm:3.1.1.3}
	Let $L$ be a line bundle over $X$. The image of the cycle class map $\gamma_2(L)$ is the inverse image of the image of $\bar{\gamma}_2$ under the mod 2 reduction map.
\end{thm}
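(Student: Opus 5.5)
Here $X$ has dimension $d\le 3$. The statement has two inclusions. We need to show that the image of $\gamma_2(L):\HH^2(X,\I^2(L))\to\HH^2(X(\mbr);\mbz(L))$ equals the set of classes whose mod 2 reduction lies in $\operatorname{im}\bar\gamma_2=\HH^2_{\alg}(X;\mbz/2)$.

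The inclusion $\subseteq$ is formal. The commutative diagram defining $\bar\gamma$ relates $\I^2(L)\to\K^{\M}_2/2$ to the mod 2 reduction $\mbz(L)\to\mbz/2$. So reducing $\gamma_2(L)(a)$ mod 2 gives $\bar\gamma_2$ of the image of $a$ in $\HH^2(X,\K^{\M}_2/2)$. That image comes from $\HH^2(X,\K^{\M}_2)$, because $\HH^3(X,2\K^{\M}_2)$ vanishes by the Rost-Schmid argument used in the remark after Theorem \ref{thm:2.2.0.12}.

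For $\supseteq$, I would compare two long exact sequences. The first comes from $0\to\I^3(L)\to\I^2(L)\to\K^{\M}_2/2\to0$ on the algebraic side. The second comes from $0\to\mbz(L)\xrightarrow{2}\mbz(L)\to\mbz/2\to0$ on $X(\mbr)$. The maps $\gamma^*_3(L)$, $\gamma_2(L)$ and $\bar\gamma_2$ give a map between these sequences.

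Take $\beta\in\HH^2(X(\mbr);\mbz(L))$ with $\beta\bmod 2=\bar\gamma_2(c)$. Lift $c$ to $a\in\HH^2(X,\I^2(L))$. This is possible because the obstruction lies in $\HH^3(X,\I^3(L))$, and that group maps isomorphically onto $\HH^3(X(\mbr);\mbz(L))$ by Theorem \ref{thm:2.2.0.10} when $d\le 2$. For $d=3$ one uses $\I^3\to\I^4$ and Theorem \ref{thm:2.2.0.11}; a diagram chase shows the boundary of $c$ maps to the topological Bockstein of $\bar\gamma_2(c)$, which vanishes. Then $\beta-\gamma_2(L)(a)$ is $2\delta$ for some $\delta\in\HH^2(X(\mbr);\mbz(L))$. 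It therefore suffices to show that $2\HH^2(X(\mbr);\mbz(L))$ lies in the image of $\gamma_2(L)$ composed with $\I^3\to\I^2$.

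When $d\le 2$, every $\delta$ comes from $\HH^2(X,\I^3(L))$ via $\gamma^2_3(L)$ by Theorem \ref{thm:2.2.0.10}. Compatibility of $\gamma^2_3$, $\gamma_2$ and multiplication by 2, from the diagram defining $\bar\gamma$, then gives the claim.

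When $d=3$ we only know that $\gamma^2_4(L)$ is an isomorphism, so I would factor through $\I^4$. Theorem \ref{thm:2.2.0.11} applied to $\I^3\xrightarrow{2}\I^4$, with kernel $\bar\K$, gives surjectivity of the relevant maps after using the vanishing $\HH^3(X,\bar\K)=0$. Concretely, the exact sequence $\HH^2(X,\I^3)\to\HH^2(X,\I^4)\to\HH^3(X,\bar\K)=0$ shows that $\HH^2(X,\I^3(L))\to\HH^2(X,\I^4(L))\cong\HH^2(X(\mbr);\mbz(L))$ is surjective. Hence every $\delta$ is $\gamma^2_3$ of some class, and we conclude as before.

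The main obstacle is the $d=3$ case. There one must check carefully that the signature normalizations $\Sign/2^m$ make the diagrams commute with the correct factor of 2. One must also apply Theorem \ref{thm:2.2.0.11} in a degree where it gives vanishing: it is stated for $\HH^d$ with $d=3$, while we use $\HH^3(X,\bar\K)$ as a cokernel term, which is exactly that case. If $X(\mbr)=\varnothing$ and $X$ is proper, the hypothesis of Theorem \ref{thm:2.2.0.11} fails. That case is trivial, since all topological groups vanish.
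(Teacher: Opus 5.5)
Your proposal is correct and is essentially the paper's proof. It uses the same map between the long exact sequences for $0\to\I^3(L)\to\I^2(L)\to\K^{\M}_2/2\to0$ and $0\to\mbz(L)\xrightarrow{2}\mbz(L)\to\mbz/2\to0$, reduces by a four-lemma-type chase to surjectivity of $\gamma^2_3(L)$ and injectivity of $\gamma_3(L)$, and gets both from $\HH^3(X,\bar{\K})=0$ via $\I^3(L)\xrightarrow{2}\I^4(L)$ together with Jacobson's isomorphism for $\I^4$. One step should be stated explicitly: when $d=3$, your lifting step needs injectivity of $\gamma_3(L)$, which comes from the same exact sequence one degree higher, $\HH^3(X,\bar{\K})\to\HH^3(X,\I^3(L))\to\HH^3(X,\I^4(L))$.
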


\begin{proof}
	Consider the commutative diagram
	\[\begin{tikzcd}[ampersand replacement=\&, column sep=20pt]
		{\HH^2(X,\I^3(L))} \& {\HH^2(X,\I^2(L))} \& {\HH^2(X,\K^{\M}_2/2)} \& {\HH^3(X,\I^3(L))} \\
		{\HH^2(X(\mbr);\mbz(L))} \& {\HH^2(X(\mbr);\mbz(L))} \& {\HH^2(X(\mbr);\mbz/2)} \& {\HH^3(X(\mbr);\mbz(L))}
		\arrow[from=1-1, to=1-2]
		\arrow["{\gamma^2_3}", from=1-1, to=2-1]
		\arrow[from=1-2, to=1-3]
		\arrow["{\gamma_2}", from=1-2, to=2-2]
		\arrow[from=1-3, to=1-4]
		\arrow["{\bar{\gamma}_2}", from=1-3, to=2-3]
		\arrow["{\gamma_3}", from=1-4, to=2-4]
		\arrow["2", from=2-1, to=2-2]
		\arrow[from=2-2, to=2-3]
		\arrow[from=2-3, to=2-4]
	\end{tikzcd}\]
	where the rows are long exact sequences in cohomology associated to short exact sequences of sheaves. By a diagram chase, it suffices to show that $\gamma_3$ is injective and $\gamma_3^2$ is surjective.

	From Theorem\autoref{thm:2.2.0.10} we know that $\HH^i(X(\mbr);\mbz(L))\cong\HH^i(X,\I^4(L))$ for all $i$. The short exact sequence of sheaves
	$$0\to\bar{\K}\to\I^3(L)\xrightarrow{2}\I^4(L)\to0$$
	yields a commutative diagram
	\[\begin{tikzcd}[ampersand replacement=\&,column sep=8pt]
		{\HH^2(X,\I^3(L))} \& {\HH^2(X,\I^4(L))} \& {\HH^3(X,\bar{\K})} \& {\HH^3(X,\I^3(L))} \& {\HH^3(X,\I^4(L))} \\
		\& {\HH^2(X(\mbr);\mbz(L))} \&\&\& {\HH^3(X(\mbr);\mbz(L))}
		\arrow[from=1-1, to=1-2]
		\arrow["{\gamma^2_3}"', from=1-1, to=2-2]
		\arrow[from=1-2, to=1-3]
		\arrow["\cong", from=1-2, to=2-2]
		\arrow[from=1-3, to=1-4]
		\arrow[from=1-4, to=1-5]
		\arrow["{\gamma_3}"', from=1-4, to=2-5]
		\arrow["\cong", from=1-5, to=2-5]
	\end{tikzcd}\]
	with an exact row. By Theorem\autoref{thm:2.2.0.11}, $\HH^3(X,\bar{\K})=0$, so $\gamma_3^2$ is surjective and $\gamma_3$ is injective as required.
\end{proof}

\begin{remark}
	In other words, for any line bundle $L$ over $X$, the preimage of $\HH^2_{\alg}(X;\mbz/2)$ in $\HH^2(X(\mbr);\mbz(L))$ under the mod 2 reduction map is precisely $\HH^2_{\alg}(X;\mbz(L))$.
\end{remark}

Combining these results, we obtain the following theorem.

\begin{thm}\label{dim3rk2}
	Let $V$ be a rank 2 real vector bundle over $X(\mbr)$. Then $V$ is algebraizable if and only if the Stiefel-Whitney classes $w_1(V)$ and $w_2(V)$ lie in $\HH^*_{\alg}(X;\mbz/2)$.
\end{thm}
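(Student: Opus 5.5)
The plan is to reduce the statement to Proposition\autoref{prop:3.1.1.2} and Theorem\autoref{thm:3.1.1.3}. The ``only if'' direction is immediate from Theorem\autoref{thm:2.2.0.12}. If $V\cong V_{\alg}(\mbr)$, then $w_i(V)=\bar{\gamma}(c_i(V_{\alg}))$, so $w_1(V)$ and $w_2(V)$ are algebraic.

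For the ``if'' direction, assume $w_1(V),w_2(V)\in\HH^*_{\alg}(X;\mbz/2)$. Since $w_1(V)$ is algebraic, I pick $c_1\in\CH^1(X)$ with $\bar{\gamma}_1(c_1)=w_1(V)$. Let $L$ be the corresponding algebraic line bundle. Then $w_1(L)=w_1(V)$, because $c_1(L)=c_1$ and Theorem\autoref{thm:2.2.0.12} applies to line bundles. This verifies condition 1 of Proposition\autoref{prop:3.1.1.2} and produces the $L$ needed in condition 2.

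It remains to show $e_{w_1}(V)\in\HH^2_{\alg}(X;\mbz(L))$. The key topological input is the standard fact that the mod 2 reduction of the twisted Euler class of a rank 2 bundle is its top Stiefel-Whitney class. More precisely, the reduction map $\mbz(L)\to\mbz/2$ sends $e_{w_1}(V)\in\HH^2(X(\mbr);\mbz(L))$ to $w_2(V)$. This holds universally on $\BO_2$ for the local system twisted by $w_1$, and here the local system $\mbz(L)$ agrees with $\mbz_{w_1(V)}$ because $w_1(L)=w_1(V)$. So $e_{w_1}(V)$ reduces mod 2 to the algebraic class $w_2(V)$. By Theorem\autoref{thm:3.1.1.3}, or equivalently the remark following it, $e_{w_1}(V)$ then lies in $\HH^2_{\alg}(X;\mbz(L))$. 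Condition 2 holds, and Proposition\autoref{prop:3.1.1.2} yields that $V$ is algebraizable.

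The main point needing care is the identification of local systems and of the reduction map. The twist $\mbz(L)$ used in the real cycle class maps $\gamma(L)$ must coincide with the twist $\mbz_{w_1}$ used to define $e_{w_1}(V)$. Any isomorphism between these two local systems commutes with reduction to $\mbz/2$, since $\mbz/2$ carries no twist. I would check this using only that both local systems are determined by the class $w_1\in\HH^1(X(\mbr);\mbz/2)$. Beyond this bookkeeping, all the substantive input — the vanishing of $\HH^3(X,\bar{\K})$ and the Postnikov lifting — is already contained in the cited results.
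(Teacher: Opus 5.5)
Your proposal is correct and follows the paper's proof essentially step for step. You choose an algebraic line bundle $L$ with $w_1(L)=w_1(V)$, observe that $e_{w_1}(V)$ reduces mod 2 to the algebraic class $w_2(V)$ so that Theorem~\ref{thm:3.1.1.3} places it in $\HH^2_{\alg}(X;\mbz(L))$, and conclude with Proposition~\ref{prop:3.1.1.2}. Your explicit ``only if'' argument via Theorem~\ref{thm:2.2.0.12} and your check that $\mbz(L)\cong\mbz_{w_1}$ compatibly with mod 2 reduction are points the paper treats as evident.
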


\begin{proof}
	It suffices to prove the ``if'' direction. Since $w_1(V)$ is algebraic, we may find an algebraic line bundle $L$ over $X$ such that $w_1(L)=w_1(V)$. By Theorem\autoref{thm:3.1.1.3}, since $w_2(V)$ is algebraic and $w_2(V)$ is the mod 2 reduction of $e_{w_1}(V)$, $e_{w_1}(V)$ is also algebraic, i.e. lies in $\HH_{\alg}^2(X;\mbz(L))$. We conclude the proof by applying Proposition\autoref{prop:3.1.1.2}.
\end{proof}

\subsection{Vector Bundles of Rank 3 or More}

We first show that we can reduce the rank $\geq 3$ case to the rank 3 case.

\begin{prop}
	Let $V$ be a rank $r$ topological real vector bundle over $X(\mbr)$ with $r\geq 3$. Then $V$ splits off a trivial bundle of rank $r-3$.
\end{prop}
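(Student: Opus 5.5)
The plan is to use classical obstruction theory on the CW-complex $X(\mbr)$ and exploit the fact that $\dim X(\mbr)\leq 3$. First, since $X$ is a smooth real algebraic variety of dimension at most $3$, the space $X(\mbr)$ is a smooth manifold of dimension at most $3$. This holds whenever $X(\mbr)$ is nonempty; if it is empty there is nothing to prove. Being a manifold, $X(\mbr)$ has the homotopy type of a CW-complex of dimension at most $3$. Note that it need not be compact, but a triangulation or handle decomposition still gives a CW structure of dimension $\leq 3$.

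Next, rank $r$ bundles correspond to homotopy classes of maps $f:X(\mbr)\to\BO_r$. Splitting off a trivial summand of rank $r-3$ is the same as lifting $f$ through the map $\BO_3\to\BO_r$ induced by $W\mapsto W\oplus\mbr^{r-3}$. The homotopy fiber of $\BO_3\to\BO_r$ is the Stiefel manifold $O_r/O_3=V_{r-3}(\mbr^r)$. This Stiefel manifold of $(r-3)$-frames in $\mbr^r$ is $(3-1)=2$-connected, because $V_k(\mbr^n)$ is $(n-k-1)$-connected. Equivalently, the map $\BO_3\to\BO_r$ is $3$-connected: it is an isomorphism on $\pi_i$ for $i\leq 2$ and surjective on $\pi_3$. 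The last point can also be checked directly: $\pi_3(\BO_3)=\pi_2(O_3)=0$ and $\pi_3(\BO_r)=\pi_2(O_r)=0$, so this case is trivial.

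The obstructions to lifting $f$ up to homotopy therefore lie in $\HH^{i+1}(X(\mbr);\pi_i(V_{r-3}(\mbr^r)))$, with possibly twisted coefficients, for $i\geq 3$. These groups live in degrees $\geq 4>\dim X(\mbr)$ and so vanish. Alternatively, one can apply the cellular Whitehead-type statement directly: an $n$-connected map induces a surjection on homotopy classes of maps from CW-complexes of dimension $\leq n$, and here $n=3$. The lift produces a rank 3 bundle $W$ with $V\cong W\oplus\mbr^{r-3}$.

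An equivalent and more elementary route is to note that a generic section of a rank $r\geq 4$ bundle over a manifold of dimension $\leq 3$ is nowhere zero. Such a section splits off a trivial line, and induction on $r$ brings the rank down to $3$. Transversality applies because the zero locus of a generic section has codimension $r>\dim X(\mbr)$. The only mildly delicate point is the non-compactness of $X(\mbr)$, and both arguments handle it: the obstruction argument uses only the dimension of a CW-model, and the transversality argument is local. I do not expect a serious obstacle here.
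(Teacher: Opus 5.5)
Your proposal is correct. Your final paragraph is essentially the paper's own proof: for $r>3$ a generic section of $V$ vanishes nowhere, since its zero locus would have codimension $r>\dim X(\mbr)$. Such a section spans a trivial line subbundle that splits off, and induction brings the rank down to $3$. This tacitly uses the standard fact that $V$ is isomorphic to a smooth bundle, so that transversality applies.

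Your main argument takes a different, purely homotopy-theoretic route. You lift the classifying map through $\BO_3\to\BO_r$, using that the fiber $V_{r-3}(\mbr^r)$ is $2$-connected, so every obstruction lies in $\HH^{\geq 4}$ of a CW-complex of dimension $\leq 3$ and vanishes.

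What your route buys:
\begin{itemize}
\item It splits off all $r-3$ trivial summands at once.
\item It needs no smoothing of $V$ or transversality on a possibly non-compact manifold.
\item It is the same style of argument the paper uses immediately afterwards: a connectivity estimate for a map of classifying spaces combined with a dimension bound. The paper does this topologically for the classification of rank $3$ bundles by $(w_1,w_2)$, and motivically for the lifting along $(c_1,c_2)\colon \BGL_3\to K(\K^{\M}_1,1)\times K(\K^{\M}_2,2)$.
\end{itemize}
The paper's generic-section argument is shorter and more geometric, but it relies on differential-topological input rather than only on the dimension of a CW model.
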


\begin{proof}
	When $r>3$, a generic section of $V$ is nonzero and generates a trivial line bundle as a direct summand.
\end{proof}

From now on, assume that $V\to X(\mbr)$ is a rank 3 topological real vector bundle.
The topological classification of such bundles is given by the following.

\begin{prop}
	Let $S$ be a CW-complex of dimension 3 or less. Then rank 3 real vector bundles over $S$ are classified by the first and second Stiefel-Whitney classes $w_1$ and $w_2$.
\end{prop}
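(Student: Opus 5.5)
Rank $3$ real vector bundles over $S$ are classified by homotopy classes of maps $S\to\BO_3$. I will analyse these by obstruction theory along the Postnikov tower of $\BO_3$, in the same way as in the rank $2$ proposition. The relevant homotopy groups are
$$\pi_1(\BO_3)=\mbz/2,\qquad \pi_2(\BO_3)=\pi_1(\mathrm{O}_3)=\mbz/2,\qquad \pi_3(\BO_3)=\pi_2(\mathrm{O}_3)=0.$$
Because $S$ has dimension at most $3$, the vanishing of $\pi_3$ gives the key consequence: the map $\BO_3\to P_2\BO_3$ to the second Postnikov stage is $4$-connected. Its homotopy fiber has $\pi_i=0$ for $i\leq 3$. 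Obstruction theory then gives a bijection
$$[S,\BO_3]\cong[S,P_2\BO_3].$$
Existence of lifts only needs the obstruction groups $\HH^{i+1}(S;\pi_i)$ with $i\geq 4$ to vanish, and uniqueness only needs $\HH^{i}(S;\pi_i)$ with $i\geq 4$ to vanish. Both hold since $\dim S\leq 3$.

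Next I will identify $P_2\BO_3$ explicitly. The action of $\pi_1$ on $\pi_2=\mbz/2$ is necessarily trivial, because $\mathrm{Aut}(\mbz/2)$ is trivial. So $P_2\BO_3$ is a fibration
$$K(\mbz/2,2)\to P_2\BO_3\to K(\mbz/2,1)$$
with untwisted fiber, classified by a $k$-invariant $k\in\HH^3(K(\mbz/2,1);\mbz/2)=\mbz/2\cdot\iota^3$. To show $k=0$, I will use the splitting $\mathrm{O}_3=\SO_3\times\{\pm1\}$, which gives a homotopy equivalence
$$\BO_3\simeq\BSO_3\times\B\mbz/2.$$
Hence
$$P_2\BO_3\simeq P_2\BSO_3\times K(\mbz/2,1)\simeq K(\mbz/2,2)\times K(\mbz/2,1).$$
Here I use $P_2\BSO_3=K(\mbz/2,2)$, since $\BSO_3$ is simply connected with $\pi_2=\mbz/2$.

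It follows that $[S,\BO_3]\cong\HH^1(S;\mbz/2)\times\HH^2(S;\mbz/2)$. It remains to check that this bijection is $V\mapsto(w_1(V),w_2(V))$. The generator of $\HH^2(K(\mbz/2,2);\mbz/2)$ pulls back along $\BSO_3\to K(\mbz/2,2)$ to $w_2$; this is the standard fact that $w_2$ detects $\pi_2(\BSO_3)$, since $w_2$ is nonzero on $\BSO_3$. The class $w_1$ is the class of $\det$, which matches the projection to $\B\mbz/2$ up to an automorphism.

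One subtlety is that the product decomposition uses $V\mapsto(V\otimes\det V,\det V)$, so the $\HH^2$-coordinate is $w_2(V\otimes\det V)$. This equals $w_2(V)+w_1(V)^2$ (for rank $3$ and $L=\det V$, $w_2(V\otimes L)=w_2+2w_1w_1(L)+3w_1(L)^2=w_2+w_1^2$ mod $2$). The change of coordinates $(w_1,w_2)\mapsto(w_1,w_2+w_1^2)$ is invertible, so $(w_1,w_2)$ still gives a bijection. This bookkeeping is the step I expect to need the most care.
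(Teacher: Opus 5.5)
Your argument is correct and is essentially the paper's. Because $\pi_3(\BO_3)=0$, any map $\BO_3\to K(\mbz/2,1)\times K(\mbz/2,2)$ that is an isomorphism on $\pi_1$ and $\pi_2$ is 4-connected, and obstruction theory then gives the bijection for $\dim S\leq 3$. The paper takes $(w_1,w_2)$ itself as that map, since $w_1$ and $w_2$ detect the generators of $\pi_1$ and $\pi_2$. This makes your detour through $\mathrm{O}_3\cong\mathrm{SO}_3\times\{\pm1\}$, the $k$-invariant, and the coordinate change $w_2\mapsto w_2+w_1^2$ unnecessary, although each of those steps is correct.
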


\begin{proof}
	Consider the map $(w_1,w_2):\BO_3\to K(\mbz/2,1)\times K(\mbz/2,2)$ corresponding to the first and second Stiefel-Whitney classes. Then this map induces isomorphisms on $\pi_i$ for $i\leq 3$ and induces a surjection on $\pi_4$. Since $S$ has dimension $\leq 3$, using a standard obstruction theory argument, we conclude that there is a bijection $[S,\BO_3]\xrightarrow{(w_1,w_2)}[S,K(\mbz/2,1)\times K(\mbz/2,2)]$.
\end{proof}

\begin{remark}
	By Wu's formula, the third Stiefel-Whitney class is given by $w_3=\Sq^1(w_2)+w_1w_2$, so $w_3$ does not appear in the above classification.
\end{remark}

\begin{thm}\label{dim3rk3}
	Let $V$ be a topological rank 3 real vector bundle over $X(\mbr)$. Then $V$ is algebraizable if and only if $w_1(V)$ and $w_2(V)$ are algebraic.
\end{thm}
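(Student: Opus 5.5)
The ``only if'' direction follows from Theorem\autoref{thm:2.2.0.12}: Chern classes of an algebraic bundle map to its Stiefel-Whitney classes under $\bar{\gamma}$. For the ``if'' direction, the plan is to reduce to Theorem\autoref{dim3rk2}. If $V$ can be written topologically as $V\cong V'\oplus\varepsilon$ with $V'$ a rank 2 bundle whose Stiefel-Whitney classes are algebraic, then Theorem\autoref{dim3rk2} makes $V'$ algebraizable, hence also $V$.

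\textbf{Step 1 (split off a line bundle).} The obstruction to a nowhere-vanishing section of the rank 3 bundle $V$ over the $\leq 3$-dimensional $X(\mbr)$ is the twisted Euler class $e_{w_1}(V)\in\HH^3(X(\mbr);\mbz(\det V))$. This need not vanish, so a trivial summand is too much to ask. Instead, aim for a splitting $V\cong V'\oplus\lambda$ with $\lambda$ a line bundle and $w_1(\lambda)=w_1(V)=:a$. Then $V\otimes\lambda$ is orientable of rank 3, and I would show it has a nowhere-vanishing section. Equivalently, I would show directly that the classification of rank 3 bundles by $(w_1,w_2)$ is realized by a sum $V'\oplus\lambda$.

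\textbf{Step 2 (choose the summands so the classes match).} Take $V'$ of rank 2 with $w_1(V')=0$ and $w_2(V')=w_2(V)+a^2$, and take $\lambda$ with $w_1(\lambda)=a$. Such a $V'$ exists: oriented rank 2 bundles are classified by $e\in\HH^2(X(\mbr);\mbz)$, so we need $w_2(V)+a^2$ to lift to an integral class. By the Whitney formula,
$$w_1(V'\oplus\lambda)=a,\qquad w_2(V'\oplus\lambda)=w_2(V')+w_1(V')a=w_2(V)+a^2.$$
This does not match $w_2(V)$. The fix is to take $w_1(V')=a$ and $\lambda$ trivial, giving $w_1=a$ and $w_2=w_2(V')$, and set $w_2(V')=w_2(V)$. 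A rank 2 bundle with classes $(a,w_2)$ exists exactly when $w_2$ is the reduction of a class in $\HH^2(X(\mbr);\mbz(a))$. That is, the twisted Bockstein $\beta_a(w_2(V))$ must vanish, and this is the obstruction $W_3$-type class. For the rank 3 bundle $V$, this is $e_{w_1}(V)$ up to the twist, so it need not vanish. Therefore I would keep both options and use $V\cong V'\oplus\lambda$ with general $\lambda$: set $b=w_1(\lambda)$ and require $w_1(V')=a+b$ and $w_2(V')+(a+b)b=w_2(V)$. The task is to choose an algebraic $b\in\HH^1_{\alg}$ making the Bockstein obstruction for $w_2(V)+(a+b)b$ with twist $a+b$ vanish.

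\textbf{Main obstacle.} The hard part is showing this choice is possible, or else bypassing it. A cleaner alternative is to avoid topological splitting and work motivically, following the proof of Proposition\autoref{prop:3.1.1.2} with $\BGL_3$. Here $\pi_1^{\mba^1}=\Gm$, and $\pi_2^{\mba^1}(\BGL_3)=\pi_1^{\mba^1}(\GL_3)=\K^{\Q}_2\cong\K^{\M}_2$ by the stabilization proposition, since $1\leq 3-2$. The next obstruction lies in $\HH^4$, which vanishes by Theorem\autoref{cohdim}. So maps $X\to\BGL_3$ surject onto maps to $K^{\Gm}(\K^{\M}_2,2)$, which are classified by $(c_1,c_2)\in\CH^1(X)\times\CH^2(X)$; the action is trivial since $\K^{\M}_2$ has no $\K^{\MW}_0$-twist. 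Realization sends $(c_1,c_2)$ to $(w_1,w_2)$ by Theorem\autoref{thm:2.2.0.12}. Given algebraic $w_1(V),w_2(V)$, choose preimages, lift to an algebraic bundle $V_{\alg}$ with the same $w_1,w_2$, and conclude $V_{\alg}\cong V$ topologically by the $(w_1,w_2)$-classification proposition above. In this route, the step to check carefully is the identification of $\pi_2^{\mba^1}(\BGL_3)$ and of the Postnikov stage $\BGL_3^{(2)}$, together with compatibility with $c_2$. That compatibility requires the Suslin-Hurewicz map in degree 2 to be multiplication by $(2-1)!=1$, per the Remark on characteristic classes.
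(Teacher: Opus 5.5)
Your final motivic route is correct and is essentially the paper's proof. The map $(c_1,c_2)\colon\BGL_3\to K(\K^{\M}_1,1)\times K(\K^{\M}_2,2)$ is an isomorphism on $\pi_1^{\mba^1}$ and $\pi_2^{\mba^1}$; the second holds because $\pi_2^{\mba^1}(\BGL_3)\cong\K^{\Q}_2$ and $c_2$ induces the degree-2 Suslin--Hurewicz isomorphism. All remaining obstructions lie in $\HH^{\geq 4}(X,-)=0$, and realization plus the $(w_1,w_2)$-classification of rank 3 bundles finishes the argument, so the topological splitting attempt in Steps 1--2 is an unnecessary detour and can be dropped.
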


\begin{proof}
	Again, it suffices to prove the ``if'' direction. We fix a map $$(c_1(V),c_2(V)):X\to K(\K^{\M}_1,1)\times K(\K^{\M}_2,2)$$ such that its image under the real cycle class map is $(w_1(V),w_2(V))$.

	Consider the map $(c_1,c_2):\BGL_3\to K(\K^{\M}_1,1)\times K(\K^{\M}_2,2)$. This map induces isomorphisms on $\pi_1^{\mba^1}$ and $\pi_2^{\mba^1}$, and is a surjection on $\pi_3^{\mba^1}$. Since the cohomological dimension of $X$ is 3 (Theorem\autoref{cohdim}), by a standard obstruction theory argument similar to the proof of Proposition\autoref{prop:3.1.1.2}, we have a surjective map
	$$[X,\BGL_3]_{\mba^1}\to[X,K(\K^{\M}_1,1)\times K(\K^{\M}_2,2)]_{\mba^1}$$
	which produces the desired vector bundle $V_{\alg}$ as a map $V_{\alg}:X\to\BGL_3$ lifting the characteristic classes $(c_1(V),c_2(V))$.
\end{proof}

Putting things together, we obtain one of our main theorems.

\begin{thm}\label{dim3}
	Let $X$ be a smooth affine real algebraic variety of dimension 3. Then a topological real vector bundle $V$ over $X(\mbr)$ is algebraizable if and only if its first and second Stiefel-Whitney classes $w_1(V),w_2(V)$ are in $\HH^*_{\alg}(X;\mbz/2)$.
\end{thm}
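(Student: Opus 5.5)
The plan is to combine the rank-by-rank results already proved in this section, splitting by the rank $r$ of $V$.

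\emph{Necessity.} If $V$ is topologically isomorphic to $V_{\alg}(\mbr)$, then by Theorem\autoref{thm:2.2.0.12} the mod 2 cycle class map sends $c_i(V_{\alg})$ to $w_i(V)$. Hence $w_1(V)$ and $w_2(V)$ lie in $\HH^*_{\alg}(X;\mbz/2)$.

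\emph{Sufficiency.} Assume $w_1(V),w_2(V)$ are algebraic, and split by rank.
\begin{enumerate}
\item \emph{Rank $0$.} Nothing to prove.
\item \emph{Rank $1$.} Line bundles over $X(\mbr)$ are classified by $w_1\in\HH^1(X(\mbr);\mbz/2)$. If $w_1(V)=\bar\gamma_1(c)$ with $c\in\CH^1(X)=\Pic(X)$, the algebraic line bundle corresponding to $c$ has first Stiefel-Whitney class $w_1(V)$, again by Theorem\autoref{thm:2.2.0.12}. It is therefore topologically isomorphic to $V$.
\item \emph{Rank $2$.} This is Theorem\autoref{dim3rk2}.
\item \emph{Rank $3$.} This is Theorem\autoref{dim3rk3}.
\item \emph{Rank $r>3$.} By the splitting proposition, $V\cong V'\oplus\varepsilon^{r-3}$ with $V'$ of rank $3$. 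The Whitney sum formula with a trivial bundle gives $w_i(V')=w_i(V)$, so $w_1(V'),w_2(V')$ are algebraic. Theorem\autoref{dim3rk3} then gives an algebraic $V'_{\alg}$ with $V'_{\alg}(\mbr)\cong V'$. Consequently $V'_{\alg}\oplus\mco_X^{r-3}$ is an algebraic bundle whose real points are isomorphic to $V$.
\end{enumerate}

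\emph{Low dimension.} The statement is phrased for dimension $3$, but nothing above uses $\dim X=3$ exactly; only $\dim X\le 3$ enters the cited results. Theorem\autoref{cohdim} kills all obstructions in degrees above $\dim X$, and Theorem\autoref{thm:2.2.0.11} is applied with $d=\dim X$. For $\dim X<3$, Theorem\autoref{thm:3.1.1.3} still holds: its diagram uses $\I^{d+1}$ in place of $\I^4$, where now $d+1\le 3$. If one prefers not to redo this, the same diagram chase goes through with the index adjusted, or alternatively one can replace $X$ by $X\times\mba^{3-\dim X}$, since cycle classes and bundles are compatible with pullback along the homotopy equivalence.

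\emph{Main obstacle.} The only step needing care is the rank $>3$ reduction. The splitting proposition needs a nowhere-vanishing section of $V$, which exists because the CW dimension of $X(\mbr)$ is at most $3<r$. One also has to note that topological isomorphism is compatible with direct sums, so that algebraizing $V'$ and adding a trivial algebraic bundle really algebraizes $V$. Everything else is a direct citation of the rank $1$, $2$ and $3$ results.
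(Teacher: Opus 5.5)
Your proposal is correct and matches the paper's proof: you split by rank, handle line bundles via $\CH^1(X)=\Pic(X)$, cite Theorem~\ref{dim3rk2} for rank $2$, and for rank $\geq 3$ split off a trivial summand and cite Theorem~\ref{dim3rk3}. Your extra paragraph on $\dim X<3$ is unnecessary, because that section's standing hypothesis is already $\dim X\leq 3$ and so the cited theorems cover it; the $X\times\mba^{3-\dim X}$ trick would also not literally give a real algebraic variety in the paper's sense, though the remark does no harm.
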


\begin{proof}
	The statement is trivial if $V$ is a line bundle. If $V$ has rank 2, apply Theorem\autoref{dim3rk2}. If $V$ has rank 3 or more, assume $V$ has rank 3 by splitting off a trivial bundle and apply Theorem\autoref{dim3rk3}.
\end{proof}

\section{Vector Bundles over Compact Fourfolds}\label{sec:3.2}

In this section we assume that $X$ is a smooth affine real algebraic variety of dimension 4 such that the smooth manifold $X(\mbr)$ is compact. Let $M_1,\cdots,M_p$ be the orientable connected components of $X(\mbr)$, and let $N_1,\cdots,N_q$ be the non-orientable connected components.

\subsection{General Theory}

Let $V$ be a topological real vector bundle over $X(\mbr)$. Since $X(\mbr)$ is compact, $V$ is algebraizable if and only if it is stably algebraizable by Theorem\autoref{thm:2.3.0.2}. Therefore, if we assume that $w_1(V)$ is algebraic, $V$ is algebraizable if and only if $V\oplus\det V$ is algebraizable. So we may replace $V$ by $V\oplus\det V$ and assume that $V$ is orientable for the moment.

The topological classification of stable vector bundles over a 4-dimensional CW-complex was obtained by Dold and Whitney \cite{cefe0e4d-d22a-33d8-8a9b-5661eb437142}.

\begin{thm}\label{doldwhitney}
	Let $S$ be a CW-complex of dimension 4. Then stable orientable real vector bundles over $S$ are classified by the second and fourth Stiefel-Whitney classes $w_2$ and $w_4$ and the first Pontryagin class $p_1$.
\end{thm}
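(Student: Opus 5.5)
We prove Theorem\autoref{doldwhitney} by obstruction theory applied to the Postnikov tower of $\BSO=\colim_n\BSO_n$, which classifies stable orientable bundles. The low homotopy groups are $\pi_1(\BSO)=0$, $\pi_2(\BSO)=\mbz/2$, $\pi_3(\BSO)=0$, $\pi_4(\BSO)=\mbz$, and $\pi_5(\BSO)=0$. For a $4$-dimensional CW-complex $S$, cellular approximation and obstruction theory give a bijection $[S,\BSO]\cong[S,P_4\BSO]$, where $P_4$ denotes the fourth Postnikov section. The obstructions to lifting lie in $\HH^{k+1}(S;\pi_k)$ for $k\geq 5$, and the classes distinguishing lifts lie in $\HH^k(S;\pi_k)$ for $k\geq5$; all of these vanish for dimension reasons.

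The space $P_4\BSO$ is a two-stage space: it is the homotopy fiber of a $k$-invariant
$$K(\mbz/2,2)\to K(\mbz,5).$$
This $k$-invariant is known to be $\beta\Sq^2\iota_2$, the integral Bockstein of $\Sq^2\iota_2=\iota_2^2$. We will not need its precise form, only the fact that $P_4\BSO$ is a principal fibration with fiber $K(\mbz,4)$ over $K(\mbz/2,2)$.

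The argument then runs in four steps.

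\begin{enumerate}
	\item \textbf{Existence of a lift.} The projection to $K(\mbz/2,2)$ is $w_2$. A map $S\to K(\mbz/2,2)$ always lifts, since the obstruction lies in $\HH^5(S;\mbz)=0$.
	\item \textbf{Lifts form a torsor.} Lifts of a fixed $w_2$ are a torsor under $\HH^4(S;\mbz)$, modulo the image of the indeterminacy coming from $\HH^1(S;\mbz/2)$ via loops on the base. Because the fibration is principal, this torsor structure is realized by Whitney sum with $\tildeK$-classes pulled back from $S^4$. Concretely, it is realized by bundles with $w_2=0$ classified by the fourth stage.
	\item \textbf{Injectivity.} We show that $(w_2,w_4,p_1)$ separates lifts. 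Suppose two stable bundles $E,E'$ have the same $w_2$. Their difference class $d\in\HH^4(S;\mbz)$ changes $p_1$ by $\pm2d$ and changes $w_4$ by $d\bmod 2$. This is checked on the universal example $S^4$: the generator of $\tildeK\mathrm{O}(S^4)=\mbz$ has $p_1=\pm2$ and $w_4\neq0$, and one then uses naturality together with the Cartan formula for $p_1$ and $w_4$ modulo torsion terms. Hence if $p_1$ and $w_4$ also agree, then $2d=0$ and $d\equiv0\pmod 2$. Combining these conditions via the Bockstein sequence $\HH^4(S;\mbz)\xrightarrow{2}\HH^4(S;\mbz)\to\HH^4(S;\mbz/2)$ forces $d$ to lie in the indeterminacy, so $E\cong E'$ stably.
	\item \textbf{Conclusion.} Since every map $S\to P_4\BSO$ lifts to $\BSO$ and lifts are unique by the first paragraph, the invariants $(w_2,w_4,p_1)$ classify stable orientable bundles over $S$.
\end{enumerate}

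We expect the main obstacle to be step 3. The difficulty is controlling exactly how $p_1$ and $w_4$ change under the $\HH^4(S;\mbz)$-action, including cross terms. For example, $p_1(E\oplus F)=p_1(E)+p_1(F)$ holds only modulo $2$-torsion. We will also need to combine the torsion and mod-$2$ information so that $2d=0$ together with $\bar d=0$ gives $d=0$. The plan for this step is to work with the difference $E-E'$ in $\tildeK\mathrm{O}(S)$ restricted to the $3$-skeleton, where it is trivial, and then to use the cofiber sequence $S^{(3)}\to S\to\bigvee S^4$. Relative classes coming from $\bigvee S^4$ have $w_4$ equal to the reduction of $p_1/2$.
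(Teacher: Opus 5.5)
\textbf{There is a gap in step 3, and it cannot be closed: the statement as written is false for general $4$-complexes.} The paper gives no proof of this statement; it only cites Dold--Whitney. Your Postnikov set-up in steps 1, 2 and 4 is the standard one and is fine.

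\emph{The indeterminacy does not help.} Since $\BSO$ is an infinite loop space with stable $k$-invariant $\beta\Sq^2$, applying $[S,-]$ gives an exact sequence
$$\HH^1(S;\mbz/2)\xrightarrow{\beta\Sq^2}\HH^4(S;\mbz)\to[S,\BSO]\xrightarrow{w_2}\HH^2(S;\mbz/2)\to 0.$$
The first map is zero because $\Sq^2$ kills degree-one classes. So the indeterminacy you hope to land in is trivial, and the $\HH^4(S;\mbz)$-action on lifts of $w_2$ is free: $E\cong E'$ if and only if $d=0$.

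\emph{Your computations of the changes are right.} They are in fact exact, not just modulo $2$-torsion. $p_1$ is additive on oriented bundles because $c_1(E\otimes\mbc)=\beta w_1(E)=0$. The class $\xi_d$ is spin, so $w_4$ has no cross terms. Your reduction to $\bigvee S^4$ is also legitimate, since $\HH^4(S/S^{(3)};\mbz)\to\HH^4(S;\mbz)$ is onto. Thus $p_1$ changes by $2d$ and $w_4$ by $\rho_2(d)$.

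\emph{The final inference fails.} Equal invariants give only $2d=0$ and $d\in 2\HH^4(S;\mbz)$, i.e.\ $d=2e$ with $4e=0$. This $d$ is nonzero whenever $e$ has order $4$.

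\emph{Explicit counterexample.} Take $S=S^3\cup_4 e^4$ and let $q:S\to S^4$ be the collapse map. Then $\HH^2(S;\mbz/2)=0$ and $\HH^4(S;\mbz)=\mbz/4$, generated by $q^*u$. The cofiber sequence gives $\widetilde{\KO}(S)\cong\mbz/4$, generated by $g=q^*x$, where $x$ generates $\widetilde{\KO}(S^4)$. The element $2g$ is nonzero, yet:
\begin{itemize}
\item $w_2(2g)=0$;
\item $w_4(2g)=0$, since $w(2g)=w(g)^2=1$;
\item $p_1(2g)=\pm 4\,q^*u=0$.
\end{itemize}
So $(w_2,w_4,p_1)$ does not classify stable oriented bundles over this $4$-complex.

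\emph{The fix.} Add the hypothesis that $\HH^4(S;\mbz)$ has no element of order $4$. Then $d=2e$ with $4e=0$ forces $2e=0$, so $d=0$, and your argument is complete. This hypothesis holds for every closed $4$-manifold, whose $\HH^4$ is a sum of copies of $\mbz$ and $\mbz/2$. In particular it holds for the compact fourfolds $X(\mbr)$ where the paper applies the theorem, since there $\HH^4(X(\mbr);\mbz)=\mbz^p\oplus(\mbz/2)^q$. So the paper's uses are unaffected, but both the statement and your proof need this extra hypothesis.
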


Before our main computations, we need the following lemma.

\begin{lemma}\label{lem:3.2.1.2}
	Let $Y$ be a smooth affine real algebraic variety with $\dim Y\geq 2$ and $Y(\mbr)$ compact. Then $\CH_1(Y)$ is 2-torsion.
\end{lemma}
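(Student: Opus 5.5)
The plan is to reduce the statement to a rank one computation on an auxiliary smooth surface. There Theorem\autoref{thm:2.3.0.2}(1) says that $\Pic$ injects into $\HH^1(-;\mbz/2)$, which is $2$-torsion. Since $\CH_1(Y)$ is generated by classes $[C]$ of integral curves $C\subseteq Y$, it suffices to show $2[C]=0$ for each such $C$.

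\textbf{Step 1: closed points are real.} First I would record the following. Let $P$ be a closed point of the ambient smooth affine scheme with residue field $\mbc$, and let $g_1,\dots,g_r$ be generators of its maximal ideal. Then $f=\sum g_i^2$ vanishes at $P$ but has no zero on the real locus, so $f$ is inverted in $\mcr(Y)$. Hence every closed point of $Y$ is real. Consequently every integral curve $C\subseteq Y$ has infinitely many, hence Zariski dense, real points. Conversely, a curve with only finitely many real points in some ambient variety cannot survive the localization of Definition\autoref{defn:2.3.0.1}.

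\textbf{Step 2: an auxiliary smooth surface.} Since $Y$ is affine of dimension $\ge 2$, I would cut $Y$ by $\dim Y-2$ general regular functions vanishing on $C$. This gives an integral surface $S\subseteq Y$ containing $C$ (if $\dim Y=2$, take $S=Y$). Next I would take a resolution of singularities $\pi:S'\to S$, which is proper, so $S'(\mbr)$ is compact. Let $C'$ be the strict transform of $C$, so $\pi_*[C']=[C]$. I would then replace $S'$ by its real algebraic localization $S''$ in the sense of Definition\autoref{defn:2.3.0.1}, i.e.\ invert the functions nonvanishing on $S'(\mbr)$. By the facts from \cite{BCR} that real algebraic varieties with this convention are affine, $S''$ is a smooth affine real algebraic variety with compact real locus. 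The curve $C'$ survives in $S''$ because its real points are dense by Step 1.

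\textbf{Step 3: the Picard argument.} By Theorem\autoref{thm:2.3.0.2}(1), the map $\CH^1(S'')=\Pic(\mcr(S''))\to\HH^1(S''(\mbr);\mbz/2)$ is injective. The target is $2$-torsion, so $2[C']=0$ in $\CH^1(S'')$. Concretely, $2C'=\operatorname{div}(\phi)+E$ for a rational function $\phi$ on $S'$, where $E$ is supported on curves of $S'$ removed in the localization, i.e.\ curves with only finitely many real points.

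\textbf{Step 4: pushing forward (main obstacle).} The delicate point is pushing this relation forward to $Y$. Proper pushforward along $S'\to S\to$ (the ambient variety) carries $\operatorname{div}(\phi)$ to a principal divisor on $S$ and hence to a cycle rationally equivalent to zero. I must also check that $\pi_*E$ vanishes in $\CH_1(Y)$. Each component of $E$ has finitely many real points, so its image is either a point or a curve with finitely many real points. By Step 1 such a curve does not meet $Y$, so its class vanishes after restriction to $Y$. Restricting to $Y$ is legitimate because $Y$ is a limit of open subsets and Chow groups are compatible with this limit. I expect the main care to be needed in verifying that the localizations (of $Y$ and of $S'$) are compatible with proper pushforward and with the restriction of rational equivalence. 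With that in place, $2[C]=\pi_*(2[C'])=0$ in $\CH_1(Y)$, proving the lemma.
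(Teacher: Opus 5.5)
There is a genuine gap. It originates in Step~1, and Steps~2 and~4 both rely on it.

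You correctly show that every closed point of $Y$ is real, but the two consequences you draw are false.
\begin{itemize}
\item A curve $C\subseteq Y$ is only essentially of finite type. It may therefore be semilocal, with finitely many (real) closed points.
\item Conversely, a closed integral subscheme of the ambient variety meets $Y$ as soon as it has \emph{one} real point: every function in its ideal vanishes there and so is not inverted. The correct criterion is that it meets $Y$ if and only if it has a real point.
\end{itemize}
For example, on the sphere $x^2+y^2+z^2=1$, the curve cut out by $(x-1)^2+y^2=0$ is irreducible over $\mbr$. It consists of two complex conjugate branches meeting only at $(1,0,0)$, that point is its only real point, and its generic point lies in $Y$.

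This invalidates the argument at two points.

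First, $C'$ need not survive in $S''$. If $S$ is singular at a real point of $C$ through which $C$ passes only with conjugate branches, the resolution may separate these branches. Take the curve $\{z=0,\ x^2+y^2=0\}$ on the cone $z^2=x^2+y^2$: its strict transform in the blow-up of the vertex meets the exceptional conic in the conjugate pair $[1:\pm i:0]$, so it has no real points at all.

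Second, the components of $E$ are precisely the curves of $S'$ without real points. The map $\pi$ can send such a curve onto a curve $D\subseteq S$ whose only real points are real singular points of $S$. Such a $D$ meets $Y$, so $\pi_*E$ need not vanish in $\CH_1(Y)$. Moreover, $[D]$ is a class of exactly the type you are trying to control, so the argument becomes circular. Even knowing $2[D]=0$ would only yield $4[C]=0$.

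What your argument actually needs is a surface through $C$ that is smooth at all of its real points. Then no resolution is required: its intersection with $Y$ is a smooth affine real algebraic surface with compact real locus. Theorem~\ref{thm:2.3.0.2}, followed by pushforward along the closed immersion, then gives $2[C]=0$.

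However, no such surface exists if $C$ has embedding dimension at least $3$ at some real point, so an extra reduction is unavoidable. The paper supplies it in two steps. It first uses Koll\'ar's smoothing theorem to replace $[C]$ by a combination of classes of smooth curves. It then uses Jelonek's theorem to place each smooth curve on a smooth surface, to which Theorem~\ref{thm:2.3.0.2} applies directly.
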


\begin{proof}
	Let $Z\subseteq Y$ be a 1-dimensional subscheme. We wish to prove that the class $[Z]\in\CH_1(Y)$ is 2-torsion. By \cite{kollár2024flatpushforwardschernclasses} Theorem 1.2, we may assume that $Z$ is smooth. By \cite{jelonek14} Theorem 1.1, we may find a smooth 2-dimensional subvariety $S\subseteq Y$ containing $Z$. By Theorem\autoref{thm:2.3.0.2}, the group $\CH^1(S)$ is 2-torsion, so the class $[Z]\in\CH_1(Y)$ is also 2-torsion.
\end{proof}

Let $\BSL_n$ denote the classifying space of $\SL_n$. It is also the homotopy fiber of $\det:\BGL_n\to\B\Gm$. Let $\BSL_{\infty}=\colim_n\BSL_n$. We claim that $V$ is algebraizable if and only if there is a map $X\to\BSL_{\infty}$ such that the class in $\K_0(\mcr(X))$ given by the composition $X\to\BSL_{\infty}\to\BGL_{\infty}$ maps to the class of $V$ in $\KO^0(X(\mbr))$. Indeed, algebraizability is equivalent to stable algebraizability, and if $V$ is isomorphic to an algebraic vector bundle $V_{\alg}$ then $\det V_{\alg}=\det V$ is trivial and the map $X\to\BGL_\infty$ given by $V_{\alg}$ factors through $\BSL_{\infty}$.

Recall that we have maps $c_i:\BSL_\infty\to K(\K^{\M}_i,i)$ and $p_i:\BSL_{\infty}\to K(\K^{\MW}_{4i},4i)$ induced by the characteristic classes. We further take the quotient of $p_i$ by $h$ to obtain maps $p_i:\BSL_\infty\to K(\I^{4i},4i)$.

Now consider the map
$$(c_2,c_4,p_1):\BSL_\infty\to K(\K^{\M}_2,2)\times K(\K^{\M}_4,4)\times K(\I^{4},4)$$
and let $F$ denote its fiber. Then we have an exact sequence
$$\K^{\Q}_4\xrightarrow{f}\K^{\M}_4\oplus\I^4\to\pi_3^{\mba^1}(F)\to\K^{\Q}_3\to 0.$$
To apply obstruction theory, we need to understand the group $\HH^4(X,\pi_3^{\mba^1}(F))$.
Let $A$ be the cokernel of $f$.

\begin{prop}
	We have $\HH^4(X,A)=\HH^4(X(\mbr);\mbz/2)\oplus\HH^4(X(\mbr);\mbz)$.
\end{prop}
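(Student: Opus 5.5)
The plan is to reduce everything to top-degree cohomology, which is right exact in dimension $4$. By definition we have a short exact sequence of strictly $\mba^1$-invariant sheaves
$$0\to \operatorname{im} f\to \K^{\M}_4\oplus\I^4\to A\to 0,$$
together with an epimorphism $\K^{\Q}_4\twoheadrightarrow\operatorname{im} f$. By Theorem\autoref{cohdim} we have $\HH^5(X,-)=0$, so $\HH^4(X,-)$ is right exact on strictly $\mba^1$-invariant sheaves. Hence $\HH^4(X,\K^{\Q}_4)\to\HH^4(X,\operatorname{im} f)$ is surjective, and
$$\HH^4(X,A)\cong\coker\bigl(\HH^4(X,\K^{\Q}_4)\xrightarrow{f_*}\HH^4(X,\K^{\M}_4)\oplus\HH^4(X,\I^4)\bigr).$$
All three groups are computed by the Rost-Schmid complex in degree $4$. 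This degree only involves the $4$-fold contractions $(\K^{\Q}_4)_{-4}=\K^{\Q}_0=\mbz$, $(\K^{\M}_4)_{-4}=\mbz$ and $(\I^4)_{-4}=\W$, evaluated on residue fields of closed points. In particular $\HH^4(X,\K^{\Q}_4)\cong\HH^4(X,\K^{\M}_4)=\CH^4(X)=\CH_0(X)$.

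Next I identify the two target groups. For $\HH^4(X,\I^4)$, I use the exact sequence $0\to\bar{\K}\to\I^4\xrightarrow{2}\I^5\to0$ from Theorem\autoref{thm:2.2.0.11}. Since $\HH^4(X,\bar{\K})=0$ and $\HH^5(X,\bar{\K})=0$, the map $\HH^4(X,\I^4)\to\HH^4(X,\I^5)$ is an isomorphism. By Theorem\autoref{thm:2.2.0.10}, $\HH^4(X,\I^5)\cong\HH^4(X(\mbr);\mbz)$. For $\CH^4(X)$, compactness of $X(\mbr)$ and Lemma\autoref{lem:3.2.1.2} (or rather its zero-cycle analogue, proved by moving a point onto a smooth curve) show that $\CH_0(X)$ is $2$-torsion. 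I would then invoke the classical computation of Colliot-Th\'{e}l\`{e}ne--Ischebeck: the mod $2$ cycle class map $\bar{\gamma}_4:\CH_0(X)\to\HH^4(X(\mbr);\mbz/2)$ is an isomorphism, with both sides equal to $(\mbz/2)^{\#\text{components}}$ for compact $X(\mbr)$.

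The remaining point is to show that $f_*$ is zero, so that the cokernel is the full direct sum. On the $\K^{\M}_4$-factor, the map $c_4$ on $\pi_4^{\mba^1}$ is the Suslin--Hurewicz map. After $4$-fold contraction (indeed already after the contraction $\K^{\Q}_2\to\K^{\M}_2$ recorded in the Remark), it is multiplication by $3!=6$. So on $\CH^4(X)$ it is multiplication by $6$, which vanishes because $\CH^4(X)$ is $2$-torsion. On the $\I^4$-factor, the induced map $\CH^4(X)\to\HH^4(X,\I^4)$ is determined by the image $w\in\W(\mbr)$ of $1\in\K^{\Q}_0(\mbr)=\mbz$ under the contracted map $(p_1)_{-4}$. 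It is then given at a closed point $x$ by sending $1$ to $w$, transferred from $\kappa(x)$.

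This last computation is the step I expect to be the main obstacle. I would first try to show that $w$ is hyperbolic, hence $0$ in $\W$. The argument would use that $p_1$ restricted along $\Omega^4$ factors through the orientable theory $\K^{\Q}$, on which $\eta$ acts trivially. Compatibility of $(p_1)_{-3}\to(p_1)_{-4}$ with multiplication by $\eta$, together with injectivity of $\eta:\I^{n}\to\I^{n-1}$, should force the image to be $\eta$-divisible and $\eta$-torsion compatible, hence zero. Alternatively, I would compare with topology via Theorem\autoref{thm:2.2.0.12}: $p_1$ restricted to $\pi_4(\BSO)$ evaluates on the generator to $\pm2$, but on the real points of $\K^{\Q}_4$ the relevant generator realizes trivially. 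If the map turns out to be nonzero, its image is still the image of a $2$-torsion group. I would then need to check by hand that this image vanishes in $\HH^4(X(\mbr);\mbz)$ on each component. This holds automatically for orientable components, where the target is torsion-free. For non-orientable components I would track the transfer from points, whose classes are divisible by $2$.
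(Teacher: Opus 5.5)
Your skeleton matches the paper's proof. Right exactness of $\HH^4(X,-)$ reduces the statement to $f_*=0$ on $\HH^4$. The two targets are identified with $\HH^4(X(\mbr);\mbz/2)$ and $\HH^4(X(\mbr);\mbz)$; your $\bar{\K}$ argument and the appeal to Colliot-Th\'el\`ene--Ischebeck are fine. The $\K^{\M}_4$-component dies because it is multiplication by $6$ on the $2$-torsion group $\CH^4(X)$.

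The gap is the $\I^4$-component. You correctly reduce it to showing $w=(f_{\I})_{-4}(1)\in\W(\mbr)$ vanishes, but you never prove it. The $\eta$-sketch stops at ``should force \dots\ hence zero'', and ``$\eta$-divisible and $\eta$-torsion compatible'' does not name an argument. The topological alternative is only asserted. The fallback would genuinely fail. On a non-orientable component $N$ one has $\HH^4(N;\mbz)\cong\mbz/2$, generated by the class of any \emph{real} point, and that class is not divisible by $2$. A real point is sent to $\Sign(w)$ times this generator. So $2$-torsion of $\CH_0(X)$ gives nothing unless you already know $\Sign(w)$ is even, which is essentially the claim you were trying to avoid.

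The missing input is the principle behind your $\eta$ idea, made precise: $\eta$ kills the source but is injective on $\I^*$. The paper shows that the composite $\K^{\M}_4\to\K^{\Q}_4\to\I^4$ is zero as a map of sheaves. Since $\K^{\MW}_4$ is the free strongly $\mba^1$-invariant sheaf of abelian groups on $\Gm^{\wedge 4}$ and $\K^{\M}_4=\K^{\MW}_4/\eta$, one gets $\HHom(\K^{\M}_4,\I^4)=\ker(\eta\colon\W(\mbr)\to\W(\mbr))=0$. Moreover, $\K^{\M}_4\to\K^{\Q}_4$ induces an isomorphism on $\HH^4(X,-)$, because it is an isomorphism on $(-3)$- and $(-4)$-contractions, which are all that the Rost--Schmid complex sees in degree $4$.

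Your own route also closes in one line once stated correctly. The Hopf map gives maps $\eta\colon M_{-n}\to M_{-n-1}$ that are natural in the strictly $\mba^1$-invariant sheaf $M$. On $\I^4$ the map $(\I^4)_{-4}=\W\to(\I^4)_{-5}=\W$ is the identity. On the source, $(\K^{\Q}_4)_{-5}=\mbz_{-1}=0$. Naturality therefore gives $\eta(w)=(f_{\I})_{-5}(\eta(1))=0$, hence $w=0$, and the induced map $\CH^4(X)\to\HH^4(X,\I^4)$ vanishes.
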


\begin{proof}
	Using Theorem\autoref{thm:2.2.0.10} and Theorem\autoref{thm:2.2.0.11}, one can prove that $\HH^4(X,\K^{\M}_4)=\CH^4(X)=\HH^4(X(\mbr);\mbz/2)$ and $\HH^4(X,\I^4)=\HH^4(X(\mbr);\mbz)$.
	By Theorem\autoref{cohdim}, the functor $\HH^4(X,-)$ is right exact, so it suffices to show that $f$ induces zero on the fourth cohomology group.

	On the summand $\K^{\M}_4$, $f$ induces the Suslin-Hurewicz map, so the induced map $\HH^4(X,\K^{\Q}_4)=\CH^4(X)\to\HH^4(X,\K^{\M}_4)=\CH^4(X)$ is the multiplication by 6. Since $\CH^4(X)$ is 2-torsion, this map is zero.

	Next we deal with the summand $\I^4$. Consider the map $g$ given by the composition
	$$\K^{\M}_4\to\K^{\Q}_4\to \K^{\M}_4\oplus\I^4\to\I^4.$$
	We claim that $g$ is 0. Indeed, since $\K^{\M}_4=\K^{\MW}_4/\eta$ and $\K^{\MW}_4$ is the free strongly $\mba^1$-invariant abelian sheaf generated by $(\Gm)^{\wedge 4}$ (\cite{Mor12A1AT}, Theorem 3.37), we get
	\begin{align*}
		\HHom_{\Ab(\mbr)}(\K^{\M}_4,\I^4) &=\ker(\HHom_{\Ab(\mbr)}(\K^{\MW}_4,\I^4)\xrightarrow{\eta}\HHom_{\Ab(\mbr)}(\K^{\MW}_5,\I^4))\\
		&=\ker(\W(\mbr)\xrightarrow{\eta}\W(\mbr))\\
		&=0.
	\end{align*}
	Now the natural map $\K_4^{\M}\to\K^{\Q}_4$ induces an isomorphism after taking $\HH^4(X,-)$, so the map $f$ is zero on the fourth cohomology group.
\end{proof}

This proposition gives an exact sequence
\[\begin{tikzcd}[ampersand replacement=\&, column sep=small]
	{\HH^3(X,\K^{\Q}_3)} \& {\HH^4(X,A)} \& {\HH^4(X,\pi_3^{\mba^1}(F))} \& 0 \\
	{\CH^3(X)} \& {\HH^4(X(\mbr);\mbz/2)\oplus\HH^4(X(\mbr);\mbz)}
	\arrow["\partial",from=1-1, to=1-2]
	\arrow[equals, from=1-1, to=2-1]
	\arrow[from=1-2, to=1-3]
	\arrow[equals, from=1-2, to=2-2]
	\arrow[from=1-3, to=1-4]
\end{tikzcd}\]
where $\HH^4(X,\K^{\Q}_3)=0$ is because $(\K^{\Q}_3)_{-4}=0$ and the Rost-Schmid complex.

\begin{prop}\label{prop:4.1.4}
	There exists a map $k:\CH^2(X)\to\HH^4(X,\pi_3^{\mba^1}(F))$ such that $V$ is algebraizable if and only if there exists $c_2\in\CH^2(X)$ such that $\bar{\gamma}(c_2)=w_2(V)$ and $k(c_2)$ is the image of $p_1(V)$ and $w_4(V)$ under the map $\HH^4(X,A)\to\HH^4(X,\pi_3^{\mba^1}(F))$.
\end{prop}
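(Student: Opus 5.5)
Throughout, $V$ is orientable (after the reduction at the start of this section), so by Theorem\autoref{thm:2.3.0.2} $V$ is algebraizable if and only if some map $X\to\BSL_\infty$ has realization stably isomorphic to $V$. I will run obstruction theory along the map $(c_2,c_4,p_1):\BSL_\infty\to K:=K(\K^{\M}_2,2)\times K(\K^{\M}_4,4)\times K(\I^4,4)$ with fiber $F$, and then compare with Theorem\autoref{doldwhitney}.

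\textbf{Step 1: connectivity of the fiber.} The map $(c_2,c_4,p_1)$ induces isomorphisms on $\pi_1^{\mba^1}$ and $\pi_2^{\mba^1}$: both sides have $\pi_1^{\mba^1}=0$, and on $\pi_2$ the map is $\K^{\Q}_2\cong\K^{\M}_2$. On $\pi_3$ the source is $\K^{\Q}_3$ and the target is $0$. Hence $\pi_i^{\mba^1}(F)=0$ for $i\le 2$. The first nontrivial group is $\pi_3^{\mba^1}(F)$, given by the displayed exact sequence. Since $X$ has Nisnevich cohomological dimension $4$ (Theorem\autoref{cohdim}), in the relative Postnikov tower only one obstruction survives, in $\HH^4(X,\pi_3^{\mba^1}(F))$. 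All higher obstructions lie in $\HH^{i+1}(X,\pi_i^{\mba^1}(F))$ with $i\ge4$, which vanish. So a map $\phi=(c_2,c_4,q):X\to K$ lifts to $\BSL_\infty$ if and only if its $k$-invariant $o(\phi)\in\HH^4(X,\pi_3^{\mba^1}(F))$ vanishes.

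\textbf{Step 2: the shape of $o(\phi)$.} The target $K$ is a product of Eilenberg--MacLane objects, and the $k$-invariant is a universal class $\kappa\in\HH^4(K,\pi_3^{\mba^1}(F))$. I expect $\kappa$ to split as a term pulled back from $K(\K^{\M}_2,2)$ plus a term from $K(\K^{\M}_4,4)\times K(\I^4,4)$. The second factor is $K(A',4)$ with $A'=\K^{\M}_4\oplus\I^4$, and there the restriction of $\kappa$ is the tautological class induced by $A'\to A\to\pi_3^{\mba^1}(F)$. This is the standard description of the first $k$-invariant of a fiber, read off from the transgression. Pulling back along $\phi$ then gives
\[o(\phi)=-k(c_2)+\iota(c_4,q),\]
where $k:\CH^2(X)\to\HH^4(X,\pi_3^{\mba^1}(F))$ is $c_2^*$ of the first component of $\kappa$, and $\iota:\HH^4(X,A')\to\HH^4(X,A)\to\HH^4(X,\pi_3^{\mba^1}(F))$. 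Proving that there are no mixed terms is the step I expect to be the main obstacle. I would argue it by noting that the cohomology of $K(\K^{\M}_2,2)\times K(A',4)$ in degree $4$ with coefficients in $\pi_3(F)$ has a Künneth decomposition, and that cross terms need degree at least $6$ for connectivity reasons.

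\textbf{Step 3: matching with topology.} Use $\HH^4(X,\K^{\M}_4)=\HH^4(X(\mbr);\mbz/2)$ and $\HH^4(X,\I^4)=\HH^4(X(\mbr);\mbz)$ from the previous proposition, together with Theorem\autoref{thm:2.2.0.12}. Under these identifications $(c_4,q)$ corresponds to $(w_4,p_1)$ of the resulting algebraic bundle, and $c_2$ realizes to $w_2$.

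Now suppose $V$ is algebraic. Its classes give a lift, so $o=0$. Hence $k(c_2)$ equals the image of $(w_4(V),p_1(V))$, and $\bar\gamma(c_2)=w_2(V)$.

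Conversely, suppose $c_2$ satisfies both conditions. Choose $c_4,q$ mapping to $w_4(V),p_1(V)$. This is possible because these identifications are isomorphisms, which is Theorem\autoref{thm:2.2.0.10} together with Theorem\autoref{thm:2.2.0.11}. Then $o(\phi)=0$, so $\phi$ lifts to $X\to\BSL_\infty$. The associated stable algebraic bundle has the same $w_2$, $w_4$ and $p_1$ as $V$, and is therefore stably isomorphic to $V$ by Theorem\autoref{doldwhitney}. One point needs care: the sign convention for $o$ is absorbed into $k$, and the ambiguity coming from the image of $\partial$ from $\CH^3(X)$ is harmless because we only work in the quotient $\HH^4(X,\pi_3^{\mba^1}(F))$.
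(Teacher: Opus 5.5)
Your proposal is correct and takes essentially the same route as the paper. By cohomological dimension there is a single obstruction in $\HH^4(X,\pi_3^{\mba^1}(F))$. Its universal class splits as the natural map on $K(\K^{\M}_4\oplus\I^4,4)$ minus a class $k$ pulled back from $K(\K^{\M}_2,2)$, with no cross term, and one then compares with topology via Dold--Whitney. The only difference is in the no-cross-term step: the paper replaces your ``K\"unneth'' remark by the stable splitting $\Sigma(\mcx\times\mcy)\simeq\Sigma\mcx\vee\Sigma\mcy\vee\Sigma(\mcx\wedge\mcy)$, which applies because $K(\pi_3^{\mba^1}(F),4)$ is an infinite loop space, and the cross term vanishes since $K(\K^{\M}_2,2)\wedge K(\K^{\M}_4\oplus\I^4,4)$ is $5$-connected.
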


\begin{proof}
	By Theorem\autoref{doldwhitney}, $V$ is algebraizable if and only if we can choose $c_2(V)\in\CH^2(X)$ with $\bar{\gamma}(c_2(V))=w_2(V)$ such that the characteristic classes $c_2(V)\in\CH^2(X),w_4(V)\in\CH^4(X),p_1(V)\in\HH^4(X,\I^4)$ can be realized by an algebraic orientable stable vector bundle $X\to\BSL_\infty$.

	Since the cohomological dimension of $X$ is 4 (Theorem\autoref{cohdim}) and $\pi_i^{\mba^1}(F)=0$ for $i=0,1,2$, using the Postnikov tower, we see that the only obstruction to lifting the characteristic classes to a vector bundle lies in the group $\HH^4(X,\pi_3^{\mba^1}(F))$ and is induced by a map
	$$k_3:K(\K^{\M}_2,2)\times K(\K^{\M}_4,4)\times K(\I^{4},4)\to K(\pi_3^{\mba^1}(F),4).$$

	It remains to argue that there is a map $k:\CH^2(X)\to\HH^4(X,\pi_3^{\mba^1}(F))$ such that $k_3$ induces a map of the form
	\begin{align*}
		\CH^2(X)\times\HH^4(X(\mbr);\mbz/2)\times\HH^4(X(\mbr);\mbz)&\to\HH^4(X,\pi_3^{\mba^1}(F))\\
		(c_2,w_4,p_1)&\mapsto (w_4,p_1)-k(c_2)
	\end{align*}
	in the corresponding cohomology groups of $X$. Indeed, since $K(\pi_3^{\mba^1}(F),4)$ is an infinite loop space, we use the equivalence $\Sigma(\mcx\times\mcy)=\Sigma\mcx\vee \Sigma\mcy\vee\Sigma(\mcx\wedge\mcy)$ to get
	\begin{align*}
		&[K(\K^{\M}_2,2)\times K(\K^{\M}_4\oplus\I^4,4),K(\pi_3^{\mba^1}(F),4)]_{\mba^1}\\ =&[K(\K^{\M}_2,2),K(\pi_3^{\mba^1}(F),4)]_{\mba^1}\times[K(\K^{\M}_4\oplus\I^4,4),K(\pi_3^{\mba^1}(F),4)]_{\mba^1}\\ \times&[K(\K^{\M}_2,2)\wedge K(\K^{\M}_4\oplus\I^4,4),K(\pi_3^{\mba^1}(F),4)]_{\mba^1}.
	\end{align*}
	Since $K(\K^{\M}_2,2)\wedge K(\K^{\M}_4\oplus\I^4,4)$ is 5-connected, $$[K(\K^{\M}_2,2)\wedge K(\K^{\M}_4\oplus\I^4,4),K(\pi_3^{\mba^1}(F),4)]_{\mba^1}=0$$
	so there exists a map $k:K(\K^{\M}_2,2)\to K(\pi_3^{\mba^1}(F),4)$ such that $k_3=(\varphi\circ\pr_2)-(k\circ\pr_1)$ where $\varphi:K(\K^{\M}_4\oplus\I^4,4)\to K(\pi_3^{\mba^1}(F),4)$ is the natural map and $\pr_i$ denotes the projections onto the $i$-th component.
\end{proof}

Next, we need to identify the boundary map $\partial$.
Let $\partial_1:\CH^3(X)\to\HH^4(X(\mbr);\mbz/2)$ and $\partial_2:\CH^3(X)\to\HH^4(X(\mbr);\mbz)$ be the composition of $\partial$ and the projections.

\begin{prop}\label{partial2}
	$\partial_2=0$.
\end{prop}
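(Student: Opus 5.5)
The plan is to compare the algebraic long exact sequence with its topological counterpart under real realization, and to use that the relevant topological homotopy group vanishes. Recall that $\partial$ is the connecting homomorphism attached to the short exact sequence of strictly $\mba^1$-invariant sheaves
$$0\to A\to\pi_3^{\mba^1}(F)\to\K^{\Q}_3\to 0.$$
I will push this sequence out along the projection $A\to A_2$, where $A_2$ is the image of $\I^4$ in $A$, that is, the cokernel of the composite $\K^{\Q}_4\to\I^4$. Call the resulting extension $0\to A_2\to E\to\K^{\Q}_3\to 0$. By naturality of connecting maps, $\partial_2$ is the connecting map of this pushed-out sequence, followed by the identification $\HH^4(X,A_2)=\HH^4(X,\I^4)\cong\HH^4(X(\mbr);\mbz)$ from the previous proposition. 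The latter isomorphism is the real cycle class map $\gamma$ of Theorem\autoref{thm:2.2.0.10} together with Theorem\autoref{thm:2.2.0.11}.

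The key step is real Betti realization. The map $(c_2,c_4,p_1)\colon\BSL_\infty\to K(\K^{\M}_2,2)\times K(\K^{\M}_4,4)\times K(\I^4,4)$ realizes to
$$(w_2,w_4,p_1)\colon\BSO\to K(\mbz/2,2)\times K(\mbz/2,4)\times K(\mbz,4),$$
using Theorem\autoref{thm:2.2.0.12} and the compatibility of signatures with $\I^4\to a_{\ret}\mbz$. Let $F^{\Top}$ be the topological fiber of this map. The long exact sequence for $F^{\Top}$ reads
$$\pi_4(\BSO)=\mbz\to\mbz/2\oplus\mbz\to\pi_3(F^{\Top})\to\pi_3(\BSO)=0.$$
So the topological analogue of $\K^{\Q}_3$ is $0$. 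I then want a commutative ladder: the algebraic sequence $\HH^3(X,\K^{\Q}_3)\xrightarrow{\partial}\HH^4(X,A)$ maps to the topological sequence $\HH^3(X(\mbr);\pi_3\BSO)\to\HH^4(X(\mbr);A^{\Top})$, compatibly with $\gamma$ on the $\I^4$-summand. Concretely, I would set this up at the level of sheaves on the real spectrum. Apply $\Sign/2^4$ on the $\I^4$ part, which gives a morphism from the pushed-out extension $E$ to the real-\'etale sheafification $a_{\ret}$ of the realized extension. The quotient term of the target extension is the sheaf $a_{\ret}\pi_3(\BSO)=0$. By naturality of connecting maps, $\gamma\circ\partial_2$ then factors through $\HH^3(X(\mbr);0)=0$. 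Since $\gamma$ is an isomorphism in degree $4$, this gives $\partial_2=0$.

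The main obstacle is making this comparison of extensions rigorous. One must produce an actual morphism of short exact sequences from $0\to A_2\to E\to\K^{\Q}_3\to0$ to a real-\'etale sequence whose right-hand term vanishes, compatibly with $\Sign$ on $\I^4$. This amounts to knowing that the real-\'etale sheafification of $\K^{\Q}_3$ vanishes, or at least that the map $E\to a_{\ret}\mbz$ extending the signature exists. The first thing I would try is to check this on stalks, i.e.\ on real closed fields: the real-\'etale sheafification of $\K^{\Q}_3$ is governed by $\pi_3$ of $\BO$ for real closed fields (Karoubi--Suslin rigidity), and this group is $0$.

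If that route fails, I would fall back on a cruder argument. By Lemma\autoref{lem:3.2.1.2}, $\CH^3(X)$ is $2$-torsion, and $\HH^4(M_i;\mbz)\cong\mbz$ on each orientable component is torsion-free, so $\partial_2$ vanishes on the orientable components. The non-orientable components $N_j$, where $\HH^4(N_j;\mbz)=\mbz/2$, would still need to be handled separately.
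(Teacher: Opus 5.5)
The step you flag as the main obstacle is a genuine gap, and the justification you propose for it is false. The real-\'etale sheafification of $\K^{\Q}_3$ does not vanish. Its stalk at the real point of $\Spec\mbr$ is $\K_3(\mbr)$, which is nonzero: for instance $\K^{\M}_3(\mbr)$, in which $\{-1,-1,-1\}\neq 0$, injects into it. Suslin's rigidity theorem for real closed fields identifies $\K_*(R;\mbz/m)$ with $\KO_*(\mathrm{pt};\mbz/m)$ only with finite coefficients, and even there $\KO_3(\mathrm{pt};\mbz/m)\cong\mbz/2$ for even $m$. The vanishing of $\pi_3(\BSO)$ is only visible after passing to the real realization, which is not the same as sheafifying a Nisnevich homotopy sheaf.

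The target extension also has the wrong shape. The generator of $\pi_4(\BSO)$ has $w_4\neq 0$ and $p_1=\pm 2$, so $\pi_3(F^{\Top})\cong(\mbz/2\oplus\mbz)/\langle(1,2)\rangle\cong\mbz/4$. Any realization-induced map sends the image of $\K^{\M}_4$ in $A$ nontrivially into this $\mbz/4$. Such a map therefore neither factors through your push-out $A\to A_2$ nor lands in $a_{\ret}\mbz$. An honest comparison with this $\mbz/4$ would in fact suffice on the non-orientable components. However, it needs a comparison between long exact sequences of $\mba^1$-homotopy sheaves and those of the real realization, which you do not supply. Your other option, a map $E\to a_{\ret}\mbz$ extending $\Sign/2^4$, amounts to splitting an extension of $\K^{\Q}_3$ by $a_{\ret}\mbz$, and you give no argument for that.

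Your fallback is exactly the first half of the paper's proof: $\CH^3(X)$ is $2$-torsion by Lemma~\ref{lem:3.2.1.2}, while $\HH^4(M_i;\mbz)\cong\mbz$. But the non-orientable components are the real content, and you leave them open. The missing idea is to read $\operatorname{im}\partial$ as realizability.

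By Proposition~\ref{prop:4.1.4}, the obstruction to lifting $(c_2,c_4,p_1)$ is the class of $(c_4,p_1)-k(c_2)$ in $\HH^4(X,A)/\partial(\CH^3(X))$. The trivial bundle realizes $(0,0,0)$. So for each $x\in\CH^3(X)$, the triple $(0,\partial_1(x),\partial_2(x))$ lifts to an algebraic oriented stable bundle $W$ with $c_2(W)=0$ and topological $p_1(W)=\partial_2(x)$ (Theorem~\ref{thm:2.2.0.12}). For every real bundle $p_1\equiv w_2^2\pmod 2$, and here $w_2(W)=\bar\gamma(c_2(W))=0$. On a closed connected non-orientable $4$-manifold $N_j$, reduction $\HH^4(N_j;\mbz)\cong\mbz/2\to\HH^4(N_j;\mbz/2)$ is an isomorphism. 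Hence $\partial_2(x)|_{N_j}=0$. This is the paper's observation that two algebraic bundles with the same $c_2$ cannot have different $p_1$ on a non-orientable component.
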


\begin{proof}
	We have $\HH^4(X(\mbr);\mbz)=\mbz^p\oplus(\mbz/2)^q$ (recall that $p$ resp. $q$ is the number of orientable resp. non-orientable connected components of $X(\mbr)$). By Lemma\autoref{lem:3.2.1.2}, $\CH^3(X)$ is 2-torsion, so $\partial_2$ is zero on the summand $\mbz^p$. Next, over non-orientable connected components, we have the relation $p_1=w_2^2$ for any orientable vector bundle. 
	If $\partial_2$ is nonzero, there would be algebraic vector bundles with the same $c_2$ and different $p_1$ over non-orientable components. But this is impossible, since (over non-orientable components) $p_1$ is determined by $c_2$ as $p_1=w_2^2=\bar{\gamma}(c_2)^2$.
\end{proof}

It takes a bit more work to identify $\partial_1$.

\begin{thm}\label{partial1}
	The map $\partial_1:\CH^3(X)\to\HH^4(X(\mbr);\mbz/2)$ coincides with the map $(-)\cup w_1(X)$.
\end{thm}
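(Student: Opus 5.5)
The approach is to give $\partial$ a geometric meaning and then compute it with Riemann--Roch without denominators. The exact sequence $\K^{\Q}_4\to\K^{\M}_4\oplus\I^4\to\pi_3^{\mba^1}(F)\to\K^{\Q}_3\to0$ comes from the long exact sequence of homotopy sheaves of the fibration $F\to\BSL_\infty\to K(\K^{\M}_2,2)\times K(\K^{\M}_4,4)\times K(\I^4,4)$. So $\partial$ measures how far classes in $\HH^3(X,\K^{\Q}_3)=\CH^3(X)$ fail to leave the invariants $(c_2,c_4,p_1)$ unchanged.

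Concretely, I will use the following reformulation. Take a map $X\to\BSL_\infty$ and modify it on the top cells of the Postnikov tower by an element $x\in\HH^3(X,\K^{\Q}_3)$. Then $c_2$ is unchanged and $(c_4,p_1)$ changes by $\partial(x)$. This is the standard description of the action of $\HH^{3}(X,\pi_3)$ on lifts, combined with the fact that the $k$-invariant computation in Proposition\autoref{prop:4.1.4} is additive. Hence $\partial_1(x)$ is the difference of the mod 2 realized $c_4$ between two stable bundles whose classes in $\K_0$ differ by an element of the top filtration detected by $x$.

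Next I make this explicit on generators. By Lemma\autoref{lem:3.2.1.2} and the argument in its proof (Koll\'ar, Jelonek), $\CH^3(X)=\CH_1(X)$ is generated by classes $[C]$ of smooth closed curves $i:C\hookrightarrow X$. The class $i_*[\mco_C]\in\K_0(X)$ lies in the codimension-3 part of the $\gamma$/topological filtration. Its image in $\HH^3(X,\K^{\Q}_3)$ under the Postnikov identification should be $\pm[C]$, up to the factor $(3-1)!$ coming from the Suslin--Hurewicz map. I will check this by comparing with $c_3(i_*\mco_C)=2[C]$ and with the remark that $c_i$ induces multiplication by $(i-1)!$ on contractions. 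The clean way to avoid this normalization issue is to define $x$ directly as the Postnikov class of $i_*\mco_C$ and track only $c_4$.

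I will then apply Jouanolou's Riemann--Roch without denominators to $c_4(i_*\mco_C)$. For a codimension 3 embedding this gives
$$c_4(i_*\mco_C)=\pm\, i_*\bigl(a\cdot c_1(N_{C/X})+b\cdot c_1(T_C)\bigr)$$
with explicit integers $a,b$. I expect $a$ to be odd, e.g. $\pm 3$ up to sign, and $b$ to be even or the $T_C$-term to vanish mod 2 after realization. Using $c_1(N_{C/X})=c_1(\omega_C)-c_1(\omega_X)|_C$ and Theorem\autoref{thm:2.2.0.12}, the realization sends $c_1(\omega_X)$ to $w_1(X)$ and $c_1(\omega_C)$ to $w_1(C(\mbr))=0$, since a compact real curve is orientable. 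The projection formula for $\bar{\gamma}$, which is compatible with pushforward by its Borel--Moore description recalled earlier, then gives $\bar{\gamma}(c_4(i_*\mco_C))=\bar{\gamma}([C])\cup w_1(X)$. Since $c_2$ and $p_1$ do not change (consistent with Proposition\autoref{partial2}), this yields $\partial_1([C])=[C]\cup w_1(X)$, and $\partial_1=(-)\cup w_1(X)$ follows by additivity.

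The main obstacle is the first reformulation step, together with the parity bookkeeping in Riemann--Roch. Precisely identifying the abstract connecting map $\partial$ with "$x\mapsto$ change of $c_4$ produced by $i_*\mco_C$" requires care with the factor $(i-1)!$ from Suslin--Hurewicz and with signs. One also has to make sure the odd coefficient really multiplies $c_1(N)$ rather than only $c_1(T_C)$.

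If this route stalls, the fallback is to compute $\partial$ locally in the Rost--Schmid complex. Lift $1\in(\K^{\Q}_3)_{-3}(\kappa(z))=\mbz$ at a codimension-3 point $z$ to $\pi_3^{\mba^1}(F)_{-3}(\kappa(z))$ and take residues. The failure of the lift to be $\Gm$-equivariant, which is the twist by $\omega_{z/X}$, should produce exactly the residue of the class of $\omega_X|_C$. That residue realizes to $w_1(X)|_{C(\mbr)}$.
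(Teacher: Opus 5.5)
There is a gap at the step you flag as the main obstacle: identifying the class $x_C\in\HH^3(X,\K^{\Q}_3)\cong\CH^3(X)$ of $i_*[\mco_C]$ (that is, of a lift of it to the fiber of $c_2$ on $\BSL_\infty$) with $[C]$. The check you propose for this is vacuous. The Suslin--Hurewicz map does induce multiplication by $(3-1)!=2$ on $\HH^3(X,-)$. But $\CH^3(X)$ is $2$-torsion (Lemma~\ref{lem:3.2.1.2}), so comparing with $c_3(i_*\mco_C)=2[C]$ only gives $2x_C=2[C]$, i.e.\ $0=0$.

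Your fallback, defining $x$ as the Postnikov class of $i_*\mco_C$, then only proves $\partial_1(x_C)=[C]\cup w_1(X)$ for an unidentified class $x_C$. That is not the theorem. It does not even give the inclusion $\operatorname{im}\partial_1\subseteq w_1(X)\cup\HH^3_{\alg}(X;\mbz/2)$ needed for the corollary, unless you know the $x_C$ generate $\CH^3(X)$.

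The missing input is the compatibility of the $\mba^1$-Postnikov (Brown--Gersten) filtration on $\K_0(X)$ with the coniveau filtration. Under this compatibility the edge map $\CH^3(X)\to F^3\K_0(X)/F^4\K_0(X)$ is $[Z]\mapsto\pm[\mco_Z]$; this follows from the Brown--Gersten and Brown--Gersten--Quillen spectral sequences agreeing from $E_2$ on. You also need this to match the Rost--Schmid identification $\HH^3(X,\K^{\Q}_3)\cong\CH^3(X)$ in which $\partial$ is defined. In addition, the class of a bundle with $c_1=c_2=0$ is only well defined modulo the image of a $d_2$ from $\HH^1(X,\K^{\Q}_2)$. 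This is harmless, since $\partial$ must kill that image, but it should be said.

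With that input the argument goes through, and your Riemann--Roch step can be made exact. Jouanolou's polynomial depends only on $N=N_{C/X}$, and $c_4(i_*\mco_C)=3\,i_*c_1(N)$, with no separate $T_C$ term. As a check, for a line $L\subset\mbp^4$ the Koszul resolution gives $c(\mco_L)=1+2h^3+9h^4$. Since $3$ is odd, $c_1(N)=c_1(\omega_C)-c_1(\omega_X)|_C$, $C(\mbr)$ is a compact $1$-manifold, and $\bar\gamma$ commutes with pushforward, you get $\bar\gamma(c_4(i_*\mco_C))=\bar\gamma([C])\cup w_1(X)$. Your description of $\partial$ as the change of $(c_4,p_1)$ under such a lift is also right. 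It is well defined because $f$ induces $0$ on $\HH^4(X,-)$, and it agrees with the connecting map up to sign because everything is in the stable range.

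The paper avoids $\K_0(X)$ altogether. It passes to the fiber $F_1$ of $(c_2,c_4)$, contracts $0\to\K^{\M}_4/6\to\pi_3^{\mba^1}(F_1)\to\K^{\Q}_3\to0$ three times, and pushes forward from $C$ with Rost--Schmid transfers, so $1\mapsto[C]$ holds by construction. The price is an unknown parameter $a\in\mbz/6$ for the $\Gm$-action on $\pi_3^{\mba^1}(F_1)_{-3}$. The twisted boundary yields $c_1(N)$ only when $a$ is odd, and the paper fixes the parity using $\mbr\mbp^4$, where every bundle is algebraizable. Your closing Rost--Schmid fallback is essentially this route, but it skips exactly this point. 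Conversely, once the coniveau comparison is supplied, your Riemann--Roch computation determines that constant directly, without the $\mbr\mbp^4$ example.
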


\begin{proof}
	Let $F_1$ be the fiber of the map
	$$(c_2,c_4):\BSL_\infty\to K(\K^{\M}_2,2)\times K(\K^{\M}_4,4).$$
	Then $\pi_i^{\mba^1}(F_1)=0$ for $i=0,1,2$ and there is an exact sequence
	$$\K^{\Q}_4\xrightarrow{\psi}\K^{\M}_4\to\pi_3^{\mba^1}(F)\to\K^{\Q}_3\to0$$
	where $\psi$ is the Suslin-Hurewicz map. By \cite{röndigs2022endomorphismsprojectiveplaneimage}, the cokernel of $\psi$ is $\K^{\M}_4/6$, so we have a short exact sequence
	$$0\to\K^{\M}_4/6\to \pi_3^{\mba^1}(F)\to\K^{\Q}_3\to0.$$
	Observe that there is a natural map $F\to F_1$ and a commutative diagram
	\[\begin{tikzcd}[ampersand replacement=\&,column sep=small]
		\& {\CH^4(X)\oplus\HH^4(X(\mbr);\mbz)} \&\& \\
		{\CH^3(X)} \& {\HH^4(X,A)} \& {\pi_3^{\mba^1}(F)} \& 0 \\
		{\CH^3(X)} \& {\HH^4(X,\K^{\M}_4/6)} \& {\pi_3^{\mba^1}(F_1)} \& 0 \\
		\& {\CH^4(X)}
		\arrow[equals, from=1-2, to=2-2]
		\arrow["\partial", from=2-1, to=2-2]
		\arrow[equals, from=2-1, to=3-1]
		\arrow[from=2-2, to=2-3]
		\arrow[from=2-2, to=3-2]
		\arrow[from=2-3, to=2-4]
		\arrow[from=2-3, to=3-3]
		\arrow[from=3-1, to=3-2]
		\arrow["{\partial_1}"', from=3-1, to=4-2]
		\arrow[from=3-2, to=3-3]
		\arrow[equals, from=3-2, to=4-2]
		\arrow[from=3-3, to=3-4]
	\end{tikzcd}\]
	where the rows come from long exact sequences in cohomology groups.

	Now the third contraction $\pi_3^{\mba^1}(F_1)_{-3}$ fits into an exact sequence
	$$0\to\Gm/6\to\pi^{\mba^1}_3(F_1)_{-3}\to\mbz\to0.$$
	Since over the site $\Sm_{\mbr}$, taking global sections of abelian sheaves is an exact functor, the sheaf $\mbz$ is a projective object and the above sequence splits.

	Let $C\subseteq X$ be a smooth closed algebraic curve representing a class $[C]\in\CH^3(X)$ and let $i:C\to X$ be the inclusion. Using Rost-Schmid complexes, for any strongly $\mba^1$-invariant sheaf of abelian groups, we can define maps (cf. \cite{Mor12A1AT}, Corollary 5.30)
	$$i_*:\HH^n(C,M_{-3}(N))\to\HH^{n+3}(X,M)$$
	where $N=\omega_{C/\mbr}\otimes i^*\omega_{X/\mbr}^\vee$ is a line bundle.
	Since $C$ is a curve, $\omega_{C/\mbr}$ is topologically trivial, we deduce that $\omega_{C/\mbr}$ is trivial by Theorem\autoref{thm:2.3.0.2}, so $N=i^*\omega_{X/\mbr}^\vee$.
	So we have a commutative diagram with exact rows
	\[\begin{tikzcd}[ampersand replacement=\&]
		{\HH^0(C,\mbz)} \& {\HH^1(C,\Gm/6)} \& {\HH^1(C,\pi_3^{\mba^1}(F_1)_{-3}(N))}\\
		{\HH^{3}(X,\K^{\Q}_3)} \& {\HH^4(X,\K_4^{\M}/6)} \& {\HH^4(X,\pi_3^{\mba^1}(F_1))} \\
		{\CH^3(X)} \& {\CH^4(X)}
		\arrow["{\partial'(N)}", from=1-1, to=1-2]
		\arrow[from=1-1, to=2-1]
		\arrow[from=1-2, to=2-2]
		\arrow[from=2-1, to=2-2]
		\arrow[equals, from=2-1, to=3-1]
		\arrow[equals, from=2-2, to=3-2]
		\arrow["\partial_1", from=3-1, to=3-2]
		\arrow[from=1-2, to=1-3]
		\arrow[from=2-2, to=2-3]
		\arrow[from=1-3, to=2-3]
	\end{tikzcd}\]
	where $\partial'(N)$ is the boundary map coming from the short exact sequence
	$$0\to\Gm/6\to\pi_3^{\mba^1}(F_1)_{-3}(N)\to\mbz\to0.$$

	To describe $\partial'(N)$, we need to understand the $\Gm$-action on $\pi_3^{\mba^1}(F_1)_{-3}$. We fix a non-canonical isomorphism $\pi_3^{\mba^1}(F_1)_{-3}\cong\Gm/6\oplus\mbz$. Using the fact that $\K^{\MW}_1$ is the free strongly $\mba^1$-invariant abelian sheaf generated by the pointed set $\Gm$ (\cite{Mor12A1AT}, Theorem 3.37), we have 
	\begin{align*}
		\intHom(\pi_3^{\mba^1}(F_1)_{-3},\pi_3^{\mba^1}(F_1)_{-3})&=\intHom(\mbz,\mbz)\oplus\intHom(\mbz,\Gm/6)\\
		&\oplus\intHom(\Gm/6,\mbz)\oplus\intHom(\Gm/6,\Gm/6)\\
		&=\mbz\oplus\Gm/6\oplus0\oplus\mbz/6
	\end{align*}
	so any $\Gm$-action on $\pi_3^{\mba^1}(F_1)_{-3}$ factors through a map $\Gm\to\Gm/6$ and has the form
	\begin{align*}
		\Gm\times(\Gm/6\oplus\mbz)&\to \Gm/6\oplus\mbz\\
		(g,(g',n))&\mapsto (g'+an\cdot g,n)
	\end{align*}
	where $a\in\mbz/6$.

	We claim that $\partial'(N)(1)=c_1(N)$. If this is true, we may conclude the proof by using the projection formula. Using the exactness of the top row of the above diagram, it suffices to show that $c_1(N)$ is mapped to 0 in $\HH^1(C,\pi_3^{\mba^1}(F_1)_{-3}(N))$.
	Now we do some explicit computations using the Rost-Schmid complex of the curve $C$. More precisely, we have a commutative diagram 
	\[\begin{tikzcd}[ampersand replacement=\&]
		{\mbr(C)^\times/6} \& {\bigoplus_{x\in C^{(1)}}\mbz/6} \& {\Pic(C)/6} \& 0 \\
		{\mbr(C)^\times/6\oplus\mbz} \& {\bigoplus_{x\in C^{(1)}}\mbz/6} \& {\HH^1(C,\pi_3^{\mba^1}(F_1)_{-3})} \& 0
		\arrow["v", from=1-1, to=1-2]
		\arrow[from=1-1, to=2-1]
		\arrow[from=1-2, to=1-3]
		\arrow[from=1-2, to=2-2]
		\arrow[from=1-3, to=1-4]
		\arrow[from=1-3, to=2-3]
		\arrow["{v(N)}", from=2-1, to=2-2]
		\arrow[from=2-2, to=2-3]
		\arrow[from=2-3, to=2-4]
	\end{tikzcd}\]
	with exact rows given by the Rost-Schmid complexes. Here $\mbr(C)$ is the field of rational functions of $C$. The map $v$ is the map taking valuations. The map $v(N)$ can be described explicitly as follows. Let $s$ be a global section of $N$ with simple zeros $x_1,\cdots,x_r$. Choose uniformizers $t_i\in\mbr(C)^\times$ of the discrete valuation rings $\mco_{C,x_i}$. Then for $x\in C\setminus\{x_1,\cdots,x_i\}$, we have
	$v(N)_{x}(g,n)=v_x(g)$, and if we let $\eta$ be the generic point of $C$, we have
	\begin{align*}
		v(N)_{x_i}(g,n)&=v(N)_{x_i}((g,n)\otimes s|_\eta)\\
		&=v(N)_{x_i}(t_i(g,n)\otimes (s/t_i)|_\eta)\\
		&=v_{x_i}(g+an\cdot t_i)\otimes(s/t_i)|_{x_i}\\
		&= v_{x_i}(g)+an.
	\end{align*}

	$c_1(N)$ is represented by the element $x_1+\cdots+x_r\in\bigoplus_{x\in C^{(1)}}\mbz/6$. If $a$ is odd, then $a-1$ is even and $v(N)(1,1)-(x_1+\cdots+x_n)=(a-1)(x_1+\cdots+x_n)$ lies in the image of $v(N)|_{\mbr(C)^\times/6}$. So $c_1(N)$ is mapped to 0 in $\HH^1(C,\pi_3^{\mba^1}(F_1)_{-3}(N))$. On the other hand, if $a$ is even, then the image of $v(N)$ is the same as the image of $v(N)|_{\mbr(C)^\times/6}$ and in this case $\partial'(N)(1)$ is zero. So it suffices to find an instance where $\partial'(N)(1)$ is nonzero in order to show that $a$ is odd.

	Let $X=\mbr\mbp^4$. It is an affine real algebraic variety by \cite{BCR} Theorem 3.4.4. In this specific case, every topological real vector bundle over $X$ is algebraizable by \cite{algbundonprojspace}.
	By a standard Postnikov tower argument, one can show that the pair $(w_2(V),w_4(V))$ can be arbitrary when $V$ ranges through orientable topological real vector bundles over $X(\mbr)$.
	Since the map $\CH^4(X)\to\HH^4(X,\pi_3^{\mba^1}(F_1))$ is surjective, the obstruction group $\HH^4(X,\pi_3^{\mba^1}(F_1))$ must vanish. Hence $\partial$ is surjective. This shows that $a$ is odd, and the proof is complete.
\end{proof}

\begin{remark}
	The above argument mimics the proof of \cite{AsokFasel2014threefold} Theorem 4.17.
\end{remark}

\begin{remark}
	Since $X(\mbr)$ is a smooth fourfold, one can show that the operation $\alpha\mapsto \alpha\cup w_1(X)$ is the same as the Steenrod square or the Bockstein $\Sq^1=\beta: \HH^3(X(\mbr);\mbz/2)\to\HH^4(X(\mbr);\mbz/2)$.
\end{remark}

\begin{remark}
	It is very likely that $\pi_3^{\mba^1}(F_1)=\pi_3^{\mba^1}(\BGL_3)$. If this is true, then one can describe the $\Gm$-action on $\pi_3^{\mba^1}(F_1)_{-3}$ by looking at vector bundles over a variety homotopy equivalent to $\mbs^3\wedge\Gm^{\wedge 3}$. However, this seems more difficult and the precise action of $\Gm$ is not needed in our argument.
\end{remark}

\begin{coro}
	We have $$\HH^4(X,\pi_3^{\mba^1}(F))=\HH^4(X(\mbr);\mbz)\oplus\coker(\CH^3(X)\xrightarrow{(-)\cup w_1(X)}\CH^4(X)).$$
\end{coro}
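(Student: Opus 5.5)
The corollary follows directly from the exact sequence displayed after the proposition computing $\HH^4(X,A)$. That sequence reads
$$\CH^3(X)\xrightarrow{\partial}\HH^4(X(\mbr);\mbz/2)\oplus\HH^4(X(\mbr);\mbz)\to\HH^4(X,\pi_3^{\mba^1}(F))\to 0,$$
so $\HH^4(X,\pi_3^{\mba^1}(F))$ is the cokernel of $\partial$. The plan is to compute $\partial$ componentwise using the two preceding results and then read off the cokernel.

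First, write $\partial=(\partial_1,\partial_2)$ with respect to the direct sum decomposition. By Proposition\autoref{partial2}, $\partial_2=0$. By Theorem\autoref{partial1}, $\partial_1=(-)\cup w_1(X)$.

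Here the source $\CH^3(X)$ is identified with $\HH^3(X,\K^{\Q}_3)$ via the Rost-Schmid complex. The target is identified with $\HH^4(X(\mbr);\mbz/2)$ via $\CH^4(X)=\HH^4(X,\K^{\M}_4)$ and the cycle class isomorphism from the proof of the proposition computing $\HH^4(X,A)$. Under these identifications, $\partial_1$ is the composite of $\bar{\gamma}_3\colon\CH^3(X)\to\HH^3(X(\mbr);\mbz/2)$ with cup product by $w_1(X)$. I read the map in the corollary's statement in exactly this sense.

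Since $\partial$ has image $\operatorname{im}(\partial_1)\oplus 0$, the cokernel splits:
$$\coker\partial=\HH^4(X(\mbr);\mbz)\oplus\coker\bigl(\CH^3(X)\xrightarrow{(-)\cup w_1(X)}\CH^4(X)\bigr),$$
where $\CH^4(X)\cong\HH^4(X(\mbr);\mbz/2)$ is used in the second factor. This is the claimed isomorphism.

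The only point requiring any care is bookkeeping rather than a genuine obstacle. One must check that the identification $\HH^4(X,A)=\CH^4(X)\oplus\HH^4(X(\mbr);\mbz)$ used to define $\partial_1$ and $\partial_2$ is the same one used in Theorem\autoref{partial1}. That theorem works with $F_1$ rather than $F$, and relates the two through the commutative diagram comparing $\HH^4(X,A)\to\HH^4(X,\K^{\M}_4/6)=\CH^4(X)$. I will check that the vertical map there is exactly the projection onto the $\CH^4(X)$ summand. Granting this, the corollary is immediate.
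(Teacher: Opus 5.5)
Your proposal is correct and matches the paper's implicit argument: the corollary is the cokernel of $\partial=(\partial_1,\partial_2)$ in the displayed exact sequence, with $\partial_2=0$ by Proposition~\ref{partial2} and $\partial_1=(-)\cup w_1(X)$ by Theorem~\ref{partial1}. Your bookkeeping check also goes through: $A\to\K^{\M}_4/6$ is induced by the projection $\K^{\M}_4\oplus\I^4\to\K^{\M}_4$, and $\CH^4(X)$ is $2$-torsion, so $\HH^4(X,\K^{\M}_4/6)=\CH^4(X)$ and the vertical map is the projection onto that summand.
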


Now we can prove one of our main theorems.

\begin{thm}\label{dim4}
	Let $X$ be an affine smooth real algebraic variety of dimension 4 and assume that $X(\mbr)$ is compact. Let $M_1,\cdots,M_p$ be the orientable connected components of $X(\mbr)$, and let $N_1,\cdots,N_q$ be the non-orientable connected components. Then there exist group homomorphisms $k_i:\CH^2(X)\to\HH^4(M_i;\mbz)$ for $1\leq i\leq p$ and maps $k_j':\CH^2(X)\to\HH^4(N_j;\mbz/2)$ for $1\leq j\leq q$ such that the following holds:

	A topological real vector bundle $V$ over $X(\mbr)$ is algebraizable if and only if there exists $c_1\in\CH^1(X),c_2\in\CH^2(X)$ and a class $\alpha\in\HH^3_{\alg}(X;\mbz/2)$ such that
	\begin{enumerate}
		\item $\bar{\gamma}(c_1)=w_1(V),\bar{\gamma}(c_2)=w_2(V)$,
		\item for each $1\leq i\leq p$, $$p_1(V|_{M_i})=k_i(c_2+c_1^2),$$
		\item for each $1\leq j\leq q$, $$(w_4(V)+w_1(V)w_3(V))|_{N_j}=\alpha|_{N_j}\cup w_1(N_j)+k'_j(c_2+c_1^2).$$
	\end{enumerate}
\end{thm}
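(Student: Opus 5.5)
We reduce to the orientable stable case and then read off the obstruction from Proposition\autoref{prop:4.1.4}, using the corollary computing $\HH^4(X,\pi_3^{\mba^1}(F))$.

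\textbf{Step 1: reduction to oriented bundles.} First, $w_1(V)$ must be algebraic, so choose $c_1\in\CH^1(X)$ with $\bar{\gamma}(c_1)=w_1(V)$. By Theorem\autoref{thm:2.3.0.2}(1) the line bundle $L$ with class $c_1$ is topologically $\det V$. Set $W=V\oplus L$. By Theorem\autoref{thm:2.3.0.2}, $V$ is algebraizable if and only if $W$ is, and $W$ is orientable.

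We then express the characteristic classes of $W$ through those of $V$ and $c_1$ by the Whitney formula. Writing $w_1=w_1(V)$, we get $w_2(W)=w_2(V)+w_1^2$ and $w_4(W)=w_4(V)+w_1w_3(V)$. For the Pontryagin class, $p_1(W)=p_1(V)$ modulo $2$-torsion, and on the orientable components $M_i$ we have $p_1(W)|_{M_i}=p_1(V|_{M_i})$ exactly. This last point needs a check: $p_1$ of a line bundle vanishes rationally, and $\HH^4(M_i;\mbz)=\mbz$ is torsion-free. Algebraically, $c_2(W)=c_2+c_1^2$ with $c_2$ a lift of $w_2(V)$, and every lift of $w_2(W)$ arises this way. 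This explains the argument $c_2+c_1^2$ in conditions 2 and 3.

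\textbf{Step 2: apply Proposition\autoref{prop:4.1.4} to $W$.} $W$ is algebraizable if and only if there is a lift $c_2(W)$ of $w_2(W)$ whose image $k(c_2(W))$ equals the image of $(w_4(W),p_1(W))$ in $\HH^4(X,\pi_3^{\mba^1}(F))$. By the corollary to Theorem\autoref{partial1}, this group splits as
\[\HH^4(X(\mbr);\mbz)\oplus\coker(\CH^3(X)\xrightarrow{\cup w_1(X)}\CH^4(X)),\]
with $\CH^4(X)=\HH^4(X(\mbr);\mbz/2)$. Decompose $\HH^4(X(\mbr);\mbz)=\bigoplus_i\HH^4(M_i;\mbz)\oplus\bigoplus_j\HH^4(N_j;\mbz)$, and note that $\HH^4(N_j;\mbz)=\mbz/2$. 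The components of $k$ give the maps $k_i$ on the $\HH^4(M_i;\mbz)$ factors, which are homomorphisms. The remaining components of $k$ on the non-orientable part give the maps $k'_j$.

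\textbf{Step 3: the non-orientable components (main obstacle).} Two contributions have to be merged here.

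First, the $\HH^4(N_j;\mbz)$-component of $p_1(W)$ is $w_2(W)^2=\bar\gamma(c_2(W))^2$, as in the proof of Proposition\autoref{partial2}. So this component depends only on $c_2(W)$, and it can be absorbed into $k'_j$.

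Second, the condition in the cokernel reads: $w_4(W)|_{N_j}$ minus the $k$-term lies in the image of $\cup w_1(X)$ applied to $\CH^3(X)$. The image of $\CH^3(X)$ in $\HH^3$ is $\HH^3_{\alg}(X;\mbz/2)$, and since $\cup w_1(X)$ factors through $\bar\gamma$, this is exactly the existence of $\alpha$ in condition 3. Restricting $\alpha\cup w_1(X)$ to $N_j$ gives $\alpha|_{N_j}\cup w_1(N_j)$. On the orientable components $M_i$, $w_1(M_i)=0$ and $\HH^4(M_i;\mbz/2)=\mbz/2$. I expect the $\mbz/2$-condition there to be automatic, because $w_4\equiv p_1$ mod $2$ up to $w_2^2$ on oriented $4$-manifolds; alternatively it can be absorbed into $k_i$ via the reduction of $p_1$. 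Making this rigorous, that is, showing no extra condition on the $M_i$ beyond condition 2, is the delicate bookkeeping step.

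A further subtlety is that $\CH^3(X)$ may not decompose componentwise. However, $\alpha$ is global and the cup product is computed componentwise, so the condition is simultaneous in $j$ with a single $\alpha$, matching the statement.

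Finally, the existence quantifier over $c_2$ in Proposition\autoref{prop:4.1.4} becomes the quantifier over $c_2$ (and $c_1$) in the theorem. Changing $c_1$ by another lift changes $L$ only up to algebraic isomorphism, by Theorem\autoref{thm:2.3.0.2}, so the final equivalence is independent of that choice.
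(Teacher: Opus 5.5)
\textbf{There is a genuine gap on the orientable components.} Your overall route is the paper's. You pass to the oriented bundle $W=V\oplus\det V$, apply Proposition~\ref{prop:4.1.4} with obstruction group $\HH^4(X(\mbr);\mbz)\oplus\coker(\cup w_1(X))$, split by components, and show that the conditions not listed in the theorem hold automatically. Your remark that $p_1$ is preserved only modulo $2$-torsion, hence exactly on the $M_i$, is more careful than the paper. The gap is the step you defer as ``delicate bookkeeping'', and the justification you sketch for it is wrong.

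Because $w_1(X)|_{M_i}=0$, the cokernel condition contains, for each $i$, an honest $\mbz/2$-equation: $w_4(W)|_{M_i}$ must equal the $M_i$-component of $k(c_2)$. If this were not automatic, the list of conditions in the theorem would be incomplete. Your heuristic ``$w_4\equiv p_1$ mod $2$ up to $w_2^2$'' does not hold. Writing $\rho_m$ for reduction mod $m$, every real bundle satisfies $\rho_2(p_1)=w_2^2$, so the mod $2$ reduction of $p_1$ carries no information about $w_4$. For the same reason the condition cannot be ``absorbed into $k_i$'', since condition 2 only sees $p_1$.

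The missing ingredient is the mod $4$ Wu relation for oriented bundles, $\rho_4(p_1)=\mathfrak{P}(w_2)+\theta(w_4)$. Here $\mathfrak{P}$ is the Pontryagin square and $\theta$ is induced by $\mbz/2\hookrightarrow\mbz/4$. On a closed orientable connected $4$-manifold, $\theta:\HH^4(M_i;\mbz/2)\to\HH^4(M_i;\mbz/4)$ is $\mbz/2\hookrightarrow\mbz/4$, so it is injective. Hence $w_4|_{M_i}$ is determined by $w_2$ and $p_1|_{M_i}$.

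\textbf{A realizability fact you never state.} To turn the Wu relation into automaticity, you need that every $c_2\in\CH^2(X)$ equals $c_2(E)$ for some algebraic oriented bundle $E$. This holds because the only obstruction to lifting $X\to K(\K^{\M}_2,2)$ to $\BSL_\infty$ lies in $\HH^4(X,\K^{\Q}_3)=0$. Since $E$ realizes $(c_2(E),c_4(E),p_1(E))$, the obstruction vanishes there, so the components of $k(c_2)$ are exactly the characteristic classes of $E$. Now suppose $w_2(W)=\bar{\gamma}(c_2)$ and $p_1(W)|_{M_i}=k_i(c_2)=p_1(E)|_{M_i}$. Then Wu forces $w_4(W)|_{M_i}=w_4(E)|_{M_i}$, which is the required equation.

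The same fact repairs two further thin spots.
\begin{enumerate}
\item On $N_j$, the equation $p_1(W)|_{N_j}=k_{N_j}(c_2)$ lives in its own summand $\HH^4(N_j;\mbz)$. It cannot be ``absorbed into $k'_j$''; it must be shown vacuous. It is vacuous because both sides are the unique integral lift of $\bar{\gamma}(c_2)^2|_{N_j}$.
\item The $k_i$ are homomorphisms only because $k_i(c_2)=p_1(E)|_{M_i}$, with $c_2$ and $p_1$ additive on oriented bundles and $\HH^4(M_i;\mbz)\cong\mbz$ torsion-free. The map $k$ of Proposition~\ref{prop:4.1.4} comes from a map of spaces and is not a priori additive. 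Your phrase ``which are homomorphisms'' therefore needs this argument.
\end{enumerate}
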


\begin{proof}
	Let $V'=V\oplus\det V$. Then $V$ is algebraizable if and only if $w_1(V)$ is algebraic and $V'$ is algebraizable by Theorem\autoref{thm:2.3.0.2}. We have $w_2(V')=w_2(V)+w_1(V)^2$, $w_4(V')=w_4(V)+w_1(V)w_3(V)$, and $p_1(V')=p_1(V)$. By replacing $V$ by $V'$ and replacing the corresponding characteristic classes, we may assume that $V$ is orientable.
	
	Let $k:\CH^2(X)\to\HH^4(X,\pi_3^{\mba^1}(F))$ be the map given in Proposition\autoref{prop:4.1.4}.
	For any connected component $Y\subseteq X(\mbr)$ of $X(\mbr)$, let $k_Y$ be the composition
	$$\CH^2(X)\xrightarrow{k}\HH^4(X,\pi_3^{\mba^1}(F))\to\HH^4(Y;\mbz)$$
	where the second map is the projection. Similarly, we fix a section
	$$i:\coker(\CH^3(X)\xrightarrow{(-)\cup w_1(X)}\CH^4(X))\to\CH^4(X)$$
	and let $k'_Y$ be the composition
	$$\CH^2(X)\xrightarrow{k}\HH^4(X,\pi_3^{\mba^1}(F))\to\coker(\CH^3(X)\to\CH^4(X))\xrightarrow{i}\CH^4(X)\to\HH^4(Y;\mbz/2)$$
	where the second and the fourth maps are the projections. For $1\leq i\leq p$ (resp. $1\leq j\leq q$), set $k_i=k_{M_i}$ (resp. $k'_j=k'_{N_j}$).
	The maps $k_i$ are group homomorphisms since $c_2$ and $p_1$ of orientable vector bundles over an orientable connected component are additive.

	Using Proposition\autoref{prop:4.1.4} while keeping in mind that we have identified $\partial$ (Proposition\autoref{partial2} and Theorem\autoref{partial1}), we see that it suffices to prove that the obstructions arising from $k_{N_j}$ and $k'_{M_i}$ vanish automatically. Indeed, as in the proof of Proposition\autoref{partial2}, the obstruction given by $k_{N_j}$ is the relation $p_1=w_2^2$ which is always satisfied since the pair $(w_2,p_1)$ comes from a vector bundle. Similarly, over $M_i$, $c_4=w_4$ is determined by $c_2$ and $p_1$ by the relation $p_1\equiv\mathfrak{P}(w_2)+j(w_4)\mod 4$ where $\mathfrak{P}$ is the Pontryagin square and $j:\HH^4(S;\mbz/2)\to\HH^4(S;\mbz/4)$ is induced by the inclusion $\mbz/2\to\mbz/4$. This relation is again automatic since the triple $(w_2,w_4,p_1)$ comes from a vector bundle.
\end{proof}

\begin{remark}
	The maps $k_i$ can be described as follows. Consider the scheme $X_{\mbc}=X\otimes_{\mbr}\mbc$. It is essentially smooth over $\mbc$ and $X_{\mbc}(\mbc)$ is homeomorphic to $X(\mbr)$. Let $p:X_{\mbc}\to X$ denote the natural map. Then $k_i$ is given by the composition
	$$\CH^2(X)\xrightarrow{p^*}\CH^2(X_{\mbc})\xrightarrow{\cl}\HH^4(X_{\mbc}(\mbc);\mbz)=\HH^4(X(\mbr);\mbz)\to\HH^4(M_i;\mbz).$$
\end{remark}

\begin{remark}
	Note that the maps $k'_j$ are not group homomorphisms. In fact, if $V$ and $V'$ are two orientable vector bundles, then $w_4(V\oplus V')=w_4(V)+w_4(V')+w_2(V)w_2(V')$, so the induced maps
	$$k'_j:\CH^2(X)\to\HH^4(N_j;\mbz/2)/\beta\HH^3_{\alg}(N_j;\mbz/2)$$
	are quadratic in some sense.
	Unfortunately, the author is unable to identify the maps $k'_j$ explicitly.
\end{remark}

\begin{remark}
	In fact, for a ``sufficiently generic'' real algebraic variety $X$ homeomorphic to $\mbs^4$, algebraic vector bundles over $X$ are stably trivial (\cite{bochnak1989vector}, Theorem 1.1).
\end{remark}

\subsection{Some Applications}

\subsubsection{Orientable Compact Fourfolds of the Form $M\times\mbs^1$}

Here we study a family of orientable examples. We start with the following lemma concerning Chow groups.

Let $\mbs^1$ denote the real algebraic variety defined by $\Spec\mbr[x,y]/(x^2+y^2-1)$.

\begin{lemma}\label{lem:4.2.1}
	Let $Y$ be any affine real algebraic variety.
	Let $p:Y_{\mbc}\to Y$ be the complexification map.
	Then
	$$\CH^i(Y\times\mbs^1)=\CH^{i-1}(Y)/p_*(\CH^{i-1}(Y_{\mbc}))\oplus\CH^i(Y).$$
\end{lemma}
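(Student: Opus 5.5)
We will compute $\CH^i(Y\times\mbs^1)$ with the localization sequence for the open immersion of $Y\times\mbs^1$ into $Y\times\mbp^1$. First we identify $\mbs^1$, as a real algebraic variety in the sense of Definition\autoref{defn:2.3.0.1}, with $\mbp^1_{\mbr}$ minus a point with no real points. The stereographic parametrization $t\mapsto((1-t^2)/(1+t^2),2t/(1+t^2))$ gives an isomorphism between the scheme $\Spec\mbr[x,y]/(x^2+y^2-1)$ and the conic, which is $\mbp^1_{\mbr}$ minus the closed point $Q=\Spec\mbc$ defined by $t^2+1=0$. The localization in Definition\autoref{defn:2.3.0.1} inverts the functions that do not vanish at real points. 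We will check that this localization commutes with taking products with $Y$, at least at the level of Chow groups, by writing both sides as colimits over open neighbourhoods of the real locus. Chow groups of such essentially smooth limits are colimits of Chow groups, so everything reduces to a colimit over opens $U\subseteq Y$ of the corresponding statement for $U\times(\mbp^1\setminus Q)$.

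The key step is the localization exact sequence
$$\CH^{i-1}(Y\times Q)\xrightarrow{j_*}\CH^i(Y\times\mbp^1)\to\CH^i(Y\times(\mbp^1\setminus Q))\to0.$$
Since $Q\cong\Spec\mbc$, we have $Y\times Q\cong Y_{\mbc}$, with first Chow groups $\CH^{i-1}(Y_{\mbc})$. By the projective bundle formula,
$$\CH^i(Y\times\mbp^1)=\CH^i(Y)\oplus\CH^{i-1}(Y)\cdot h,$$
where $h$ is the class of a rational point $\infty\in\mbp^1(\mbr)$, and the first summand is $\pi^*\CH^i(Y)$.

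We then compute $j_*$ in these coordinates. The divisor $Y\times Q$ has class $2h$ in $\CH^1(\mbp^1)$ pulled back, and $j_*(\beta)=\pi^*p_*(\beta)\cdot h$ for $\beta\in\CH^{i-1}(Y_{\mbc})$. The cleanest route is to factor $j$ through $Y\times\mbp^1_{\mbc}$: there $Q$ splits into two rational points, each with class $h$, and then we push forward along $Y_{\mbc}\times\mbp^1_{\mbc}\to Y\times\mbp^1$, using that this pushforward is compatible with the projective bundle decomposition. Consequently the image of $j_*$ is $0\oplus p_*\CH^{i-1}(Y_{\mbc})\cdot h$, and the cokernel is exactly
$$\CH^i(Y)\oplus\CH^{i-1}(Y)/p_*\CH^{i-1}(Y_{\mbc}).$$

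The main obstacle is making this last identification rigorous: checking that $j_*$ has no component in the $\CH^i(Y)$ summand, and correctly identifying the $h$-component with $p_*$ and not, say, $2p_*$ or $p_*$ composed with conjugation. We will verify this with the projection formula, $\pi_*(j_*\beta\cdot h)=0$ and $\pi_*(j_*\beta)=p_*\beta$, since the composite $Y\times Q\to Y$ is $p$. In the decomposition $\alpha=\pi^*a+\pi^*b\cdot h$, these give $a=\pi_*(\alpha\cdot h)$ and $b=\pi_*\alpha$, so $a=0$ and $b=p_*\beta$. A secondary point is the passage to the localized varieties, which is handled by the colimit argument above, since localization sequences are compatible with restriction to opens.
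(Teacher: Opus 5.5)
Your main computation is correct and is the same as the paper's: the localization sequence for $Y\times Q\subseteq Y\times\mbp^1_{\mbr}$, the projective bundle formula, and identifying the first map with $(p_*,0)$. Your projection-formula check ($b=\pi_*\alpha$, $a=\pi_*(\alpha\cdot h)$, with $j^*h=0$ because $Q$ misses a real point) justifies more carefully the step the paper dismisses with ``one easily sees''.

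The gap is the passage to the localized circle. It is not true that $\mbs^1$, as a real algebraic variety in the sense of Definition~\ref{defn:2.3.0.1}, is $\mbp^1_{\mbr}$ minus one point; that describes the scheme $S=\Spec\mbr[x,y]/(x^2+y^2-1)$. The real algebraic variety also inverts functions such as $x-2$, which is nonzero on the real circle but vanishes at the non-real closed point $x=2$, $y^2=-3$ of $S$. In fact every closed point of $S$ with residue field $\mbc$ is removed. Your colimit runs only over opens $U$ of the first factor, with the second factor frozen at $\mbp^1_{\mbr}\setminus Q$. So it computes $\CH^i(Y\times S)$, not $\CH^i(Y\times\mbs^1)$. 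Compatibility of localization sequences with restriction to opens does not show that the extra localization on the circle factor is harmless.

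The missing step is the last paragraph of the paper's proof. For every non-real closed point $x\in S$, the image of $\CH^{i-1}(Y\times\{x\})\to\CH^i(Y\times\mbp^1_{\mbr})$ is again $0\oplus p_*\CH^{i-1}(Y_{\mbc})\cdot h$. Your own argument gives this verbatim. Since $\kappa(x)\cong\mbc$, the map $Y\times\{x\}\to Y$ is $p$ for either choice of isomorphism, because $p\circ\sigma=p$ for complex conjugation $\sigma$. Also, $x$ is disjoint from the real point representing $h$. Hence removing finitely many such points leaves the cokernel unchanged, and passing to the colimit over them gives the lemma.

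This fix, like the paper, treats $Y\times\mbs^1$ as the fibre product of the two real algebraic varieties. Your remark that the localization ``commutes with products'' at the level of Chow groups might instead be meant for the real algebraic variety attached to the product of the underlying finite-type schemes. In that case you would also have to kill classes of arbitrary non-real subvarieties, not just those of the form $Y\times\{x\}$, and your sketch gives no argument for that.
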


\begin{proof}
	Let $\infty=\{x^2+y^2=0\}=\Spec\mbc\in\mbp_{\mbr}^1$ be the point at infinity and let $S=\mbp^1_{\mbr}\setminus \{\infty\}$. Then $S=\mbr[x,y]/(x^2+y^2-1)$ and $\mbs^1$ is the real algebraic variety defined by $S$.

	We first compute the Chow groups of $Y\times S$. The projective bundle theorem gives
	$$\CH^i(Y\times \mbp^1_{\mbr})=\CH^{i-1}(Y)\oplus\CH^i(Y)$$
	and we have an exact sequence
	$$\CH^{i-1}(Y\times\{\infty\})\to\CH^{i-1}(Y)\oplus\CH^i(Y)\to\CH^i(Y\times S)\to 0.$$
	One easily sees that the first map coincides with $(p_*,0)$ so $$\CH^i(Y\times S)=\CH^{i-1}(Y)/p_*(\CH^{i-1}(Y_{\mbc}))\oplus\CH^i(Y).$$

	Now, since all points in $\mbp_{\mbr}^1\setminus\mbs^1$ are rationally equivalent, the quotient $\CH^i(Y\times S)\to\CH^i(Y\times\mbs^1)$, obtained from removing closed subsets of the form $X\times\{x\}$ with $x\in \mbp_{\mbr}^1\setminus\mbs^1$, is an isomorphism. We conclude by taking the colimit.
\end{proof}

Now we can prove one of the main theorems.

\begin{thm}\label{mainexam}
	Let $Y$ be a smooth affine real algebraic variety of dimension 3 such that $Y(\mbr)$ is compact and orientable. Let $X=Y\times\mbs^1$, where $\mbs^1$ is the standard circle. Then a topological real vector bundle $V$ over $X$ is algebraizable if and only if $w_*(V)\in\HH^*_{\alg}(X;\mbz/2)$ and $p_1(V)=0$.
\end{thm}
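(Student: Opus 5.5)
The plan is to specialize Theorem\autoref{dim4} to $X=Y\times\mbs^1$, and then show that the homomorphisms $k_i$ vanish because $\CH^2(X)$ is a $2$-torsion group.

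First I check that the hypotheses of Theorem\autoref{dim4} hold. $X$ is a smooth affine real algebraic variety of dimension $4$, since products of smooth affine real algebraic varieties are again such. Moreover $X(\mbr)=Y(\mbr)\times S^1$ is compact. Since $Y(\mbr)$ is orientable, every connected component of $X(\mbr)$ is orientable. So in the notation of Theorem\autoref{dim4} we have $q=0$, and condition 3 is vacuous. Each component $M_i$ is a compact connected orientable $4$-manifold, so $\HH^4(M_i;\mbz)\cong\mbz$ and $\HH^4(X(\mbr);\mbz)=\bigoplus_i\HH^4(M_i;\mbz)$ is torsion-free.

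Theorem\autoref{dim4} then says that $V$ is algebraizable if and only if there are $c_1\in\CH^1(X)$ and $c_2\in\CH^2(X)$ with $\bar\gamma(c_1)=w_1(V)$, $\bar\gamma(c_2)=w_2(V)$, and $p_1(V|_{M_i})=k_i(c_2+c_1^2)$ for all $i$.

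The key step is to show $k_i=0$. By Theorem\autoref{dim4} the $k_i$ are group homomorphisms into $\mbz$, so it suffices to show that $\CH^2(X)$ is $2$-torsion. By Lemma\autoref{lem:4.2.1},
$$\CH^2(X)=\CH^1(Y)/p_*\CH^1(Y_{\mbc})\oplus\CH^2(Y).$$
The two summands are handled separately.
\begin{enumerate}
\item Since $Y(\mbr)$ is compact, Theorem\autoref{thm:2.3.0.2} makes $\CH^1(Y)=\Pic(\mcr(Y))$ inject into $\HH^1(Y(\mbr);\mbz/2)$. Hence $\CH^1(Y)$, and therefore its quotient, is $2$-torsion.
\item Since $\dim Y=3\geq 2$ and $Y(\mbr)$ is compact, $\CH^2(Y)=\CH_1(Y)$ is $2$-torsion by Lemma\autoref{lem:3.2.1.2}.
\end{enumerate}
Thus every element of $\CH^2(X)$ is killed by $2$, so its image in $\mbz$ is zero and $k_i=0$. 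The condition $p_1(V|_{M_i})=0$ for all $i$ is then exactly $p_1(V)=0$, because $\HH^4(X(\mbr);\mbz)=\bigoplus_i\HH^4(M_i;\mbz)$.

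It remains to match the two directions of the statement.
\begin{itemize}
\item[\emph{Only if.}] If $V$ is algebraizable, then all $w_*(V)$ are algebraic by Theorem\autoref{thm:2.2.0.12}. Choosing $c_1,c_2$ from an algebraic model, Theorem\autoref{dim4} forces $p_1(V)=0$.
\item[\emph{If.}] If $w_*(V)$ is algebraic and $p_1(V)=0$, then algebraicity of $w_1,w_2$ provides $c_1,c_2$. The remaining condition $0=k_i(c_2+c_1^2)$ holds automatically, so Theorem\autoref{dim4} gives algebraizability.
\end{itemize}
Only $w_1$ and $w_2$ are actually used in the sufficiency direction.

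The main point needing care is the torsion computation. One must check that Lemma\autoref{lem:4.2.1} applies to $Y$, and that Lemma\autoref{lem:3.2.1.2} and Theorem\autoref{thm:2.3.0.2} apply in the colimit/real-variety convention. One also needs $k_i$ to be genuinely additive, which Theorem\autoref{dim4} asserts. If additivity were in doubt, I would instead use the remark describing $k_i$ via $\CH^2(X)\to\CH^2(X_{\mbc})\xrightarrow{\cl}\HH^4(X(\mbr);\mbz)$, which is visibly a homomorphism.
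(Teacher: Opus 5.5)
Your proposal is correct and follows the paper's proof. Both show that $\CH^2(X)$ is $2$-torsion via Lemma~\ref{lem:4.2.1}, Lemma~\ref{lem:3.2.1.2} and Theorem~\ref{thm:2.3.0.2}, conclude that the homomorphisms $k_i$ into the torsion-free groups $\HH^4(M_i;\mbz)$ vanish, and then read off the criterion from Theorem~\ref{dim4}; your use of $\bigoplus_i\HH^4(M_i;\mbz)$ instead of the paper's $\HH^4(X(\mbr);\mbz)=\mbz$ also correctly covers the case where $Y(\mbr)$ is disconnected.
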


\begin{proof}
	Recall that $\CH^3(Y)=\mbz/2$ by \cite{CTS96}, $\CH^2(Y)$ is 2-torsion by Lemma\autoref{lem:3.2.1.2}, $\CH^1(Y)$ is 2-torsion by Theorem\autoref{thm:2.3.0.2}, and $\CH^0(Y)=\mbz$ obviously. Also recall that if we let $p:Y_{\mbc}\to Y$ be the complexification map, then the map $p_*p^*$ is multiplication by 2. By Lemma\autoref{lem:4.2.1}, we see that the Chow groups of $X=Y\times\mbs^1$ are 2-torsion in codimension $\geq 1$.

	Let $k:\CH^2(X)\to\HH^4(X(\mbr);\mbz)$ be the map given in Theorem\autoref{dim4}. Since $\CH^2(X)$ is 2-torsion and $\HH^4(X(\mbr);\mbz)=\mbz$, the map $k$ is zero. We conclude by applying Theorem\autoref{dim4}.
\end{proof}

\subsubsection{$\tildeK_0$ of Compact Fourfolds}

Again, let $X$ be a smooth affine real algebraic variety of dimension 4 such that $X(\mbr)$ is compact.
Here we give a description of $\tildeK_0(X)$.

Consider the abelian group $G=1\oplus\HH^1_{\alg}(X;\mbz/2)\oplus\CH^2(X)$ (recall that $\CH^1(X)=\HH^1_{\alg}(X;\mbz/2)$) where the group operation is given by multiplication. Then there is a group homomorphism
$$(c_1,c_2):\tildeK_0(X)\to G.$$
Also set
$$d=\dim_{\mbz/2} w_1(X)\cup\HH^3_{\alg}(X;\mbz/2).$$

\begin{prop}\label{prop:4.2.3}
	There is an exact sequence
	$$0\to(\mbz/2)^d\to\tildeK_0(X)\to G\to 0.$$
\end{prop}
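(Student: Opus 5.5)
We prove exactness in three steps: surjectivity of $(c_1,c_2)$, identification of its kernel with classes detected by $w_4$ over the non-orientable components, and computation of the size of that kernel.

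\emph{Surjectivity.} Given $c_1\in\CH^1(X)$ and $c_2\in\CH^2(X)$, let $L$ be the line bundle with $c_1(L)=c_1$. By the analysis in the proof of Theorem\autoref{dim4} (via Proposition\autoref{prop:4.1.4}), the only obstruction to realizing prescribed classes $(c_2,w_4,p_1)$ by a map $X\to\BSL_\infty$ is the class $k_3$. Since $w_4$ and $p_1$ are free parameters in the target $\HH^4(X,A)$, we can choose them so that the obstruction $(w_4,p_1)-k(c_2)$ vanishes. This produces an oriented stable algebraic bundle $E$ with $c_2(E)=c_2-c_1^2$ (correcting for $L$). Then $E+[L]-1$ has total Chern class $(1+c_1)(1+c_2-c_1^2)$, whose degree $\le 2$ part is $1+c_1+c_2$. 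Since $G$ only records degrees $\le 2$, this shows $(c_1,c_2)$ is onto.

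\emph{The kernel.} Let $\xi\in\tildeK_0(X)$ have $c_1(\xi)=0$ and $c_2(\xi)=0$. Then $\xi$ comes from $X\to\BSL_\infty$ with $c_2=0$. By Theorem\autoref{thm:2.3.0.2}, $\tildeK_0(X)\to\widetilde{\KO}^0(X(\mbr))$ is injective, so by Theorem\autoref{doldwhitney} $\xi$ is determined by $(w_2,w_4,p_1)$. Here $w_2=\bar\gamma(c_2)=0$.

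On the orientable components $M_i$, Theorem\autoref{dim4} gives $p_1|_{M_i}=k_i(0)=0$. The mod $4$ relation $p_1\equiv\mathfrak{P}(w_2)+j(w_4)$ then forces $w_4|_{M_i}=0$.

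On the non-orientable components $N_j$, $p_1=w_2^2=0$, and Theorem\autoref{dim4} forces
\[
w_4|_{N_j}\in k'_j(0)+w_1(N_j)\cup\HH^3_{\alg}(X;\mbz/2).
\]
Using the trivial bundle we get $k'_j(0)=0$.

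So the kernel injects, via $w_4$, into $w_1(X)\cup\HH^3_{\alg}(X;\mbz/2)\subseteq\HH^4(X(\mbr);\mbz/2)$. Conversely, every class in this subgroup occurs: by the Corollary following Theorem\autoref{partial1}, the pair $(c_2,w_4,p_1)=(0,w_4,0)$ has vanishing obstruction whenever $w_4$ lies in the image of $\partial_1=(-)\cup w_1(X)$ on $\CH^3(X)$. Its image in $\HH^3$ is $\HH^3_{\alg}$, by the Remark identifying $\bar\gamma$ with the cycle class map.

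\emph{Group structure and the main obstacle.} On the kernel, $w_4$ is additive because $w_4(\xi+\xi')=w_4+w_4'+w_2w_2'$ and $w_2=0$ there. So the kernel is an $\mbz/2$-vector space of dimension $d$.

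The delicate step is making sure that the realization from Proposition\autoref{prop:4.1.4} can be carried out with $c_2=0$ exactly, and that the kernel of $(c_1,c_2)$ is a group, i.e.\ that $(c_1,c_2)$ is a homomorphism into $G$. For the latter we use multiplicativity of the total Chern class truncated in degrees $\le 2$.
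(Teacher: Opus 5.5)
Your proof is correct. Your treatment of the kernel follows the paper's route, but fills in what the paper leaves implicit. The paper just cites Theorem~\ref{dim4} to get $p_1=0$ from $c_2=0$ and to see that $w_4$ ranges over $w_1(X)\cup\HH^3_{\alg}(X;\mbz/2)$. You make this explicit: injectivity of $\K_0(\mcr(X))\to\KO^0(X(\mbr))$ plus Dold--Whitney detect kernel elements by $(w_2,w_4,p_1)$, and additivity of $w_4$ once $w_2=0$ gives the group structure.

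Where you genuinely differ is surjectivity. The paper lifts $(c_1,c_2)$ directly along $\BGL_\infty\to K(\K^{\M}_1,1)\times K(\K^{\M}_2,2)$. This map is an isomorphism on $\pi_1^{\mba^1}$ and $\pi_2^{\mba^1}$, so the fiber's first nonzero homotopy sheaf is $\K^{\Q}_3$ in degree $3$. The only obstruction then lies in $\HH^4(X,\K^{\Q}_3)=0$, with no choice of $w_4,p_1$ and no line-bundle correction needed. Your detour through $\BSL_\infty$ and Proposition~\ref{prop:4.1.4} also works, because $\HH^4(X,\K^{\M}_4)\oplus\HH^4(X,\I^4)\to\HH^4(X,\pi_3^{\mba^1}(F))$ is onto. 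But it invokes the harder fourfold machinery where none is needed.

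That detour also contains an arithmetic slip. Since $c_1(E)=0$, the degree-two part of $c(E)c(L)$ is $c_2(E)$, so $(1+c_1)(1+c_2-c_1^2)$ has degree-two part $c_2-c_1^2$, not $c_2$. You should simply take $c_2(E)=c_2$; surjectivity survives anyway because $c_2$ is arbitrary.

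Separately, the trivial bundle does not give $k'_j(0)=0$. It only shows $k'_j(0)\in w_1(N_j)\cup\HH^3_{\alg}$ with a common $\alpha_0$, which is enough for your coset argument. What the trivial bundle really gives, via the obstruction of Proposition~\ref{prop:4.1.4} at $(0,0,0)$, is $k(0)=0$. Using this directly is cleaner than working component by component. For algebraic $V$ with $c_1(V)=c_2(V)=0$, vanishing of the obstruction says the image of $(w_4(V),p_1(V))$ in $\HH^4(X(\mbr);\mbz)\oplus\coker\bigl(\CH^3(X)\xrightarrow{(-)\cup w_1(X)}\CH^4(X)\bigr)$ is zero. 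That is, $p_1(V)=0$ and $w_4(V)\in w_1(X)\cup\HH^3_{\alg}(X;\mbz/2)$. Running the same computation backwards realizes each such $w_4$ by a lift to $\BSL_\infty$ with $c_2=0$ exactly. This settles the step you flagged as delicate.
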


\begin{proof}
	We first argue that the map $\tildeK_0(X)\to G$ is surjective. Indeed, the morphism
	$$(c_1,c_2):\BGL_\infty\to K(\K^{\M}_1,1)\times K(\K^{\M}_2,2)$$
	induces isomorphisms on $\pi_1^{\mba^1}$ and $\pi_2^{\mba^1}$, so the primary obstruction (which is also the only obstruction) of lifting a pair of classes $(c_1,c_2)$ to a vector bundle lies in $\HH^4(X,\K^{\Q}_3)=0$. So $\tildeK_0(X)\to G$ is surjective.

	Next, we look at its kernel. By definition, its kernel contains algebraic vector bundles $V$ with $c_1(V)=0$ and $c_2(V)=0$. By Theorem\autoref{dim4}, $p_1(V)=0$ since $c_2(V)=0$, and $w_4(V)$ can take any value in $w_1(X)\cup\HH^3_{\alg}(X;\mbz/2)\subseteq \CH^4(X)=\HH^4(X(\mbr);\mbz/2)$. So the kernel is isomorphic to $(\mbz/2)^d$.
\end{proof}

If $X(\mbr)$ is orientable, then we can say more about the groups $\CH^2(X)$ and $\tildeK_0(X)$.

\begin{prop}
	Assume that $X(\mbr)$ is orientable. Then the group homomorphism
	$$(\bar{\gamma},k):\CH^2(X)\to\HH^2(X(\mbr);\mbz/2)\times\HH^4(X(\mbr);\mbz)$$
	is injective. Here $k$ is the map given in Theorem\autoref{dim4}.
\end{prop}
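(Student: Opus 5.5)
The plan is to realize the kernel of $(\bar{\gamma},k)$ through vector bundles and then use the injectivity of $\K_0(\mcr(X))\to\KO^0(X(\mbr))$ from Theorem\autoref{thm:2.3.0.2}. Let $c\in\CH^2(X)$ satisfy $\bar{\gamma}(c)=0$ and $k(c)=0$; I want to show $c=0$.

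\textbf{Step 1: realize $c$ by a bundle.} Since $X(\mbr)$ is orientable, there are no non-orientable components, so $q=0$ and Theorem\autoref{dim4} only involves the maps $k_i$. Apply the surjectivity in the proof of Proposition\autoref{prop:4.2.3}: the only obstruction to lifting $(c_1,c_2)=(0,c)$ lies in $\HH^4(X,\K^{\Q}_3)=0$. This gives an algebraic stable bundle $E$ with $c_1(E)=0$ and $c_2(E)=c$, so $E$ defines a class in $\tildeK_0(X)$.

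\textbf{Step 2: $E$ is topologically stably trivial.} The real points of $E$ form an orientable stable bundle, and:
\begin{itemize}
\item $w_2(E)=\bar{\gamma}(c)=0$;
\item $p_1(E)=k(c)=0$, since the identity $p_1=k(c_2+c_1^2)$ of Theorem\autoref{dim4} holds on every component (the bundle is algebraic);
\item $w_4(E)$ is constrained by Proposition\autoref{prop:4.1.4} together with the corollary computing $\HH^4(X,\pi_3^{\mba^1}(F))$.
\end{itemize}
With $q=0$ we have $w_1(X)=0$, so the cup product $(-)\cup w_1(X)$ vanishes and the cokernel summand is all of $\CH^4(X)=\HH^4(X(\mbr);\mbz/2)$. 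The obstruction group is therefore $\HH^4(X(\mbr);\mbz)\oplus\HH^4(X(\mbr);\mbz/2)$, with $\partial=0$. Consequently the realizable $w_4$ for an algebraic bundle is uniquely determined by $c_2$: it equals the $\mbz/2$-component of $k(c_2)$ in that group.

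The trivial bundle also has $c_2=0$, so both the trivial bundle and $E$ must have the $w_4$ determined by $k(0)$ with $c_2=0$. Hence $w_4(E)=w_4(\text{trivial})=0$.

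\textbf{The main obstacle.} Here $k$ is the full map of Proposition\autoref{prop:4.1.4}, whose $\mbz/2$-component the theorem statement suppresses for orientable components. So the step needs care: the hypothesis $k(c)=0$ controls only the $\HH^4(X(\mbr);\mbz)$-component. I resolve this by noting that on orientable $M_i$, $w_4$ is determined mod the Pontryagin square relation $p_1\equiv\mathfrak{P}(w_2)+j(w_4)\pmod 4$. With $p_1=0$ and $w_2=0$ this forces $j(w_4)=0$. Since $\HH^4(M_i;\mbz/2)\to\HH^4(M_i;\mbz/4)$ is injective for $M_i$ a closed orientable 4-manifold (both groups are $\mbz/2\to\mbz/4$), we get $w_4(E)=0$.

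\textbf{Step 3: conclude.} By Theorem\autoref{doldwhitney}, $E$ is topologically stably trivial, since $w_2=w_4=p_1=0$ (and $X(\mbr)$ is 4-dimensional). By Theorem\autoref{thm:2.3.0.2}(1), the class of $E$ is then trivial in $\K_0(\mcr(X))$. Therefore $c=c_2(E)=c_2(\text{trivial})=0$, which proves injectivity.
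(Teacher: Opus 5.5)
Your argument is correct and is essentially the paper's: realize $c$ by an oriented algebraic bundle with $c_2=c$, check that its real points have $w_2=w_4=p_1=0$ so that Dold--Whitney makes it topologically stably trivial, and conclude $c=0$ from the injectivity of $\K_0(\mcr(X))\to\KO^0(X(\mbr))$; your Pontryagin-square argument for $w_4=0$ spells out a step the paper leaves implicit. Do delete the sentence deducing $w_4(E)=w_4(\text{trivial})$ from $k(0)$, because it is circular: $c_2(E)=c$, and $c=0$ is exactly what you are proving, so only the Pontryagin-square argument actually gives $w_4(E)=0$.
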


\begin{proof}
	Take $c\in\CH^2(X)$ such that $\bar{\gamma}(c)=0$ and $k(c)=0$. By Theorem\autoref{dim4}, there exists an orientable algebraic vector bundle $V$ with $c_2(V)=c$ and $p_1(V)=0$. By Theorem\autoref{doldwhitney}, $V$ is topologically trivial, and by Theorem\autoref{thm:2.3.0.2}, $V$ is algebraically trivial. So $c=c_2(V)=0$.
\end{proof}

Using this proposition, we may write $\CH^2(X)=(\mbz/2)^s\oplus\mbz^t$ for some integers $s,t$.
Let $$b_1=\dim_{\mbz/2}\HH^1_{\alg}(X;\mbz/2)$$ and 
$$\delta=\dim_{\mbz/2}\{v\in\HH^1_{\alg}(X;\mbz/2)| v^2=0\}.$$

\begin{thm}\label{thm:4.2.5}
	Assume that $X(\mbr)$ is orientable. Then we have an isomorphism
	$$\tildeK_0(X)\cong \mbz^t\oplus(\mbz/2)^{s-b_1+2\delta}\oplus(\mbz/4)^{b_1-\delta}.$$
\end{thm}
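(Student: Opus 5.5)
Since $X(\mbr)$ is orientable, $w_1(X)=0$, so $d=\dim_{\mbz/2}w_1(X)\cup\HH^3_{\alg}(X;\mbz/2)=0$. By Proposition\autoref{prop:4.2.3}, $(c_1,c_2)$ then identifies $\tildeK_0(X)$ with $G=1\oplus\HH^1_{\alg}(X;\mbz/2)\oplus\CH^2(X)$. So the whole task is to compute the abstract group structure of $G$. Throughout, write $V=\HH^1_{\alg}(X;\mbz/2)=\CH^1(X)$, which is $2$-torsion.

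\textbf{The extension.} Multiplication truncated in codimension $\leq 2$ gives
$$(1+a+b)(1+a'+b')=1+(a+a')+(b+b'+aa').$$
So $G$ is a central extension
$$0\to\CH^2(X)\to G\to V\to 0$$
with symmetric cocycle $(a,a')\mapsto aa'$. In particular $G$ is abelian.

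For $a\in V$ set $x_a=1+a$. Since $2a=0$, we get $x_a^2=1+a^2$. The class of the extension is therefore the homomorphism
$$\varphi:V\to\CH^2(X)/2\CH^2(X),\qquad a\mapsto a^2 \bmod 2.$$
It is additive, because $(a+a')^2=a^2+a'^2+2aa'$.

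\textbf{Key step: controlling $a^2$.} I claim $a^2=0$ in $\CH^2(X)$ if and only if $\bar{\gamma}(a)^2=0$.
\begin{itemize}
\item Since $2a=0$, the class $a^2$ is $2$-torsion.
\item $X(\mbr)$ is compact orientable, so $\HH^4(X(\mbr);\mbz)$ is torsion-free. Hence $k(a^2)=0$.
\item $\bar{\gamma}$ is a ring map, so $\bar{\gamma}(a^2)=\bar{\gamma}(a)^2$.
\item The injectivity of $(\bar{\gamma},k)$ from the previous proposition now gives the claim.
\end{itemize}
Consequently $q:a\mapsto a^2$ is an $\mbz/2$-linear map $V\to\CH^2(X)[2]=(\mbz/2)^s$ with kernel of dimension $\delta$. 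Its image is a subspace of dimension $r:=b_1-\delta$. This subspace maps isomorphically into $\CH^2(X)/2$, because $(\mbz/2)^s\to\CH^2(X)/2$ is injective (the free part $\mbz^t$ has no $2$-torsion).

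\textbf{Splitting $G$.} Choose a basis $v_1,\dots,v_{b_1}$ of $V$ such that $v_{r+1},\dots,v_{b_1}$ span $\ker q$ and $v_1^2,\dots,v_r^2$ are linearly independent. Extend $v_1^2,\dots,v_r^2$ to a basis $v_1^2,\dots,v_r^2,u_1,\dots,u_{s-r}$ of $(\mbz/2)^s$, and fix a basis $f_1,\dots,f_t$ of a free complement $\mbz^t$. Then:
\begin{itemize}
\item $x_{v_i}$ has order $4$ for $i\leq r$;
\item $x_{v_j}$ has order $2$ for $j>r$;
\item $G$ is generated by the $x_{v_i}$, the $1+u_l$, and the $1+f_m$.
\end{itemize}
This produces a surjection
$$(\mbz/4)^r\oplus(\mbz/2)^{\delta}\oplus(\mbz/2)^{s-r}\oplus\mbz^t\to G.$$
To see it is injective, project to $V$: this forces the coefficients of the $x_{v_i}$ to be even. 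The remaining relation then lives in $\CH^2(X)$, where the chosen elements form a basis. Collecting terms gives
$$\mbz^t\oplus(\mbz/2)^{s-b_1+2\delta}\oplus(\mbz/4)^{b_1-\delta}.$$

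The main obstacle is the key step: showing that $a^2=0$ in $\CH^2(X)$ exactly when $\bar{\gamma}(a)^2=0$. This is precisely where orientability (torsion-freeness of $\HH^4(X(\mbr);\mbz)$) and the injectivity of $(\bar{\gamma},k)$ are used. The rest is elementary abelian-group bookkeeping.
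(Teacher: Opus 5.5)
Your proof is correct and follows the same route as the paper. With $w_1(X)=0$, Proposition \ref{prop:4.2.3} gives $\tildeK_0(X)\cong G$; squares of classes in $\CH^1(X)$ lie in the $2$-torsion of $\CH^2(X)$, so $\mbz^t$ splits off; and the remaining finite group is identified by counting elements of order $\leq 2$. Where the paper says only ``counting'', you build an explicit basis, and your key step (injectivity of $(\bar{\gamma},k)$ together with torsion-freeness of $\HH^4(X(\mbr);\mbz)$) makes explicit that $a^2=0$ in $\CH^2(X)$ exactly when $\bar{\gamma}(a)^2=0$, which the paper leaves implicit in its definition of $\delta$.
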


\begin{proof}
	By Proposition\autoref{prop:4.2.3}, we have $\tildeK_0(X)\cong G$. Since $\HH^1_{\alg}(X;\mbz/2)$ is 2-torsion, cup products of elements in $\HH^1_{\alg}(X;\mbz/2)$ lie in the $(\mbz/2)^s$ summand of $\CH^2(X)$. So $\mbz^t$ is a direct summand of $G$. Its complement is the group $1\oplus\HH^1_{\alg}(X;\mbz/2)\oplus(\mbz/2)^s$ with group operation given by multiplication. The rest follows easily by counting.
\end{proof}

\printbibliography[title={References}]

\end{document}